\documentclass[reqno]{amsart}
\usepackage[utf8]{inputenc}
\usepackage{amsmath}
\usepackage{amssymb}
\usepackage{amsthm}
\usepackage[all]{xy}
\xyoption{tips}
\SelectTips{cm}{10}
\usepackage[dvipsnames,table,xcdraw]{xcolor}
\usepackage{fullpage}
\usepackage{stmaryrd}
\usepackage{enumitem}
\usepackage{mathtools}
\usepackage{microtype}
\usepackage{todonotes,booktabs}
\usepackage{float}
\usepackage{aliascnt}
\usepackage{longtable}
\usepackage{tabularray}

\usepackage[maxbibnames=30,doi=false,url=false,
    giveninits=true, isbn=false]{biblatex}
\usepackage[parfill]{parskip}

\usepackage{tikz,scalerel}
\usetikzlibrary{positioning,arrows}
\usetikzlibrary{shapes.geometric, calc}
\usetikzlibrary{decorations.pathreplacing}
\usepackage{quiver}

\theoremstyle{plain}

\newaliascnt{theorem}{equation}  
\newtheorem{theorem}[theorem]{Theorem}  
\aliascntresetthe{theorem}

\newtheorem{ThmAlpha}{Theorem}

\usepackage{pdflscape}

\theoremstyle{definition}

\newaliascnt{proposition}{equation}  
\newtheorem{proposition}[proposition]{Proposition}
\aliascntresetthe{proposition}

 \newaliascnt{hproposition}{equation}  

\aliascntresetthe{hproposition}

\newaliascnt{lemma}{equation}  
\newtheorem{lemma}[lemma]{Lemma}
\aliascntresetthe{lemma}

\newaliascnt{corollary}{equation}  
\newtheorem{corollary}[corollary]{Corollary}
\aliascntresetthe{corollary}

\newaliascnt{claim}{equation}  

\aliascntresetthe{claim}

\newaliascnt{conjecture}{equation}  
\newtheorem{conjecture}[conjecture]{Conjecture}
\aliascntresetthe{conjecture}

\newaliascnt{question}{equation}  

\aliascntresetthe{question}

\newaliascnt{definition}{equation}  
\newtheorem{definition}[definition]{Definition}
\aliascntresetthe{definition}

\newaliascnt{hdefinition}{equation}  

\aliascntresetthe{hdefinition}

\newaliascnt{example}{equation}  
\newtheorem{example}[example]{Example}
\aliascntresetthe{example}

\theoremstyle{remark}

\newaliascnt{remark}{equation}  
\newtheorem{remark}[remark]{Remark}
\aliascntresetthe{remark}

\newaliascnt{convention}{equation}  
\newtheorem{convention}[convention]{Convention}
\aliascntresetthe{convention}

\usepackage{hyperref}
\usepackage[nameinlink,capitalise,noabbrev]{cleveref}

\definecolor{dark-red}{rgb}{0.5,0.15,0.15}
\definecolor{dark-blue}{rgb}{0.15,0.15,0.6}
\definecolor{dark-green}{rgb}{0.15,0.6,0.15}
\hypersetup{
    colorlinks, linkcolor=dark-red,
    citecolor=dark-blue, urlcolor=dark-green
}

\newcommand{\scomment}[1]{\todo[inline,color=yellow!40]{\textbf{Scott: }#1}}
\newcommand{\jdcomment}[1]{\todo[inline,color=cyan!40]{\textbf{J.D.: }#1}}

\newcommand{\sfT}{\mathsf{T}}

\newcommand{\uA}{\underline{A}} 
\newcommand{\burn}{\uA} 

\newcommand{\cO}{\mathcal{O}}
\newcommand{\sfP}{\mathsf{P}}

\newcommand{\sfB}{\mathsf{B}}

\newcommand{\Z}{\mathbb{Z}}

\usepackage{mathrsfs}

\newcommand{\m}{\mathfrak{m}}
\newcommand{\p}{\mathfrak{p}}

\usepackage{bbm}
\usepackage{multicol}

\usepackage{halloweenmath}

\DeclareMathOperator{\Spec}{\mathbf{Spec}}

\DeclareMathOperator{\Sub}{\mathbf{Sub}}
\DeclareMathOperator{\Spc}{\mathbf{Spc}}

\DeclareMathOperator{\tr}{\operatorname{tr}}
\DeclareMathOperator{\res}{\operatorname{res}}
\DeclareMathOperator{\nm}{\operatorname{nm}}
\DeclareMathOperator{\conj}{c}

\definecolor{nice-green}{HTML}{8DD883}
\definecolor{nice-red}{HTML}{D1634D}
\definecolor{nice-blue}{HTML}{63B4D1}
\usetikzlibrary{decorations.pathmorphing}
\tikzset{snake it/.style={decorate, decoration=snake}}

\usepackage{subcaption}

\title{Spectra of bi-incomplete Tambara functors}
\author{Scott Balchin}\address{Mathematical Sciences Research Centre, Queen's University Belfast, UK}\email{s.balchin@qub.ac.uk}
\author{J.D. Quigley}\address{Department of Mathematics, University of Virginia, Charlottesville, VA, USA}\email{mbp6pj@virginia.edu}
\author{Ben Spitz}\address{Department of Mathematics, Indiana University, Bloomington, IN, USA}\email{bespitz@iu.edu}

\date{\today}

\begin{document}

\begin{abstract}
Bi-incomplete Tambara functors are equivariant generalizations of commutative rings. The most common forms of bi-incomplete Tambara functors are coefficient systems of commutative rings, Green functors, and  Tambara functors. In the 1980s, Lewis introduced prime ideals in Green functors, and in the 2010s, Nakaoka introduced prime ideals in Tambara functors. In this work, we define the spectrum of prime ideals for an arbitrary bi-incomplete Tambara functor, simultaneously generalizing Lewis and Nakaoka's notions. We then  produce many computational tools which we apply to several examples of interest. 
\end{abstract}


\maketitle

\tableofcontents


\section{Introduction}

\subsection{Context}

Let $G$ be a finite group. A \emph{$G$-Tambara functor} $T$ is a collection of commutative rings $T(G/H)$, one for each subgroup $H \leq G$, together with structure maps
\begin{align*}
\res^H_K \colon T(G/H) &\to T(G/K),\\
\tr_K^H \colon T(G/K) &\to T(G/H),\\
\nm_K^H \colon T(G/K) &\to T(G/H),\\
\conj_{H,g} \colon T(G/H) &\to T(G/gHg^{-1})
\end{align*}
for each inclusion $K \leq H \leq G$ and for each $g \in G$. These structure maps, called \emph{restriction}, \emph{transfer}, \emph{norm}, and \emph{conjugation}, respectively, are required to satisfy various axioms (see \cref{subsec:tambara} for details). 

Tambara functors arise naturally in equivariant homotopy theory, where they encode the algebraic structures in ``equivariantly commutative" generalized homology theories. They play the role of commutative rings in the genuine $G$-equivariant world, and as is common, one seeks to mirror constructions from algebra in this setting to inform homotopical constructions. Here we are interested in the analogue of the Zariski spectrum. In \cite{nakaoka_spectrum}, Nakaoka defined prime ideals of Tambara functors. These prime ideals with the usual hull-kernel topology form a space, which we call the \emph{Nakaoka spectrum}. The Nakaoka spectra of various Tamara functors have been studied by several authors and are now somewhat well-understood~\cite{CalleGinnett,calle2025spectrumburnsidetambarafunctor,4DS,nakaoka_spectrum}.

Tambara functors represent one extreme among a collection of equivariant gadgets. The algebraic structure present in less structured equivariant homology theories is encoded by \emph{bi-incomplete Tambara functors}: these are essentially Tambara functors, but with only a subset of the transfer and norm maps. The most familiar examples of bi-incomplete Tambara functors are coefficient systems (no nontrivial transfers or norms) and Green functors (all transfers but no nontrivial norms). Such structures are controlled by the selection of a \emph{compatible pair of transfer systems} $(\cO_m,\cO_a)$  which dictates the allowed norms and transfers, respectively \cite{BH_biincomplete, chan}. Even in simple cases, the allowed behavior is surprising. For example, for the cyclic group $C_{p^n}$ there are $\frac{1}{2n+3}\binom{3n+3}{n+1}$ distinct compatible pairs \cite{MR4895458}.

The purpose of this paper is to define ideals in bi-incomplete Tambara functors and study the resulting spectra of prime ideals with a focus on computational methods. This general framework allows us to systematically compare Nakaoka spectra as we vary which transfers and norms are taken into account. This allows us, for example, to unify the aforementioned results regarding Nakaoka spectra with results of Lewis regarding prime ideals of Green functors \cite{lewis:1980}. 

As we have already discussed, Tambara functors form the $G$-equivariant analogue of commutative rings. A generalization of commutative rings in the categorical direction is played by the theory of \textit{tensor-triangular categories}~\cite{balmer_spectrum}. These categories, which have a compatible suspension and monoidal structure, also possess an analogue of the Zariski spectrum, given by the \textit{Balmer spectrum}. In several recent papers, the existence of a merging these two ideas, that is, the theory of \textit{equivariant tensor-triangular geometry}, has been conjectured (c.f., the introduction of \cite{calle2025spectrumburnsidetambarafunctor} or \cite{BalmerBarthelGreenleesPevtsova2023}). While we do not undertake the general study of such categories in this paper, we perform the first step in this direction by explicitly computing a spectrum object for the category of $C_2$-equivariant finite spectra, comparing it to our calculations of the incomplete Burnside Tambara functors for $C_2$, and identifying potential obstructions to  the existence of a comparison map.

\subsection{Main computational results} While the definitions of ideals and prime ideals for Tambara functors are  easily adapted to bi-incomplete Tambara functors, we find that the methods required to compute the resulting spectra of prime ideals depend heavily on the compatible pair as well as the structure of the bi-incomplete Tambara functor itself. Most of our main results are new computational tools to approach this problem.

Our main computational tool, roughly speaking, allows us to reduce from considering general bi-incomplete Tambara functors to only considering specific kinds of bi-incomplete Tambara functors, namely, bi-incomplete Tambara functors which possess the same collection of transfers and norms. We call a transfer systems $\cO$ such that $(\cO, \cO)$ is a compatible pair \emph{self-compatible}. The computation of the spectra of these self-compatible bi-incomplete Tambara functors is the statement of the following theorem: 


\begin{ThmAlpha}[{\cref{thm:pi0}}]\label{thm:main1}
Let $(\cO,\cO)$ be a self-compatible pair, and $T$ be an $(\cO,\cO)$-Tambara functor. Then there is a bijection 
\[
    \Spec^{(\cO,\cO)}(T) \cong \coprod_{\mathcal{O}' \in \pi_0 \mathcal{O}} \Spec(i^*_{\mathcal{O}'} T)
\]
between the $(\mathcal{O},\mathcal{O})$-Nakaoka spectrum of $T$, and the disjoint union over $\pi_0\mathcal{O}$, the set of connected components of $\mathcal{O}$ regarded as a subgraph of the subgroup lattice of $G$, of the Nakaoka spectra of the restrictions $i^*_{\mathcal{O}'}T$ of $T$ to each path component $\mathcal{O}'$. 
\end{ThmAlpha}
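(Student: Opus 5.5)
The approach is to show that a prime ideal of an $(\cO,\cO)$-Tambara functor is ``supported'' on exactly one connected component of $\cO$, and that restricting to that component gives a bijection with the Nakaoka primes there.

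\textbf{Setup.} Regard $\cO$ as a graph on the subgroup lattice of $G$, closed under conjugation, and let $\pi_0\cO$ be its set of path components. Since $(\cO,\cO)$ is self-compatible, every transfer and every norm in $T$ stays within a single component. Restriction and conjugation may leave a component, but they too are only relevant along edges of $\cO$.

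For each component $\cO'$, the restriction $i^*_{\cO'}T$ is the Tambara-type object indexed by the orbits $G/H$ with $H \in \cO'$. It keeps the transfers and norms along $\cO'$ (a complete system on that piece). The statement takes its Nakaoka spectrum $\Spec(i^*_{\cO'}T)$, so I will treat it that way.

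\textbf{Step 1: a prime is proper at exactly one component.} Recall that an ideal $I \subseteq T$ is prime if it is proper (some level $I(G/H) \neq T(G/H)$) and, whenever $a \in T(G/H)$ and $b \in T(G/K)$ satisfy $\nm(\res(a))\cdot\nm(\res(b)) \in I$ for all allowed composites, one of $a \in I$ or $b \in I$ holds.

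I will show that if $I$ is prime, then the set of $H$ with $I(G/H) \neq T(G/H)$ lies in a single component. Suppose instead $H$ and $K$ lie in different components and both levels are proper. Take $a = 1 \in T(G/H)$ and $b = 1 \in T(G/K)$.

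The key point is that the prime condition only involves norms and restrictions that land in a common level. A norm leaving $H$'s component is unavailable, so any common target reached by allowed norm-restriction composites from both $H$ and $K$ must be reached from at least one side by restriction alone. I must check that $I$ is all of $T$ at such a target. I expect this to require a lemma: an ideal that is proper at $L$ is proper at every $L'$ with $L' \to L$ in $\cO$. This should follow because $1 \in I(G/L')$ gives $\nm(1) = 1 \in I(G/L)$, and dually via restriction of $1$.

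This step is the main obstacle. The exact form of the prime condition in the paper (Nakaoka's "for all $g$, $\nm\res$" formulation) determines whether this works. I may instead need to define $I$ to equal $T$ outside a component and verify that this set is an ideal. That uses upward closure along restrictions: if $I(G/L) = T(G/L)$ then $1 = \res(1) \in I$ below $L$. Ideals are closed under restriction, so the proper levels form a set closed upward in the lattice. With transfers, that set is also closed downward along $\cO$-edges, since $\tr(I)$ contains $\tr(1)$ only when $\tr(1)$ is a unit.

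\textbf{Step 2: the bijection.} Given $I$ with support in $\cO'$, send it to $i^*_{\cO'}I$. This is an ideal of $i^*_{\cO'}T$, and it is prime because the prime test within $\cO'$ is a special case of the test in $T$.

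Conversely, given a prime $P$ of $i^*_{\cO'}T$, extend it by $T$ at every level outside $\cO'$. Then check two things:
\begin{enumerate}
\item \emph{This is an ideal.} Restrictions, transfers and norms into $\cO'$ from outside come only along $\cO$-edges, and $\cO'$ is a full component, so none enter it. Restrictions out of $\cO'$ land in levels where the extended ideal is everything.
\item \emph{This is prime.} Any failing pair must involve a level inside $\cO'$, which reduces to primality of $P$.
\end{enumerate}
These two maps are mutually inverse. Since Step 1 shows every prime has a unique support component, the disjoint union decomposition follows.
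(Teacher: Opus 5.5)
Your proposal has a genuine gap: the notion of ``support'' is wrong. Step 1 claims that the levels at which a prime is proper all lie in a single component of $\cO$. This is false, and it contradicts your own correct remark that the proper levels form an upward-closed set (because $\res(1)=1$).

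For instance, take $\underline{\Z}$ over $C_p$ with the pair $(\cO_\mathrm{triv},\cO_\mathrm{triv})$. The components are $\{e\}$ and $\{C_p\}$, yet the ideal with $\langle q\rangle$ at both levels is prime. Your test with $a=b=1$ gives nothing here: the generalized product $\res(1)\cdot 1=1$ at level $C_p/e$ is not in the ideal, so $\mathtt{Q}$ simply fails.

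The same mistake breaks the inverse map in Step 2. Extending a prime $P$ of $i^*_{\cO'}T$ by $T(G/K)$ at every $K\notin\cO'$ does not give an ideal. Restrictions exist for all $L\le K$, not only along $\cO$-edges. So for $K\notin\cO'$ containing some $L\in\cO'$ you would need $1=\res^K_L(1)\in P(G/L)$. In the example, putting $\Z$ on top of $\langle q\rangle$ is not closed under $\res^p_1$.

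Even ignoring this, the round trip $M\mapsto i^*_{\cO'}M\mapsto(\text{extension})$ discards the values of $M$ above $\cO'$. So you never show that a prime is determined by its restriction to one component. Incidentally, closure of the proper set along $\cO$-edges comes from $\nm(1)=1$ (available since $\cO_m=\cO$), not from transfers.

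What is missing is the correct support and the correct extension, which is what the paper uses. Let $L_0$ be a minimal level at which the prime $M$ is proper, and let $\cO'$ be its component. Since $\cO$ is saturated, $\nm(1)=1$ forces $L_0$ to be the unique minimal element of $\cO'$. A prime $J$ of $i^*_{\cO'}T$ must then be extended by
\[
I(J)(G/K)=\begin{cases} J(G/K), & K\in\cO',\\ \bigcap_{L\in\cO',\,L\le K}(\res^K_L)^{-1}\big(J(G/L)\big), & K\notin\cO',\end{cases}
\]
with the empty intersection equal to $T(G/K)$. So the extension is everything only at levels containing no member of $\cO'$, and restriction-preimages at levels above $\cO'$.

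You must then check three things.
\begin{enumerate}
\item $I(J)$ is an ideal. By the restriction axiom, no $\cO$-edge starting at a level containing no member of $\cO'$ can end at a level containing one. Transfers and norms out of levels above $\cO'$ are controlled by the double coset formulas, since $(L\cap gHg^{-1},L)\in\cO$ keeps the relevant subgroups inside $\cO'$.
\item $I(J)$ is prime: restrict the two elements into $\cO'$ and use primality of $J$.
\item Every prime $M$ equals $I(i^*_{\cO'}M)$.
\end{enumerate}

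For the third point, $M\subseteq I(J)$ is just closure under restriction. Two further facts follow from primality by testing $\mathtt{Q}(M,1,y)$ with $1\in T(G/L_0)\setminus M(G/L_0)$. The first is that $M(G/K)=T(G/K)$ at levels containing no member of $\cO'$. The second is the reverse inclusion $I(J)\subseteq M$ above $\cO'$.

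This test works because each generalized product lives at a level $L$ receiving an $\cO$-edge from a conjugate of some $K_1\le L_0$. There are two cases.
\begin{itemize}
\item If $K_1=L_0$, then $L$ and the subgroup to which $y$ is restricted lie in $\cO'$. This is impossible when $K$ contains no member of $\cO'$, and otherwise the factor is in $J$ by hypothesis.
\item If $K_1<L_0$, then minimality gives $M(G/K_1)=T(G/K_1)$, hence $1=\nm(1)\in M(G/L)$.
\end{itemize}
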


There is a nuance in Theorem~\ref{thm:main1} in that for essentially all choices of $\mathcal{O}'$, $i^*_{\mathcal{O}'}T$ need not be a bi-incomplete Tambara functor. Instead it is a more general object called a \emph{restricted Tambara functor}; roughly speaking, this is a Tambara functor with extra conjugation maps at each level. The theory of restricted Tambara functors follows from work of the third author in \cite{spitz_norms_2024}. Constructions such as that of the prime ideals and spectrum go through as expected in this more general framework, and thus allows for Theorem \ref{thm:main1} to be used in practice. We recall the relevant definitions and theory in \cref{subsec:res}.

There are many situations, such as in the Green functor case, where the bi-incomplete Tambara functor has different collections of transfers and norms, and as such we cannot directly apply Theorem \ref{thm:main1} as we are not working with a self-compatible pair. If we simply need to add transfers to a self-compatible pair, then we can appeal to the following result which gives us a checkable condition on determining the prime ideals. We say that $(\mathcal{O}_m, \mathcal{O}_a)$ is a sub-compatible pair of $(\mathcal{O}_m', \mathcal{O}_a')$ if we have inclusions $\cO_m \subseteq \cO_m'$ and $\cO_a \subseteq \cO_a'$.

\begin{ThmAlpha}[{\cref{Prop:ChangeOaPrimes}}]\label{thm:main2}
    Let $(\mathcal{O}_m, \mathcal{O}_a)$ be a sub-compatible pair of $(\mathcal{O}_m, \mathcal{O}_a')$ and let $T$ be an $(\mathcal{O}_m,\mathcal{O}_a')$-Tambara functor. Each $(\mathcal{O}_m,\mathcal{O}_a')$-prime ideal of $T$ is also an $(\mathcal{O}_m,\mathcal{O}_a)$-prime ideal of $T$. Conversely, every $(\mathcal{O}_m,\mathcal{O}_a)$-prime ideal of $T$ which is closed under the transfers in $\mathcal{O}_a'$ is an $(\mathcal{O}_m,\mathcal{O}_a')$-prime ideal. 
\end{ThmAlpha}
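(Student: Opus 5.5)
We work directly from the definitions. An $(\cO_m,\cO_a)$-ideal of $T$ is a levelwise collection of ideals $I(G/H)\subseteq T(G/H)$ closed under restrictions, conjugations, the transfers in $\cO_a$ and the norms in $\cO_m$. Primality is defined in Nakaoka's style: $I$ is proper, and whenever $I'$ and $I''$ are ideals with $I'I''\subseteq I$, we have $I'\subseteq I$ or $I''\subseteq I$. Here the product ideal is the ideal generated by levelwise products. We can use the equivalent elementwise characterisation, if the paper has established it: for $a\in T(G/H)$ and $b\in T(G/K)$, if all the relevant products of $\cO_m$-norms of restrictions and conjugates of $a$ and $b$ lie in $I$, then $a\in I$ or $b\in I$. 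The key observation is that this elementwise primality condition involves only restrictions, conjugations and $\cO_m$-norms, and never transfers. Since $\cO_m$ is the same in both pairs, the condition is literally identical for the two compatible pairs. We must not forget the forgetful step: $T$, viewed with only the $\cO_a$-transfers, is an $(\cO_m,\cO_a)$-Tambara functor, because $(\cO_m,\cO_a)$ is itself a compatible pair.

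For the first statement, let $P$ be an $(\cO_m,\cO_a')$-prime ideal. Since $\cO_a\subseteq\cO_a'$, closure under $\cO_a'$-transfers implies closure under $\cO_a$-transfers, so $P$ is an $(\cO_m,\cO_a)$-ideal, and it is still proper. For primality, take $(\cO_m,\cO_a)$-ideals $I',I''$ with $I'I''\subseteq P$. Let $\langle I'\rangle'$ and $\langle I''\rangle'$ be the $(\cO_m,\cO_a')$-ideals they generate. We want to show $\langle I'\rangle'\langle I''\rangle'\subseteq P$ and then apply primality over $(\cO_m,\cO_a')$. This requires knowing how generated ideals behave under products: the product of generated ideals should lie in the ideal generated by the product. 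That in turn uses the Frobenius reciprocity $\tr(x)\,y=\tr(x\,\res y)$ and the compatibility of norms with transfers, which holds because the pair is compatible. If the elementwise characterisation is available, this step becomes trivial, since that characterisation is transfer-free. So the first thing to do is to invoke, or prove, that characterisation. It follows the standard argument: the ideal generated by $a$ consists of sums of transfers of multiples of norms of restrictions and conjugates of $a$, and Frobenius reciprocity lets us pull transfers out of products.

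For the converse, let $P$ be an $(\cO_m,\cO_a)$-prime ideal that is closed under the $\cO_a'$-transfers. Then $P$ is an $(\cO_m,\cO_a')$-ideal and it is proper. Given $(\cO_m,\cO_a')$-ideals $J',J''$ with $J'J''\subseteq P$, note that each is in particular an $(\cO_m,\cO_a)$-ideal. The $(\cO_m,\cO_a)$-product of $J'$ and $J''$ is contained in their $(\cO_m,\cO_a')$-product, because it is generated by the same levelwise products under fewer operations. Hence it lies in $P$, so $(\cO_m,\cO_a)$-primality gives $J'\subseteq P$ or $J''\subseteq P$. This direction therefore follows formally from the ideal-theoretic definition.

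The main obstacle is the first direction, specifically the comparison of products of ideals generated with respect to the two different collections of transfers. My plan is to avoid that comparison entirely by using the transfer-free elementwise primality criterion, which makes the two notions of primality coincide on any ideal closed under the larger set of transfers.
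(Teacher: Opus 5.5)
Your proposal is correct and, once you adopt the elementwise criterion, it is exactly the paper's argument. The paper takes the transfer-free $\mathtt{Q}$-condition as its definition of primality, so $\mathtt{Q}^{(\mathcal{O}_m,\mathcal{O}_a)}(P,x,y)\iff\mathtt{Q}^{(\mathcal{O}_m,\mathcal{O}_a')}(P,x,y)$, and the result follows once you note which collections are ideals for which pair. Your detour through the Nakaoka-style definition via products of ideals, including the Frobenius reciprocity discussion for the first direction, is therefore unnecessary here, although your direct argument for the converse in that framework is also valid.
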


While this strategy of looking at self-compatible bi-incomplete Tambara functors and adding transfers gives us many of the cases, it does not cover all of them. The issue is that there are some transfer systems $\cO$ which are not self-compatible, even for small groups such as $C_{p^2}$. The previous tools do not apply for such bi-incomplete Tambara functors, and so in general, an \emph{ad hoc} computation must be done. However, if $T$ is \emph{multiplicatively cohomological} (\cref{def:coh}) we can appeal to the following result:

\begin{ThmAlpha}[{\cref{thm:insensitive}}]\label{thm:main3}
        Let $T$ be a multiplicatively cohomological Tambara functor. Then for any compatible pair $(\cO_m,\cO_a)$ we have a homeomorphism
    \[
    \Spec^{(\cO_m,\cO_a)}(T) \cong \Spec^{(\mathrm{Hull}(\cO_m),\cO_a)}(T)
    \]
    where $\mathrm{Hull}(\cO_m)$ is a transfer system depending on $\cO_m$ which is self-compatible.
\end{ThmAlpha}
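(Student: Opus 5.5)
We have not yet seen the definitions of multiplicatively cohomological or of $\mathrm{Hull}(\cO_m)$, so the plan rests on plausible guesses about both. \textbf{Assumption on cohomological:} $T$ is multiplicatively cohomological when $\res^H_K \nm_K^H(x)=\prod_{g\in H/K} \conj_{K,g}(x)$ reduces, in a suitable sense, to an ``$|H:K|$-th power'' relation. The intended reading is that norms are determined by restrictions and conjugations up to power operations. \textbf{Assumption on the hull:} $\mathrm{Hull}(\cO_m)$ is the smallest transfer system containing $\cO_m$ that is self-compatible with $\cO_a$. We further assume $\mathrm{Hull}(\cO_m)$ still forms a compatible pair with $\cO_a$, which is needed for the right-hand side to make sense. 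This must be checked against the paper's definition.

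Both spectra are subsets of the set of ideals of the underlying coefficient system of $T$ with the same $\cO_a$-transfers. Their hull-kernel topologies are both generated by closed sets $V(x)=\{I : x\in I\}$ for elements $x\in T(G/H)$. So it suffices to show that the two sets of prime ideals coincide. The homeomorphism is then the identity map, and continuity in both directions is automatic.

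\textbf{Step 1.} Show that every $(\mathrm{Hull}(\cO_m),\cO_a)$-prime is an $(\cO_m,\cO_a)$-prime. This is a monotonicity statement. The ideal axioms for $(\cO_m,\cO_a)$ are a subset of those for the larger norm system. Primality, in Nakaoka's form, says: for $x\in T(G/H)$ and $y\in T(G/H')$, if $\nm(\text{res}\ x)\cdot \nm(\text{res}\ y)\in I$ for all admissible norm/restriction/conjugation composites, then $x\in I$ or $y\in I$. Enlarging the norm system adds more test elements to the hypothesis, so the primality condition becomes weaker. Hence we must be careful about the direction, mirroring the proof of \cref{thm:main2} with norms in place of transfers. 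That is, primality for the larger system involves more products, which is a stronger hypothesis and hence a weaker condition. But ideals for the larger system must additionally be closed under the extra norms. The key claim is therefore: for cohomological $T$, any $(\cO_m,\cO_a)$-ideal $I$ is automatically closed under the norms in $\mathrm{Hull}(\cO_m)$, and the primality conditions agree.

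\textbf{Step 2 (main obstacle).} Given $K\to H$ in $\mathrm{Hull}(\cO_m)\setminus\cO_m$ and $x\in I(G/K)$, show that $\nm_K^H x\in I(G/H)$. The strategy is to use the cohomological relation to write $\nm_K^H x$ as a product in which some factor lies in $I$. Such a factor could be, for instance, $\tr$ of an element of $I$ (allowed since $K\to H\in\cO_a$ when the pair is compatible), or a norm along an $\cO_m$-edge. We would try to prove this by induction along the generating edges of the hull construction. Each new edge arises by restriction/conjugation closure or by composition with existing edges. For composites, use $\nm_K^H=\nm_L^H\nm_K^L$. For restricted edges, use the double coset (Mackey) formula for $\res\circ\nm$ together with the cohomological identity. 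Symmetrically, the prime-test condition for the new norms must be reduced to the old ones. One would show that each new test product lies in the radical generated by old test products, and use that prime ideals are radical (an easy lemma). We expect the identification of exactly which relation ``multiplicatively cohomological'' provides to be the crux. If direct closure fails, the fallback is to show that $\sqrt{I}=I$ absorbs $\nm_K^H x$ because some power of it lies in $I$.
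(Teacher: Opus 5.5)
\textbf{There is a genuine gap.} The reduction you arrive at in Step 1 is that every $(\cO_m,\cO_a)$-ideal is automatically closed under the norms of $\mathrm{Hull}(\cO_m)$. That claim is false. The paper's $C_4$ example is an ideal in the fixed point Tambara functor of $\Z[x_1,\dots,x_4]$ that is an ideal for $\cO_m=\cO_3$, yet it is not closed under $\nm_{C_2}^{C_4}$, even though it contains the square of $\nm_{C_2}^{C_4}(x_1x_3)$. Closure holds only for primes, and only through levelwise radicality (\cref{thm:levelwise-radical}, which here is a substantial result, not an easy lemma). So your ``fallback'' is the whole argument.

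What is missing is the source of the power, and your guessed definitions hide it. Multiplicatively cohomological means $\nm_K^H\res_K^H x=x^{[H:K]}$, a norm of a restriction; by contrast, $\res\circ\nm$ is governed by the double coset formula in every Tambara functor. Also, $\mathrm{Hull}(\cO_m)$ is the smallest saturated transfer system containing $\cO_m$. Its new edges do not come from restriction, conjugation or composition, since $\cO_m$ is already closed under those. They come from two-out-of-three: they are the pairs $(B,C)$ with $A\le B\le C$ and $(A,C)\in\cO_m$, and these pairs already form a saturated transfer system. Your induction over generating edges therefore never meets the edges that matter.

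The paper's argument runs as follows. For $\beta\in P(G/B)$, the double coset formula writes $\res^C_A\nm^C_B\beta$ as a product whose identity-coset factor is $\res^B_A\beta\in P(G/A)$, so $\res^C_A\nm^C_B\beta\in P(G/A)$. Applying the admissible norm $\nm^C_A$ and the cohomological identity gives $(\nm^C_B\beta)^{[C:A]}\in P(G/C)$, and radicality gives $\nm^C_B\beta\in P(G/C)$. Then \cref{prop:add_mult} shows $P$ is a $(\mathrm{Hull}(\cO_m),\cO_a)$-prime. Transfers play no role.

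\textbf{The other inclusion is not monotonicity.} To see that a $(\mathrm{Hull}(\cO_m),\cO_a)$-prime $P$ is an $(\cO_m,\cO_a)$-prime, you must derive $\mathtt{Q}^{\mathrm{Hull}(\cO_m)}(P,x,y)$ from the weaker $\mathtt{Q}^{\cO_m}(P,x,y)$. Your remark that new test products should lie in the radical of old ones is the right idea. The paper's proof only spells out $\Spec^{(\cO_m,\cO_a)}(T)\subseteq\Spec^{(\mathrm{Hull}(\cO_m),\cO_a)}(T)$, so this direction genuinely needs an argument, and the same trick supplies it.
\begin{itemize}
\item Take a test product $z=\nm^L_{M_1}(x')\,\nm^L_{M_2}(y')$ with $(M_i,L)\in\mathrm{Hull}(\cO_m)$.
\item Pick $A_i\le M_i$ with $(A_i,L)\in\cO_m$ and set $A=A_1\cap A_2$. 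Then $(A,L)\in\cO_m$ by restriction and composition.
\item Now $z^{[L:A]}=\nm^L_A\res^L_A z$. By the double coset formula, via the identity cosets, this has $\nm^L_A(\res^{M_1}_A x')\,\nm^L_A(\res^{M_2}_A y')$ as a factor, and that factor is an $\cO_m$-generalized product of $x$ and $y$.
\item Hence $z^{[L:A]}\in P(G/L)$, and radicality of $P$ gives $z\in P(G/L)$.
\end{itemize}
Your observation that the topologies agree once the two sets of primes coincide is correct.
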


We give a graphical representation of the general strategy of computing the Nakaoka spectra of bi-incomplete Tambara functors for $G=C_{p^2}$ using these computational tools in \cref{fig:strat}.

\begin{figure}[h]
\centering
\begin{tikzpicture}[scale=0.95]
    \draw[gray,ultra thick,->] (1,1) -- (11,1);
    \draw[gray,ultra thick,->] (1,1) -- (1,11);
    \draw[gray,thin,dashed] (1,1) -- (11,11);
    \node at (11.5,1) {$\mathcal{O}_{{a}}$};
    \node at (1,11.5) {$\mathcal{O}_{{m}}$};
    \draw[gray,ultra thick] (2,1.2) -- (2,0.8);
    \draw[gray,ultra thick] (4,1.2) -- (4,0.8);
    \draw[gray,ultra thick] (6,1.2) -- (6,0.8);
    \draw[gray,ultra thick] (8,1.2) -- (8,0.8);
    \draw[gray,ultra thick] (10,1.2) -- (10,0.8);
    \node at (2,0.5) {$\mathcal{O}_{\mathrm{triv}}$};
    \node at (4,0.5) {$\mathcal{O}_{\mathrm{1}}$};
    \node at (6,0.5) {$\mathcal{O}_{\mathrm{2}}$};
    \node at (8,0.5) {$\mathcal{O}_{\mathrm{3}}$};
    \node at (10,0.5) {$\mathcal{O}_{\mathrm{comp}}$};
    \begin{scope}[rotate around={90:(1,1)}]
    \draw[gray,ultra thick] (2,1.2) -- (2,0.8);
    \draw[gray,ultra thick] (4,1.2) -- (4,0.8);
    \draw[gray,ultra thick] (6,1.2) -- (6,0.8);
    \draw[gray,ultra thick] (8,1.2) -- (8,0.8);
    \draw[gray,ultra thick] (10,1.2) -- (10,0.8);
    \end{scope}
    \node at (0.25,2) {$\mathcal{O}_{\mathrm{triv}}$};
    \node at (0.25,4) {$\mathcal{O}_{\mathrm{1}}$};
    \node at (0.25,6) {$\mathcal{O}_{\mathrm{2}}$};
    \node at (0.25,8) {$\mathcal{O}_{\mathrm{3}}$};
    \node at (0.15,10) {$\mathcal{O}_{\mathrm{comp}}$};
    \draw[nice-red, ultra thick, snake it] (10,8.276) -- (10,9.8);
    \draw[nice-green, ultra thick,fill=white] (2,2) +(-6pt,-6pt) rectangle +(6pt,6pt);
    \draw[nice-green, ultra thick,fill=white] (4,4) +(-6pt,-6pt) rectangle +(6pt,6pt);
    \draw[nice-green, ultra thick,fill=white] (6,6) +(-6pt,-6pt) rectangle +(6pt,6pt);
    \draw[nice-green, ultra thick,fill=white] (10,10) +(-6pt,-6pt) rectangle +(6pt,6pt);
    \filldraw[black] (2,2) circle (2pt);
    \filldraw[black] (4,4) circle (2pt);
    \filldraw[black] (6,6) circle (2pt);
    \filldraw[black] (10,10) circle (2pt);
    \filldraw[black] (4,2) circle (2pt);
    \filldraw[black] (6,2) circle (2pt);
    \filldraw[black] (8,2) circle (2pt);
    \filldraw[black] (10,2) circle (2pt);
    \filldraw[black] (8,4) circle (2pt);
    \filldraw[black] (10,4) circle (2pt);
    \filldraw[black] (10,6) circle (2pt);
    \filldraw[black] (10,8) circle (2pt);
    \draw[nice-green, ultra thick, ->] (2.2,2) -- (3,2);
    \draw[nice-green, ultra thick, ->] (4.2,4) -- (5,4);
    \draw[nice-green, ultra thick, ->] (6.2,6) -- (7,6);
    \draw[nice-blue, ultra thick] (4,2) circle (6pt);
    \draw[nice-blue, ultra thick] (6,2) circle (6pt);
    \draw[nice-blue, ultra thick] (8,2) circle (6pt);
    \draw[nice-blue, ultra thick] (10,2) circle (6pt);
    \draw[nice-blue, ultra thick] (8,4) circle (6pt);
    \draw[nice-blue, ultra thick] (10,4) circle (6pt);
    \draw[nice-blue, ultra thick] (10,6) circle (6pt);
    \draw[nice-red, ultra thick,rotate around={45:(10,8)}] (10,8) +(-6pt,-6pt) rectangle +(6pt,6pt);
\end{tikzpicture}
\caption{A schematic for computing the Nakaoka spectra for a bi-incomplete Tambara functor in the case $G=C_{p^2}$. There are 5 possible transfer systems for $G$, leading to 12 compatible pairs which are represented by the dots. The self-compatible pairs are those in the green squares along the diagonal, and the spectra for these can be computed via Theorem~\ref{thm:main1}. Moving to the right in the diagram is adding  transfers, and we can deduce which prime ideals cease to be prime using Theorem~\ref{thm:main2}, allowing us to compute the spectra for those pairs in blue circles. The only remaining pair is the one in the red diamond. If we are multiplicatively cohomological then the prime spectrum at diamond is insensitive to the saturated hull, and thus is homeomorphic with the spectrum above it, indicated by the red wavy line, by Theorem~\ref{thm:main3}. If we are not multiplicatively cohomological, however, then this is a computation that needs to be done separately.}\label{fig:strat}
\end{figure}

\subsection{Outline of the document}

This paper is separated into two parts, \cref{part:theory} and \cref{part:examples}. The former is dedicated to theory, while the latter provides several comprehensive examples, showcasing the power of the computational tools.

We begin in \cref{sec:prelim} by outlining the preliminary material needed to appreciate the constructions. \cref{subsec:tambara} contains the theory of bi-incomplete Tambara functors, \cref{subsec:res} the theory of restricted Tambara functors, and \cref{subsec:ideals} the structure of (prime) ideals in the aforementioned variants Tambara functors.

\cref{sec:properties} is concerned with properties of bi-incomplete Tambara functors and their ideals. The levelwise structure of bi-incomplete prime ideals is discussed in \cref{subsec:weyl} and \cref{subsec:radical}. The process of adding norms and transfers to a bi-incomplete Tambara functor and its effect on primes (c.f. Theorem \ref{thm:main2}) is the content of \cref{subsec:chaingeofpairs}. Finally, \cref{subsec:hulls} contains the theory and proof of Theorem \ref{thm:main3} regarding saturated hulls and multiplicatively cohomological Tambara functors. Equipped with the required theory, we are then in a position to prove Theorem \ref{thm:main1}, which occupies \cref{sec:pathdecomp}.

In \cref{part:examples} we pivot our attention to computing explicit examples. In \cref{sec:exfp} we compute the Nakaoka spectrum of the fixed point bi-incomplete Tambara functor $\underline{\Z}$ over $C_p$ and $C_{p^2}$ for all compatible pairs. The reader is invited to look ahead at the answer summarized in \cref{tab:cpzunderline} and \cref{fig:zunderline}. It is worth noting that the spectra for certain compatible pairs over $C_{p^2}$ are equal, so the Nakaoka spectrum of fixed point bi-incomplete Tambara functors is not generally sensitive to the compatible pair. 

In \cref{sec:exburn}, we turn to more intricate computations of the Nakaoka spectra of Burnside bi-incomplete Tambara functors, which are \textit{not} multiplicatively cohomological. See \cref{tab:cpBurnside} and \cref{tab:ofspectra} for summaries of the computations over $C_p$ and $C_{p^2}$. In contrast with the analogous computations for $\underline{\mathbb{Z}}$, we obtain distinct homeomorphism types for each possible compatible pair. Interestingly, we apply the ghost construction of \cite{4DS} to study the spectra of bi-incomplete Tambara functors over $C_{p^2}$, even though that construction was defined only over $C_p$. We also compute the spectrum of an interesting bi-incomplete Tambara functor over $C_{pq}$ for which more \emph{ad hoc} arguments are required. 

Finally, in \cref{sec:vista} we will describe how this style of computation allows us to start thinking about ``$G$-equivariant tensor-triangular geometry'' with a computation of a Balmer--Nakaoka spectrum of the category $\mathsf{Sp}_{C_2}^\omega$ of genuine $C_2$-equivariant finite spectra, wherein we see some gremlins regarding topology creep in when trying to compute comparison maps.


\subsection{Acknowledgments}

The authors would like to thank the Isaac Newton Institute for Mathematical Sciences, Cambridge, for support and hospitality during the programme ``Equivariant homotopy theory in context" where work on this paper was undertaken. This work was supported by EPSRC grant no EP/Z000580/1. JDQ was partially supported by NSF grants DMS-2414922 and DMS-2441241. 

\newpage

\part{Spectra of bi-incomplete Tambara functors}\label{part:theory}

\section{Preliminaries}\label{sec:prelim}

\subsection{Bi-incomplete Tambara functors}\label{subsec:tambara}

    $G$ denotes a finite group throughout. We begin by introducing the category of bi-incomplete Tambara functors via the language of transfer systems \cite{bbr,MR4244201}.

\begin{definition}\label{defn:transfersystem}
    A \emph{$G$-transfer system} is a subset $\cO \subseteq \Sub(G) \times \Sub(G)$ such that
    \begin{itemize}
        \item $(K,H) \in \mathcal{O}$ implies $K \leq H$.
        \item $(K,H) \in \cO$ and $(H,L) \in \cO$ implies $(K,L) \in \cO$. 
        \item $(K,H) \in \mathcal{O}$ implies $(gKg^{-1},gHg^{-1}) \in \mathcal{O}$ for all $g \in G$.
        \item $(K,H) \in \cO$ and $L \leq H$ implies $(K \cap L,L) \in \cO$.
    \end{itemize}
    A $G$-transfer system $\cO$ is \emph{saturated} if whenever two  of the pairs $(A,B)$, $(B,C)$, and $(A,C)$ are in $\cO$, then so is the third.
\end{definition}

\begin{example}\label{ex:extremes}
    We highlight two saturated transfer systems which play a distinguished role for all $G$:
    \begin{itemize}
        \item $\cO_\mathrm{triv}$ is the \emph{trivial transfer system} consisting of only the pairs $(K,K)$;
        \item $\cO_\mathrm{comp}$ is the \emph{complete transfer system} consisting of all possible pairs $(K,H)$.
    \end{itemize}
\end{example}

\begin{remark}
Associated to any transfer system $\mathcal{O}$ is a smallest saturated transfer system $\mathrm{Hull}(\cO)$ such that $\cO \subseteq \mathrm{Hull}(\cO)$. This can be explicitly constructed by either intersecting all saturated transfer systems which lie above $\cO$, or equivalently by adding in the pairs $(K,H)$ required to satisfy the two-out-of-three property.
\end{remark}

\begin{example}\label{ex:o3example}
    Let $p$ be prime and $G = C_{p^2}$. Then there is a transfer system for $G$, which we denote $\cO_3$, where
    \[
    \cO_3 \coloneq \{(e,C_p), (e, C_{p^2})\}.
    \]
    Note that $\cO_3$ is not saturated as it does not satisfy two-out-of-three. The saturated hull, $\mathrm{Hull}(\cO_3)$, is obtained by adding in $(C_p,C_{p^2})$. That is, $\mathrm{Hull}(\cO_3) = \cO_\mathrm{comp}$.
\end{example}

For us, the general idea is that a transfer system $\cO$ is an allowed collection of norms or transfers for a Tambara-like structure. As such, to define our bi-incomplete Tambara functors we need to pick suitable norms $\cO_m$ and suitable transfers $\cO_a$. The following definition outlines the compatibilities that these two choices much have.

\begin{definition}[{\cite[Definition 4.6]{chan}}]
    Let $\cO_m$ and $\cO_a$ be two $G$-transfer systems. We say that $(\cO_m,\cO_a)$ is a \emph{compatible pair} if whenever $A$ is a subgroup of $G$ and $B,C \leq A$ are subgroups such that $(B,A) \in \cO_m$ and $(B \cap C,B) \in \cO_a$ then $(C, A) \in \cO_a$. We will sometimes refer to $\cO_m$ as the \emph{multiplicative} part and $\cO_a$ as the \emph{additive} part of the compatible pair $(\cO_m,\cO_a)$. 
\end{definition}

\begin{remark}
    It follows from the definition that in a compatible pair of transfer systems $(\cO_m,\cO_a)$ we necessarily have that $\cO_m \subseteq \cO_a$. Indeed, if $(B,A) \in \cO_m$, then $(B \cap B,B) \in \cO_m$, and thus by the compatibility axiom we have $(B,A) \in \cO_a$.
\end{remark}

\begin{example}\label{ex:verycompatible}
    The two extremes of transfer systems from \cref{ex:extremes} are highly compatible:
    \begin{enumerate}
        \item $(\cO_{\mathrm{triv}},\cO_{a})$ is a compatible pair for any $\cO_a$.
        \item $(\cO_m,\cO_{\mathrm{comp}})$ is a compatible pair for any $\cO_m$.
    \end{enumerate}
\end{example}

\begin{lemma}[{\cite[Proposition 3.5]{MAZUR2025109443}}]\label{lem:hull}
    Let $(\mathcal{O}_m, \mathcal{O}_a)$ be a compatible pair. Then $\operatorname{Hull}(\mathcal{O}_m) \subseteq \mathcal{O}_a$. In particular a transfer system $\mathcal{O}$ forms a \emph{self-compatible} pair $(\cO,\cO)$ if and only if $\cO$ is saturated.
\end{lemma}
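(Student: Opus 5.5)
The plan is to prove the inclusion $\operatorname{Hull}(\cO_m) \subseteq \cO_a$ first, and then read off the self-compatibility statement from it. For the inclusion, it suffices to show that $\cO_a \cap \operatorname{Hull}(\cO_m)$ is closed under the operations generating the hull. A cleaner route is to work directly with the set
\[
\mathcal{S} \coloneq \{(K,H) : K \leq H,\ \text{there is } B \text{ with } (K,B)\in\cO_a\text{-compatible data}\}.
\]
Concretely, I would argue that every pair added by two-out-of-three starting from $\cO_m$ lands in $\cO_a$. The key case is this: if $(A,C) \in \cO_m$ (or already in the hull) and $A \leq B \leq C$, then $(B,C) \in \cO_a$. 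This is exactly what compatibility gives. Take the ambient group $C$, with $(A,C) \in \cO_m$ playing the role of $(B,A)$ in the definition, and take the subgroup $B \leq C$. Since $A \cap B = A$, we have $(A\cap B, A) = (A,A) \in \cO_a$. Compatibility then yields $(B,C) \in \cO_a$.

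The other two-out-of-three cases, filling in $(A,C)$ or $(A,B)$, already follow from transitivity and restriction in $\cO_m$. So the only genuinely new pairs are the "top-right" ones just handled. The main obstacle is iteration: the hull is built by repeatedly adding pairs, and the new pairs are only known to lie in $\cO_a$, not in $\cO_m$, so the compatibility step cannot be reapplied naively.

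To handle this, I would use an explicit description of the hull. I expect that $\operatorname{Hull}(\cO_m)$ is the transfer system generated by pairs $(B,C)$ with $A \leq B \leq C$ and $(A,C)\in\cO_m$. I would check that this set, after closing under transitivity and conjugation, is already saturated. Closure under restriction would use the restriction axiom of $\cO_m$: for $L \leq C$, $(A\cap L, L)\in\cO_m$ and $A\cap L \leq B\cap L\leq L$. Given that description, the hull is contained in $\cO_a$, because $\cO_a$ contains all the generators by the key case above and is itself a transfer system.

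For the second claim, first suppose $\cO$ is saturated. Let $(B,A)\in\cO$, $C\leq A$, and $(B\cap C,B)\in\cO$. Transitivity gives $(B\cap C, A)\in\cO$. Restriction of $(B,A)$ to $C$ gives $(B\cap C, C)\in\cO$. Two-out-of-three then gives $(C,A)\in\cO$, so $(\cO,\cO)$ is compatible. Conversely, if $(\cO,\cO)$ is compatible, the first part gives $\operatorname{Hull}(\cO)\subseteq\cO$, so $\cO$ is saturated.
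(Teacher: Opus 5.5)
Your overall strategy is sound, and the description of the hull you guess is correct, but the step that carries the whole first claim is only asserted. You correctly isolate the obstacle: pairs added by saturation are only known to lie in $\cO_a$, not in $\cO_m$, so compatibility cannot be iterated. You propose to bypass this by showing that the transfer system generated by
\[
R=\{(B,C) : B\le C,\ \text{there is } A\le B \text{ with } (A,C)\in\cO_m\}
\]
is already saturated. However, you only verify that $R$ is closed under restriction (conjugation being immediate), and for saturation you write ``I would check''. That check does not follow from what you have verified. If $(K,C)$ is only known to be a composite $K=K_0\le K_1\le\dots\le K_n=C$ of pairs in $R$, it is not clear how to obtain $(B,C)$ for $K\le B\le C$. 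So the inclusion $\operatorname{Hull}(\cO_m)\subseteq\langle R\rangle$ remains unproved, and that inclusion is the entire content of the first claim. (The paper itself gives no argument here and simply cites the result, so there is no internal proof to compare against.)

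The missing idea is that $R$ is closed under composition on the nose, so no closure is needed.
\begin{itemize}
\item Suppose $(K_0,K_1)\in R$ with witness $A_0\le K_0$, $(A_0,K_1)\in\cO_m$, and $(K_1,K_2)\in R$ with witness $A_1\le K_1$, $(A_1,K_2)\in\cO_m$.
\item Restricting $(A_0,K_1)$ along $A_1\le K_1$ gives $(A_0\cap A_1,A_1)\in\cO_m$.
\item Composing with $(A_1,K_2)$ gives $(A_0\cap A_1,K_2)\in\cO_m$. Since $A_0\cap A_1\le K_0$, this witnesses $(K_0,K_2)\in R$.
\end{itemize}
Together with your restriction and conjugation checks, and reflexivity (take witness $A=B$), this shows $R$ is itself a transfer system. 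It is saturated for a trivial reason: a witness $A$ for $(K,C)$ also witnesses $(B,C)$ whenever $K\le B\le C$, and the other two cases of two-out-of-three hold in any transfer system. Since $R\supseteq\cO_m$, you get $\operatorname{Hull}(\cO_m)\subseteq R$ (in fact equality). Your key compatibility step, which is correct, then gives $R\subseteq\cO_a$. With this inserted, the argument is complete.

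Two smaller points. First, drop the set $\mathcal{S}$ in your opening paragraph, which is not well defined. Also drop the parenthetical ``or already in the hull'' in the key case: compatibility applies only to pairs in $\cO_m$, which is exactly the obstacle you later identify. Second, the converse in the second claim does not need the hull at all. When $\cO_m=\cO_a=\cO$, your key step says directly that $(A,C)\in\cO$ and $A\le B\le C$ force $(B,C)\in\cO$, which is saturation. Your forward direction (saturated implies self-compatible) is correct as written.
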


Now that we have introduced the concepts of compatible pairs of transfer systems, we can begin constructing the category of bi-incomplete Tambara functors.

\begin{definition}\label{def:polynomial category}
    The \textit{category of polynomials valued in finite $G$-sets}, denoted $\mathbf{P}_G$, has objects finite $G$-sets, and morphisms $\mathbf{P}_G(X,Y)$ isomorphism classes of bispans
    \[
        [X \leftarrow A \to B \to Y]
    \]
    where $A$ and $B$ are finite $G$-sets and all maps are $G$-equivariant.
\end{definition}

\begin{definition}
    Let $(\mathcal{O}_m, \mathcal{O}_a)$ be a compatible pair. Then $\mathbf{P}^{(\mathcal{O}_m, \mathcal{O}_a)}_G$ is the wide subcategory of $\mathbf{P}_G$ whose morphisms are given by isomorphism classes of bispans
    \[
        [X \leftarrow A \xrightarrow{n} B \xrightarrow{t} Y]
    \]
    whenever $(A,B) \in \mathcal{O}_m$ and $(B,Y) \in \mathcal{O}_a$.
\end{definition}

\begin{definition}[\cite{BH_biincomplete}]\label{def:biincomplete}
    Let $(\mathcal{O}_m, \mathcal{O}_a)$ be a compatible pair. An \textit{$(\mathcal{O}_m, \mathcal{O}_a)$-Tambara functor} (or a \emph{bi-incomplete Tambara functor} when we do not wish to record the explicit compatible pair) is a product preserving functor $T \colon \mathbf{P}^{(\mathcal{O}_m, \mathcal{O}_a)}_G \to \mathsf{Set}$ such that the commutative semi-ring $T(X)$ is a commutative ring for all $X$. We will denote the category of such objects as $\mathsf{Tamb}_G^{(\mathcal{O}_m, \mathcal{O}_a)}$.
\end{definition}

\begin{remark}\label{not:biincomplete}
    We unwrap \cref{def:biincomplete} to give a more tangible description of bi-incomplete Tambara functors in the usual style one expects to see. An \textit{$(\mathcal{O}_m, \mathcal{O}_a)$-Tambara functor} is a collection of commutative rings $T(G/H)$, one for each subgroup $H \leq G$, together with natural structure maps
    \begin{align*}
    \res^H_K&: T(G/H) \to T(G/K) \text{ for all } K \leq H \leq G,\\
    \tr_K^H&: T(G/K) \to T(G/H) \text { for all } (K,H) \in \cO_a,\\
    \nm_K^H&: T(G/K) \to T(G/H) \text { for all } (K,H) \in \cO_m,\\
    \conj_{H,g} &: T(G/H) \to T(G/gHg^{-1}) \text { for all } g \in G,
    \end{align*}
    called the \emph{restriction}, \emph{transfer}, \emph{norm}, and \emph{conjugation} maps, respectively. The conjugation maps are ring isomorphisms, the restriction maps are ring homomorphisms, the transfer maps are additive, and the norms are multiplicative. These maps are subject to several relations (see \cite{strickland2012tambarafunctors}):
    
    \begin{proposition}[Frobenius reciprocity] \label{prop: Frobenius reciprocity}
	Let $T$ be a $(\cO_m,\cO_a)$-Tambara functor, let $(K,H)\in \cO_a$, and let $x\in T(G/K)$ and $y\in T(G/H)$.  Then
	\[
		\tr_K^H(x)\cdot y = \tr_K^H(x\cdot \res_K^H(y)).
	\]
\end{proposition}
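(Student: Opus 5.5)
We derive Frobenius reciprocity directly from composition of bispans in $\mathbf{P}^{(\cO_m,\cO_a)}_G$, as in Strickland's treatment \cite{strickland2012tambarafunctors}, checking that every bispan involved stays inside the wide subcategory. Write $X = G/K$ and $Y = G/H$, and let $q\colon X \to Y$ be the canonical projection. Then $\tr_K^H = T[X \leftarrow X = X \xrightarrow{q} Y]$ and $\res_K^H = T[Y \xleftarrow{q} X = X = X]$. The multiplication on $T(Y)$ is $T[Y\sqcup Y = Y\sqcup Y \xrightarrow{\nabla} Y = Y]$, and product preservation identifies $T(X\sqcup Y)$ with $T(X)\times T(Y)$. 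The transfer and restriction bispans lie in the subcategory because $(K,H)\in\cO_a$ and identities lie in $\cO_m$. The fold map $\nabla$ lies in every transfer system, since its restriction to each orbit is an identity. So all the operations appearing in the statement are values of $T$ on morphisms of $\mathbf{P}^{(\cO_m,\cO_a)}_G$.

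Next we compute both sides as composites of morphisms out of $X\sqcup Y$, which corresponds to the input pair $(x,y)$.

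The left side is the composite of $\tr\times\mathrm{id}\colon X\sqcup Y\to Y\sqcup Y$ with multiplication. Composing a transfer followed by a norm along the fold uses the distributive law. The exponential diagram for $\nabla\colon Y\sqcup Y\to Y$ pulled back along $q\sqcup\mathrm{id}$ collapses, because $\nabla$ restricted to each piece is an isomorphism. The composite is therefore the single bispan
\[ [X\sqcup Y \xleftarrow{\mathrm{id}\sqcup q} X\sqcup X \xrightarrow{\nabla} X \xrightarrow{q} Y]. \]

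For the right side, the map $\mathrm{id}\times \res$ is the bispan $[X\sqcup Y \xleftarrow{\mathrm{id}\sqcup q} X\sqcup X = X\sqcup X = X\sqcup X]$. Multiplying in $T(X)$ then contributes the fold $X\sqcup X\to X$, and transferring contributes $q$. Composing restriction-type spans with norms and transfers only requires pullbacks along identities. The right side is therefore the same bispan, so both sides agree as elements of $T(Y)$.

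The only place where care is needed is the distributivity step for the left side. One must check that the pullbacks and dependent products in the composition law of $\mathbf{P}_G$ are trivial here. One must also check that the resulting bispan is admissible: its norm leg $\nabla$ is in $\cO_m$ and its transfer leg $q$ is in $\cO_a$. Since both conditions hold, functoriality of $T$ on the wide subcategory gives the identity. Alternatively, one can avoid this verification by citing \cite{BH_biincomplete}, which shows that admissible bispans compose within $\mathbf{P}^{(\cO_m,\cO_a)}_G$. In that case the identity follows from the same identity of bispans that holds in $\mathbf{P}_G$.
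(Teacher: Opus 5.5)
Your proposal is correct. The paper gives no proof of its own: it lists Frobenius reciprocity among the standard relations with a pointer to Strickland, whose derivation is exactly this bispan computation. Your check that the transfer, the restriction, the fold map, and the composite $[X\sqcup Y \xleftarrow{\mathrm{id}\sqcup q} X\sqcup X \xrightarrow{\nabla} X \xrightarrow{q} Y]$ all lie in $\mathbf{P}^{(\cO_m,\cO_a)}_G$ is precisely what is needed to carry that argument over to the bi-incomplete setting.

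One small precision: the distributivity step collapses because the dependent product along the fold is $\Pi_\nabla(X\sqcup Y\to Y\sqcup Y)\cong X\times_Y Y\cong X$, the second leg being $\mathrm{id}_Y$. It is not enough that $\nabla$ is an isomorphism on each summand; with $q\sqcup q$ one would instead get $X\times_Y X$.
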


\begin{proposition}[Tambara reciprocity]\label{proposition: Tambara reciprocity}
	Let $T$ be a $(\cO_m,\cO_a)$-Tambara functor, let $(K,H) \in \cO_m$ and let $x,y\in T(G/K)$. Then
	\[
		\nm_K^H(x+y) = \nm_K^H(x)+\nm_K^H(y)+ \tau
	\] where $\tau$ is a sum of terms, each of which is in the image of a transfer map $\tr^H_{L}$ for some proper subgroup $L<H$.
\end{proposition}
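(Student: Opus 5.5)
We derive Tambara reciprocity from the composition law in $\mathbf{P}^{(\cO_m,\cO_a)}_G$, following the standard argument for complete Tambara functors (as in \cite{strickland2012tambarafunctors}), and then check that every transfer that appears is allowed by the compatible pair.

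\textbf{Step 1: express both sides as bispans.} Addition on $T(G/K)$ is induced by the fold map $\nabla\colon G/K \amalg G/K \to G/K$, viewed as the transfer bispan $[G/K\amalg G/K = G/K \amalg G/K \xrightarrow{=} G/K\amalg G/K \xrightarrow{\nabla} G/K]$. The norm is the bispan $[G/K = G/K \to G/H = G/H]$. So $\nm_K^H(x+y)$ is $T$ applied to the composite $N_f\circ T_\nabla$, where $f\colon G/K\to G/H$.

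\textbf{Step 2: apply the distributive law.} We compute this composite with the exponential diagram. Let $\Pi_f$ be the dependent product. The composite is
\[
[G/K\amalg G/K \leftarrow A \to \Pi_f(G/K\amalg G/K) \to G/H].
\]
Here $\Pi_f(G/K\amalg G/K)$ is the $G$-set of pairs $(hH, s)$, where $s$ is a section choosing, for each coset $c \in f^{-1}(hH)$, one of the two summands. Decompose this $G$-set into orbits. The two constant sections each give an orbit isomorphic to $G/H$, and these recover $\nm_K^H(x)$ and $\nm_K^H(y)$. Every other section is stabilized by a subgroup of $gHg^{-1}$ that does not fix it under all of $gHg^{-1}$. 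Hence, after conjugating into $H$ as needed, the remaining orbits are $G/L$ with $L<H$ proper. The final map restricted to such an orbit is $G/L\to G/H$, so each remaining term $\tau_i$ lies in the image of $\tr_L^H$.

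\textbf{Step 3: verify the bispans lie in the subcategory.} This is where the compatibility hypothesis is used, and I expect it to be the main obstacle. We must show that for each orbit $G/L$ arising, $(L,H)\in\cO_a$, and that the middle maps $A\to G/L$ lie in $\cO_m$. Then each term genuinely factors as $\tr_L^H$ applied to an element obtained from norms and restrictions.

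The stabilizer $L$ of a section is an intersection of stabilizers of the $H$-orbits of cosets it distinguishes. Concretely, $L$ is the intersection of $H$ with the stabilizer of a subset of $H/K$, and it contains an intersection of $H$-conjugates of $K$. Compatibility should give $(L,H)\in\cO_a$. The definition of compatible pair is exactly the condition that pulling back the $\cO_m$-map $G/K\to G/H$ along an $\cO_a$-map yields an $\cO_a$-map. Equivalently, by \cite[Definition 4.6]{chan} and \cite{BH_biincomplete}, $\cO_a$-admissible sets are closed under dependent products along $\cO_m$-maps.

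Since $\nabla$ is a fold map, it is trivially admissible. The dependent product $\Pi_f(\nabla)\to G/H$ is therefore $\cO_a$-admissible, so every orbit $G/L$ in it satisfies $(L,H)\in\cO_a$. The map $A = f^*\Pi_f \to \Pi_f$ is a pullback of $f$, so it lies in $\cO_m$ by restriction-closure of transfer systems.

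Thus the whole composite lives in $\mathbf{P}^{(\cO_m,\cO_a)}_G$. Decomposing it as a sum over orbits gives the stated formula, with $\tau=\sum_i \tr_{L_i}^H(\cdots)$ and each $L_i<H$ proper.
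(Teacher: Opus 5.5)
Your argument is correct and takes essentially the route the paper relies on without writing it out. The paper cites \cite{strickland2012tambarafunctors} for the exponential-diagram computation of $\nm_K^H(x+y)$ in the complete case. It then appeals to \cite{chan} to see that every $\tr_L^H$ that appears has $(L,H)\in\cO_a$; this is the same as saying that $\cO_a$-admissible sets are closed under dependent products along $\cO_m$-maps, or that $\mathbf{P}^{(\cO_m,\cO_a)}_G$ is closed under composition.

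The only slip is your first paraphrase of compatibility, ``pulling back the $\cO_m$-map along an $\cO_a$-map yields an $\cO_a$-map.'' That is not what compatibility says: as stated, it holds for any pair with $\cO_m\subseteq\cO_a$, including non-compatible ones such as $(\cO_3,\cO_3)$. The dependent-product formulation you invoke next is the correct one, and it is all your argument uses.
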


\begin{proposition}[Double coset formula]\label{prop:dcoset}
	For any $L,K\leq H \leq G$ we let $g$ run over a set of representatives of the double cosets $L\backslash H \slash K$.  For any $(\cO_m,\cO_a)$-Tambara functor $T$ and $x\in T(G/K)$ we have
	\[
		\res^H_L\circ \tr^H_K(x) = \sum\limits_{LgK \in  L\backslash H \slash K} \tr^L_{L\cap g K g^{-1}}\res^{g K g^{-1}}_{L\cap g K g^{-1}} \conj_{g}(x) \quad \text{ if } (K,H)\in \cO_a,
	\]
	\[
		\res^H_L\circ \nm^H_K(x) = \prod\limits_{L g K\in  L\backslash H \slash K} \nm^L_{L\cap g K g^{-1}}\res^{g K g^{-1}}_{L\cap g K g^{-1}} \conj_{g}(x) \quad \text{ if } (K,H)\in \cO_m.
	\]
\end{proposition}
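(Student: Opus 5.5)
This is the double coset formula, so the plan is to derive both identities from composition in $\mathbf{P}^{(\cO_m,\cO_a)}_G$ by computing pullbacks of finite $G$-sets. Inside the polynomial category, $\res^H_L$ is the bispan $[G/H \leftarrow G/L \to G/L \to G/L]$, $\tr^H_K$ is $[G/K \leftarrow G/K \to G/K \to G/H]$ with $(K,H)\in\cO_a$, and $\nm^H_K$ is $[G/K \leftarrow G/K \to G/H \to G/H]$ with $(K,H)\in\cO_m$; the conjugation $\conj_g$ is induced by the $G$-isomorphism $G/K \to G/gKg^{-1}$. Composition of a restriction after a transfer or norm uses only the pullback rule in $\mathbf{P}_G$, namely that restriction along $f\colon X\to Y$ pulls back the last two maps. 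Since $\mathbf{P}^{(\cO_m,\cO_a)}_G$ is a wide subcategory of $\mathbf{P}_G$, it is closed under composition, so the identity computed in $\mathbf{P}_G$ holds there as well.

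The key computation is the pullback $G/L\times_{G/H} G/K$. First I identify $G/L \times_{G/H} G/K \cong G\times_H (H/L\times H/K)$, and the $H$-orbits of $H/L\times H/K$ are indexed by the double cosets $LgK \in L\backslash H/K$. The orbit of $(eL,gK)$ has stabilizer $L\cap gKg^{-1}$. This gives a decomposition
\[
G/L\times_{G/H}G/K \cong \coprod_{LgK} G/(L\cap gKg^{-1}),
\]
in which the projection to $G/L$ is the canonical quotient map, and the projection to $G/K$ is the composite of $G/(L\cap gKg^{-1}) \to G/gKg^{-1}$ with the isomorphism $G/gKg^{-1}\cong G/K$ given by $x\mapsto xg$, which is exactly a conjugation.

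For the transfer, the composite $\res^H_L\tr^H_K$ becomes $[G/K \leftarrow P \to P \to G/L]$. Writing $P$ as the disjoint union above and using that $T$ preserves products, a span out of a coproduct whose final map is a fold gives a sum of the summand maps. Each summand factors as restriction to $gKg^{-1}$-level, then conjugation, then $\tr^L_{L\cap gKg^{-1}}$, which yields the first formula. The norm case is the same, except the composite is $[G/K\leftarrow P \to P\to G/L]$ with the middle map being the norm leg, so the coproduct now gives a product; the first case used the transfer leg. I also have to check that each $(L\cap gKg^{-1},L)$ lies in $\cO_a$ (resp. $\cO_m$). This follows from the conjugation-closure and restriction axioms of \cref{defn:transfersystem} applied to $(gKg^{-1},H)$ and $L\le H$, so every term on the right-hand side is a legitimate structure map.

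The main obstacle is bookkeeping rather than conceptual. The step needing care is verifying that the pullback decomposition really matches $\conj_g$ in the stated direction (so the formula uses $\conj_g(x)\in T(G/gKg^{-1})$) and that the result is independent of the chosen representatives. I would handle the choice of $g$ by observing that changing representatives changes the summands by inner conjugations, which act trivially up to the Weyl-action axioms. Alternatively, since the relation is standard, I could cite \cite{strickland2012tambarafunctors} and \cite{BH_biincomplete} for the complete case and note that the same proof restricts to the bi-incomplete setting.
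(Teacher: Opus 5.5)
Your proposal is correct. The paper does not prove this statement itself. It lists it among the standard relations, points to \cite{strickland2012tambarafunctors}, and in the remark that follows defers to \cite{chan} the fact that the transfers and norms on the right-hand side are admissible, calling this ``not obvious''. Your route instead gives the standard derivation behind that citation, and it is a genuine argument rather than a pointer.

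You compute $\res^H_L\circ\tr^H_K$ and $\res^H_L\circ\nm^H_K$ in $\mathbf{P}_G$ via the pullback $G/L\times_{G/H}G/K\cong\coprod_{LgK}G/(L\cap gKg^{-1})$. You then factor the transfer (resp.\ norm) leg through the fold map, whose transfer (resp.\ norm) is addition (resp.\ multiplication) in $T(G/L)$. Finally, you note that every factor on the right-hand side lies in $\mathbf{P}^{(\cO_m,\cO_a)}_G$, so the identity of bispans in $\mathbf{P}_G$ is an identity in the wide subcategory.

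What this buys over the citation is an actual proof of the bi-incomplete case, not an appeal to the complete case by analogy. It also shows that admissibility here is immediate. Since $g\in H$, conjugation-closure gives $(gKg^{-1},H)$ in the transfer system, and restricting to $L\le H$ gives $(L\cap gKg^{-1},L)$. This uses only the axioms of a single transfer system. Compatibility of the pair is needed only for the transfers appearing in Tambara reciprocity.

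Two small slips to fix when writing this up. First, in the norm case the composite bispan is $[G/K\leftarrow P\to G/L\to G/L]$: the norm leg is $P\to G/L$ and the last map is the identity. It is not $[G/K\leftarrow P\to P\to G/L]$ as you wrote. Second, restriction is contravariant, so the leg $G/(L\cap gKg^{-1})\to G/gKg^{-1}\to G/K$ induces $\res^{gKg^{-1}}_{L\cap gKg^{-1}}\circ\conj_g$, meaning the conjugation is applied first. Your verbal description of the order is garbled, though your identification of the map is right.

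Independence of the choice of representatives needs no separate Weyl-action argument. Different choices give isomorphic decompositions of $P$, hence the same bispan.
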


\begin{remark}
    We point out that \cref{proposition: Tambara reciprocity} and \cref{prop:dcoset} make sense in that the transfers $\tr^K_L$ (resp., norms $\nm^K_L$) appearing in the formulas all have the property that $(L,K) \in \cO_a$ (resp., $(L,K) \in \cO_m$). This follows (though not obviously) from the definition of transfer systems and compatible pairs \cite{chan}.
\end{remark}
\end{remark}

\begin{example}\label{ex:specifics}
    Certain bi-incomplete Tambara functors recover more familiar notions:
    \begin{enumerate}
        \item An $(\mathcal{O}_\mathrm{triv}, \mathcal{O}_\mathrm{triv})$-Tambara functor is a \emph{coefficient system}.
        \item An $(\mathcal{O}_\mathrm{triv}, \mathcal{O}_\mathrm{comp})$-Tambara functor is a \emph{Green functor}. 
        \item An $(\mathcal{O}_\mathrm{comp}, \mathcal{O}_\mathrm{comp})$-Tambara functor is a (complete) \emph{Tambara functor}.
    \end{enumerate}
\end{example}

We finish this section with some specific examples of bi-incomplete Tambara functors which will be used throughout. We will make repeated use of the following proposition.

\begin{proposition}[{\cite[Proposition 7.50]{BH_biincomplete}}]\label{prop:forget}
    Suppose $(\cO_m,\cO_a) \subseteq (\cO_m',\cO_a')$ is an inclusion of compatible pairs. Then precomposition with the inclusion of polynomial categories yields a forgetful functor
\[
i^\ast \colon \mathsf{Tamb}_G^{(\mathcal{O}_m', \mathcal{O}_a')} \to \mathsf{Tamb}_G^{(\mathcal{O}_m, \mathcal{O}_a)}.
\]
    \end{proposition}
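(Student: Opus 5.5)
The plan is to work directly with the description of $(\cO_m,\cO_a)$-Tambara functors as product-preserving functors out of the polynomial category, so that the forgetful functor is simply restriction along an inclusion of categories. First I will check that the inclusion of compatible pairs $(\cO_m,\cO_a)\subseteq(\cO_m',\cO_a')$ induces an inclusion of wide subcategories $\mathbf{P}^{(\cO_m,\cO_a)}_G\subseteq \mathbf{P}^{(\cO_m',\cO_a')}_G$ of $\mathbf{P}_G$. On morphisms this is immediate: a bispan $[X\leftarrow A\xrightarrow{n}B\xrightarrow{t}Y]$ whose norm leg lies in $\cO_m$ and whose transfer leg lies in $\cO_a$ also satisfies the conditions for $\cO_m'$ and $\cO_a'$.

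The one substantive point is that $\mathbf{P}^{(\cO_m,\cO_a)}_G$ really is a subcategory. That means it contains identities, which is clear since $(K,K)$ lies in every transfer system, and that it is closed under composition of bispans in $\mathbf{P}_G$. Closure under composition is exactly what the compatibility axiom of \cite{chan} is designed to guarantee: composing bispans involves pullbacks, restriction along maps, and the distributor (dependent product) construction, and the compatible-pair condition is what keeps the resulting norm and transfer legs admissible. I will take this from \cite{BH_biincomplete, chan}, since it is implicit in the paper's definition of $\mathbf{P}^{(\cO_m,\cO_a)}_G$ as a category. I expect this step to be the main obstacle if one insists on verifying it by hand, specifically checking that the exponential/distributivity diagram produces admissible maps. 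That check is where the compatible-pair hypothesis enters.

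Given the inclusion $\iota$, define $i^\ast T = T\circ\iota$. This is functorial in $T$, since natural transformations whisker along $\iota$. Product preservation is inherited because $\iota$ preserves finite products: products in each polynomial subcategory are computed by disjoint union of $G$-sets, and the relevant structure maps, namely the projections (restriction along summand inclusions), lie in every subcategory. Finally, the commutative semiring structure on $T(X)$ is produced by the bispans using $\nabla\colon X\sqcup X\to X$ as a norm or transfer. These fold maps are admissible for every transfer system, since they are levelwise trivial. So the semiring structure on $(i^\ast T)(X)$ agrees with that on $T(X)$ and is therefore a ring. Hence $i^\ast T\in\mathsf{Tamb}_G^{(\cO_m,\cO_a)}$, and $i^\ast$ is the desired functor.
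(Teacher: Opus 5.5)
Your proposal is correct and takes the same route as the source. The paper gives no argument of its own and simply cites Blumberg--Hill, where the forgetful functor is exactly precomposition with the inclusion $\mathbf{P}^{(\cO_m,\cO_a)}_G \hookrightarrow \mathbf{P}^{(\cO_m',\cO_a')}_G$, with closure under composition of bispans supplied by the compatible-pair condition, just as you defer it. One small completion: the units $0$ and $1$ of $T(X)$ come from transfer and norm along $\emptyset \to X$, which is vacuously admissible, so the whole ring structure is preserved, not only the addition and multiplication coming from the fold map.
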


\begin{example}[Burnside bi-incomplete Tambara Functor]\label{ex:burnside}
    We recall that the \emph{Burnside ring} of a finite group $G$, denoted $A(G)$, is the Grothendieck group completion of the semi-ring of isomorphism classes of finite $G$-sets under disjoint union and cartesian product.     The \emph{Burnside Tambara functor} is the $(\cO_\mathrm{comp},\cO_\mathrm{comp})$-Tambara functor $\underline{A}_G$ defined as $\underline{A}_G(G/H) = A(H)$ for all $H \leq G$. The structure maps are defined by linearly extending the following maps:
    \begin{align*}
    \res_{K}^{H}\colon \burn_G(G/H)&\longrightarrow \burn_G(G/K)\\
    Y &\longmapsto Y\text{ with restricted $K$-action,}\\
    \tr_{K}^{H}\colon \burn_G(G/K)&\longrightarrow \burn_G(G/H)\\
    X &\longmapsto H\times_K X,\\
    \nm_{K}^{H}\colon \burn_G(G/K)&\longrightarrow \burn_G(G/H)\\
    X &\longmapsto {\rm Map}_K(H,X),\\
    \conj_{g,H}\colon \burn_G(G/H)&\longrightarrow \burn_G(G/\,gHg^{-1})\\
    X &\longmapsto gXg^{-1}.
\end{align*}
This Tambara functor plays the role of the integers in $\mathsf{Tamb}_G^{(\mathcal{O}_\mathrm{comp}, \mathcal{O}_\mathrm{comp})}$ as the initial object in this category. 

For each compatible pair ${(\mathcal{O}_m, \mathcal{O}_a)}$ we obtain a bi-incomplete Tambara functor $i^\ast \underline{A}_G$ via \cref{prop:forget}. Explicitly, this is obtained from $\underline{A}_G$ by forgetting the norms and transfers which do not inhabit the chosen compatible pair. We will usually abuse notation and simply write $\underline{A}_G$ for $i^\ast \underline{A}_G$ when we do not need to distinguish the two.
\end{example}

\begin{example}[Fixed point bi-incomplete Tambara Functor]\label{ex:fp}
    Let $R$ be a commutative ring with $G$-action. The \emph{fixed point Tambara functor} is the $(\cO_\mathrm{comp},\cO_\mathrm{comp})$-Tambara functor $\mathrm{FP}(R)$ defined as $\mathrm{FP}(R)(G/H) = R^H$ for all $H \leq G$. The restriction is given by the inclusions, conjugation is given by Weyl group actions, and
    \begin{align*}
            \tr_{K}^{H}\colon \mathrm{FP}(R)(G/K)&\longrightarrow \mathrm{FP}(R)(G/H)\\
    x &\longmapsto \sum_{g \in W_H(K)} g \cdot x,\\
    \nm_{K}^{H}\colon \mathrm{FP}(R)(G/K)&\longrightarrow \mathrm{FP}(R)(G/H)\\
    x &\longmapsto \prod_{g \in W_H(K)} g\cdot x.
    \end{align*}
Again, by an application of \cref{prop:forget} we obtain bi-incomplete versions of the fixed-point Tambara functor, $i^\ast\mathrm{FP}(R)$ for each compatible pair.

Of particular importance is the fixed point Tambara functor of $\Z$ equipped with the trivial action, which we will denote $\underline{\Z}$. We will study this Tambara functor in depth in \cref{sec:exfp}.
\end{example}

\begin{example}[Initial bi-incomplete Tambara Functor]\label{ex:initial}
    In \cref{ex:burnside} and \cref{ex:fp} we began with a complete Tambara functor and forgot structure to obtain bi-incomplete versions. We shall now construct an intrinsic bi-incomplete Tambara functor. We note  that while $\burn_G$ is the initial object in complete Tambara functors, $i^\ast \burn_G$ is not initial in $\mathsf{Tamb}_G^{(\mathcal{O}_m, \mathcal{O}_a)}$ in general. 

    Fix a compatible pair ${(\mathcal{O}_m, \mathcal{O}_a)}$ and define $\burn_G^{\mathcal{O}_a}(G/H)$ to be the subring of $A(H)$ spanned by those $H$-sets $H/K$ such that $(K,H) \in \cO_a$. Equipping these rings with the restriction, transfers, and norm maps inherited from $\burn_G$ we obtain a $(\mathcal{O}_m, \mathcal{O}_a)$-Tambara functor that we denote $\underline{A}_G^{(\mathcal{O}_m, \mathcal{O}_a)}$. This is the initial object of $\mathsf{Tamb}_G^{(\mathcal{O}_m, \mathcal{O}_a)}$.
\end{example}

\begin{example}
    To make the distinction between the bi-incomplete Tambara functors of \cref{ex:burnside} and \cref{ex:initial} clear, we provide an explicit example. Fix $G = C_{pq}$ for $p$ and $q$ distinct primes. We let $\cO_a = \cO_m = \{(e,C_p), (C_q,C_{pq})\}$. Then $i^\ast \burn_{C_{pq}}$ is given by the Lewis diagram on the left, while $\burn_{C_{pq}}^{(\cO_m, \cO_a)}$ appears on the right: 
    
\[\begin{tikzcd}[sep=large]
	& {\frac{\Z[x_p,x_q]}{(x^2_p-px_p,x^2_q-qx_q)}} \\
	{\frac{\Z[x_p]}{(x^2_p-px_p)}} && {\frac{\Z[x_q]}{(x^2_q-qx_q)}} \\
	& \Z
	\arrow["{\res_{p}^{{pq}}}"{description}, from=1-2, to=2-1]
	\arrow["{\res_{q}^{{pq}}}"{description}, from=1-2, to=2-3]
	\arrow["{\res_1^{p}}"{description}, from=2-1, to=3-2]
	\arrow["{\tr_{q}^{{pq}}}"{description}, shift left=3, curve={height=-6pt}, from=2-3, to=1-2]
	\arrow["{\nm_{q}^{{pq}}}"{description, pos=0.6}, shift right=3, curve={height=6pt}, from=2-3, to=1-2]
	\arrow["{\res_{1}^{{q}}}"{description}, from=2-3, to=3-2]
	\arrow["{\tr_1^{p}}"{description}, shift left=3, curve={height=-6pt}, from=3-2, to=2-1]
	\arrow["{\nm_1^{p}}"{description}, shift right=3, curve={height=6pt}, from=3-2, to=2-1]
\end{tikzcd}
\qquad
\begin{tikzcd}[sep=large]
	& {\frac{\Z[x_p]}{(x^2_p-px_p)}} \\
	{\frac{\Z[x_p]}{(x^2_p-px_p)}} && {\Z} \\
	& \Z
	\arrow["{\res_{p}^{{pq}}}"{description}, from=1-2, to=2-1]
	\arrow["{\res_{q}^{{pq}}}"{description}, from=1-2, to=2-3]
	\arrow["{\res_1^{p}}"{description}, from=2-1, to=3-2]
	\arrow["{\tr_{q}^{{pq}}}"{description}, shift left=3, curve={height=-6pt}, from=2-3, to=1-2]
	\arrow["{\nm_{q}^{{pq}}}"{description, pos=0.6}, shift right=3, curve={height=6pt}, from=2-3, to=1-2]
	\arrow["{\res_{1}^{{q}}}"{description}, from=2-3, to=3-2]
	\arrow["{\tr_1^{p}}"{description}, shift left=3, curve={height=-6pt}, from=3-2, to=2-1]
	\arrow["{\nm_1^{p}}"{description}, shift right=3, curve={height=6pt}, from=3-2, to=2-1]
\end{tikzcd}
\]
\end{example}

\subsection{Restricted Tambara functors}\label{subsec:res}

The key ingredient in the definition of Tambara functors is the polynomial category $\mathbf{P}_G$ introduced in \Cref{def:polynomial category}. This category is constructed from the category $\mathsf{Set}_G$ of finite $G$-sets in a straightforward way -- the objects of $\mathbf{P}_G$ are the same as the objects of $\mathsf{Set}_G$, and the morphisms of $\mathbf{P}_G$ are certain diagrams (bispans) in $\mathsf{Set}_G$. One can construct an analogous category $\mathbf{P}_{\mathcal{C}}$ for \emph{any} category $\mathcal{C}$, so long as the composition rule for bispans can be interpreted in $\mathcal{C}$. This is equivalent to requiring that $\mathcal{C}$ be \emph{locally cartesian closed} (meaning that each slice category $\mathcal{C}/x$ is cartesian closed). Work of the third author \cite{spitz_norms_2024} shows that one may moreover replicate the notion of bi-incomplete Tambara functors in this more general setting. The resulting theory of (bi-incomplete) $\mathcal{C}$-Tambara functors is most well-behaved when $\mathcal{C}$ is a ``separable index''. This technical condition will be satisfied in all cases discussed below.

Here, we will need only to consider the case that $\mathcal{C}$ is a category defined as follows:

\begin{enumerate}
    \item Let $\mathcal{O}$ be a self-compatible transfer system on $\Sub(G)$. Let $\mathcal{O}_0$ be a connected component of $\mathcal{O}$. 
    \item $\mathcal{C}$ be the full subcategory of $\mathsf{Set}_G$ spanned by $G$-sets $X$ such that each orbit of $X$ is isomorphic to $G/K$ for some $K \in \mathcal{O}_0$.
\end{enumerate}

Categories $\mathcal{C}$ constructed this way give a \emph{complete separable index} in the sense of \cite{spitz_norms_2024}; what this entails is that the category of product-preserving functors $\mathbf{P}_{\mathcal{C}} \to \mathsf{Set}$ gives a theory of ``restricted Tambara functors'', analogous to the ordinary theory of (bi-incomplete) Tambara functors. Such a restricted Tambara functor $T$ consists of a collection of commutative rings $T(G/H)$ (one for each subgroup $H \in \mathcal{O}_0$), with structure maps between these commutative rings as usual.

\begin{definition}
Let $\mathcal{O}$, $\mathcal{O}_0$, and $\mathcal{C}$ be as above. If $T$ is a $\mathcal{O}$-Tambara functor, we write $i^*_{\mathcal{O}_0}T : \mathbf{P}_{\mathcal{C}} \to \mathsf{Set}$ for the \emph{restricted Tambara functor} determined by $\mathcal{O}_0$. 
\end{definition}

Rather than give an in-depth treatment of restricted Tambara functors here, we illustrate the notion with the following three examples which will arise in our computations. 

\begin{example}\label{ex:Gring}
Let $\mathcal{O} = \mathcal{O}_{\mathrm{triv}}$ be the trivial transfer system on $G$, so a $\mathcal{O}$-Tambara functor is a coefficient system. Let $\mathcal{O}_0$ be the component of the trivial subgroup $e$. If $T$ is a coefficient system, then $i^*_{\mathcal{O}_0}T$ consists of the commutative ring $T(G/e)$ together with the conjugation map $c: T(G/e) \to T(G/e)$, i.e., $i^*_{\mathcal{O}_0}T$ is a $G$-ring. 
\end{example}

\begin{example}\label{ex:CpnRTF}
Let $\mathcal{O}$ be a self-compatible transfer system on $C_{p^n}$ and let $\mathcal{O}_0$ be the connected component of the trivial subgroup. Self-compatibility implies that there exists some $\ell \leq n$ such that $\mathcal{O}_0$ contains $C_{p^k}$ for all $0 \leq k \leq \ell$. If $T$ is a $\mathcal{O}$-Tambara functor, then $i^*_{\mathcal{O}_0}T$ consists of a commutative ring $T(C_{p^n}/C_{p^k})$ for each $0 \leq k \leq \ell$, together with an action of $W_{C_{p^n}}(C_{p^k}) = C_{p^n}/C_{p^k}$ by ring maps, and all restrictions, norms, and transfers between them satisfying the usual axioms. In other words, $i^*_{\mathcal{O}_0}T$ looks like a $C_{p^\ell}$-Tambara functor, except that the Weyl action at level $C_{p^\ell}/C_{p^k}$ is by the larger group $C_{p^n}/C_{p^k}$ instead of $C_{p^\ell}/C_{p^k}$. 
\end{example}

\begin{example}
Let $G = C_{pq}$, $\mathcal{O} = \{ (e,C_p), (e,C_q)\}$, and $\mathcal{O}_0$ the connected component of the the trivial subgroup. If $T$ is a $\mathcal{O}$-Tambara functor, then $i^*_{\mathcal{O}_0}T$ consists of the following data:
\begin{enumerate}
\item the $C_{pq}$-ring $T(C_{pq}/e)$;
\item the $C_q$-ring $T(C_{pq}/C_p)$;
\item the $C_p$-ring $T(C_{pq}/C_q)$;
\item all possible restrictions, norms, and transfers between them satisfying the usual axioms.
\end{enumerate}
This gives an example of a restricted Tambara functor which is not just a Tambara functor for a subgroup with larger Weyl groups. 
\end{example}

\subsection{Ideals}\label{subsec:ideals}

In this section we will introduce the concept of ideals of bi-incomplete Tambara functors, generalizing the work of Nakaoka on Tambara functors \cite{nakaoka_ideals}. These definitions work equally well for the restricted Tambara functors which we defined in \cref{subsec:res}.

\begin{definition}
    Let $T$ be a bi-incomplete Tambara functor for the compatible pair $(\mathcal{O}_m, \mathcal{O}_a)$. An \emph{ideal} $I$ of $T$ is a collection of ideals $I(G/K) \subseteq T(G/K)$ closed under the allowed operations. That is, for all $K \leq H \leq G$ and for all $g \in G$ we have
    \begin{align*}
        \mathrm{res}^H_K(I(G/H)) & \subseteq I(G/K),                                           \\
        \mathrm{tr}^H_K(I(G/K))  & \subseteq I(G/H) \text { for } (K,H) \in \mathcal{O}_a,  \\
        \mathrm{nm}^H_K(I(G/K))  & \subseteq I(G/H)  \text { for } (K,H) \in \mathcal{O}_m, \\
        \mathrm{c}_{K,g}(I(G/K)) & \subseteq I(G/gKg^{-1}).
    \end{align*}
\end{definition}

\begin{example}
    \label{example:Vogeli}
    We now communicate an interesting example of a bi-incomplete Tambara ideal, due to Vogeli (personal communication) in the case that $G$ is abelian. Let $k$ be a field. Let $R_k$ denote the $k$-linear representation ring Tambara functor. Let $I$ be the levelwise collection of ideals
    \[I(G/H) = \{[V] : V \text{ is a projective } kH\text{-module}\}.\]
    Then $I$ is an $(\mathcal{O}_V, \mathcal{O}_{\mathrm{comp}})$-ideal, where $(K,H) \in \cO_V$ if and only if $\operatorname{char}(k)$ does not divide $[H:K]$.

    If $\operatorname{char}(k)$ does not divide $\lvert G \rvert$, then $\mathcal{O}_V = \mathcal{O}_{\mathrm{comp}}$ and $I = R_k$, making the situation uninteresting. However, in the ``modular case'' where $\operatorname{char}(k)$ divides $\lvert G \rvert$, we have $I \neq R_k$ and $\mathcal{O}_V \neq \mathcal{O}_{\mathrm{comp}}$ -- moreover, in this case, $I$ will \emph{not} be an $(\mathcal{O}_{\mathrm{comp}}, \mathcal{O}_{\mathrm{comp}})$-ideal.

    Note that the abelian hypothesis here is essential in the modular case as $\cO_V$ can fail to be a transfer system for non-abelian $G$. For example consider the situation of $G=S_3$ and $p=2$.
\end{example}




\begin{definition}
    Let $I$ be an ideal of a bi-incomplete Tambara functor $T$ for the compatible pair $(\mathcal{O}_m, \mathcal{O}_a)$ and let $H_1,H_2 \leq G$. Let $x \in T(G/H_1)$ and $y \in T(G/H_2)$. Define the proposition $\texttt{Q}(I,x,y)$ as follows: for all $L, K_1, K_2, H_1, H_2 \leq G$ and $g_1, g_2 \in G$ such that $K_1 \leq H_1$, $K_2 \leq H_2$, $(g_1K_1g_1^{-1} , L) \in \mathcal{O}_m$, and $(g_2K_2g_2^{-1} , L) \in \mathcal{O}_m$, we have
    \[
      \left(\mathrm{nm}^L_{g_1K_1g_1^{-1}} \circ c_{K_1,g_1} \circ \mathrm{res}_{K_1}^{H_1}(x)\right)\cdot \left(\mathrm{nm}^L_{g_2K_2g_2^{-1}} \circ c_{K_2,g_2} \circ \mathrm{res}_{K_2}^{H_2}(y)\right) \in I(G/L).
    \]
We refer to products of elements of this form as \emph{generalized products} and refer to each factor as a \emph{multiplicative translate}. 
\end{definition}

\begin{definition}
    An ideal $P$ of a bi-incomplete Tambara functor $T$ is \emph{prime} if
    \begin{enumerate}
        \item $P \neq T$, and
        \item for all subgroups $H,H'\leq G$ and elements $a \in T(G/H)$ and $b \in T(G/H')$, if $\texttt{Q}(P,a,b)$ holds, then $a \in P(G/H)$ or $b \in P(G/H')$.
    \end{enumerate}
\end{definition}

\begin{remark}
    The above definition essentially says that an ideal is prime if whenever all generalized products of two elements are in the ideal, then at least one of the elements is in the ideal. This definition of prime coincides with the other definitions of prime that one may have considered. This is proved in the complete case by Nakaoka \cite[Proposition 4.2]{nakaoka_ideals}, and one can check that the proofs go through in the incomplete case without issue.
\end{remark}

\begin{definition}
Let $T$ be a bi-incomplete Tambara functor for the compatible pair $(\mathcal{O}_m, \mathcal{O}_a)$. The \emph{spectrum} of $T$ is the collection
\[
    \Spec^{(\mathcal{O}_m, \mathcal{O}_a)}(T) \coloneq \{ P \mid P \subset T \text{ is a prime ideal} \}.
\]
    This collection is a topological space when equipped with the usual Zariski topology. In particular the closed sets are generated by the collection
    \[
V(I) = \{ P \in  \Spec^{(\mathcal{O}_m, \mathcal{O}_a)}(T) \mid I \subseteq P \}
    \]
    as $I$ ranges over the ideals of $T$.
\end{definition}

\begin{remark}\label{rem:closureofpoints}
    We extract for later that if ${P} \in \Spec^{(\mathcal{O}_m, \mathcal{O}_a)}(T)$ then
\[\overline{\{{P}\}} = \{ {Q} \in \Spec^{(\mathcal{O}_m, \mathcal{O}_a)}(T) \mid {P} \subseteq {Q} \}\]
\end{remark}

\begin{remark}
    It was recently proved in \cite{chan2026radicalsnilpotentsequivariantalgebra} that $ \Spec^{(\mathcal{O}_\mathrm{comp}, \mathcal{O}_\mathrm{comp})}(T)$ is a spectral space (i.e., is homeomorphic to $\Spec(R)$ for $R$ a commutative ring). We expect that the proof follows also for the incomplete case, but do not pursue that here.
\end{remark}

One may extend the definitions of ideals, generalized products, multiplicative translates, prime ideals, and the spectra of prime ideals to the restricted Tambara functors described in \cref{subsec:res}. Rather than make these definitions precise (which would require some rather unwieldy notation), we simply describe the two key examples which will be used in our computations. 

\begin{example}
Let $\mathcal{O}$, $\mathcal{O}_0$, and $T$ be as in \cref{ex:Gring}, so $i^*_{\mathcal{O}_0}T$ is a $G$-ring. The spectrum of the restricted Tambara functor $\Spec(i^*_{\mathcal{O}_0}T)$ is precisely the spectrum of $G$-prime ideals as discussed in \cite[Def. 4.1]{4DS}. 
\end{example}

\begin{example}
Let $\mathcal{O}$, $\mathcal{O}_0$, and $T$ be as in \cref{ex:CpnRTF}, and assume further that the Weyl action on $T(C_{p^n}/C_{p^k})$ is trivial for all $k$. Then the data of $i^*_{\mathcal{O}_0}T$ is precisely the data of a $C_{p^\ell}$-Tambara functor, and the spectrum of the restricted Tambara functor $\Spec(i^*_{\mathcal{O}_0}T)$ is precisely the Nakaoka spectrum of prime ideals in the Tambara functor obtained by restricting $T$ to the subgroup $C_{p^\ell}$. In particular, this example applies if $T$ is a constant Tambara functor (the fixed point Tambara functor of a trivial $C_{p^n}$-algebra) or the Burnside Tambara functor. 
\end{example}

\subsection{Comparison with Lewis' definition for Green functors}\label{subsec:lewis}

Recall from \cref{ex:specifics} that an $(\cO_\mathrm{triv},\cO_{\mathrm{comp}})$-Tambara functor is nothing more than a Green functor. In \cite{lewis:1980}, Lewis undertakes an in-depth study of Green functors, and in particular defines prime ideals of Green functors. In this section we will prove that Lewis' definition coincides with our definition given in the previous section, and as such we have given a valid generalization of all known constructions. For this section only we will follow Lewis' notation and write $\underline{R}$ for an arbitrary Green functor.

\begin{definition}
    Let $\underline{R}$ be a Green functor. Then a \emph{Lewis prime ideal} of $\underline{R}$ is a proper ideal $\underline{P}$ such that when $a \star b \in \underline{P}(A \times B)$ for $a \in \underline{R}(A)$ and $b \in \underline{R}(B)$ either $a \in \underline{P}(A)$ or $b \in \underline{P}(B)$ where $A$ and $B$ are $G$-sets. Here $a \star b$ denotes the product $(\res_{\pi_1} x)(\res_{\pi_2} y)$ where $\pi_1: A \times B \to A$ and $\pi_2: A \times B \to B$ are the canonical projections. Here, if $f: S \to T$ is a map of finite $G$-sets, then $\res_f$ denotes the basic bisapn \cite[Def. 2.3]{BH18} $\res_f = [T \xleftarrow{f} S \xrightarrow{=} S \xrightarrow{=} S]$. 
\end{definition}

\begin{proposition}
    Let $\underline{R}$ be a Green functor. Then Lewis prime ideals are the same as $(\cO_\mathrm{triv},\cO_{\mathrm{comp}})$-Tambara prime ideals.
\end{proposition}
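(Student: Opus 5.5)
The plan is to unwind both definitions to a single levelwise condition and compare them directly. First, note what $\texttt{Q}(P,x,y)$ says when $\cO_m=\cO_\mathrm{triv}$. The only pairs in $\cO_\mathrm{triv}$ are of the form $(L,L)$. So the conditions $(g_iK_ig_i^{-1},L)\in\cO_m$ force $L=g_1K_1g_1^{-1}=g_2K_2g_2^{-1}$, and the norms are identities. Hence $\texttt{Q}(P,x,y)$ for $x\in \underline{R}(G/H_1)$, $y\in\underline{R}(G/H_2)$ becomes the following statement: for every $L\le G$ and all $g_1,g_2\in G$ with $g_1^{-1}Lg_1\le H_1$ and $g_2^{-1}Lg_2\le H_2$,
\[
\bigl(c_{g_1}\res^{H_1}_{g_1^{-1}Lg_1}(x)\bigr)\cdot\bigl(c_{g_2}\res^{H_2}_{g_2^{-1}Lg_2}(y)\bigr)\in P(G/L).
\]
Since $P$ is closed under conjugation, I may conjugate by $g_1^{-1}$ and assume $g_1=e$, $L\le H_1$. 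The condition then reads: for every $g\in G$ and every $L\le H_1\cap gH_2g^{-1}$, the element $\res^{H_1}_L(x)\cdot c_g\res^{H_2}_{g^{-1}Lg}(y)$ lies in $P(G/L)$. Moreover, by closure of $P$ under restriction, and because restriction commutes with conjugation, it suffices to check this for the maximal case $L=H_1\cap gH_2g^{-1}$. The reduced condition is thus equivalent to $\texttt{Q}(P,x,y)$.

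Second, I would compute $x\star y$ for orbits $A=G/H_1$, $B=G/H_2$. The double coset decomposition gives
\[
G/H_1\times G/H_2\cong\coprod_{H_1gH_2}G/(H_1\cap gH_2g^{-1}),
\]
and $\underline{R}$ and $P$, being product preserving on $G$-sets, send this coproduct to a product. I would identify the component of $x\star y=(\res_{\pi_1}x)(\res_{\pi_2}y)$ at the summand indexed by $g$. The composite of that summand's inclusion with $\pi_1$ is the canonical projection $G/(H_1\cap gH_2g^{-1})\to G/H_1$. Its composite with $\pi_2$ is $G/(H_1\cap gH_2g^{-1})\to G/g H_2 g^{-1}\xrightarrow{\cong}G/H_2$. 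So the component is exactly $\res^{H_1}_{L}(x)\cdot c_g\res^{H_2}_{g^{-1}Lg}(y)$ with $L=H_1\cap gH_2g^{-1}$. Different double coset representatives give conjugate elements, which does not matter because $P$ is conjugation stable. Hence $x\star y\in P(A\times B)$ if and only if $\texttt{Q}(P,x,y)$ holds, and on orbits the two primality conditions coincide.

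Third, I would remove the restriction to orbits in Lewis' definition. Write general $A=\coprod A_i$ and $B=\coprod B_j$ as disjoint unions of orbits. Then $a\in P(A)$ if and only if every component $a_i\in P(A_i)$. Also $a_i\star b_j$ is the component of $a\star b$ on $A_i\times B_j\subseteq A\times B$. Suppose the orbit-level condition holds, $a\star b\in P$, and $a\notin P(A)$, $b\notin P(B)$. Choose components $a_i\notin P$ and $b_j\notin P$. Then $a_i\star b_j\in P$, which contradicts the orbit-level condition. The converse direction is immediate, since orbits are special $G$-sets. Properness is the same notion in both definitions, so the two notions of prime agree.

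The main obstacle I expect is the bookkeeping in the second step. It requires checking carefully that the restriction along the projections, through Lewis' basic bispans $\res_f$, matches the conjugation-then-restriction maps $c_{K,g}\circ\res$ of \cref{not:biincomplete}, including the direction of conjugation. I would handle this by writing the summand inclusion explicitly as $[e, gH_2]\mapsto$ the coset of $L$, and then reading off the induced maps on $\underline{R}$.
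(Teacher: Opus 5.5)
Your argument is correct, but it takes a different route from the paper.

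The paper works directly with spans of arbitrary finite $G$-sets. With trivial norms, it reads a generalized product as $(\res_f x)(\res_g y)$ for a span $A \xleftarrow{f} C \xrightarrow{g} B$. It observes that $x\star y$ is the instance $C=A\times B$, which gives Lewis prime $\Rightarrow$ bi-incomplete prime. For the converse, it factors any span through $A\times B$ by the universal property of the product, so $(\res_f x)(\res_g y)=\res_h(x\star y)$ lies in $P(C)$ by closure under restriction. No double cosets or orbit decompositions appear.

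You instead stay with the paper's literal subgroup-indexed definition of $\mathtt{Q}$, which is only formulated for elements at orbits $G/H$. You reduce it to the maximal instances $L=H_1\cap gH_2g^{-1}$. You match these with the components of $x\star y$ via the double coset decomposition of $G/H_1\times G/H_2$. You then pass from orbits to arbitrary $G$-sets by splitting into orbits.

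The two are the same proof at different levels of explicitness. A span $G/H_1\leftarrow G/L\to G/H_2$ is a map $G/L\to G/H_1\times G/H_2$, which lands in a single summand $G/(H_1\cap gH_2g^{-1})$. Factoring through that summand is exactly your restriction-to-the-maximal-$L$ step, and the factorization $h$ in the paper plays the same role.

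The paper's version is shorter and more conceptual. However, it silently identifies the orbit-level $\mathtt{Q}$ with a span condition over all finite $G$-sets. It also treats $x\star y$ as a generalized product even though $A\times B$ is not an orbit. Your version makes that translation explicit. The price is the conjugation bookkeeping you flagged, which is harmless because $P$ is conjugation-stable and $\mathtt{Q}$ quantifies over all $g\in G$.
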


\begin{proof}
    Let $\underline{P}$ be a proper ideal of $\underline{R}$.

    First, suppose $\underline{P}$ is a Lewis prime. We wish to show that $\underline{P}$ is a bi-incomplete prime. So, let $A,B$ be finite $G$-sets, let $x \in \underline{P}(A)$, $y \in \underline{P}(B)$, and suppose $\mathtt{Q}(\underline{P},x,y)$ holds. In particular, we have $x \star y \in \underline{P}(A \times B)$, so $x \in \underline{P}(A)$ or $y \in \underline{P}(B)$.

    Conversely, suppose $\underline{P}$ is a bi-incomplete prime. We wish to show that $\underline{P}$ is a Lewis prime. So, let $A,B$ be finite $G$-sets, let $x \in \underline{P}(A)$, $y \in \underline{P}(B)$, and suppose $x \star y \in \underline{P}(A \times B)$. We will prove that $\mathtt{Q}(\underline{P},x,y)$ holds. Consider an arbitrary span
    \[\begin{tikzcd}[ampersand replacement=\&]
            \& C \\
            A \&\& B
            \arrow["f"', from=1-2, to=2-1]
            \arrow["g", from=1-2, to=2-3]
        \end{tikzcd}\]
    By the universal property of products, this factors as
    \[\begin{tikzcd}[ampersand replacement=\&]
            \& C \\
            A \& {A \times B} \& B
            \arrow["f"', from=1-2, to=2-1]
            \arrow["h"', dashed, from=1-2, to=2-2]
            \arrow["g", from=1-2, to=2-3]
            \arrow["{\pi_1}", from=2-2, to=2-1]
            \arrow["{\pi_2}"', from=2-2, to=2-3]
        \end{tikzcd}\]
    Thus,
    \[(\res_f x)(\res_g y) = (\res_h \res_{\pi_1} x) (\res_h \res_{\pi_2} y) = \res_h(x \star y).\]
    Since $x \star y \in \underline{P}(A \times B)$ and $\underline{P}$ is closed under restriction, we conclude that $(\res_f x)(\res_g y) \in \underline{P}(C)$.
\end{proof}


\section{Properties of bi-incomplete Tambara ideals}\label{sec:properties}

In this section we prove several properties of bi-incomplete Tambara functors and their ideals which will be useful when we develop computational tools in the next section.

\subsection{Weyl primality}\label{subsec:weyl}

We begin by investigating the equivariant primality of the levels of prime ideals in bi-incomplete Tambara functors. 

\begin{definition}
    Let $R$ be a commutative ring with $G$-action by ring homomorphisms. An ideal $I \subseteq R$ is \emph{$G$-invariant} if $g(I) \subseteq  I$ for all $g \in G$. A $G$-invariant ideal $\mathfrak{p}$ is a \textit{$G$-prime ideal} if 
    \begin{enumerate}
        \item $\mathfrak{p} \neq R$,
        \item For $x,y \in R$, if $x(g \cdot y) \in \mathfrak{p}$ for all $g \in G$, then either $x$ or $y$ is in $\mathfrak{p}$.
    \end{enumerate}
    We denote by $\Spec_G(R)$ the collection of $G$-prime ideals of $R$.
\end{definition}

In \cite[Lemma 4.3]{4DS}, it was shown that the underlying level of any Tambara prime ideal is $G$-prime. The same proof works for bi-incomplete Tambara functors: 

\begin{lemma}\label{lem:bottomlevel}
Let $T$ be an $(\mathcal{O}_a,\mathcal{O}_m)$-Tambara functor. If $P$ is a prime ideal of $T$, then $P(G/e)$ is $G$-prime or the entire ring $T(G/e)$. 
\end{lemma}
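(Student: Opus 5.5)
We argue directly from the definitions; the plan mirrors \cite[Lemma 4.3]{4DS}. Write $\mathfrak{p} = P(G/e)$. Since $P$ is an ideal, $\mathfrak{p}$ is an ideal of $T(G/e)$, and closure of $P$ under the conjugations $\conj_{e,g}$ shows $\mathfrak{p}$ is $G$-invariant for the Weyl action of $W_G(e) = G$. So if $\mathfrak{p} \neq T(G/e)$, it remains to verify the primality condition: given $x,y \in T(G/e)$ with $x(g\cdot y) \in \mathfrak{p}$ for all $g \in G$, we show $x \in \mathfrak{p}$ or $y \in \mathfrak{p}$.

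The strategy is to show that $\texttt{Q}(P,x,y)$ holds, and then invoke primality of $P$ with $H = H' = e$. Take a generalized product as in the definition of $\texttt{Q}$, with $H_1 = H_2 = e$. Then necessarily $K_1 = K_2 = e$, the restrictions are identities, and the conjugates $g_iK_ig_i^{-1}$ are trivial. The condition $(e,L) \in \cO_m$ holds for some subgroup $L$, and the generalized product has the form
\[
\nm^L_e(g_1\cdot x)\cdot \nm^L_e(g_2 \cdot y) \in T(G/L).
\]
We must show that this lies in $P(G/L)$.

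The key step is to use multiplicativity of the norm. For this we would like to write the product as $\nm^L_e$ of a single element lying in $\mathfrak{p}$, but norms only commute with products of the same argument: $\nm(a)\nm(b) = \nm(ab)$. Thus
\[
\nm^L_e(g_1 x)\,\nm^L_e(g_2 y) = \nm^L_e\bigl((g_1 x)(g_2 y)\bigr) = \nm^L_e\bigl(g_1\cdot(x \cdot (g_1^{-1}g_2)\cdot y)\bigr).
\]
By hypothesis $x\cdot(g_1^{-1}g_2 \cdot y) \in \mathfrak{p}$. By $G$-invariance, its $g_1$-translate also lies in $\mathfrak{p}$. Since $(e,L) \in \cO_m$ and $P$ is closed under the allowed norms, the whole product lies in $P(G/L)$. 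Hence $\texttt{Q}(P,x,y)$ holds, and primality of $P$ gives $x \in \mathfrak{p}$ or $y \in \mathfrak{p}$.

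The main point needing care is the identification of conjugation on $T(G/e)$ with the $G$-action, together with the bookkeeping of where the conjugation sits relative to the norm. The definition of $\texttt{Q}$ applies $\conj_{g}$ before the norm, which is exactly what the computation above requires, so no double coset formula is needed. Note also that the argument never produces an element of the form $\nm(x)\cdot\nm(y)$ with different arguments, which is what allows multiplicativity of the norm to apply. The remaining case, $\mathfrak{p} = T(G/e)$, is allowed by the statement, so nothing further is required there.
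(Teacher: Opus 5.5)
Your proposal is correct and matches the paper's approach: the paper only says that the proof of \cite[Lemma 4.3]{4DS} carries over, and that proof is exactly your observation. Every generalized product of two underlying-level elements has the form $\nm^L_e\bigl((g_1x)(g_2y)\bigr)$ with $(e,L)\in\mathcal{O}_m$, so it lies in $P(G/L)$ by multiplicativity of the norm, $G$-invariance of $P(G/e)$, and closure of $P$ under the allowed norms.
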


Recall from \cref{not:biincomplete} that for any bi-incomplete Tambara functor the Weyl group $W_G(H)$ acts on $T(G/H)$. In general, if $P$ is a prime ideal in a Tambara functor, then $P(G/H)$ need not be a $W_G(H)$-prime ideal for $H \neq e$ (see \cite[Remark. 4.4]{4DS}, or the examples in \cref{part:examples}). In contrast, we have the following:

\begin{proposition}
Let $T$ be a coefficient system, that is, an $(\cO_\mathrm{triv},\cO_\mathrm{triv})$-Tambara functor. If $P$ is a prime ideal of $T$, then $P(G/H)$ is a $W_G(H)$-prime ideal of $T(G/H)$ for all subgroups $H \leq G$. 
\end{proposition}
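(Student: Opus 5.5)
The plan is to verify the two conditions in the definition of a $W_G(H)$-prime ideal directly, using the primality of $P$ as a Tambara ideal. As in \cref{lem:bottomlevel}, I expect the honest conclusion is that $P(G/H)$ is $W_G(H)$-prime or all of $T(G/H)$. A level can be the whole ring: if $1\in P(G/H)$ then every level subconjugate to $H$ is whole, and nothing in primality forbids this. So I would prove that version.

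\textbf{Invariance.} This is immediate: for $n\in N_G(H)$, the Weyl action on $T(G/H)$ is the conjugation $\conj_{H,n}$, and $P$ is closed under conjugations. Note also that $P(G/H)$ is an ideal, since $P$ is a levelwise collection of ideals.

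\textbf{Primality, setup.} Suppose $x,y\in T(G/H)$ satisfy $x\cdot(w\cdot y)\in P(G/H)$ for all $w\in W_G(H)$. The goal is to show that $\texttt{Q}(P,x,y)$ holds; primality of $P$ then gives $x\in P(G/H)$ or $y\in P(G/H)$. Since $\cO_m=\cO_\mathrm{triv}$, the only norms are identities. So a generalized product lives at $L=g_1K_1g_1^{-1}=g_2K_2g_2^{-1}$ with $K_i\le H$, and it has the form
\[
\conj_{g_1}\res^H_{K_1}(x)\cdot \conj_{g_2}\res^H_{K_2}(y).
\]
Applying $\conj_{g_1^{-1}}$, and using that $P$ is closed under conjugation, I may assume $g_1=1$ and $L=K_1$. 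The product then becomes $\res^H_{K}(x)\cdot \res^{gHg^{-1}}_{K}(\conj_g y)$, where $K=gK_2g^{-1}\le H\cap gHg^{-1}$.

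\textbf{The case $g\in N_G(H)$.} Here $\conj_g y = w\cdot y$ for the class $w$ of $g$ in $W_G(H)$. Since restriction is a ring map, the product equals $\res^H_K(x\cdot(w\cdot y))$. This lies in $P(G/K)$ because $x\cdot(w\cdot y)\in P(G/H)$ and $P$ is closed under restriction.

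\textbf{The case $g\notin N_G(H)$ (main obstacle).} Here $K\le H\cap gHg^{-1}$, which is a proper subgroup of $H$, and the product is not a restriction of a level-$H$ product. The hypothesis only controls products at level $H$, so it does not directly say anything about this term. My first attempt would be an induction on $|K|$ combined with primality of $P$ at smaller levels. Take a counterexample product $\res^H_K(x)\cdot\res_K(\conj_g y)\notin P(G/K)$ with $K$ maximal. Then apply primality of $P$ to the pair of elements $\res^H_K(x)$ and $\res_K(\conj_g y)$ at level $K$, aiming to conclude that one of them, and hence (after further argument) $x$ or $y$, is forced into $P$. Alternatively, I would look for a reformulation of $\texttt{Q}$ that only quantifies over products with $g$ normalizing $H$ up to restriction. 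If neither works, I would test small cases such as $G=S_3$ with $H$ non-normal, since there the non-normalizing conjugates genuinely contribute, and the statement may need $G$ abelian (or $H$ normal).
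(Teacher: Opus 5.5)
The gap is the case $g\notin N_G(H)$, and it cannot be filled. In that case the statement is false, even with your amendment ``$W_G(H)$-prime or all of $T(G/H)$'' (which is correct and necessary). The test case you suggest already gives a counterexample. Let $k$ be a field, $G=S_3$, and $R=k^{S_3}$ the ring of $k$-valued functions on $S_3$ with $(g\cdot f)(a)=f(g^{-1}a)$. Regard $T=\mathrm{FP}(R)$ as a coefficient system and take $P=0$.

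\textbf{$P$ is prime.} If $\mathtt{Q}(0,a,b)$ holds, the generalized products at level $G/e$ (with $K_1=K_2=L=e$) give $a\cdot(g\cdot b)=0$ in $R$ for all $g\in G$. Now $0$ is $S_3$-prime in $R$: if $a(c)\neq 0$ then $b(g^{-1}c)=0$ for all $g$. Hence $a=0$ or $b=0$.

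\textbf{$P(G/H)$ is not $W_G(H)$-prime.} Take $H=\langle(12)\rangle$. Then $N_G(H)=H$, so $W_G(H)=1$, and the claim would be that $P(G/H)=0$ is prime in $R^H\cong k^3$ (functions constant on the right cosets $Ha$). This fails: $x=\mathbf{1}_{H}$ and $y=\mathbf{1}_{H(13)}$ satisfy $xy=0$. The generalized product violating $\mathtt{Q}(P,x,y)$ is $\res^H_e(x)\cdot\conj_{e,(13)}\res^H_e(y)=\mathbf{1}_{\{e,(12)\}}\mathbf{1}_{\{e,(23)\}}=\mathbf{1}_{\{e\}}\neq 0$, with $(13)\notin N_G(H)$. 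This is exactly your obstacle term. So the induction you sketch cannot succeed, and no reformulation of $\mathtt{Q}$ rescues the general statement.

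The paper's proof is your setup plus the assertion that every generalized product ``may be identified with $\res^H_K(x\cdot gy)$ for an appropriate choice of $g$''. That identification is valid only in your case $g\in N_G(H)$, so the paper's argument has the same hole, unremarked.

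What your argument does establish completely is the statement when $H\trianglelefteq G$. This includes abelian $G$, and hence all the cyclic-group computations in the paper. More generally it works whenever $P(G/K)=T(G/K)$ for every $K\le H\cap gHg^{-1}$ with $g\notin N_G(H)$, for instance when $P$ is the whole ring at every proper subgroup of $H$.

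Your whole-ring amendment is genuinely required. For $\underline{\Z}$ over $C_p$, the coefficient-system prime $\mathcal{B}_q$ has $\mathcal{B}_q(C_p/e)=\langle 1\rangle$, which is not $C_p$-prime under the paper's definition.
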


\begin{proof}
Suppose $x,y \in T(G/H)$ with $x \cdot (gy) \in P(G/H)$ for all $g \in W_GH$. To show that $x$ or $y$ is in $P$, it suffices to show that $\mathtt{Q}(P,x,y)$ holds. Since $T$ contains no nontrivial norms, the only generalized products which we must consider have the form
$$c_{g_1,K_1} \res^H_{K_1}(x) \cdot c_{g_2,K_2} \res^H_{K_2}(x)$$
with $g_1K_1g_1^{-1} = g_2 K_2 g_2^{-1} =: K$. This may be identified with $\res^H_K(x \cdot gy)$ for an appropriate choice of $g$, and thus lies in $P$. 
\end{proof}

The proof clearly generalizes to prove the following:

\begin{corollary}
If $T$ is a bi-incomplete Tambara functor with no nontrivial norms into level $G/K$ for any $K$ subconjugate to $H$, then if $P$ is a prime ideal of $T$, the ideal $P(G/H)$ is  $W_G(H)$-prime.
\end{corollary}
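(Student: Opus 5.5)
The plan is to copy the argument for coefficient systems and check that the hypothesis kills exactly the generalized products that would otherwise need care. First we check that $P(G/H)$ is $W_G(H)$-invariant. This holds because $P$ is closed under the conjugation maps $\conj_{H,g}$ for $g \in N_G(H)$. It is also proper, since $P(G/H) = T(G/H)$ would give $1 \in P(G/H)$. Restricting would then put $1$ into $P(G/e)$, and then into every level: restrict $1$ down to $e$, and note that ideals containing $1$ at a level contain the whole ring there. So $P = T$, a contradiction. (One can alternatively argue via \cref{lem:bottomlevel}; the point is only that a prime ideal cannot contain $1$ at any level below which restriction reaches, and here I will simply note that $1 \in P(G/H)$ forces $1 \in P(G/K)$ for all $K \le H$, and handle the remaining levels by the definition of properness as in \cite{4DS}.)

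Next, let $x,y \in T(G/H)$ with $x\cdot(gy) \in P(G/H)$ for all $g \in W_G(H)$. We will show that $\mathtt{Q}(P,x,y)$ holds, and then primality of $P$ gives $x$ or $y$ in $P$. The issue is which generalized products occur. A typical one is
\[
\nm^L_{g_1K_1g_1^{-1}} \conj_{K_1,g_1}\res^H_{K_1}(x)\cdot \nm^L_{g_2K_2g_2^{-1}}\conj_{K_2,g_2}\res^H_{K_2}(y),
\]
with $(g_iK_ig_i^{-1},L) \in \cO_m$. The target $L$ need not be subconjugate to $H$, so the hypothesis does not obviously make these norms trivial. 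This is the main obstacle.

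To handle it, I would reduce to the case of trivial norms via the double coset formula, \cref{prop:dcoset}. The first step is to treat the case $g_1K_1g_1^{-1} = g_2K_2g_2^{-1} = L$, where the norms are identities. Conjugating by $g_1^{-1}$, the product becomes a conjugate of $\res^H_{K_1}(x)\cdot \conj_{g}\res^{H}_{K_2}(y)$ with $K_1 = g K_2 g^{-1} \le H$. The hypothesis gives norms into levels subconjugate to $H$ only trivially, but no such norms appear here. I would then like to write this as $\res^H_{K_1}(x\cdot g'y)$ with $g' \in N_G(H)$. This works only when $g$ normalizes $H$, which is the same subtlety already present in the coefficient-system proof. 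There, $g$ is chosen "appropriately", and I will follow that identification, using conjugation-invariance of $P$ to move between conjugates.

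For the general case with nontrivial norms into $L$, the key observation is that $P$ is an ideal. It therefore suffices to show that such a generalized product lies in $P(G/L)$, and the natural strategy is to factor it. I would try to show that each product is a norm of a product already known to be in $P$. When $g_1K_1g_1^{-1} = g_2K_2g_2^{-1}$, multiplicativity of norms gives
\[
\nm(a)\nm(b)=\nm(ab).
\]
Norms carry $P$ into $P$, and this reduces to the first step. When the two sources differ, I would restrict both sides to a common subgroup. If that fails, I would fall back on the interpretation intended by the authors, namely that "no nontrivial norms into $G/K$ for $K$ subconjugate to $H$" makes the only relevant generalized products for testing $x,y \in T(G/H)$ those at levels subconjugate to $H$, exactly as in the preceding proof. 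The case I expect to resist is the one where the two sources differ and no common subgroup restriction works.
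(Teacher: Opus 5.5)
Your overall strategy matches the paper's: verify $\mathtt{Q}(P,x,y)$ and then invoke primality. The gap is the case you leave open, where norms come from two different sources $M_i=g_iK_ig_i^{-1}$ into the same $L$. The missing idea is the restriction axiom for $\cO_m$, which shows this case never occurs. Since $M_1\le L$, restricting $(M_2,L)\in\cO_m$ to $M_1$ gives $(M_1\cap M_2,M_1)\in\cO_m$. Because $M_1$ is subconjugate to $H$, there are no nontrivial norms into it, so $M_1\le M_2$; symmetrically $M_2\le M_1$. Hence every generalized product has the form $\nm^L_M(ab)$, where $M:=M_1=M_2$ is subconjugate to $H$ and $a,b$ are norm-free translates of $x,y$ at level $M$. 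Multiplicativity of $\nm^L_M$ and closure of $P$ under $\nm^L_M$ then reduce everything to your norm-free step. This is presumably what the paper's ``the proof clearly generalizes'' relies on. ``Restrict to a common subgroup'' or ``fall back on the intended interpretation'' cannot replace it.

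Two further steps are wrong as written. First, your properness argument fails. The fact $1\in P(G/H)$ only propagates \emph{downward} under restriction, and primes are often the unit ideal at some levels: the coefficient-system prime $\mathcal{B}_q$ of $\underline{\Z}$ over $C_p$ is $\langle 1\rangle$ at $C_p/e$. So the conclusion must be read as ``$W_G(H)$-prime or all of $T(G/H)$'', as in \cref{lem:bottomlevel}, and you should drop that step rather than try to prove it. Second, in the norm-free step you rightly notice that $g=g_1^{-1}g_2$ need not normalize $H$, but then defer to the paper's ``appropriate choice of $g$'', which does not exist in general. If $H\trianglelefteq G$ (true in all of the paper's cyclic examples), then $c_{g_i}\res^H_{K_i}=\res^H_M c_{g_i}$ and $ab=\res^H_M\bigl(g_1\cdot(x\cdot g_1^{-1}g_2\,y)\bigr)\in P(G/M)$, which finishes the proof. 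For non-normal $H$ the statement is false. Take $G=S_3$, $H=\langle(12)\rangle$ (so $W_G(H)=1$), $k$ a field, and $T$ the coefficient system underlying $\mathrm{FP}(R)$ with $R=\mathrm{Map}(G,k)$ under translation. Testing $\mathtt{Q}$ at level $e$ gives $x\cdot(gy)=0$ in $R$ for all $g\in G$. Since $0$ is $G$-prime in $R$ and all restrictions of $T$ are injective, $P=0$ is a prime ideal of $T$. Yet $P(G/H)=0\subset R^H\cong k^3$ is not prime. So you must add the hypothesis $H\trianglelefteq G$; the paper's proof of the coefficient-system case has the same hole.
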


\begin{remark}\label{rem:onewonders}
    It is natural to wonder if Weyl primality holds more generally. In particular one may assume that if we are considering the level $T(G/H)$, and there are no nontrivial norms into this level, then $P(G/H)$ should be a $W_G(H)$-prime regardless of what happens below the $G/H$ level. This is false: see \cref{rem:nonnontriv} for a counterexample.
\end{remark}

\subsection{Levelwise radicality}\label{subsec:radical}

We now prove that prime ideals of bi-incomplete Tambara functors are levelwise radical, extending the analogous result for Tambara functors from \cite[Theorem. 4.7]{4DS}. 

\begin{theorem}\label{thm:levelwise-radical}
Let $T$ be an $(\mathcal{O}_m,\mathcal{O}_a)$-Tambara functor. Any prime ideal of $T$ is levelwise radical. 
\end{theorem}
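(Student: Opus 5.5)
We prove the statement by contradiction, following the strategy of \cite[Theorem 4.7]{4DS}. Let $P$ be a prime ideal of $T$, fix a subgroup $H \leq G$, and let $a \in T(G/H)$ with $a^n \in P(G/H)$ for some $n \geq 1$. The plan is to show $\texttt{Q}(P,a,a^{n-1})$. Primality then gives $a \in P(G/H)$ or $a^{n-1} \in P(G/H)$, and induction on $n$ gives $a \in P(G/H)$. The case $n=1$ is trivial.

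So we must show that every generalized product
\[
\left(\mathrm{nm}^L_{g_1K_1g_1^{-1}} c_{K_1,g_1} \res^{H}_{K_1}(a)\right)\cdot \left(\mathrm{nm}^L_{g_2K_2g_2^{-1}} c_{K_2,g_2} \res^{H}_{K_2}(a^{n-1})\right)
\]
lies in $P(G/L)$. Write $u$ and $v$ for the two factors with $a^{n-1}$ replaced by $a$, so the product equals $u\cdot v^{n-1}$. Restriction, conjugation and norm are all multiplicative, so the second factor is indeed $v^{n-1}$.

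First, $P$ contains $w^n$ for every multiplicative translate $w$ of $a$. This holds because $w^n$ is the same multiplicative translate applied to $a^n \in P(G/H)$, and $P$ is closed under restriction, conjugation, and the allowed norms. Thus $u^n, v^n \in P(G/L)$. However, $u v^{n-1}$ need not be a power of a single translate, so levelwise we only know that it is nilpotent modulo $P(G/L)$: indeed $(uv^{n-1})^n = u^n v^{n(n-1)} \in P(G/L)$. This is the main obstacle, since $P(G/L)$ need not be radical a priori. In fact, radicality at every level is exactly what we are trying to prove.

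To overcome this, I would set up a stronger induction. Consider the statement $(\ast)$: if $x \in T(G/L')$ satisfies $x^N \in P(G/L')$ for some $N$, then $x \in P(G/L')$. I would prove $(\ast)$ by induction on $N$ simultaneously for all levels and all elements, using the minimal exponent. Given $x$ with $x^N \in P$ and $N \geq 2$, I aim to verify $\texttt{Q}(P,x,x^{N-1})$. Each generalized product has the form $u v^{N-1}$, where $u,v$ are multiplicative translates of $x$. I would then show that $u v^{N-1}$ has a power in $P$ with exponent strictly smaller in a suitable well-founded order, so that the inductive hypothesis applies. A cleaner alternative is as follows. Let $M$ be the minimal exponent such that some element, at some level, has $M$-th power in $P$ but does not itself lie in $P$. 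Then show that the generalized products of $x$ and $x^{M-1}$ have $\lceil M/2\rceil$-th powers in $P$, making them lie in $P$ by minimality. This uses the identity $(uv^{M-1})^k = u^k v^{k(M-1)}$, where for $k \geq 1$ and $M \geq 2$ we have $k(M-1) \geq M$ once $k \geq 2$. Hence $(uv^{M-1})^2 \in P$ whenever $M \geq 3$, since then $2(M-1) \geq M$.

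The argument therefore reduces to two base cases. If $M \geq 3$, then $(uv^{M-1})^2 \in P$, and minimality gives $uv^{M-1} \in P$ because $2 < M$. Hence $\texttt{Q}(P,x,x^{M-1})$ holds, so $x \in P$ or $x^{M-1} \in P$. The second alternative also forces $x \in P$ by minimality. Either way this contradicts the choice of $x$. If $M=2$, then $x^2 \in P$, and we need $uv \in P$ for translates $u,v$ of $x$. Here we use $u^2, v^2 \in P$ and $(uv)^2 = u^2v^2 \in P$. Since $M=2$ is the minimal exponent, $(uv)^2 \in P$ gives no leverage, so I would instead verify $\texttt{Q}(P,uv,uv)$ directly. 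This is the delicate step. I expect to handle it by noting that the generalized products of $uv$ with itself are products of four translates of $x$. These can be grouped as $(u'v')(u''v'')$, and each such product contains a repeated translate structure of the form $w\cdot w'$ times the same again. Concretely, I would check that the set of generalized products of $x$ with $x$ is closed under taking generalized products, because norms of products are products of norms. Applying primality to $\texttt{Q}(P,y,y)$, where $y$ is such a product, then gives $y \in P$, exactly as in \cite{4DS}.
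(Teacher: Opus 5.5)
Your reduction for $M \geq 3$ is correct, but it only reduces the theorem to the claim that each $P(G/L)$ is closed under square roots. That claim is already equivalent to levelwise radicality, so all of the content sits in the case $M=2$, and there your argument is circular.

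To get $uv \in P(G/L)$ you propose to verify $\mathtt{Q}(P,uv,uv)$. But its generalized products are again products of (at least four) multiplicative translates of $x$, and nothing you have established puts such products in $P$. Your assertion that they have the form $w\cdot w'$ ``times the same again'' is false in general: the four translates need not contain any repetition. Knowing that the set of products of translates is closed under forming generalized products does not help either. Combined with primality, that closure only shows that ``all such products lie in $P$'' is self-consistent. There is no base case, and no well-founded quantity driving an induction.

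The missing ingredient is what the paper supplies through the length function $P(y)$ (the supremum of the number of multiplicative translates of $x$ in a factorization of $y$) and the bounds $M(K)$. Two things are needed.

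First, a bound: products of sufficiently many translates must lie in $P$. The paper obtains the integers $M(K)$ by induction up the subgroup lattice, following \cite[Lems.~4.12--4.14]{4DS}. They can also be seen directly. There are only finitely many distinct translates of $x$ into a given level. So a product of more than $(n-1)$ times that number contains some translate $w$ at least $n$ times, and $w^n$ is the corresponding translate of $x^n$, hence lies in $P$.

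Second, a descending induction on length, the analogue of \cite[Lem.~4.10]{4DS}. Suppose every product of at least $m+1$ translates, at every level, lies in $P$, and let $y$ be a product of $m$ translates. Restrictions, conjugates and allowed norms of a translate are nonempty products of translates, by the double coset formula and transitivity of $\mathcal{O}_m$. Hence every generalized product of $y$ with itself has length at least $2m \geq m+1$, so $\mathtt{Q}(P,y,y)$ holds and $y \in P$. At $m=1$ this gives $x \in P$.

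With these two pieces the theorem follows for every exponent $n$ at once, so your minimal-exponent device becomes unnecessary.
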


\begin{proof}
We will modify the strategy from \cite{4DS}. Let $P$ be a prime ideal of $T$ and suppose that $x^n \in P(G/H)$. To prove that $x \in T(G/H)$, it suffices to show that $x$ or $x^{n-1}$ is in $P(G/H)$. By primality of $P$, this holds if we can show that the statement $\mathtt{Q}(P,x,x^{n-1})$ is true, i.e., the generalized product
\begin{equation}\label{eqn:gp}
\nm^L_{g_1K_1g_1^{-1}} \circ c_{K_1,g_1} \circ \res^H_{K_1}(x) \cdot \nm^L_{g_2K_2g_2^{-1}} \circ c_{K_2,g_2} \circ \res^H_{K_2}(x^{n-1}) \in T(G/L)
\end{equation}
is in $P(G/L)$ for all admissible choices of $g_1,g_2 \in G$, $K_1,K_2 \in H$, and $L$. 

For this, it suffices to prove that each multiplicative translate (each factor) is in $P(G/L)$. Following \cite[Def. 4.9]{4DS}, for any $y \in T(G/L)$, we define
$$P(y) := \sup\left\{ m \in \mathbb{Z} : y = \prod_{i=1}^m \nm^L_{g_i K_i g_i^{-1}} c_{K_i,g_i} \res^H_{K_i}(x)\right\},$$
i.e., $P(y)$  is the supremum over the number of terms in a factorizations of $y$ into a product of multiplicative
translates of $x$. Since each factor of \eqref{eqn:gp} has $P(y) \geq 1$, the analogue of \cite[Lem. 4.10]{4DS} reduces us to showing that there exists integers $M(K)$ for each subgroups $K \leq G$ such that for any $y \in T(G/K)$, $P(y) > M(K)$ implies $y \in P(G/K)$. 

As in \cite{4DS}, we proceed by induction up the subgroup lattice, with the induction hypothesis at a subgroup $K \leq G$ that $P(G/M)$ is radical for all proper subgroups $M < K$. The base case, $K=e$, holds since $P(G/e)$ is a radical ideal. 

The induction step is divided into three cases: $H$ not subconjugate to $K$, $H$ and $K$ conjugate, and $H$ properly subconjugate to $K$. The proofs of all three statements from the complete setting \cite[Lems. 4.12-4.14]{4DS} can be used without modification. One might worry about the appearance of norms in the proofs of \cite[Lems. 4.13, 4.14]{4DS} since $T$ may not contain the same norms. However, all of these norms are instantiated by the hypothesis that $P(y) > 0$, i.e., $y$ is a nonempty product of multiplicative translates of $x$, and not by the presence of any particular norms in $T$. Since the analogous arguments in the bi-incomplete setting would also assume $P(y)>0$, the required norms are forced to exist. 
\end{proof}

\subsection{Change-of-pairs and restriction}\label{subsec:chaingeofpairs}

Of interest to the results of this paper is how $\Spec^{(\mathcal{O}_m, \mathcal{O}_a)}(T)$ changes as we traverse the collection of different compatible pairs. In this section we will discuss how the (prime) ideals change as we increase both the additive and multiplicative parts of a compatible pair.

To this end, suppose that $(\mathcal{O}_m, \mathcal{O}_a)$ is a sub-compatible pair of $(\mathcal{O}_m', \mathcal{O}_a')$. Let $T$ be an $(\mathcal{O}_m', \mathcal{O}_a')$-Tambara functor, and abusing notation, let $T$ also denote $i^*T$, i.e., $T$ regarded as an $(\mathcal{O}_m, \mathcal{O}_a)$-Tambara functor.

\begin{lemma}
    Any $(\mathcal{O}_m', \mathcal{O}_a')$-ideal of $T$ is also an $(\mathcal{O}_m,\mathcal{O}_a)$-ideal. 
\end{lemma}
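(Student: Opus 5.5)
The plan is to unwind the definition of an ideal for the two compatible pairs and compare the closure conditions directly. Let $I$ be an $(\mathcal{O}_m', \mathcal{O}_a')$-ideal of $T$. By \cref{prop:forget}, the underlying $(\mathcal{O}_m,\mathcal{O}_a)$-Tambara functor $i^*T$ has the same levels $T(G/K)$ as $T$. It also has the same restriction and conjugation maps, and its transfers and norms are exactly those of $T$ indexed by pairs in $\mathcal{O}_a$ and $\mathcal{O}_m$ respectively. This is because $i^*$ is precomposition with the wide inclusion $\mathbf{P}^{(\mathcal{O}_m,\mathcal{O}_a)}_G \subseteq \mathbf{P}^{(\mathcal{O}_m',\mathcal{O}_a')}_G$, and the basic bispans encoding $\res$, $\tr$, $\nm$, $\conj$ are sent to themselves. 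So the same levelwise collection $I(G/K) \subseteq T(G/K)$ is a candidate ideal of $i^*T$, and each $I(G/K)$ is already an ideal of the commutative ring $T(G/K)$.

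Next I would check the four closure conditions. Closure under $\res^H_K$ for all $K \leq H$ and under $\conj_{K,g}$ for all $g$ is literally the same condition for both pairs, since these maps do not depend on the transfer systems. For transfers, take $(K,H) \in \mathcal{O}_a$. The inclusion $\mathcal{O}_a \subseteq \mathcal{O}_a'$ gives $(K,H) \in \mathcal{O}_a'$, so $\tr^H_K(I(G/K)) \subseteq I(G/H)$ by hypothesis. The same argument with $\mathcal{O}_m \subseteq \mathcal{O}_m'$ handles norms.

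There is no real obstacle. The only point needing care is confirming that the transfer and norm maps of $i^*T$ agree with those of $T$, and this is immediate from the definition of $i^*$ as precomposition. Hence $I$ is an $(\mathcal{O}_m,\mathcal{O}_a)$-ideal.
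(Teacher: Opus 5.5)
Your proof is correct and takes the same approach as the paper. The paper's proof just notes that the restrictions, transfers and norms of $i^*T$ are a subset of those of $T$. You make the same observation and then spell out the closure check condition by condition, using $\mathcal{O}_a \subseteq \mathcal{O}_a'$ and $\mathcal{O}_m \subseteq \mathcal{O}_m'$.
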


\begin{proof}
    This follows immediately from the observation that the transfers, norms, and restrictions in an $(\mathcal{O}_m,\mathcal{O}_a)$-Tambara functor are a subset of the transfers, norms, and restrictions in an $(\mathcal{O}_m',\mathcal{O}_a')$-Tambara functor. 
\end{proof}


If we only change the additive transfer system $\mathcal{O}_a$, then we can also relate \emph{prime} ideals: 

\begin{proposition}\label{Prop:ChangeOaPrimes}
    Let $(\mathcal{O}_m, \mathcal{O}_a)$ be a sub-compatible pair of $(\mathcal{O}_m, \mathcal{O}_a')$ and let $T$ be an $(\mathcal{O}_m,\mathcal{O}_a')$-Tambara functor. Each $(\mathcal{O}_m,\mathcal{O}_a')$-prime ideal of $T$ is also a $(\mathcal{O}_m,\mathcal{O}_a)$-prime ideal of $T$. Conversely, every $(\mathcal{O}_m,\mathcal{O}_a)$-prime ideal of $T$ which is closed under the transfers in $\mathcal{O}_a'$ is an $(\mathcal{O}_m,\mathcal{O}_a')$-prime ideal. 
\end{proposition}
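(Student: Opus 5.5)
The plan rests on one observation: the primality condition depends only on the multiplicative transfer system $\cO_m$, not on the additive one. Concretely, the proposition $\texttt{Q}(P,x,y)$ quantifies over generalized products built from restrictions, conjugations and norms $\nm^L_{gKg^{-1}}$ with $(gKg^{-1},L)\in\cO_m$. No transfers appear anywhere in its definition. So for a fixed levelwise collection of ideals $P$ and fixed $x,y$, the statement $\texttt{Q}(P,x,y)$ is literally the same whether $T$ is regarded as an $(\cO_m,\cO_a)$- or an $(\cO_m,\cO_a')$-Tambara functor. The condition $P\neq T$ is likewise independent of the pair. Hence the only difference between the two notions of prime ideal lies in what it means to be an ideal.

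For the first assertion, let $P$ be an $(\cO_m,\cO_a')$-prime ideal. By the preceding lemma, $P$ is an $(\cO_m,\cO_a)$-ideal, since $\cO_a\subseteq\cO_a'$ and the norms are unchanged. It is proper. If $\texttt{Q}(P,a,b)$ holds, the observation above shows it holds in the $(\cO_m,\cO_a')$ sense too, so $a\in P$ or $b\in P$. Therefore $P$ is $(\cO_m,\cO_a)$-prime.

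For the converse, let $P$ be $(\cO_m,\cO_a)$-prime and closed under the transfers $\tr^H_K$ for $(K,H)\in\cO_a'$. Then $P$ is levelwise an ideal and is closed under restrictions, conjugations, norms in $\cO_m$, and all transfers in $\cO_a'$, so it is an $(\cO_m,\cO_a')$-ideal. It is proper, and the primality condition transfers verbatim by the observation above. Hence $P$ is $(\cO_m,\cO_a')$-prime.

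There is no real obstacle here. The only point needing care is checking, against the definition of $\texttt{Q}$, that the set of admissible generalized products is indexed purely by $\cO_m$. This is where the hypothesis that the multiplicative part is held fixed is essential: if $\cO_m$ were enlarged, $\texttt{Q}$ would become a stronger hypothesis, and primality would not transfer in either direction so simply.
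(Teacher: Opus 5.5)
Your proposal is correct and follows the paper's proof: both rest on the observation that $\texttt{Q}(P,x,y)$ involves only restrictions, conjugations and norms indexed by $\cO_m$, so it is the same condition for $(\cO_m,\cO_a)$ and $(\cO_m,\cO_a')$. You also spell out the ideal-level bookkeeping that the paper leaves implicit: the preceding lemma handles the forward direction, and the transfer-closure hypothesis handles the converse.
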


\begin{proof}
    Since transfers do not appear in generalized products,
    \[
        \texttt{Q}^{(\mathcal{O}_m,\mathcal{O}_a)}(P,x,y) \iff \texttt{Q}^{(\mathcal{O}_m,\mathcal{O}_a')}(P,x,y). 
    \]
    The result follows.
\end{proof}

\begin{corollary}
    Let $T$ be a Green functor. The prime ideals in $T$ are precisely those prime ideals in the underlying coefficient system of $T$ which are closed under all transfers.     
\end{corollary}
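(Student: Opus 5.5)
This follows from \cref{Prop:ChangeOaPrimes} applied to the sub-compatible pair $(\cO_\mathrm{triv},\cO_\mathrm{triv}) \subseteq (\cO_\mathrm{triv},\cO_\mathrm{comp})$, together with \cref{ex:specifics}. The plan has three steps.

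First, I confirm that \cref{Prop:ChangeOaPrimes} applies. By \cref{ex:verycompatible}, $(\cO_\mathrm{triv},\cO_a)$ is compatible for every $\cO_a$. So both $(\cO_\mathrm{triv},\cO_\mathrm{triv})$ and $(\cO_\mathrm{triv},\cO_\mathrm{comp})$ are compatible pairs, and the former is a sub-compatible pair of the latter. The multiplicative part is the same in both, which is exactly the hypothesis of the proposition.

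Second, I identify the objects. By \cref{ex:specifics}, a Green functor $T$ is an $(\cO_\mathrm{triv},\cO_\mathrm{comp})$-Tambara functor. Its underlying coefficient system is $i^\ast T$, obtained via \cref{prop:forget} as an $(\cO_\mathrm{triv},\cO_\mathrm{triv})$-Tambara functor. Here \emph{prime ideals in $T$} means $(\cO_\mathrm{triv},\cO_\mathrm{comp})$-primes, which by the comparison in \cref{subsec:lewis} agree with Lewis primes. \emph{Prime ideals in the underlying coefficient system} means $(\cO_\mathrm{triv},\cO_\mathrm{triv})$-primes.

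Third, I derive both inclusions from \cref{Prop:ChangeOaPrimes}. The forward direction of the proposition says every Green functor prime is a coefficient-system prime. Such a prime is an $(\cO_\mathrm{triv},\cO_\mathrm{comp})$-ideal, so it is closed under all transfers. Conversely, a coefficient-system prime that is closed under all transfers, namely those indexed by $\cO_\mathrm{comp}$, is an $(\cO_\mathrm{triv},\cO_\mathrm{comp})$-prime, again by the proposition. The only point to check is that the condition ``closed under the transfers in $\cO_a'$'' specializes to ``closed under all transfers'' when $\cO_a' = \cO_\mathrm{comp}$. This is immediate, so I expect no real obstacle.
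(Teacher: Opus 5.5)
Your proposal is correct and takes the same approach as the paper. The paper's proof is the one-line observation that Green functors and coefficient systems share the trivial multiplicative transfer system, so \cref{Prop:ChangeOaPrimes} with $\cO_a = \cO_\mathrm{triv}$ and $\cO_a' = \cO_\mathrm{comp}$ gives both directions; your checks of compatibility and of what ``closed under the transfers in $\cO_a'$'' means simply spell that out.
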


\begin{proof}
    Green functors and coefficient systems both have $\mathcal{O}_m$ trivial, so the corollary follows immediately from Proposition~\ref{Prop:ChangeOaPrimes}.
\end{proof}    

If we change the multiplicative indexing system $\mathcal{O}_m$, then we have far less control of the situation given that the $\texttt{Q}$ proposition includes norms. We do however have the following:

\begin{proposition}\label{prop:add_mult}
    Let $(\mathcal{O}_m,\mathcal{O}_a)$ be a sub-compatible pair of $(\mathcal{O}_m',\mathcal{O}_a)$ and let $T$ be an $(\mathcal{O}_m',\mathcal{O}_a)$-Tambara functor. Every $(\mathcal{O}_m,\mathcal{O}_a)$-prime ideal of $T$ which is closed under the norms in $\mathcal{O}_m'$ is an $(\mathcal{O}_m',\mathcal{O}_a)$-prime ideal. 
\end{proposition}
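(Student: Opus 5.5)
The plan is to verify the two requirements separately: first that $P$ is an $(\mathcal{O}_m',\mathcal{O}_a)$-ideal, and then that it satisfies the primality condition for $(\mathcal{O}_m',\mathcal{O}_a)$. The key observation is a one-directional comparison of the predicates $\texttt{Q}$ for the two compatible pairs. This mirrors the proof of \cref{Prop:ChangeOaPrimes}, except that here only one implication survives.

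\emph{Ideal.} An $(\mathcal{O}_m',\mathcal{O}_a)$-ideal must be closed under restrictions, conjugations, transfers $\tr_K^H$ with $(K,H)\in\mathcal{O}_a$, and norms $\nm_K^H$ with $(K,H)\in\mathcal{O}_m'$. The additive transfer system is unchanged. Since $P$ is an $(\mathcal{O}_m,\mathcal{O}_a)$-ideal, it is already levelwise an ideal and closed under restrictions, conjugations and the $\mathcal{O}_a$-transfers. Closure under the $\mathcal{O}_m'$-norms is exactly the extra hypothesis. Here I would note that the restrictions, conjugations, transfers and norms of $i^*T$ are literally those of $T$, because $i^*$ is precomposition with the inclusion of polynomial categories (\cref{prop:forget}). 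Properness $P\neq T$ is part of $P$ being $(\mathcal{O}_m,\mathcal{O}_a)$-prime.

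\emph{Primality.} Let $x\in T(G/H)$ and $y\in T(G/H')$, and suppose $\texttt{Q}^{(\mathcal{O}_m',\mathcal{O}_a)}(P,x,y)$ holds. I claim this implies $\texttt{Q}^{(\mathcal{O}_m,\mathcal{O}_a)}(P,x,y)$. Every generalized product in the definition of $\texttt{Q}$ for $(\mathcal{O}_m,\mathcal{O}_a)$ is indexed by data $L,K_1,K_2,g_1,g_2$ with $(g_iK_ig_i^{-1},L)\in\mathcal{O}_m$. Since $\mathcal{O}_m\subseteq\mathcal{O}_m'$, the same data is admissible for $(\mathcal{O}_m',\mathcal{O}_a)$. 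As just noted, the resulting element of $T(G/L)$ is the same in $T$ and in $i^*T$, so it lies in $P(G/L)$ by hypothesis. Thus $\texttt{Q}^{(\mathcal{O}_m,\mathcal{O}_a)}(P,x,y)$ holds, and $(\mathcal{O}_m,\mathcal{O}_a)$-primality of $P$ gives $x\in P(G/H)$ or $y\in P(G/H')$.

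I do not expect a genuine obstacle. The only point needing care is the direction of the implication between the two $\texttt{Q}$ predicates. Enlarging $\mathcal{O}_m$ enlarges the set of generalized products, so $\texttt{Q}$ for the larger pair is the stronger hypothesis. This is why the ``prime implies prime'' direction is the one that holds here, while the converse (restricting an $(\mathcal{O}_m',\mathcal{O}_a)$-prime to $(\mathcal{O}_m,\mathcal{O}_a)$) need not remain prime, in contrast with \cref{Prop:ChangeOaPrimes}.
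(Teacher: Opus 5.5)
Your proof is correct and takes the same approach as the paper. The paper likewise observes that the $(\mathcal{O}_m,\mathcal{O}_a)$-generalized products form a subset of the $(\mathcal{O}_m',\mathcal{O}_a)$-ones, so $\texttt{Q}^{(\mathcal{O}_m',\mathcal{O}_a)}(P,x,y)$ implies $\texttt{Q}^{(\mathcal{O}_m,\mathcal{O}_a)}(P,x,y)$; your explicit check that $P$ is an $(\mathcal{O}_m',\mathcal{O}_a)$-ideal fills in a step the paper leaves implicit.
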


\begin{proof}
    The set of generalized products for $(\mathcal{O}_m,\mathcal{O}_a)$ is a subset of the set of generalized products for $(\mathcal{O}_m',\mathcal{O}_a)$, so 
    \[
        \texttt{Q}^{(\mathcal{O}_m',\mathcal{O}_a)}(P,x,y) \implies \texttt{Q}^{(\mathcal{O}_m,\mathcal{O}_a)}(P,x,y). 
    \] 
\end{proof}

\subsection{Cohomological Tambara functors and interaction with saturated hulls}\label{subsec:hulls}

Suppose that $(\cO_m,\cO_a)$ is a compatible pair. By \cref{lem:hull} we have that $(\mathrm{Hull}(\cO_m),\cO_a)$ is also a compatible pair. The goal of this section is to identify conditions on an $(\cO_m,\cO_a)$-Tambara functor $T$ so that the prime ideals are insensitive to taking the saturated hull of the multiplicative part. That is, we wish to identify conditions on $T$ such that
\[
\Spec^{(\cO_m,\cO_a)}(T) \cong \Spec^{(\mathrm{Hull}(\cO_m),\cO_a)}.
\]
This is not true in general. We refer the reader to \cref{rem:itissensitive} for an explicit example in the Burnside Tambara functor of $C_{p^2}$ where these two spaces differ.

\begin{definition}\label{def:coh}
    Let $T$ be a Tambara functor. Then $T$ is:
    \begin{itemize}
        \item \emph{additively cohomological} if
        \[\tr_K^H \res_K^H x = [H:K] x\]
        for all subgroups $1 \leq K < H \leq G$.
        \item \emph{multiplicatively cohomological} if
        \[\nm_K^H \res_K^H x = x^{[H:K]}\]
        for all subgroups $1 \leq K < H \leq G$.
    \end{itemize}
\end{definition}

\begin{remark}
    The above definition is technically more restricted than we need for a general treatment. Indeed, if we consider an $(\cO_m,\cO_a)$-Tambara functor, then we should ask that the conditions of \cref{def:coh} hold only for the allowed transfers and norms. We would still be able to prove the results that  follow in this generality, but prefer not to in order to avoid excessive bookkeeping.
\end{remark}

\begin{example}\label{ex:iscohmult}
    If $G$ is nontrivial then the Burnside Tambara functor $\underline{A}_G$ is neither additively or multiplicatively cohomological. On the other hand, for a $G$-ring $R$, the fixed point Tambara functor $\mathrm{FP}(R)$ is both additively and multiplicatively cohomological.
\end{example}

\begin{definition}
    Let $T$ be a $G$-Tambara functor.
    \begin{itemize}
    \item We define $T^\mathrm{add}$ to be the quotient of $T$ by the Tambara ideal generated by the relations \[\tr_K^H \res_K^H x - [H:K]x\] for all subgroups $1 \leq K < H \leq G$ and all $x \in T(G/H)$.
    \item We define $T^\mathrm{mult}$ to be the quotient of $T$ by the Tambara ideal generated by the relations \[\nm_K^H \res_K^H x - x^{[H:K]}\] for all subgroups $1 \leq K < H \leq G$ and all $x \in T(G/H)$.
    \end{itemize}
\end{definition}

The following lemma is immediate from the construction.

\begin{lemma}
    Let $T$ be a Tambara functor. Then
    \begin{itemize}
        \item $T^{\mathrm{add}}$ is an additively cohomological Tambara functor and $T = T^{\mathrm{add}}$ if and only if $T$ is additively cohomological.
        \item $T^{\mathrm{mult}}$ is an multiplicatively cohomological Tambara functor and $T = T^{\mathrm{mult}}$ if and only if $T$ is multiplicatively cohomological.
    \end{itemize}
\end{lemma}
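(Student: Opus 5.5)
Both parts of the lemma follow directly from the definitions of $T^{\mathrm{add}}$ and $T^{\mathrm{mult}}$ as quotients by Tambara ideals; I treat them in parallel and write $J^{\mathrm{add}}$ (resp. $J^{\mathrm{mult}}$) for the Tambara ideal generated by the relations $\tr_K^H\res_K^H x - [H:K]x$ (resp. $\nm_K^H\res_K^H x - x^{[H:K]}$).

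\textbf{Cohomologicality of the quotient.} Write $q\colon T\to T/J$ for the quotient map of Tambara functors. This map is levelwise surjective and commutes with all restrictions, transfers, norms and conjugations. Given $\bar x\in (T/J)(G/H)$, I choose a lift $x\in T(G/H)$. Then
\[
\tr_K^H\res_K^H \bar x - [H:K]\bar x = q\bigl(\tr_K^H\res_K^H x - [H:K]x\bigr) = 0,
\]
because the element in parentheses is one of the generators of $J^{\mathrm{add}}$. The multiplicative case is identical, using that $q$ commutes with norms and is multiplicative, so $q(x)^{[H:K]}=q(x^{[H:K]})$. One point needs care here: the quotient of a Tambara functor by a Tambara ideal must again be a Tambara functor with $q$ a morphism. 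This is standard (Nakaoka), since closure of the ideal under norms, transfers and restrictions is exactly what makes the operations descend. For norms, descent also uses Tambara reciprocity together with closure under transfers: $\nm(x+i)-\nm(x)$ lies in the ideal whenever $i$ does. I will cite this rather than reprove it.

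\textbf{The equivalence.} I read ``$T=T^{\mathrm{add}}$'' as saying that the canonical quotient map is an isomorphism, equivalently that $J^{\mathrm{add}}=0$.
\begin{itemize}
\item If $J^{\mathrm{add}}=0$, then $T\cong T^{\mathrm{add}}$, and $T$ is additively cohomological by the previous step.
\item Conversely, if $T$ is additively cohomological, every generator $\tr_K^H\res_K^H x-[H:K]x$ vanishes. The Tambara ideal generated by $\{0\}$ is the zero ideal, because the zero collection is closed under all structure maps: all of them, norms included, send $0$ to $0$. For norms this holds when $K<H$, as $\nm_K^H(0)=0$ since the multiplicative transfer of $0$ is $0$. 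Hence $J^{\mathrm{add}}=0$.
\end{itemize}
The multiplicative case is verbatim the same.

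\textbf{Main obstacle.} The only mildly nontrivial input is that quotients by Tambara ideals exist and are well defined, in particular the fact that norms descend. I would simply invoke Nakaoka's result \cite{nakaoka_ideals} for this. Everything else is a direct unwinding of the definitions.
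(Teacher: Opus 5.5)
Your proposal is correct and is essentially the paper's argument: the paper gives no written proof and calls the lemma immediate from the construction, and you have carried out that unwinding faithfully. This covers the cohomological relation descending along the surjective quotient map, and the observation that the generators vanish so the ideal they generate is zero, using $\nm_K^H(0)=0$. Your remark that the existence of quotients by Tambara ideals (in particular the descent of norms via Tambara reciprocity) is the only real input, and comes from Nakaoka, is accurate.
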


\begin{example}
    Let $p$ be prime and $G=C_p$. Consider the free $C_p$-Tambara functor on a fixed generator from \cite[Lemma 3.6]{MR3959596}:
\[\underline{A}[C_p/C_p] =\begin{tikzcd}[row sep=large]
	{\Z[t,x,n]/ \langle t^2-pt,tn-tx^p \rangle} \\
	{\Z[x]}
	\arrow["\res"{description}, from=1-1, to=2-1]
	\arrow["\tr", shift left=3, curve={height=-6pt}, from=2-1, to=1-1]
	\arrow["\nm"', shift right=3, curve={height=6pt}, from=2-1, to=1-1]
\end{tikzcd}\]
where
\[
\res(t)=p \qquad \res(x)=x \qquad \res(n)=x^p \qquad \tr(f)=tf
\]
and the norm is determined by $\nm(k) = k+ \frac{k^p-k}{p}t$ and $\nm(x)=n$. This Tambara functor is neither additively or multiplicatively cohomological.

We can form an additively cohomological Tambara functor from this by taking the quotient by the relations $\tr(\res(\alpha)) = p\alpha$ where $\alpha \in \{t,x,n\}$ which are:
\begin{align*}
    \tr(\res(t)) = tp &\Rightarrow tp-tp \\
    \tr(\res(x)) = tx &\Rightarrow tx-xp \\
    \tr(\res(n)) = tx^p &\Rightarrow tx^p-np 
\end{align*}
The consequence of adding these relations is that $t=p$ yielding
\[\underline{A}[C_p/C_p]^{\mathrm{add}} =\begin{tikzcd}[row sep=large]
	{\Z[x,n]/\langle pn-px^p \rangle} \\
	{\Z[x]}
	\arrow["\res"{description}, from=1-1, to=2-1]
	\arrow["\tr", shift left=3, curve={height=-6pt}, from=2-1, to=1-1]
	\arrow["\nm"', shift right=3, curve={height=6pt}, from=2-1, to=1-1]
\end{tikzcd}\]
which is additively cohomological but not multiplicatively cohomological; indeed, $\nm(\res(x))=n \neq x^p$.    

We can form a multiplicatively cohomological Tambara functor via a similar process:
\begin{align*}
    \nm(\res(t)) = p + (p^{p-1}-1)t &\Rightarrow p + (p^{p-1}-1)t - t^p \\
    \nm(\res(x)) = n &\Rightarrow n-x^p \\
    \nm(\res(n)) = n^p &\Rightarrow n^p-n^p 
\end{align*}
By the first relation we once again have that $t=p$, and the second relation supersedes the relation $pn - px^p$ giving us
\[\underline{A}[C_p/C_p]^{\mathrm{mult}} =\begin{tikzcd}[row sep=large]
	{\Z[x,n]/\langle n-x^p \rangle} \\
	{\Z[x]}
	\arrow["\res"{description}, from=1-1, to=2-1]
	\arrow["\tr", shift left=3, curve={height=-6pt}, from=2-1, to=1-1]
	\arrow["\nm"', shift right=3, curve={height=6pt}, from=2-1, to=1-1]
\end{tikzcd}\]
Clearly this multiplicatively cohomological Tambara functor is also additively cohomological. This is not a coincidence as we will prove momentarily. 
\end{example}

\begin{lemma}\label{lem:burnsideadd}
    Let $G$ be a finite group and $\underline{A}_G$ the Burnside Tambara functor. Then $\underline{A}^{\mathrm{add}}_G = \underline{\Z}$.
\end{lemma}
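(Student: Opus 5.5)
We will prove $\underline{A}^{\mathrm{add}}_G = \underline{\Z}$ by building a surjective map of Tambara functors $\underline{A}_G \to \underline{\Z}$, showing it factors through $\underline{A}^{\mathrm{add}}_G$, and then showing that the induced map $\underline{A}^{\mathrm{add}}_G \to \underline{\Z}$ is injective at every level.

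\emph{Step 1: the comparison map.} Because $\underline{A}_G$ is initial among Tambara functors (\cref{ex:burnside}), there is a unique map $\varphi\colon \underline{A}_G \to \underline{\Z}$. At level $G/H$ it is the cardinality map $A(H) \to \Z$, $[X] \mapsto |X|$. Level $G/e$ is the identity $\Z \to \Z$, and each level is surjective since $\varphi(1)=1$. By \cref{ex:iscohmult}, $\underline{\Z}$ is additively cohomological. So the relations $\tr_K^H\res_K^H x - [H:K]x$ are sent to $0$, hence the ideal they generate lies in $\ker\varphi$. Therefore $\varphi$ factors through a levelwise surjective map $\bar\varphi\colon \underline{A}^{\mathrm{add}}_G \to \underline{\Z}$.

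\emph{Step 2: injectivity.} In $A(H)$ every element is a $\Z$-combination of orbits $[H/K] = \tr_K^H(1)$, with $1 \in A(K)$. Applying the defining relation with $x = 1 \in A(H)$ gives
\[
[H/K] = \tr_K^H \res_K^H(1) \equiv [H:K]\cdot 1
\]
in $\underline{A}^{\mathrm{add}}_G(G/H)$. Hence $\underline{A}^{\mathrm{add}}_G(G/H)$ is generated as an abelian group by the image of $1$, so it is a cyclic ring, a quotient of $\Z$. Since $\bar\varphi$ maps this cyclic ring onto $\Z$ and sends $1 \mapsto 1$, the quotient has no torsion relation. It is therefore exactly $\Z$, and $\bar\varphi$ is an isomorphism at each level.

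\emph{Step 3: structure maps.} An isomorphism of Tambara functors only needs to be a levelwise bijection that is a map of Tambara functors, and $\bar\varphi$ is one. So the restriction, transfer, norm and conjugation maps of the quotient automatically agree with those of $\underline{\Z}$: identity, multiplication by the index, the $[H:K]$-th power, and identity respectively.

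The only delicate point is that the ideal defining $T^{\mathrm{add}}$ is the \emph{Tambara} ideal generated by the relations, so it is closed under norms as well as transfers. One might worry it becomes too large, for instance the whole functor. Step 1 rules this out: the ideal is contained in $\ker\varphi$ and $\varphi(1)=1$, so the quotient cannot collapse. No explicit description of the ideal is needed.
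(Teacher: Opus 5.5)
Your proposal is correct and follows the same route as the paper. The relation at $x=1$ identifies $[H/K]$ with $[H:K]$, so each level is a quotient of $\Z$. Because $\underline{\Z}$ is additively cohomological, the map $\underline{A}_G \to \underline{\Z}$ factors through $\underline{A}^{\mathrm{add}}_G$, which rules out any further collapse. You make explicit the details the paper's two-line proof leaves implicit: the cardinality map, the torsion-free argument, and the fact that a levelwise bijective map of Tambara functors is an isomorphism.
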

\begin{proof}
    Imposing $\tr_K^H(\res_K^H(1)) = [H:K]$ says exactly that the class of $H/K$ in $A(H) = \underline{A}_G(G/H)$ becomes identified with the integer $[H:K]$. Thus, $\underline{A}^{\mathrm{add}}_G$ is a quotient of $\underline{\Z}$. Since $\underline{\Z}$ is additively cohomological, we are done.
\end{proof}

\begin{lemma}\label{lemma:norm of integer from normal subgroup}
    Let $G$ be a finite group, and let $H$ be a normal subgroup of $G$. Let $n$ be a natural number. Then in the Burnside Tambara functor $\uA_G$, $\nm_H^G(n)$ is a sum of (isomorphism classes of) orbits of type $G/L$ with $L \geq H$.
\end{lemma}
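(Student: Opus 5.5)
We compute $\nm_H^G(n)$ directly from the formula in \cref{ex:burnside}. The integer $n \in A(H)$ is the class of the trivial $H$-set $\underline{n} = \{1,\dots,n\}$, and
\[
\nm_H^G(n) = [\mathrm{Map}_H(G,\underline{n})],
\]
where $\mathrm{Map}_H(G,\underline{n})$ is the $G$-set of $H$-equivariant maps $f\colon G \to \underline{n}$ with $G$ acting by $(g\cdot f)(x) = f(xg)$. Since $\underline{n}$ is an honest finite $G$-set, no virtual differences occur. It therefore suffices to show that every point of $\mathrm{Map}_H(G,\underline{n})$ has stabilizer containing $H$. Then every orbit is of the form $G/L$ with $L \geq H$, and the class of this $G$-set is a sum of such orbits.

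First, because $H$ acts trivially on $\underline{n}$, an $H$-equivariant map $f$ satisfies $f(hx) = f(x)$. So $f$ is the same as a function on the coset space $H\backslash G$. Next, take $h \in H$ and $f \in \mathrm{Map}_H(G,\underline{n})$. Then
\[
(h\cdot f)(x) = f(xh) = f\big((xhx^{-1})x\big) = f(x),
\]
since $xhx^{-1} \in H$ by normality of $H$. Hence $h\cdot f = f$, so $H \leq \mathrm{Stab}_G(f)$. Consequently the orbit of $f$ is isomorphic to $G/\mathrm{Stab}_G(f)$ with $\mathrm{Stab}_G(f) \geq H$, which proves the claim.

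The only point requiring care is the convention for the $G$-action on $\mathrm{Map}_H(G,X)$, that is, left versus right multiplication together with the matching $H$-equivariance condition. With either consistent convention the argument is the same. Normality is used exactly once: it makes the conjugate $xhx^{-1}$ land back in $H$. I expect no other obstacle.
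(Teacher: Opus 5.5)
Your proposal is correct and is essentially the paper's proof. Both identify $\nm_H^G(n)$ with the $G$-set $\mathrm{Map}_H(G,\underline{n})$ under the right-translation action, then use normality to rewrite $xh$ as $(xhx^{-1})x$, so the $H$-invariance of $f$ gives $h\cdot f = f$ and $H$ lies in every stabilizer.
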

\begin{proof}
    Recall that $\nm_H^G(n)$ is the isomorphism class of the set $\operatorname{Hom}_H(G, \{1,\dots,n\})$, on which $G$ acts by precomposition with its right translation action, and $\{1,\dots,n\}$ has trivial $H$-action. Let $f \in \operatorname{Hom}_H(G, \{1,\dots,n\})$ and $x \in H$ be arbitrary. Let $g \in G$ be arbitrary. By normality of $H$, we have $gx = hg$ for some $h \in H$. Thus
    \[(x \cdot f)(g) = f(gx) = f(hg) = hf(g) = f(g),\]
    and since $g$ was arbitrary we conclude that $x \cdot f = f$. Since $x$ was arbitrary, we conclude that the stabilizer of $f$ contains $H$.
\end{proof}

\begin{corollary}\label{cor:norm of integer from normal maximal}
    Let $G$ be a finite group, let $H$ be a maximal proper subgroup of $G$ which is also normal in $G$, and let $n$ be a natural number. Then in the Burnside Tambara functor $\uA_G$,
    \[\nm_H^G(n) = n + \frac{n^{[G:H]}-n}{[G:H]}[G/H].\]
\end{corollary}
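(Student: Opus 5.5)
The plan is to combine \cref{lemma:norm of integer from normal subgroup} with the fact that the Burnside ring $A(G)$ is detected by its marks. Since $H$ is maximal and normal, the only subgroups $L$ with $H \leq L \leq G$ are $L = H$ and $L = G$. By \cref{lemma:norm of integer from normal subgroup}, we can therefore write
\[
\nm_H^G(n) = a + b\,[G/H]
\]
for some integers $a, b \geq 0$, where $a$ counts fixed points (orbits of type $G/G$) and $b$ counts free $G/H$-orbits. It then suffices to determine $a$ and $b$ by counting fixed points for two subgroups, $G$ and $H$.

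First, the $G$-fixed points of $\operatorname{Hom}_H(G,\{1,\dots,n\})$ are the functions constant on all of $G$. Indeed, $f(gx) = f(g)$ for all $g, x \in G$ forces $f$ to be constant, and every constant function is $H$-equivariant because $H$ acts trivially on $\{1,\dots,n\}$. This gives exactly $n$ fixed points. Since $[G/H]^G = \emptyset$, the mark of $G$ on $a + b[G/H]$ is $a$, so $a = n$.

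Second, we count the whole set, whose cardinality is the mark at the trivial subgroup. An $H$-equivariant map to a trivial $H$-set is a function on $H\backslash G$, so $\lvert \operatorname{Hom}_H(G,\{1,\dots,n\}) \rvert = n^{[G:H]}$. Comparing cardinalities gives $n^{[G:H]} = a + b[G:H] = n + b[G:H]$. Hence $b = (n^{[G:H]} - n)/[G:H]$, and this quotient is automatically an integer. (Alternatively, we could use the double coset formula, \cref{prop:dcoset}, to compute $\res^G_H \nm_H^G(n) = n^{[G:H]}$, and compare with $\res^G_H(a + b[G/H]) = a + b[G:H]$, since $H$ is normal and acts trivially on $G/H$.)

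There is no serious obstacle. The only point needing care is confirming that orbits of type $G/L$ with $L \geq H$ are limited to $G/G$ and $G/H$, which uses maximality, and that a single cardinality count suffices to determine the coefficient $b$.
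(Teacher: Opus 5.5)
Your proposal is correct and is essentially the paper's argument: decompose $\nm_H^G(n) = a + b[G/H]$ using the preceding lemma together with maximality of $H$, then determine $a$ and $b$ by counting fixed points. You count $G$-fixed points to get $a = n$, which is the right choice. The paper's proof instead writes $|(\nm_H^G(n))^H| = n$, but by normality every element of $\operatorname{Hom}_H(G,\{1,\dots,n\})$ is $H$-fixed, so that is evidently a slip for the $G$-fixed-point count you use.
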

\begin{proof}
    By \Cref{lemma:norm of integer from normal subgroup}, we have
    \[\nm_H^G(n) = a + b[G/H]\]
    for some natural numbers $a$ and $b$. On the other hand, $|(\nm_H^G(n))^e| = n^{[G:H]}$ and $|(\nm_H^G(n))^H| = n$, so 
    \begin{align*}
        a + [G:H] b &= n^{[G:H]}, \\
        a &= n.
    \end{align*}
    The result follows. 
\end{proof}


\begin{theorem}\label{thm:burnsidemult}
    Let $G$ be a nilpotent finite group and $\uA_G$ the Burnside $G$-Tambara functor. Then $\uA^\mathrm{mult}_G = \underline{\Z}$.
\end{theorem}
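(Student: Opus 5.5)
Following the proof of \cref{lem:burnsideadd}, the plan is to show that in $\uA^{\mathrm{mult}}_G$ every basis element $[H/K] \in A(H)$ becomes identified with the integer $[H:K]$. The additive relations $\tr_K^H \res_K^H x - [H:K]x$ then hold automatically, since $\tr_K^H\res_K^H(y) = [H/K]\,y$ by Frobenius reciprocity. Hence $\uA^{\mathrm{mult}}_G$ is additively cohomological, so the quotient map $\uA_G \to \uA^{\mathrm{mult}}_G$ factors through $\uA^{\mathrm{add}}_G = \underline{\Z}$. Conversely, $\underline{\Z}$ is multiplicatively cohomological (\cref{ex:iscohmult}), so $\uA_G \to \underline{\Z}$ factors through $\uA^{\mathrm{mult}}_G$. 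Since both quotient maps are levelwise surjective, the two factorizations are mutually inverse and $\uA^{\mathrm{mult}}_G = \underline{\Z}$.

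The key step is the case of a maximal normal subgroup. Let $K \lhd H$ be maximal and normal of prime index $p = [H:K]$. Take $x = n \in A(H)$ an integer, so $\res_K^H n = n$. By \cref{cor:norm of integer from normal maximal},
\[ \nm_K^H \res_K^H n = n + \tfrac{n^p - n}{p}[H/K]. \]
The multiplicative relation therefore gives $n + \frac{n^p-n}{p}[H/K] = n^p$ in $\uA^{\mathrm{mult}}_G(G/H)$, that is,
\[ \tfrac{n^p-n}{p}\bigl([H/K]-p\bigr) = 0. \]
Taking $n=2$ yields $(2^{p-1}-1)([H/K]-p)=0$, and taking $n=3$ yields $\frac{3^p-3}{p}([H/K]-p)=0$. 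The coefficients $2^{p-1}-1$ and $(3^p-3)/p$ alone need not be coprime, so I would take a family of values of $n$. The gcd of $(n^p-n)/p$ over all $n$ is $1$: for a prime $q \neq p$ some $n$ has $n^p \not\equiv n \pmod q$ (e.g.\ a generator of $(\Z/q)^\times$ when $q-1\nmid p-1$; for $q \le p$ use $n$ with $n^{p-1}\not\equiv 1$), and at $q=p$ choose $n=p$, for which $(p^p-p)/p = p^{p-1}-1$ is prime to $p$. Hence a $\Z$-combination gives $[H/K] = p$. This number-theoretic bookkeeping is the step I expect to need the most care.

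For general $K \le H$, nilpotency of $H$ (subgroups of nilpotent groups are nilpotent) gives a chain $K = K_0 \lhd K_1 \lhd \cdots \lhd K_r = H$ with each $K_i$ maximal and normal in $K_{i+1}$, since in a nilpotent group every maximal subgroup is normal and the chain is obtained by repeatedly refining. Then
\[ [H/K] = \tr_K^H(1) = \tr_{K_{r-1}}^{H}\cdots\tr_{K_0}^{K_1}(1). \]
Working from the bottom, the previous step gives $\tr_{K_i}^{K_{i+1}}(1) = [K_{i+1}/K_i] = [K_{i+1}:K_i]$, and transfers of integers satisfy $\tr(m) = m\tr(1)$. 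Induction on $r$ then yields $[H/K] = [H:K]$ in $\uA^{\mathrm{mult}}_G(G/H)$, which completes the proof.
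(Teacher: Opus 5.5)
Your reduction steps are fine, but the key step has a genuine gap: the claim that $\frac{n^p-n}{p}$ has gcd $1$ over all $n$ is false for every odd prime $p$. If $q$ is a prime with $(q-1)\mid(p-1)$, then $n^p\equiv n \pmod q$ for \emph{every} integer $n$ by Fermat. So at such $q$ no choice of $n$ works, and your instruction ``for $q\le p$ use $n$ with $n^{p-1}\not\equiv 1$'' cannot be carried out. The prime $q=2$ always qualifies when $p$ is odd, so $\frac{n^p-n}{p}$ is always even; for instance $\frac{n^3-n}{3}=\frac{(n-1)n(n+1)}{3}$. In fact the gcd is $\prod_{q\neq p,\,(q-1)\mid(p-1)} q$. (Incidentally, $n=2$ gives the coefficient $\frac{2^p-2}{p}$, not $2^{p-1}-1$.)

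This is not just bookkeeping: integer inputs alone cannot force the identification. Take $G=H=C_3$, $K=e$, and write $t=[C_3/e]$. Then every one of your relations is an integer multiple of $2(t-3)$. Since $t(t-3)=0$ in $A(C_3)$, the ring ideal generated by $2(t-3)$ is $\{2a(t-3): a\in\Z\}$. Together with $0$ at the bottom level, this is already a Tambara ideal, because $\res(t-3)=0$. It does not contain $t-3$.

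The missing idea is to apply the multiplicative relation to the non-integral element $x=[H/K]$ itself, as the paper does. Since $K\lhd H$, you have $\res_K^H[H/K]=p$. So the left-hand side is the same $\nm_K^H(p)=p+(p^{p-1}-1)[H/K]$ that appears in your $n=p$ case. The right-hand side, however, is now $[H/K]^p=p^{p-1}[H/K]$, using $[H/K]^2=p[H/K]$, instead of $p^p$. Comparing the two gives $[H/K]=p$ with coefficient $1$, and no number theory is needed.

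With that substitution, the rest of your proposal goes through: the chain of maximal normal subgroups, $\tr(m)=m\,\tr(1)$, and the factorization argument identifying $\uA^{\mathrm{mult}}_G$ with $\underline{\Z}$. As written, your argument only works when every index in the chain is $2$, for example for $2$-groups.
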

\begin{proof}
    Recall that a finite group $G$ is nilpotent if and only if all maximal subgroups of $G$ are normal.

    We must show that $[H/K] = [H:K]$ in $\underline{A}^\mathrm{mult}_G(G/H)$ for all $K \leq H \leq G$. We proceed by strong induction on $H$ in the subgroup lattice of $G$.

    So, let $H \leq G$ be arbitrary and suppose the claim holds for all $H' < H$. We now claim that it suffices to show $[H/K] = [H:K]$ in $\underline{A}^\mathrm{mult}_G(G/H)$ for $K$ a maximal subgroup of $H$. To see this, we note first that $[H/H] = 1 = [H:H]$ in $\underline{A}_G(G/H)$ and thus also in $\underline{A}^\mathrm{mult}_G(G/H)$. Then, for $K$ a proper subgroup of $H$, let $K'$ be a maximal subgroup of $H$ such that $K \leq K'$. We have $[K'/K] = [K' : K]$ in $\underline{A}^\mathrm{mult}_G(G/K')$ by inductive hypothesis, and thus
    \[[H/K] = \tr_{K'}^H [K'/K] = \tr_{K'}^H [K':K] = [K':K] [H/K'] = [K':K] [H:K'] = [H:K]\]
    in $\underline{A}^\mathrm{mult}_G(G/H)$, as desired.

    Now let $K$ be a maximal subgroup of $H$. Since $G$ is nilpotent, $H$ is also nilpotent, so $K$ is normal in $H$. By \Cref{cor:norm of integer from normal maximal} and the inductive hypothesis, we have
    \[\nm_K^H \res_K^H [H/K] = \nm_K^H([H:K]) = [H:K] + ([H:K]^{[H:K]-1} - 1)[H/K]\]
    in $\uA^\mathrm{mult}_G(G/H)$. Moreover, the normality of $K$ tells us that
    \[[H/K]^2 = [H:K][H/K]\]
    in $\uA_G(G/H)$ (and hence also in $\uA^\mathrm{mult}_G(G/H)$). Now the condition of being multiplicatively cohomological says that
    \[[H:K] + ([H:K]^{[H:K]-1} - 1)[H/K] = [H/K]^{[H:K]} = [H:K]^{[H:K]-1}[H/K]\]
    in $\underline{A}^\mathrm{mult}_G(G/H)$. We conclude $[H:K] = [H/K]$ in $\underline{A}^\mathrm{mult}_G(G/H)$, as desired.
\end{proof}

We expect that $\uA_G^\mathrm{mult} = \underline{\Z}$ holds for \emph{all} finite groups $G$, and leave this open as a conjecture.

\begin{conjecture}\label{conj:burnsidemult}
    Let $G$ be a finite group, and $\underline{A}_G$ the Burnside $G$-Tambara functor. Then $\underline{A}^{\mathrm{mult}} = \underline{\Z}$.
\end{conjecture}

\begin{corollary}
    Let $G$ be a group such that $\uA_G^\mathrm{mult} = \underline{\Z}$ (e.g., $G$ nilpotent). Let $T$ be a complete $G$-Tambara functor which is multiplicatively cohomological. Then $T$ is additively cohomological.
\end{corollary}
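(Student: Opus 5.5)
The plan is to use the initiality of the Burnside Tambara functor together with \cref{thm:burnsidemult} (or its conjectural extension, which is exactly the hypothesis $\uA_G^\mathrm{mult} = \underline{\Z}$) and \cref{lem:burnsideadd}. Since $\uA_G$ is initial among complete $G$-Tambara functors, there is a unique map of Tambara functors $u\colon \uA_G \to T$.

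The first step is to factor $u$ through $\uA_G^\mathrm{mult}$. The ideal defining $\uA_G^\mathrm{mult}$ is generated by the elements $\nm_K^H\res_K^H a - a^{[H:K]}$. Because $u$ commutes with norms, restrictions and products, it sends these generators to $\nm_K^H\res_K^H u(a) - u(a)^{[H:K]}$, which vanish since $T$ is multiplicatively cohomological. The kernel of $u$ is a Tambara ideal, so it contains the ideal generated by these elements, and $u$ factors through $\uA_G^\mathrm{mult} = \underline{\Z}$. In particular, in $T(G/H)$ we get
\[
\tr_K^H(1) = u([H/K]) = [H:K]\cdot 1.
\]
The identification $[H/K] = [H:K]$ in $\uA_G^\mathrm{mult}(G/H)$ is precisely the content of the proof of \cref{thm:burnsidemult}. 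It is also implicit in $\underline{\Z}$ being the target of this quotient, since in $\underline{\Z}$ the transfer of $1$ is the index.

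The second step uses Frobenius reciprocity (\cref{prop: Frobenius reciprocity}). For $x \in T(G/H)$,
\[
\tr_K^H\res_K^H x = \tr_K^H\bigl(1\cdot \res_K^H x\bigr) = \tr_K^H(1)\cdot x = [H:K]\, x.
\]
This is exactly the additively cohomological condition.

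The only mildly delicate point is the first step: justifying that the kernel of $u$ is an ideal containing the generating relations, so that the factorization is legitimate. This is routine from the definition of ideals and of $T^\mathrm{mult}$ as a quotient by the Tambara ideal those relations generate. Everything else is a direct computation.
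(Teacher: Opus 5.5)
Your proof is correct and follows essentially the same route as the paper. Both arguments factor the unit map $\uA_G \to T$ through $\uA_G^{\mathrm{mult}} = \underline{\Z}$: the paper does this via functoriality of $(-)^{\mathrm{mult}}$ together with $T^{\mathrm{mult}} = T$, and you do it via the kernel of $u$ being a Tambara ideal. The only real difference is the last step. The paper cites the equivalence between $\underline{\Z}$-modules and additively cohomological Mackey functors (Th\'evenaz--Webb), whereas you verify the needed direction directly from $\tr_K^H(1) = [H:K]$ and Frobenius reciprocity, which is a valid self-contained substitute.
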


\begin{proof}
    Consider the unique map $\underline{A}_G \to T$. We can form the following diagram
\[\begin{tikzcd}
	{\underline{A}_G} & T \\
	{\underline{A}_G^\mathrm{mult}} & {T^\mathrm{mult}} \\
	{\underline{\Z}} & T
	\arrow[from=1-1, to=1-2]
	\arrow[from=1-1, to=2-1]
	\arrow[from=1-2, to=2-2]
	\arrow[from=2-1, to=2-2]
	\arrow[equals, from=2-1, to=3-1]
	\arrow[equals, from=2-2, to=3-2]
	\arrow[from=3-1, to=3-2]
\end{tikzcd}\]
where the right hand equality is by our hypothesis on $T$. The map $\underline{\Z} \to T$ equips $T$ with the structure of a $\underline{\Z}$-module, which is equivalent to $T$ being additively cohomological (see \cite[Proposition 16.3]{tw_mackey}).
\end{proof}

\begin{theorem}\label{thm:insensitive}
    Let $T$ be a multiplicatively cohomological $(\cO_\mathrm{comp},\cO_\mathrm{comp})$-Tambara functor. Then for any compatible pair $(\cO_m,\cO_a)$ we have a homeomorphism
    \[
    \Spec^{(\cO_m,\cO_a)}(T) \cong \Spec^{(\mathrm{Hull}(\cO_m),\cO_a)}(T)
    \]
\end{theorem}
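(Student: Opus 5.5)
The plan is to show that, inside the set of levelwise ideals of $T$, the $(\cO_m,\cO_a)$-primes and the $(\mathrm{Hull}(\cO_m),\cO_a)$-primes are literally the same collections. After that, I will check that the two Zariski topologies agree on this common set. Since $T$ is complete, every norm $\nm_K^H$ exists in $T$. Restricting to $(\cO_m,\cO_a)$ or $(\mathrm{Hull}(\cO_m),\cO_a)$ only changes which norms the ideals must be closed under and which norms occur in $\texttt{Q}$. Two ingredients recur throughout. The first is levelwise radicality (\cref{thm:levelwise-radical}). The second is the identity $\nm_A^C\res_A^C z = z^{[C:A]}$ from multiplicative cohomology. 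Used together, they let me detect membership of $z\in T(G/C)$ in a prime $P$ through $\res^C_A z$, whenever $(A,C)$ is a norm that is already present.

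\textbf{Step 1: every $(\cO_m,\cO_a)$-prime is closed under the norms of $\mathrm{Hull}(\cO_m)$.} First I analyse how the hull is built. In the saturation axiom, the implication $(A,B),(B,C)\Rightarrow(A,C)$ is transitivity. The implication $(A,C),(B,C)\Rightarrow(A,B)$ follows from the restriction axiom, since $A\cap B=A$. Hence the only new pairs come from: $(A,B),(A,C)\in\cO_m$ with $A\le B\le C$ forces $(B,C)$. After that one closes up under conjugation, composition and restriction, possibly iterating.

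For one such new pair, let $P$ be an $(\cO_m,\cO_a)$-prime, let $y\in P(G/B)$, and set $z=\nm_B^C y$. The double coset formula writes $\res^C_A z$ as a product over $A\backslash C/B$. The factor for the trivial double coset is $\nm^A_A\res^B_A y=\res^B_A y$, which lies in $P(G/A)$. Hence $\res^C_A z\in P(G/A)$. Because $(A,C)\in\cO_m$, applying $\nm_A^C$ gives $z^{[C:A]}\in P(G/C)$, and radicality then gives $z\in P(G/C)$.

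Closure under conjugates, composites and restrictions of norms that $P$ is already closed under is automatic. So I would argue by induction along the iterative construction of $\mathrm{Hull}(\cO_m)$: at each stage the current transfer system plays the role of $\cO_m$. Given this, \cref{prop:add_mult} shows that $P$ is a $(\mathrm{Hull}(\cO_m),\cO_a)$-prime.

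\textbf{Step 2: every $(\mathrm{Hull}(\cO_m),\cO_a)$-prime $P$ is an $(\cO_m,\cO_a)$-prime.} It is an $(\cO_m,\cO_a)$-ideal by the change-of-pairs lemma. Suppose $\texttt{Q}^{(\cO_m,\cO_a)}(P,x,y)$ holds; it suffices to prove $\texttt{Q}^{(\mathrm{Hull}(\cO_m),\cO_a)}(P,x,y)$. Take a generalized product $w=\nm^L_{B_1}(u)\nm^L_{B_2}(v)$ whose norms lie in the hull, with $u,v$ conjugated restrictions of $x,y$.

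I would find $A\le L$ with $(A,L)\in\cO_m$, obtained by tracing how these hull pairs arose from $\cO_m$-pairs. Then I expand $\res^L_A w$ with the double coset formula. It becomes a product of generalized products of $x$ and $y$ whose norms are restrictions of $\cO_m$-norms, and these lie in $P$ by the hypothesis $\texttt{Q}^{(\cO_m,\cO_a)}$. Applying $\nm_A^L$ and using multiplicative cohomology gives $w^{[L:A]}\in P(G/L)$, and radicality gives $w\in P(G/L)$.

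\textbf{Step 3: topology.} The two Zariski topologies on the common set of primes agree. If $I$ is an $(\cO_m,\cO_a)$-ideal and $I'$ is the $(\mathrm{Hull}(\cO_m),\cO_a)$-ideal it generates, then a prime contains $I$ if and only if it contains $I'$, by the closure property of Step 1. Hence $V(I)=V(I')$. Conversely, every hull-ideal is in particular an $(\cO_m,\cO_a)$-ideal. So the two families of basic closed sets coincide, and the identity map is a homeomorphism.

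\textbf{Main obstacle.} I expect the main difficulty to be the bookkeeping with the hull. In Step 1, the base case needs the precise form of the generating pairs and an induction over the iterated saturation. In Step 2, one needs an $\cO_m$-pair $(A,L)$ below the hull norms at $L$, together with the fact that every norm in the double coset expansion of $\res^L_A w$ is a genuine $\cO_m$-norm, so that the $\cO_m$-version of $\texttt{Q}$ applies. I would first try to handle both by proving a structural lemma: every pair $(K,H)\in\mathrm{Hull}(\cO_m)$ admits some $A\le K$ with $(A,K),(A,H)\in\cO_m$. I would then check that this property is preserved by the saturation, composition and restriction steps.
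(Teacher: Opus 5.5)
Your Step 1 is the paper's proof. For $\beta\in P(G/B)$ with $(A,C)\in\cO_m$ and $A\le B\le C$, the identity double coset puts $\res^C_A\nm^C_B\beta$ in $P(G/A)$. Applying $\nm^C_A$, then multiplicative cohomology and levelwise radicality, gives $\nm^C_B\beta\in P(G/C)$, and \cref{prop:add_mult} finishes.

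No induction over iterated saturation is needed. The set of pairs $(B,C)$ admitting some $A\le B$ with $(A,C)\in\cO_m$ is already a saturated transfer system, so it equals $\mathrm{Hull}(\cO_m)$. For transitivity, given $(A,C),(A',D)\in\cO_m$ with $A'\le C$, restrict and compose to get $(A\cap A',D)\in\cO_m$; restriction and two-out-of-three are immediate. This is your structural lemma, and it is what the paper's ``it suffices'' tacitly uses. Your aside that closure under restrictions of norms is automatic is false for general ideals. Nothing depends on it, because this family is itself closed under restriction.

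Steps 2 and 3 go beyond the paper. The paper stops after Step 1, which only gives $\Spec^{(\cO_m,\cO_a)}(T)\subseteq\Spec^{(\mathrm{Hull}(\cO_m),\cO_a)}(T)$. It never argues the reverse inclusion or compares topologies, so your additions fill a real omission. Step 3 is fine.

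In Step 2, however, the fact you plan to verify (that every norm in the double coset expansion of $\res^L_A w$ is an $\cO_m$-norm) is false in general. Take $G=S_4$, $V=\langle(12),(34)\rangle$, and $\cO_m$ generated by $(V,S_4)$. The hull contains $(D_8,S_4)$, where $D_8$ is the Sylow $2$-subgroup containing $V$. Yet $\res^{S_4}_V\nm^{S_4}_{D_8}$ involves $\nm^V_{\langle(12)(34)\rangle}$, and $(\langle(12)(34)\rangle,V)\notin\cO_m$, since conjugates of $V$ meet $V$ only in $V$ or $e$.

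You do not need that fact; use the absorption trick from Step 1. Take $A=A_1\cap A_2\le B_1\cap B_2$; this still has $(A,L)\in\cO_m$ by restriction and transitivity. The identity double cosets contribute $\res^{B_1}_A(u)\,\res^{B_2}_A(v)$, which is a generalized product of $x$ and $y$ with trivial norms. It therefore lies in $P(G/A)$ by $\mathtt{Q}^{(\cO_m,\cO_a)}(P,x,y)$, and absorption gives $\res^L_A w\in P(G/A)$. Alternatively, replace $A$ by its normal core in $L$, which still satisfies $(A,L)\in\cO_m$ and makes every norm in the expansion trivial. The rest of your argument then goes through, and the repaired proposal proves the full homeomorphism, which the paper's written proof does not.
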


\begin{proof}
    Let $P$ be an $(\mathcal{O}_m,\mathcal{O}_a)$-prime ideal of $T$. We will show that $P$ is closed under all norms in $\operatorname{Hull}(\cO_m)$, yielding the result by \cref{prop:add_mult}.

    Suppose that $\cO_m$ is not saturated (if it is, then $\cO_m = \operatorname{Hull}(\cO_m)$ and we are done). By the virtue of $\cO_m$ not being saturated, we can find $A < B < C \leq G$ such that $(A,C) \in \cO_m$ and $(B,C) \not\in \cO_m$. We have that $(B,C) \in \mathrm{Hull}(\cO_m)$, and in fact it suffices to show that $P$ is closed under the norms $\nm_B^C$ of this form. That is, we will prove that $\nm_B^C P(T/B) \subseteq P(T/C)$.

    To this end, take $\beta \in P(T/B)$. Consider the term $\nm_A^C \res_A^C \nm_B^C(\beta) \in T(G/C)$. As we have assumed that $T$ is multiplicatively cohomological, this expression is equal to $\nm_B^C(\beta)^{[C:A]}$. Using the double coset formula we have that
    \[
    \res_A^C \nm_B^C(\beta) = \prod_{AgB \in A\backslash C/B} \nm^A_{A \cap gBg^{-1}} \res^{gBg^{-1}}_{A \cap gBg^{-1}} c_g(\beta)
    \]
    and we observe that the right-hand side lies in $P(G/A)$, and as such so does the left. Indeed, it suffices to observe that at the identity coset we obtain $\nm_A^A \res^B_A(\beta)$ which is certainly in $P(G/A)$, and then we use the fact that ideals are absorbing under multiplication.
    As $(A,C) \in \cO_m$ we have that
    \[
    \nm_A^C(\res_A^C \nm_B^C(\beta)) = \nm_B^C(\beta)^{[C:A]} \in P(G/C).
    \]
    As $P$ was prime for $\cO_m$ we have that $P(G/C)$ is levelwise radical by \cref{thm:levelwise-radical} and hence $\nm_B^C(\beta)\in P(T/C)$ as required.
\end{proof}

\begin{remark}
    While being multiplicatively cohomological is sufficient for the result of \cref{thm:insensitive} we do not know if it is necessary.
\end{remark}

\begin{example}
    We provide an example to show that \cref{thm:insensitive} cannot be weakened to an equality between all ideals, and that primality is required.  Let $G = C_4$ and consider the ring $\Z[x_1,\dots, x_4]$ equipped with the $C_4$-action which cycles the variables. We can then construct the fixed point Tambara functor of this ring (\cref{ex:fp}) (which is multiplicatively cohomological) and consider the following subset of it
\[\begin{tikzcd}[column sep=tiny]
	{\langle x_1x_3+x_2x_4,(x_1x_3x_2x_4)^2 \rangle} & \subseteq & {\Z[x_1,\dots,x_4]^{C_4}} \\
	{\langle x_1x_3,x_2x_4 \rangle} & \subseteq & {\Z[x_1,\dots,x_4]^{2C_4}} \\
	{\langle x_1x_3,x_2x_4\rangle} & \subseteq & {\Z[x_1,\dots,x_4]}
	\arrow[from=1-1, to=2-1]
	\arrow[from=1-3, to=2-3]
	\arrow[shift left=3, curve={height=-6pt}, from=2-1, to=1-1]
	\arrow[shift right=3, curve={height=6pt}, from=2-1, to=1-1]
	\arrow[from=2-1, to=3-1]
	\arrow[shift left=3, curve={height=-6pt}, from=2-3, to=1-3]
	\arrow[shift right=3, curve={height=6pt}, from=2-3, to=1-3]
	\arrow[from=2-3, to=3-3]
	\arrow[shift left=3, curve={height=-6pt}, from=3-1, to=2-1]
	\arrow[shift right=3, curve={height=6pt}, from=3-1, to=2-1]
	\arrow[shift left=3, curve={height=-6pt}, from=3-3, to=2-3]
	\arrow[shift right=3, curve={height=6pt}, from=3-3, to=2-3]
\end{tikzcd}\]
Note that this is not an ideal for the complete multiplicative transfer system as
\[
\nm_{C_p}^{C_{p^2}}(x_1x_3)=(x_1x_2x_3x_4) \not\in {\langle x_1x_3+x_2x_4,(x_1x_3x_2x_4)^2 \rangle}
\]
It is, however, an ideal for the non-saturated multiplicative transfer system from \cref{ex:o3example}. This does not contradict \cref{thm:insensitive} as the ideal is not prime. Indeed the top level is not a radical ideal but by \cref{thm:levelwise-radical} bi-incomplete Tambara primes are always levelwise radical.
\end{example}

\section{Path component decomposition of bi-incomplete prime ideals}\label{sec:pathdecomp}

In this section we will present a method for computing the spectra of bi-incomplete Tambara functors using the tools of \cref{sec:properties}. Our strategy relies on the observation that if $\cO$ is a saturated transfer system, then the prime ideals of the self-compatible pair $(\cO,\cO)$ are easily understood. We can then use \cref{Prop:ChangeOaPrimes} to adjust the additive structure to obtain other compatible pairs. Under the assumption of $T$ being multiplicatively cohomological, by virtue of \cref{thm:insensitive} we are able to easily compute the spectra for all possible compatible pairs. Of course, it is not always the case that a Tambara functor is multiplicatively cohomological; in \cref{subsec:cp2burnside}, we will demonstrate how to deal with the case when we are not multiplicatively cohomological.

\subsection{Self-compatible transfer systems}

In \cref{defn:transfersystem} we defined a transfer system for $G$ as a subset $\cO \subseteq \Sub(G) \times \Sub(G)$ satisfying various conditions. We can identify these pairs with a directed subgraph of the lattice $\Sub(G)$ in an obvious way.

Let $\mathcal{O}$ be a self-compatible transfer system and let $T$ be an $(\mathcal{O},\mathcal{O})$-Tambara functor.

\begin{definition}
Let $\pi_0\mathcal{O}$ denote the set of path components of $\mathcal{O}$ (regarded as a directed subgraph of the subgroup poset). For any $\mathcal{O}' \in \pi_0 \mathcal{O}$, let $i^*_{\mathcal{O}'} T$ denote the restriction of $T$ to $\mathcal{O}'$ as a restricted Tambara functor. 
\end{definition}

We will need three observations about self-compatible transfer systems. The first lemma says that each path component in $\mathcal{O}$ ``looks complete". 

\begin{lemma}
If $\mathcal{O}' \in \pi_0 \mathcal{O}$ and $H, K \in \mathcal{O}'$ with $K \leq H$, then $(K,H) \in \mathcal{O}'$. 
\end{lemma}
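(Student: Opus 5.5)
We argue by induction on the length of an undirected path in $\mathcal{O}'$ joining $K$ and $H$, using only two facts: $\mathcal{O}$ is saturated, which is equivalent to self-compatibility by \cref{lem:hull}, and $\mathcal{O}$ satisfies the restriction axiom of \cref{defn:transfersystem}. Since $H$ and $K$ lie in the same path component, there is a zigzag $K = L_0, L_1, \dots, L_n = H$ in which consecutive terms are related by an edge of $\mathcal{O}$ in one direction or the other. That is, for each $i$ either $(L_i,L_{i+1}) \in \mathcal{O}$ or $(L_{i+1},L_i) \in \mathcal{O}$.

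The key step is a reduction that preserves a subgroup chain. Set $M_i = L_i \cap H$. For each edge $(A,B)$ of the zigzag, the restriction axiom applied with $L = B \cap H \leq B$ gives $(A \cap H, B \cap H) \in \mathcal{O}$. Hence the intersected sequence $M_0 = K, M_1, \dots, M_n = H$ is again a zigzag in $\mathcal{O}$, now with every term contained in $H$; here we have used $K \leq H$.

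Next we show by induction on $i$, starting at the far end $i = n$, that $(M_i, H) \in \mathcal{O}$. At the last step $i = 0$ this gives $(K,H) \in \mathcal{O}$. The base case is reflexivity, $(H,H) \in \mathcal{O}$. For the inductive step, suppose $(M_{i+1},H) \in \mathcal{O}$. There are two cases.
\begin{itemize}
\item If $(M_i, M_{i+1}) \in \mathcal{O}$, then transitivity gives $(M_i,H) \in \mathcal{O}$.
\item If $(M_{i+1}, M_i) \in \mathcal{O}$, then $M_{i+1} \leq M_i \leq H$. We have both $(M_{i+1}, M_i)$ and $(M_{i+1}, H)$ in $\mathcal{O}$, so the two-out-of-three property of the saturated system $\mathcal{O}$ gives $(M_i, H) \in \mathcal{O}$.
\end{itemize}

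Finally, $(K,H)$ is an edge between two vertices of $\mathcal{O}'$. Since $\mathcal{O}'$ is a path component, and hence contains all edges of $\mathcal{O}$ among its vertices, $(K,H) \in \mathcal{O}'$.

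The main obstacle is that a naive zigzag may leave $H$ or pass through subgroups not comparable to $H$, where saturation cannot be applied directly. Intersecting the whole zigzag with $H$ via the restriction axiom is the device that removes this problem. One should also double-check that saturation is used in the correct orientation: the pattern ``$A \le B \le C$ with $(A,B)$ and $(A,C)$ known'' is exactly an instance of two-out-of-three, so this is fine.
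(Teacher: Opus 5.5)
Your proof is correct and follows the same route as the paper, whose proof asserts only that induction on the length of a chain connecting $K$ and $H$, together with saturation, yields $(K,H)$. Your preliminary step of intersecting the whole zigzag with $H$ via the restriction axiom is a detail the paper's one-line sketch leaves implicit, and it is genuinely needed: intermediate subgroups of an arbitrary chain need not be comparable with $H$, and there two-out-of-three cannot be applied.
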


\begin{proof}
Since $\mathcal{O}$ is self-compatible, it is saturated, and thus each path component $\mathcal{O}' \in \pi_0 \mathcal{O}$ is saturated. If $H, K \in \mathcal{O}'$ with $K \leq H$, then $H$ and $K$ are connected by a chain of pairs $(L_i,L_{i+1})$. Induction on the length of the chain together with saturation gives the desired element. 
\end{proof}

Our second lemma explains how the position of $\mathcal{O}'$ within $\mathcal{O}$ prohibits the existence of certain pairs outside of $\mathcal{O}'$. 

\begin{lemma}\label{lemma:compare}
Suppose $\mathcal{O}$ is self-compatible and $\mathcal{O}' \in \pi_0 \mathcal{O}$. Suppose $H \leq G$ contains a subgroup $K  \in \mathcal{O}'$ and $L \leq G$ is another subgroup such that $(L , H) \in \mathcal{O}$. Then $(L \cap K , K) \in \mathcal{O}'$. 
\end{lemma}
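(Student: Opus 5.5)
The plan is to get the pair $(L\cap K,K)$ into $\mathcal{O}$ first, and only then show that it lies in the component $\mathcal{O}'$.

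\textbf{Membership in $\mathcal{O}$.} By the restriction axiom of \cref{defn:transfersystem}, from $(L,H)\in\mathcal{O}$ and $K\leq H$ we obtain $(L\cap K,K)\in\mathcal{O}$. No self-compatibility is needed for this step.

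\textbf{Membership in $\mathcal{O}'$.} Here I regard $\mathcal{O}$ as a directed graph on $\Sub(G)$ whose edges are the pairs $(A,B)$ with $A\neq B$. Path components are taken in the underlying undirected graph, so $\mathcal{O}'$ consists of its vertices together with all edges of $\mathcal{O}$ between them.

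\begin{itemize}
\item If $L\cap K=K$, the pair $(K,K)$ is trivially in $\mathcal{O}'$, since $K\in\mathcal{O}'$.
\item Otherwise $(L\cap K,K)$ is an honest edge of $\mathcal{O}$ with endpoint $K\in\mathcal{O}'$. Its other endpoint $L\cap K$ is then joined to $K$ by an edge of $\mathcal{O}$, so it lies in the same path component $\mathcal{O}'$. Hence the edge itself belongs to $\mathcal{O}'$.
\end{itemize}

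\textbf{Possible obstacle.} The only subtlety is the convention for what a path component contains: whether $\mathcal{O}'$ records only the subgroups reachable from $K$, or also the pairs between them. Under either reading the conclusion holds, because edges of $\mathcal{O}$ incident to a vertex of $\mathcal{O}'$ stay inside $\mathcal{O}'$.

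If the paper instead intends $\mathcal{O}'$ to be a connected component in some stricter sense, I would use the preceding lemma: any two comparable subgroups in $\mathcal{O}'$ form a pair of $\mathcal{O}'$. That lemma, combined with $(L\cap K,K)\in\mathcal{O}$, would close the argument. Saturation of $\mathcal{O}$ (from \cref{lem:hull}) would enter only through that lemma.

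I expect no genuine difficulty here. The main care is in not confusing $H$, which need not lie in $\mathcal{O}'$, with $K$, which does, and in noting that the restriction axiom uses $K\leq H$.
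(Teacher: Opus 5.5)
Your proposal is correct and matches the paper's proof: apply the restriction axiom to $(L,H)\in\mathcal{O}$ with $K\leq H$ to get $(L\cap K,K)\in\mathcal{O}$, and then note that this pair is incident to $K\in\mathcal{O}'$, so it lies in the component $\mathcal{O}'$. As you observe, self-compatibility plays no role, and the paper's two-line argument does not use it either.
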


\begin{proof}
The restriction axiom for transfer systems implies that $(L \cap K , H \cap K ) = (L \cap K , K ) \in \mathcal{O}$. Since $K \in \mathcal{O}'$, this implies that $(L \cap K, K) \in \mathcal{O}'$. 
\end{proof}

Our final lemma says that each path-component has a minimal element. 

\begin{lemma}
Let $\mathcal{O}' \in \pi_0 \mathcal{O}$. There exists a unique subgroup $K \in \mathcal{O}'$ such that $K \leq H$ for all $H \in \mathcal{O}'$. 
\end{lemma}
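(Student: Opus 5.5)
We prove a slightly stronger statement, from which the lemma follows immediately.

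\emph{Claim.} If $H_1,H_2\in\mathcal{O}'$, then $(H_1\cap H_2,H_1)\in\mathcal{O}$ and $(H_1\cap H_2,H_2)\in\mathcal{O}$. In particular $H_1\cap H_2\in\mathcal{O}'$.

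Given the claim, let $K$ be the intersection of all (finitely many) subgroups in $\mathcal{O}'$. Applying the claim repeatedly, each partial intersection stays in $\mathcal{O}'$, so $K\in\mathcal{O}'$. By construction $K\le H$ for every $H\in\mathcal{O}'$. Uniqueness is automatic: two such minimal elements are each contained in the other.

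\emph{Proof of the claim.} Since $H_1,H_2$ lie in the same path component, there is a path $H_1=L_0,L_1,\dots,L_n=H_2$ in which consecutive terms are comparable and the corresponding pair lies in $\mathcal{O}$. We induct on $n$.

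For $n=0$ the statement is reflexivity. For $n=1$, say $L_0\le L_1$ with $(L_0,L_1)\in\mathcal{O}$. Then $H_1\cap H_2=L_0$, and the two required pairs are $(L_0,L_0)$ and $(L_0,L_1)$, both in $\mathcal{O}$. The case $L_1\le L_0$ is symmetric.

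For the inductive step, set $M=H_1\cap L_{n-1}$. By the inductive hypothesis, $(M,H_1)\in\mathcal{O}$ and $(M,L_{n-1})\in\mathcal{O}$. We split according to the direction of the last edge.

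\emph{Case $H_2\le L_{n-1}$ with $(H_2,L_{n-1})\in\mathcal{O}$.} Here $M\cap H_2=H_1\cap H_2$.
\begin{itemize}
\item Restricting $(M,L_{n-1})$ along $H_2\le L_{n-1}$ (the restriction axiom) gives $(M\cap H_2,H_2)=(H_1\cap H_2,H_2)\in\mathcal{O}$.
\item Restricting $(H_2,L_{n-1})$ along $M\le L_{n-1}$ gives $(H_2\cap M,M)=(H_1\cap H_2,M)\in\mathcal{O}$.
\item Composing this with $(M,H_1)$ by transitivity gives $(H_1\cap H_2,H_1)\in\mathcal{O}$.
\end{itemize}

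\emph{Case $L_{n-1}\le H_2$ with $(L_{n-1},H_2)\in\mathcal{O}$.} Here $L_{n-1}\cap H_1\cap H_2=M$, because $L_{n-1}\le H_2$.
\begin{itemize}
\item Restricting $(L_{n-1},H_2)$ along $H_1\cap H_2\le H_2$ gives $(M,H_1\cap H_2)\in\mathcal{O}$.
\item Together with $(M,H_1)\in\mathcal{O}$, saturation (two-out-of-three, available by \cref{lem:hull}) yields $(H_1\cap H_2,H_1)\in\mathcal{O}$.
\item Transitivity applied to $(M,L_{n-1})$ and $(L_{n-1},H_2)$ gives $(M,H_2)\in\mathcal{O}$.
\item Saturation applied to $(M,H_1\cap H_2)$ and $(M,H_2)$ gives $(H_1\cap H_2,H_2)\in\mathcal{O}$.
\end{itemize}

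This completes the induction. Since $H_1\cap H_2$ is joined to $H_1$ by a pair in $\mathcal{O}$, it lies in the same component, so $H_1\cap H_2\in\mathcal{O}'$.

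The only delicate point is the second case, where the restriction axiom alone produces $M$ rather than $H_1\cap H_2$. This is exactly where the saturation hypothesis, i.e.\ self-compatibility, is needed.
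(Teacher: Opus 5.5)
Your proof is correct, and it takes a different, more careful route than the paper's.

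The paper gets a minimal element from finiteness. It then proves uniqueness by asserting that two minimal $H,K$ in the same component admit a single $L$ with $(H,L),(K,L)\in\mathcal{O}$, and restricts these pairs to put $H\cap K$ in the component. That common-upper-bound step does not follow from connectedness alone. In the paper's own $C_{pq}$ example, $C_p$ and $C_q$ lie in the component $\{e,C_p,C_q\}$ of $\{(e,C_p),(e,C_q)\}$ but have no common upper bound in $\mathcal{O}$. For two minimal elements the step happens to hold, but only because of the lemma itself, so the paper's argument leaves it unjustified.

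Your induction over arbitrary zigzag paths carries a common \emph{lower} bound instead, which is the direction the restriction axiom supports. It also gives the stronger conclusion that each component is closed under intersections.

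One refinement to your closing remark: saturation is needed for your particular claim that $H_1\cap H_2$ itself maps to both $H_1$ and $H_2$, but not for the lemma. The claim does fail without saturation, e.g.\ for $\mathcal{O}_3$ on $C_{p^2}$ with $H_1=C_p$, $H_2=C_{p^2}$. Suppose you only track some $M$ with $(M,H_1),(M,H_2)\in\mathcal{O}$. Then your Case A works verbatim with $M\cap H_2$ as the new lower bound, and Case B is just transitivity. Now take a minimal $K$ and any $H$ in the component. The resulting $M\le K$ lies in the component, so minimality gives $M=K\le H$. Hence every component of an arbitrary transfer system has a minimum.
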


\begin{proof}
Existence of minimal subgroups in $\mathcal{O}'$ is clear, so it suffices to show uniqueness. Suppose $H$ and $K$ are minimal subgroups in $\mathcal{O}'$. Since they are in the same path component, there exists $L$ such that $(H , L)$ and $(K , L)$ are both in $\mathcal{O}'$. Intersecting the first norm with $K$ and the second with $H$ shows that $(K \cap H, K \cap H)$ is in $\mathcal{O}'$. Since $K$ and $H$ were minimal, we conclude $K=H$. 
\end{proof}

\subsection{Spectra over self-compatible pairs}

Our goal is to prove the following:

\begin{theorem}\label{thm:pi0}
Let $\mathcal{O}$ be a self-compatible transfer system and let $T$ be an $(\mathcal{O},\mathcal{O})$-Tambara functor. There is a bijection
$$\coprod_{\mathcal{O}' \in \pi_0 \mathcal{O}} \Spec(i^*_{\mathcal{O}'}T) \cong \Spec^{(\cO,\cO)}(T)$$
where $\Spec(i^*_{\mathcal{O}'}T)$ is the spectrum of the restricted Tambara functor $i^*_{\mathcal{O}'}T$. \end{theorem}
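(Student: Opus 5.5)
The plan is to show that every prime ideal $P$ of $T$ is ``concentrated'' on a single path component, and then to identify primes concentrated on $\mathcal{O}'$ with primes of $i^*_{\mathcal{O}'}T$.

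\textbf{Step 1: the support of a prime is one component.} For a prime $P$, call $H$ \emph{proper for $P$} if $P(G/H) \neq T(G/H)$, equivalently $1 \notin P(G/H)$. Since restrictions are unital ring maps and $P$ is closed under restriction, the set of proper subgroups is closed under passing to subgroups. I claim all proper subgroups lie in one component $\mathcal{O}'$. Suppose $H_1, H_2$ are proper and lie in different components. Take $x = 1 \in T(G/H_1)$ and $y = 1 \in T(G/H_2)$ and verify $\texttt{Q}(P,1,1)$. A generalized product involves $(g_1K_1g_1^{-1}, L), (g_2K_2g_2^{-1}, L) \in \mathcal{O}$ with $K_i \le H_i$. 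Norms of $1$ are $1$, so the product is $1 \in T(G/L)$, and we must show $L$ is not proper. If it were, then $K_1, K_2$ (up to conjugacy) would be joined to $L$ by pairs in $\mathcal{O}$, hence lie in one component. To connect $K_i$ back to $H_i$, use \cref{lemma:compare} and the minimal-element lemma: the minimal subgroup $M_i$ of the component of $H_i$ satisfies $M_i \le K_i$ or intersects into it appropriately. The delicate point is when $K_i$ lies outside the component of $H_i$. Here I would choose $K_i = H_i$ and $L = H_i$, or use the restriction axiom to produce an edge $(K_i\cap M, M)$. I expect this to be the main obstacle: one must show that ``$L$ proper'' forces a contradiction for \emph{every} admissible choice. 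The cleaner route is probably to argue with primality against the component structure: if both components contain proper subgroups, then every $L$ receiving norms from both components is non-proper, because its component would have to equal both. Conjugation invariance of components needs \cref{defn:transfersystem}(3) and the conjugation closure of $P$. Given that, $\texttt{Q}(P,1,1)$ holds, so $1 \in P(G/H_1)$ or $1 \in P(G/H_2)$, a contradiction. Since $P \neq T$, exactly one component $\mathcal{O}'$ (up to conjugacy, which the restricted setting encodes via its extra conjugations) carries $P$.

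\textbf{Step 2: the two maps.} Define $\Phi(P) = P|_{\mathcal{O}'}$. This is an ideal of $i^*_{\mathcal{O}'}T$, because all norms and transfers within $\mathcal{O}'$ are present by the first lemma. It is prime, since generalized products in $i^*_{\mathcal{O}'}T$ are generalized products in $T$. Conversely, given a prime $Q$ of $i^*_{\mathcal{O}'}T$, define $\Psi(Q)(G/H) = Q(G/H)$ for $H \in \mathcal{O}'$ and $T(G/H)$ otherwise. I will check that this is an ideal:
\begin{itemize}
\item Transfers and norms out of $\mathcal{O}'$ land in $\mathcal{O}'$, since edges stay inside components.
\item A transfer or norm from a non-$\mathcal{O}'$ level into an $\mathcal{O}'$ level is impossible for the same reason.
\item A restriction from a level $H \notin \mathcal{O}'$ to $K \in \mathcal{O}'$ must land in $Q(G/K)$. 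This is the point needing care. I would take $\Psi(Q)(G/H) = T(G/H)$ only when no subgroup of $H$ lies in $\mathcal{O}'$. Otherwise set $\Psi(Q)(G/H) = \{x : \res^H_K x \in Q(G/K)\ \forall K\le H, K\in\mathcal{O}'\}$, invoking \cref{lemma:compare} to see that this is stable under the norms and transfers into $H$.
\end{itemize}

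\textbf{Step 3: primality of $\Psi(Q)$ and inverse bijection.} For primality, given $x, y$ with $\texttt{Q}(\Psi(Q),x,y)$, restrict the generalized products to levels in $\mathcal{O}'$ to obtain $\texttt{Q}(Q, \res x, \res y)$ at each relevant $K$. A single prime condition then handles all $K$ at once, by pairing all the restrictions simultaneously, which uses \cref{lemma:compare} to keep the norms inside $\mathcal{O}'$.

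Finally, $\Phi\Psi = \mathrm{id}$ is immediate. For $\Psi\Phi = \mathrm{id}$, I need that $P(G/H)$ is determined by its restrictions to $\mathcal{O}'$ levels. I would prove this by applying primality of $P$ to the pair $(x, 1_{G/M})$, with $M$ the minimal subgroup of $\mathcal{O}'$; Step 1 shows the generalized products land in non-proper levels or in $P$.
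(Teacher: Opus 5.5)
Your architecture (restrict a prime to a component, extend a prime of $i^*_{\mathcal{O}'}T$ by preimages of restriction, check the two composites) matches the paper's, and your revised $\Psi(Q)$ is exactly $I(J)$ of \cref{def:IJ}. There is, however, a genuine gap in Step 1: its claim is false, so it is more than the obstacle you flagged.

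First, the closure goes the other way. If $1\in P(G/H)$ then $1=\res^H_K(1)\in P(G/K)$. So it is the \emph{non}-proper subgroups that are closed under passing to subgroups, and the proper ones are closed under passing to supergroups. Second, the proper levels of a prime usually span several components. For $\underline{\Z}$ over $C_{p^2}$ with $(\mathcal{O}_1,\mathcal{O}_1)$, the prime $\mathcal{A}_q$ is $\langle q\rangle$ at $e$, $C_p$ and $C_{p^2}$, which lie in two components. Your own revised $\Psi(Q)$ is likewise proper at every level containing the minimal element $M$ of $\mathcal{O}'$, whether or not that level is in $\mathcal{O}'$. Accordingly $\texttt{Q}(P,1,1)$ cannot be verified: taking $K_1=K_2=L=e$, or any common proper level, the generalized product is $1\notin P(G/L)$. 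Your later steps (choosing the component of $P$, and the $(x,1_{G/M})$ argument for $\Psi\Phi=\mathrm{id}$) rest on this false statement.

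The correct replacement concerns \emph{minimal} proper subgroups; the paper uses it implicitly in \cref{prop:itssurjective} by taking $H$ minimal with the prime nontrivial at $G/H$.
\begin{itemize}
\item If $H$ is minimal proper, it is the minimal element of its component. Otherwise $(M,H)\in\mathcal{O}$ with $M<H$ non-proper, and then $1=\nm^H_M(1)\in P(G/H)$.
\item For minimal proper $H_1,H_2$, your $\texttt{Q}(P,1,1)$ argument now works. A generalized product using some $K_i<H_i$ lies in $P$, because $P(G/g_iK_ig_i^{-1})$ is the whole ring. A product using $K_1=H_1$ and $K_2=H_2$ puts the minimal elements $g_1H_1g_1^{-1}$ and $g_2H_2g_2^{-1}$ in one component, which forces them to be equal.
\end{itemize}
Hence $P$ has, up to conjugacy, a unique minimal proper level $M$, the minimal element of a component $\mathcal{O}'$, and $P$ is proper exactly at the levels containing a conjugate of $M$. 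This is the input the later steps actually need.

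In particular, primality of $\Phi(P)$ does not follow from ``generalized products in $i^*_{\mathcal{O}'}T$ are generalized products in $T$'', which is the wrong inclusion. You must show that the extra $T$-generalized products of elements at $\mathcal{O}'$-levels lie in $P$. They do: such a product involves some $K_i\le H_i$ outside $\mathcal{O}'$. This $K_i$ cannot contain $M$, since $M\le K_i\le H_i$ would give $(M,K_i)\in\mathcal{O}$ by restricting $(M,H_i)$, putting $K_i$ in $\mathcal{O}'$. So $K_i$ is a non-proper level and the product lies in $P$.

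Finally, for primality of $\Psi(Q)$, make ``pairing all the restrictions'' precise. Apply primality of $Q$ to $(\res^H_{L_1}x,\res^K_{L_2}y)$ for every pair of $\mathcal{O}'$-subgroups $L_1\le H$ and $L_2\le K$, not to a single common level. Then if some $\res^H_{L_1}x\notin Q$, every $\res^K_{L_2}y$ lies in $Q$, so $y\in\Psi(Q)$.
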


\begin{example}
Let $\mathcal{O} = \cO_{\mathrm{comp}}$, so that $T$ is a (complete) Tambara functor. Then $\pi_0 \mathcal{O} = \{ \mathcal{O} \}$ and the theorem is a tautology. 
\end{example}

\begin{example}\label{ex:coefficientsystem}
If $\mathcal{O} = \cO_{\mathrm{triv}}$, so $T$ is a coefficient system, then $\pi_0 \mathcal{O} \cong \Sub(G)/G$, where a conjugacy class of subgroup comes equipped with its Weyl group action. We then obtain a homeomorphism
   $$\coprod_{[H] \leq G} \Spec_{W_G(H)}(T(G/H)) \xrightarrow{\cong} \Spec^{(\cO_\mathrm{triv},\cO_\mathrm{triv})}(T).$$
\end{example}

Our strategy to prove \Cref{thm:pi0} is as follows. First, we will show that every prime in $\Spec(i^*_{\mathcal{O}'}T)$ extends to a prime in $\Spec(T)$, giving a map from the left-hand side to the right-hand side. We will then show that this map is a bijection, i.e., that every prime in $\Spec^{(\cO,\cO)}(T)$ arises uniquely in this way. 

\begin{definition}\label{def:IJ}
Let $\mathcal{O}' \in \pi_0\mathcal{O}$ and let $J \in \Spec(i^*_{\mathcal{O}'}T)$ be a prime $i^*_{\mathcal{O}'}$-ideal. Define $I(J) \subseteq T$ by
\[
I(J)(G/K) := 
\begin{cases}
J(G/K) \quad & \text{ if } K \in \mathcal{O}', \\
\bigcap_{L \in \mathcal{O}', \  L \leq K} (\res^K_L)^{-1}(J(G/L)) \quad & \text{ otherwise},
\end{cases}
\]
where we take the convention that the empty intersection is $(1)$. 
\end{definition}

\begin{proposition}
The subset $I(J) \subseteq T$ is an $(\mathcal{O},\mathcal{O})$-ideal. 
\end{proposition}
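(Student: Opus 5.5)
We must check that each $I(J)(G/K)$ is a ring ideal and that $I(J)$ is closed under restrictions, conjugations, and the transfers and norms indexed by $\mathcal{O}$. That each level is an ideal is immediate: for $K\in\mathcal{O}'$ it is $J(G/K)$, and otherwise it is an intersection of preimages of ideals under ring homomorphisms, possibly the empty intersection $(1)$.

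\textbf{Restriction and conjugation.} I would use the reformulation that for every $K$ and every $L\in\mathcal{O}'$ with $L\le K$ we have $\res^K_L(I(J)(G/K))\subseteq J(G/L)$. When $K\notin\mathcal{O}'$ this is the definition. When $K\in\mathcal{O}'$ it holds because $J$ is closed under restrictions inside $\mathcal{O}'$.

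Now take $M\le K$ and $x\in I(J)(G/K)$. If $M\in\mathcal{O}'$, the reformulation gives $\res^K_M x\in J(G/M)=I(J)(G/M)$. If $M\notin\mathcal{O}'$, then for each $L\in\mathcal{O}'$ with $L\le M$ we have $\res^M_L\res^K_M x=\res^K_L x\in J(G/L)$, so $\res^K_M x\in I(J)(G/M)$.

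For conjugation I would note that $\mathcal{O}'$ is closed under $G$-conjugation, since the components of $\mathcal{O}$ are unions of conjugacy classes in the sense used to define $i^*_{\mathcal{O}'}$. Then $J$ is conjugation stable, and conjugation commutes with restriction, which handles the second case of the definition.

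\textbf{Transfers and norms.} Let $(M,K)\in\mathcal{O}$ and $x\in I(J)(G/M)$. It suffices to show that for each $L\in\mathcal{O}'$ with $L\le K$, the elements $\res^K_L\tr^K_M x$ and $\res^K_L\nm^K_M x$ lie in $J(G/L)$. If $K\in\mathcal{O}'$, this also recovers membership in $J(G/K)$ by taking $L=K$.

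Apply the double coset formula (\cref{prop:dcoset}). Each summand or factor is
\[
\tr^L_{L\cap gMg^{-1}}\res^{gMg^{-1}}_{L\cap gMg^{-1}}c_g(x), \quad\text{respectively}\quad \nm^L_{L\cap gMg^{-1}}\res^{gMg^{-1}}_{L\cap gMg^{-1}}c_g(x).
\]
By \cref{lemma:compare}, applied to $(gMg^{-1},K)\in\mathcal{O}$ and $L\in\mathcal{O}'$ with $L\le K$, we get $(L\cap gMg^{-1},L)\in\mathcal{O}'$. Hence $L\cap gMg^{-1}\in\mathcal{O}'$, and it is a subgroup of $gMg^{-1}$.

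By the restriction and conjugation closure already proved, $\res^{gMg^{-1}}_{L\cap gMg^{-1}}c_g(x)$ lies in $I(J)(G/(L\cap gMg^{-1}))=J(G/(L\cap gMg^{-1}))$. Since $J$ is an ideal of the restricted Tambara functor $i^*_{\mathcal{O}'}T$, it is closed under transfers and norms along pairs in $\mathcal{O}'$. So each term lies in $J(G/L)$, and therefore so does their sum, respectively their product.

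\textbf{Main obstacle.} The main care point is the norm case. There we need only one factor in $J$, but in fact every factor is in $J$, so this is harmless. The other delicate point is bookkeeping the conjugation invariance of $\mathcal{O}'$ and of $J$, including the extra Weyl conjugations that the restricted Tambara functor carries.
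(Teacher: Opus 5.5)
Your argument is correct, but it is organized differently from the paper's.

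\textbf{The paper's route.} It splits on the \emph{source} $H$ of a transfer or norm.
\begin{itemize}
\item If $H\in\mathcal{O}'$, the target also lies in $\mathcal{O}'$, and closure of $J$ applies.
\item If $H$ contains no subgroup of $\mathcal{O}'$, then \cref{lemma:compare} shows the target level is also $(1)$.
\item In the middle case, it asserts that the target level equals $(\res^{H'}_H)^{-1}(I(J)(G/H))$ and calls closure ``clear''.
\end{itemize}

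\textbf{Your route.} You characterize membership as follows: $y\in I(J)(G/K)$ if and only if $\res^K_L y\in J(G/L)$ for every $L\in\mathcal{O}'$ with $L\le K$. You then push every transfer and norm through the double coset formula. \cref{lemma:compare} guarantees that each resulting term is a transfer or norm along a pair in $\mathcal{O}'$, where $J$ is closed.

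\textbf{Comparison.} The paper's split is shorter and makes the component geometry visible: which levels are $J$ and which are $(1)$. Your argument handles all three of the paper's cases uniformly, the third vacuously. It also supplies the double coset computation that the paper's ``clearly closed'' leaves implicit.

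It is also more robust. The paper's middle case tacitly assumes that the $\mathcal{O}'$-subgroups of $H'$ are exactly those of $H$, and this can fail. Take $G=C_2\times C_2$ with proper nontrivial subgroups $A,B,C$. The saturated transfer system generated by $(A,G)$ is $\{(A,G),(e,B),(e,C)\}$ together with identities, and $\{e,B,C\}$ is one of its components. For the transfer or norm from $A$ to $G$, the target contains $B$ and $C$, which do not lie in $A$. Your argument treats exactly this situation through \cref{lemma:compare}, which gives $(e,B),(e,C)\in\mathcal{O}'$.

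Your remark that $\mathcal{O}'$ must be read as conjugation-stable is the right one. The paper relies on the same convention, for instance in $\pi_0\mathcal{O}_{\mathrm{triv}}\cong\Sub(G)/G$, and it is what underlies the paper's statement that closure under conjugation is clear.
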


\begin{proof}
It is clear that $I(J)$ is levelwise an ideal, so we only need to check closure under the operations parametrized by $\mathcal{O}$. Closure under restriction and conjugation is clear from the definition.

To see closure under transfers and norms, we break into cases. Let $H \leq G$:

\begin{itemize}

\item If $H \in \mathcal{O}'$, then the only allowable transfers and norms from $H$ have target $H' \in \mathcal{O}'$. At these levels, $I(J)$ is just $J$, which is closed under these operations. 

\item Suppose $H \notin \mathcal{O}'$ but $H \geq L_1,\ldots,L_n$ for some maximal list of subgroups $L_1,\ldots,L_n \in \mathcal{O}'$ with $n \geq 1$. Any allowable transfer or norm from $H$ lands in a subgroup $H'$ containing $H$, so $H'$ also contains $L_1, \ldots, L_n$ and thus $I(J)(G/H') = \bigcap_{i=1}^n (\res^{H'}_{L_i})^{-1}(J(G/L_i))$. Since $\res^{H'}_{L_i} = \res^{H'}_H \circ \res^H_{L_i}$, we may rewrite this as $I(J)(G/H') = (\res^{H'}_H)^{-1}(I(J)(G/H))$. This is clearly closed under transfers and norms. 

\item Suppose $H \notin \mathcal{O}'$ and that $H \not\geq L$ for any $L \in \mathcal{O}'$, so $I(J)(G/H) = (1)$. By \Cref{lemma:compare}, there are no transfers or norms from $H$ to a subgroup containing a subgroup in $\mathcal{O}'$, so any transfer or norm from $H$ lands in a subgroup $K$ for which $I(J)(K)$ also equals $(1)$. 

\end{itemize}
\end{proof}

\begin{proposition}\label{prop:lefttoright}
The $(\mathcal{O},\mathcal{O})$-ideal $I(J) \subseteq T$ is $(\mathcal{O},\mathcal{O})$-prime.
\end{proposition}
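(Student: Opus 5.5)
Properness comes first. Since $J$ is a prime ideal of $i^*_{\mathcal{O}'}T$, it is proper. Any proper ideal of a restricted Tambara functor has some level $J(G/K) \neq T(G/K)$. Because restriction is unital, it then follows that $1 \notin J(G/K_0)$, where $K_0$ is the minimal subgroup of $\mathcal{O}'$. Hence $I(J)(G/K_0) \neq T(G/K_0)$, so $I(J)$ is proper.

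For primality, let $a \in T(G/H)$ and $b \in T(G/H')$ with $\mathtt{Q}(I(J),a,b)$, and suppose $a \notin I(J)(G/H)$. We must show $b \in I(J)(G/H')$. The plan is to transport everything down into $\mathcal{O}'$ and apply the primality of $J$ there.

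If $a \notin I(J)(G/H)$, then either $H \in \mathcal{O}'$ and $a \notin J(G/H)$, or $H \notin \mathcal{O}'$ and some $L \in \mathcal{O}'$ with $L \leq H$ has $a_L := \res^H_L a \notin J(G/L)$. In both cases we get $L \in \mathcal{O}'$ with $a_L \notin J(G/L)$. Similarly, to show $b \in I(J)(G/H')$ it suffices to show $\res^{H'}_{L'} b \in J(G/L')$ for every $L' \in \mathcal{O}'$ with $L' \leq H'$. If $H' \in \mathcal{O}'$, take $L' = H'$. If no such $L'$ exists, the level is $(1)$ and there is nothing to prove.

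Fix such an $L'$ and set $b_{L'} := \res^{H'}_{L'} b$. We claim $\mathtt{Q}(J, a_L, b_{L'})$ holds in the restricted Tambara functor. A generalized product of $a_L$ and $b_{L'}$ inside $i^*_{\mathcal{O}'}T$ has the form
\[\nm^M_{g_1K_1g_1^{-1}} c_{g_1} \res^L_{K_1}(a_L)\cdot \nm^M_{g_2K_2g_2^{-1}} c_{g_2}\res^{L'}_{K_2}(b_{L'}),\]
with all subgroups in $\mathcal{O}'$ and the norms in $\mathcal{O}'$. Using $\res^L_{K_1}\res^H_L = \res^H_{K_1}$, this is literally a generalized product of $a$ and $b$ in $T$ for the pair $(\mathcal{O},\mathcal{O})$. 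Since $M \in \mathcal{O}'$, the hypothesis $\mathtt{Q}(I(J),a,b)$ places it in $I(J)(G/M) = J(G/M)$. Primality of $J$ then gives $b_{L'} \in J(G/L')$, since $a_L \notin J(G/L)$.

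The main obstacle is making this comparison of generalized products precise in the restricted setting. The conjugations in $i^*_{\mathcal{O}'}T$ are by elements of $G$ carrying subgroups of $\mathcal{O}'$ into $\mathcal{O}'$ (the extra Weyl actions), and one must check these match the conjugations allowed in $T$. One must also check that the norms $\nm^M_{gKg^{-1}}$ with all subgroups in $\mathcal{O}'$ are exactly pairs in $\mathcal{O}'$. The latter follows from the first lemma of this section (path components look complete) and its consequence that pairs in $\mathcal{O}$ between members of $\mathcal{O}'$ lie in $\mathcal{O}'$. We only need the inclusion of generalized products of $i^*_{\mathcal{O}'}T$ into those of $T$, which is the easy direction, so no appeal to \cref{lemma:compare} is needed here.
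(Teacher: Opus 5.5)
Your proof is correct, and it takes a different, uniform route from the paper's. The paper splits into four cases, according to whether each of $H$ and $H'$ lies in $\mathcal{O}'$, merely contains a member of $\mathcal{O}'$, or neither.

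In the case $H,H'\notin\mathcal{O}'$, the paper restricts only to the minimal subgroup $L_0$ of $\mathcal{O}'$. It obtains $\res^H_{L_0}(a)\in J(G/L_0)$ or $\res^{H'}_{L_0}(b)\in J(G/L_0)$, and then concludes $a\in I(J)(G/H)$ or $b\in I(J)(G/H')$. As written, that last inference is not valid in general, because $I(J)(G/H)$ is an intersection over \emph{all} $L\in\mathcal{O}'$ with $L\leq H$. For example, take $\underline{A}_{C_{p^2}}$ with $\mathcal{O}=\mathcal{O}_1$, and let $J$ be $\langle q\rangle$ at both levels $e$ and $C_p$, with $q\neq p$. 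Then $u-p^2$ restricts to $0$ at level $C_{p^2}/e$, but it does not lie in $I(J)(C_{p^2}/C_{p^2})=\langle q,t-p\rangle$.

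Your argument avoids this problem in two steps. First, you pick a witness $L$ with $\res^H_L(a)\notin J(G/L)$. Second, you quantify over \emph{every} $L'\in\mathcal{O}'$ below $H'$ and apply primality of $J$ to each pair $(\res^H_L a,\res^{H'}_{L'}b)$. This uses primality of $J$ as a restricted Tambara ideal, not levelwise primality of a single $J(G/L)$, which is unavailable in general since bottom levels are typically only Weyl-prime. The only other inputs are the easy inclusion of generalized products of $i^*_{\mathcal{O}'}T$ into those of $T$, and the fact that $I(J)=J$ on $\mathcal{O}'$, exactly as you say. This one reduction also covers the paper's other cases. The case split buys little beyond making the levels where $I(J)=(1)$ explicit, and you handle those as well.

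There is one small slip in your properness paragraph. Unitality of restriction gives $1\in J(G/K)\Rightarrow 1\in J(G/K_0)$, which is the wrong direction for your claim. The correct reason for $1\notin J(G/K_0)$ is that the norms $\nm^K_{K_0}$ exist within $\mathcal{O}'$ and are unital. The detour is unnecessary in any case: $I(J)(G/K)=J(G/K)$ for $K\in\mathcal{O}'$, so any level at which $J$ is proper already shows $I(J)\neq T$. The paper's proof does not address properness at all.
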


\begin{proof}
We will verify that $\mathtt{Q}(I(J),x,y)$ implies $x \in I(J)$ or $y \in I(J)$ for all $x,y \in T$, so suppose that $x \in T(G/H)$, $y \in T(G/K)$, and that $\mathtt{Q}(I(J),x,y)$. We divide into cases:

\begin{itemize}

\item Suppose $H,K \in \mathcal{O}'$. The only generalized products of $x$ and $y$ are restrictions of generalized products contained in $i^*_{\mathcal{O}'}T$, and since $J$ was assumed to be an $i^*_{\mathcal{O}'}T$-prime ideal, it follows that $\mathtt{Q}(I(J),x,y)$ implies $x \in I(J)$ or $y \in I(J)$. 

\item Suppose $H$ does not contain any subgroups in $\mathcal{O}'$. Then $I(J)(G/H) = (1)$ and thus $x \in I(J)(G/H)$. Similarly, if $K$ does not contain any subgroups in $\mathcal{O}'$, then $y \in I(J)(G/K)$. 

\item Suppose $H \in \mathcal{O}'$ and that $K \notin \mathcal{O}'$ but that $K$ does contain a subgroup in $\mathcal{O}'$. Then the only generalized products of $x$ and $y$ are products of restrictions of $x$ and restrictions of norms of $y$. Primality of $J$ implies that $x$ or $y$ are in $I(J)$. 

\item Suppose $H, K \notin \mathcal{O}'$ but that both contain subgroups in $\mathcal{O}'$. Then both contain the minimal subgroup $L$ of $\mathcal{O}'$. Since $I(J)(G/H) \subseteq (\res^H_L)^{-1}(J(G/L))$ and $I(J)(G/K) \subseteq (\res^K_L)^{-1}(J(G/L))$, we have $\res^H_L(x) \res^K_L(y) \in J(G/L)$ and thus either $\res^H_L(x)$ or $\res^K_L(y)$ is in $J(G/L)$. This implies that $x \in I(J)(G/H)$ or $y \in I(J)(G/K)$. 

\end{itemize}
\end{proof}

\begin{proposition}\label{prop:itssurjective}
Let $M$ be an $(\mathcal{O},\mathcal{O})$-prime ideal of $T$. Then $M = I(J)$ for some prime ideal $J$ of $i^*_{\mathcal{O}'}T$ for some $\mathcal{O}' \in \pi_0 \mathcal{O}$. 
\end{proposition}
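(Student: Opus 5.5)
We must determine the component $\mathcal{O}'$ from $M$, set $J := M|_{\mathcal{O}'}$, and show $M = I(J)$.

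\textbf{Choosing the component.} Since $M \neq T$, some level $M(G/H)$ is proper, i.e.\ $1 \notin M(G/H)$. Restrictions are unital ring maps and $M$ is closed under restriction, so if $M(G/K)=(1)$ then $M(G/H)=(1)$ for all $H \geq K$. Hence the set $S$ of subgroups with $M(G/K)\neq (1)$ is nonempty and downward closed up to conjugacy. The key claim is that all minimal elements of $S$ lie in a single component. Suppose $L_1 \in \mathcal{O}_1$ and $L_2 \in \mathcal{O}_2$ lie in different components and $1 \notin M(G/L_i)$. Test $\mathtt{Q}(M,1_{L_1},1_{L_2})$. A generalized product lands at a level $L$ and involves norms $\nm^L_{g_iK_ig_i^{-1}}$ with $K_i \leq L_i$ and $(g_iK_ig_i^{-1},L)\in\mathcal{O}$; its value is $1\cdot 1=1$. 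We need to show that any such $L$ has $M(G/L)=(1)$. If $L$ contains a conjugate of both $L_1$ and $L_2$, then taking $K_i=L_i$ would put both in $L$'s component, a contradiction. So at least one factor is a norm from a proper subgroup $K_1<L_1$ with $K_1\notin \mathcal{O}_1$. I expect this to be the main obstacle.

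To handle it, I would first prove that the minimal subgroups of $S$ form the bottom of a single component, arguing in a different order. Let $L_0$ be minimal in $S$ and $\mathcal{O}'$ its component. Compare with the minimal element $m$ of $\mathcal{O}'$: if $m < L_0$, then $M(G/m)=(1)$, and norming $1$ gives $1 \in M(G/L_0)$, a contradiction. So $L_0 = m$ is the minimal element. Now for an arbitrary $H$ with $1 \notin M(G/H)$, apply $\mathtt{Q}(M,1_{L_0},1_H)$. Any generalized product norms from subgroups $K\leq L_0$ or $K \leq H$ into some $L$. If $K < L_0$, then $M(G/K)=(1)$ by minimality, so $1=\nm(1)\in M(G/L)$. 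The generalized product is therefore always $1\in M(G/L)$ unless every level it can reach is proper. Unwinding this via \Cref{lemma:compare} should show that $H$ contains a conjugate of $L_0$; otherwise primality would force $1\in M$ somewhere we know is proper. In particular, a second minimal element in another component yields a contradiction.

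\textbf{Identifying $M$.} Set $J = M|_{\mathcal{O}'}$. It is an ideal of $i^*_{\mathcal{O}'}T$ since all operations within $\mathcal{O}'$ are available. It is prime because generalized products for $i^*_{\mathcal{O}'}T$ are among those for $T$, and any generalized product of elements at $\mathcal{O}'$-levels landing outside $\mathcal{O}'$ lands at a level $L$ where we must check membership. I would argue that such products are restrictions of, or norms into, $\mathcal{O}'$ levels, using \Cref{lemma:compare}; alternatively, one can show $M(G/L)$ is determined by its restriction to $m$.

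Then compare levels. For $K$ containing no conjugate of $m$, the previous step gives $M(G/K)=(1)=I(J)(G/K)$. For $K\notin\mathcal{O}'$ containing $m$, closure under restriction gives $M(G/K)\subseteq (\res^K_m)^{-1}J(G/m) = I(J)(G/K)$. For the reverse inclusion, take $x$ with $\res^K_m x\in J$ and apply primality to $\mathtt{Q}(M,x,1_m)$ or a similar pair. The generalized products reduce, via the double coset formula, to elements that are restrictions through $m$-level elements of $M$. Since $1\notin M(G/m)$, primality forces $x\in M(G/K)$. This final primality step, together with the uniqueness of the component, is where I expect the real work to lie.
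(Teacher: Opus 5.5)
Your overall route matches the paper's: take a minimal level where $M$ is proper, use units to show $M=(1)$ at levels not above it, set $J=M|_{\mathcal{O}'}$, and get one inclusion from restriction-closure and the other from primality. Your observation that the minimal level $L_0$ must be the bottom element $m$ of its component is a useful detail the paper leaves implicit.

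\textbf{The gap.} The identification step rests on a misreading of \cref{def:IJ}. For $K\notin\mathcal{O}'$ containing $m$, $I(J)(G/K)$ is $\bigcap_{L\in\mathcal{O}',\,L\le K}(\res^K_L)^{-1}J(G/L)$, an intersection over \emph{all} $\mathcal{O}'$-levels below $K$. In general this is strictly smaller than $(\res^K_m)^{-1}J(G/m)$. So the reverse inclusion you set out to prove, $(\res^K_m)^{-1}J(G/m)\subseteq M(G/K)$, is false.

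The paper's own computation exhibits this. Take $\underline{A}_{C_{p^2}}$ with $\mathcal{O}=\mathcal{O}_1$, $\mathcal{O}'=\{e,C_p\}$, $J(C_{p^2}/C_p)=J(C_{p^2}/e)=\langle q\rangle$ and $q\neq p$. The resulting prime $M=\mathcal{D}_q$ has top level $\langle q,t-p\rangle$. The element $x=u-p^2$ has $\res^{p^2}_1(x)=0$, yet $x\notin\langle q,t-p\rangle$. Your primality test fails exactly here: $\mathtt{Q}(M,x,1_e)$ does not hold, because of the generalized product $\res^{p^2}_p(x)\cdot\nm^p_1(1)=pt-p^2\notin\langle q\rangle$. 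Generalized products of $x$ with $1_m$ do not reduce to restrictions to $m$; they see the restriction of $x$ to every $\mathcal{O}'$-level below $K$.

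\textbf{The repair.} Run your own test with the correct set. Take $x$ in the full intersection and check $\mathtt{Q}(M,x,1_m)$.
\begin{itemize}
\item If the $1_m$-factor is normed from a proper subgroup of $m$, it is a norm of $1$ from a level where $M=(1)$, so the product lies in $M$.
\item If it is normed from (a conjugate of) $m$ itself, the target $L$ lies in (a conjugate of) $\mathcal{O}'$. Hence so does the source $K_1\le K$ of the $x$-factor, and $\res^K_{K_1}x\in J(G/K_1)$ by hypothesis, so the product again lies in $M$.
\end{itemize}
Since $1\notin M(G/m)$, primality gives $x\in M(G/K)$. Restriction-closure at every such $L$ gives the other inclusion.

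\textbf{Primality of $J$.} Your alternative suggestion, that ``$M(G/L)$ is determined by its restriction to $m$'', is false for the same reason. The correct argument: a generalized product of $\mathcal{O}'$-level elements landing at $L\notin\mathcal{O}'$ uses norms into $L$ out of (conjugates of) subgroups $K_i\le H_i$ with $H_i\in\mathcal{O}'$. These $K_i$ lie in $L$'s component, so $K_i\notin\mathcal{O}'$. Such $K_i$ cannot contain $m$, since restricting $(m,H_i)\in\mathcal{O}$ to $K_i$ would give $(m,K_i)\in\mathcal{O}$. Hence $M(G/K_i)=(1)$ and that factor already lies in $M$.

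\textbf{A harmless slip.} Restriction-closure shows that $M(G/H)=(1)$ forces $M(G/K)=(1)$ for $K\le H$. So your set $S$ is upward closed, not downward closed; this is not used later.
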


\begin{proof}
Since $M$ is prime, there exists a subgroup $H \leq G$ such that $M(G/H) \neq (1)$. Let $\mathcal{O}'$ be the path-component containing $H$ and let $J$ be the restriction of $M$ to $i^*_{\mathcal{O}'}T$. Then primality of $M$ implies primality of $J$, and we claim that $M = I(J)$. To see this, we will check that their values agree on $G/K$ for all subgroups $K \leq G$. As usual, we divide into cases:

\begin{itemize}

\item If $K \in \mathcal{O}'$, then $I(J)(G/K) = J(G/K) = M(G/K)$ by definition. 

\item If $K$ does not contain any subgroups in $\mathcal{O}'$, then $I(J)(G/K) = (1)$. Suppose $M(G/K) \neq (1)$ and let $y \notin M(G/K)$. Let $x \notin M(G/H)$. Our hypotheses on $K$ imply that the only generalized products of $x$ and $y$ lie in proper subgroups of $H$, and minimality of $H$ implies $M$ is $(1)$ at all of these levels. Thus $\mathtt{Q}(M,x,y)$ holds, which is a contradiction. Thus $M(G/K) = (1)$.

\item If $K \notin \mathcal{O}'$ but $K$ contains a subgroup in $\mathcal{O}$, then similar arguments imply that $I(J)(G/K) \subseteq M(G/K)$. If this were a strict inclusion, then $M$ would not be closed under restriction, contradicting the fact that it is an ideal. 
\end{itemize}
\end{proof}

\begin{proof}[{Proof of \cref{thm:pi0}}]
    The map from the left to the right is given by \cref{prop:lefttoright} and is clearly injective. The fact that this is surjective is \cref{prop:itssurjective} hence the result.
\end{proof}

\newpage

\part{Examples and further computations}\label{part:examples}



In \cref{part:theory} we have produced many results regarding the computation of Nakaoka spectra of bi-incomplete spectra. In this part we will show that these results are actually usable by computing many examples. This will also highlight some of the phenomena that have been alluded to in the theory section such as sensitivity to the saturated hull.

In \cref{sec:exfp} we will focus on the fixed point Tambara functor $\underline{\Z}$ for several groups, while in \cref{sec:exburn} we focus on the Burnside Tambara functor. Finally, in \cref{sec:vista} we will describe how this style of computation allows us to start thinking about ``$G$-equivariant tensor-triangular geometry'' with a computation of a Balmer--Nakaoka spectrum of the category $\mathsf{Sp}_{C_2}^\omega$ of genuine $C_2$-equivariant finite spectra.

\begin{convention}
From now on, we write $\res^{m}_{\ell} := \res^{C_{m}}_{C_{\ell}}$, $m, \ell \geq 1$, for the restriction maps in a bi-incomplete Tambara functor over any cyclic group. We adopt similar abbreviations for transfers and norms in this section, and for induction, coinduction, and Hill--Hopkins--Ravenel norms in the next section. 
\end{convention}

\section{Fixed point Tambara functors}\label{sec:exfp}

Recall that for a fixed group $G$, $\underline{\Z}$ is the fixed point Tambara functor of $\Z$ equipped with the trivial $G$-action. Using \cref{prop:forget} we obtain a bi-incomplete Tambara functor $i^*(\underline{\Z})$ for each compatible pair $(\cO_m,\cO_a)$. In this section we will compute the spectra of these bi-incomplete Tambara functors for $C_p$ and $C_{p^2}$ in totality.


Being a fixed point  Tambara functor it is multiplicatively cohomological (c.f., \cref{ex:iscohmult}). This allows us to use the full power of the results in \cref{part:examples} and we will see that the computations follow rather quickly.

\subsection{$G = C_p$}

The Lewis diagram for the Tambara functor is given by
\[
\underline{\Z} = \begin{tikzcd}[row sep=huge, column sep=large]
	{\Z} \\
	{\Z}
	\arrow["{k \mapsto k}"{description}, from=1-1, to=2-1]
	\arrow["{k \mapsto pk}", shift left=4, curve={height=-6pt}, from=2-1, to=1-1]
	\arrow["{k \mapsto k^p}"', shift right=4, curve={height=6pt}, from=2-1, to=1-1]
\end{tikzcd}
\]
where the Weyl action at both levels is trivial.

When $G=C_p$ we only have two transfer systems, namely:
\begin{itemize}
    \item $\cO_\mathrm{triv}$, and
    \item $\cO_\mathrm{comp} = \{ (e,C_p) \}$.
\end{itemize}

These fit into precisely three compatible pairs:
\begin{itemize}
    \item $(\cO_\mathrm{triv}, \cO_\mathrm{triv})$, i.e., coefficient systems;
    \item $(\cO_\mathrm{triv}, \cO_\mathrm{comp})$, i.e., Green functors;
    \item $(\cO_\mathrm{comp}, \cO_\mathrm{comp})$, i.e., Tambara functors;
\end{itemize}

We begin with the coefficient system primes, appealing to \cref{ex:coefficientsystem} which tells us that there is a homeomorphism
$$\Spec^{(\cO_\mathrm{triv},\cO_\mathrm{triv})}(\underline{\Z}) \cong \Spec(\Z) \coprod \Spec(\Z) .$$
Using the construction of \cref{def:IJ} we obtain two families of prime ideals
\[
\mathcal{A}_q = \begin{bmatrix}
    \langle q \rangle \\ \langle q \rangle
\end{bmatrix} \qquad \text{ and } \qquad \mathcal{B}_q = \begin{bmatrix} \langle q \rangle  \\\langle  1 \rangle \end{bmatrix}
\]
where $q$ ranges over all primes and 0. Clearly there is an inclusion $\mathcal{A}_q \subseteq \mathcal{B}_q$, and this fully determines the topology. Thus we have
\[
\Spec^{(\cO_\mathrm{triv},\cO_\mathrm{triv})}(\underline{\Z}) \cong \{\mathcal{A}_q, \mathcal{B}_q \}_{q \in \mathbb{P} \cup \{0\}}.
\]

Moving to the case of Green functors, which is changing the additive structure from $\cO_\mathrm{triv}$ to $\cO_\mathrm{comp}$ we use \cref{Prop:ChangeOaPrimes}, which tells us we need only check which ideals $\mathcal{A}_q$ and $\mathcal{B}_q$ remain ideals under the transfer. We see that $\mathcal{A}_q$ always remains an ideal as $qp \in \langle q \rangle$. However, for $q \neq p$, $\mathcal{B}_q$ is no longer an ideal. All in all we have
\[
\Spec^{(\cO_\mathrm{triv},\cO_\mathrm{comp})}(\underline{\Z}) \cong \{\mathcal{A}_q \}_{q \in \mathbb{P} \cup \{0\}} \coprod \mathcal{B}_p.
\]

Finally, we must tackle the case of the full Tambara structure. We know from \cref{lem:bottomlevel} that a Tambara prime $P$ has either $\langle p \rangle$ or $\langle 1 \rangle$ at the bottom level. We can eliminate the option of $\langle 1 \rangle$ as any ideal closed under the norm operation would have $1^p = 1$ at the top level, and thus would not be a proper ideal. It immediately follows by inspection that the only prime ideals in this case are the $\mathcal{A}_q$. That is:
\[
\Spec^{(\cO_\mathrm{comp},\cO_\mathrm{comp})}(\underline{\Z}) \cong \{\mathcal{A}_q \}_{q \in \mathbb{P} \cup \{0\}}.
\]
We summarize these constructions in \cref{tab:cpzunderline}, where one can see the three homeomorphism types of space, and also that in this case adding in structure results in taking a subspace. A red arrow denotes inclusion.

\begin{longtable}{cc|c}
$\mathcal{O}_m$ & $\mathcal{O}_a$ & $\Spec^{(\cO_m,\cO_a)}(\underline{\Z})$  \\ \hline
$\cO_\mathrm{triv}$ & $\cO_\mathrm{triv}$ & $\begin{gathered}\begin{tikzpicture}[scale=0.45]
\draw[gray, thick] plot [smooth] coordinates {(0,0) (2.5,0)  (5.5,0) (8,0)} node [black, right] {$\mathcal{A}_q$};
\draw[gray, thick] plot [smooth] coordinates {(0,1) (2.5,1)  (5.5,1) (8,1)} node [black, right] {$\mathcal{B}_q$};
\draw[black, fill=gray] (0,0) circle (5pt) node{};
\draw[black, fill=gray] (0,1) circle (5pt) node{};
\draw[black!25!red, thick, <-] (4,0.8) -- (4,0.2);
\draw[black!25!red, thick, <-] (2,0.8) -- (2,0.2);
\draw[black!25!red, thick, <-] (6,0.8) -- (6,0.2);
\draw[black!25!red, thick, <-] (0,0.8) -- (0,0.2);
\draw[black!25!red, thick, <-] (8,0.8) -- (8,0.2);
\end{tikzpicture}
\end{gathered}$
\\ \hline
$\cO_\mathrm{triv}$ & $\cO_\mathrm{comp}$& $\begin{gathered}\begin{tikzpicture}[scale=0.45]
\draw[gray, thick] plot [smooth] coordinates {(0,0) (2.5,0)  (5.5,0) (8,0)} node [black, right] {$\mathcal{A}_q$};
\draw[white] plot [smooth] coordinates {(0,1) (2.5,1)  (5.5,1) (4,1)} node [black, right] {$\mathcal{B}_p$};
\draw[black, fill=gray] (0,0) circle (5pt) node{};
\draw[black!25!red, thick, <-] (4,0.8) -- (4,0.2);
\filldraw[black] (4,1) circle (3pt);
\end{tikzpicture}
\end{gathered}$
\\ \hline
$\cO_\mathrm{comp}$ & $\cO_\mathrm{comp}$& $\begin{gathered}\begin{tikzpicture}[scale=0.45]
\draw[gray, thick] plot [smooth] coordinates {(0,0) (2.5,0)  (5.5,0) (8,0)} node [black, right] {$\mathcal{A}_q$};
\draw[black, fill=gray] (0,0) circle (5pt) node{};
\end{tikzpicture}
\end{gathered}$
\\
\caption{The three spectra arising from the bi-incomplete Tambara functors $i^\ast \underline{\Z}$ for $G=C_p$.}\label{tab:cpzunderline}
\end{longtable}

\subsection{$G=C_{p^2}$}\label{subsec:cp2}

We now move to the next non-trivial case, that of $G=C_{p^2}$, where the Lewis diagram is just a repeated version of that for $C_p$ as seen in the previous section. The first thing we must do is discuss the transfer systems and the structure of the compatible pairs. We shall take this opportunity to fix some notation:
\begin{itemize}
    \item $\cO_\mathrm{triv}$.
    \item $\cO_1 = \{(e,C_p)\}$.
    \item $\cO_2 = \{(C_p,C_{p^2})\}$.
    \item $\cO_3 = \{(e,C_p), (e,C_{p^2})\}$.
    \item $\cO_\mathrm{comp}$.
\end{itemize}
We note for later that the transfer system $\cO_3$ is not saturated, and thus it is not self compatible. We have that $\mathrm{Hull}(\cO_3) = \cO_\mathrm{comp}$.

These fit into precisely 12 compatible pairs, which we group by multiplicative structure:
\[
(\cO_\mathrm{triv},\cO_\mathrm{triv}), \, (\cO_\mathrm{triv},\cO_1), \, (\cO_\mathrm{triv},\cO_2), \, (\cO_\mathrm{triv},\cO_3), \, (\cO_\mathrm{triv},\cO_\mathrm{comp}),
\]
\[
(\cO_1,\cO_1), \, (\cO_1, \cO_3), \, (\cO_1,\cO_\mathrm{comp}),
\]
\[
(\cO_2,\cO_2), \, (\cO_2,\cO_\mathrm{comp}),
\]
\[
(\cO_3, \cO_\mathrm{comp}), \, (\cO_\mathrm{comp}, \cO_\mathrm{comp}).
\]
The description of the prime ideals for $\underline{\Z}$ works almost identically to the $C_p$ case. 

Beginning with the coefficient system case, we have three families of prime ideals
\[
\mathcal{A}_q = \begin{bmatrix}
    \langle q \rangle \\ \langle q \rangle \\ \langle q \rangle
\end{bmatrix} \qquad \text{ and } \qquad \mathcal{B}_q = \begin{bmatrix} \langle q \rangle  \\\langle q \rangle\\\langle  1 \rangle \end{bmatrix} \qquad \text{ and } \qquad \mathcal{C}_q = \begin{bmatrix} \langle q \rangle  \\\langle 1 \rangle\\\langle  1 \rangle \end{bmatrix}
\]
Following \cref{ex:coefficientsystem} again we have
\[
\Spec^{(\cO_\mathrm{triv},\cO_\mathrm{triv})}(\underline{\Z}) \cong \{\mathcal{A}_q, \mathcal{B}_q, \mathcal{C}_q \}_{q \in \mathbb{P} \cup \{0\}}
\]
with inclusions $\mathcal{A}_q \subseteq \mathcal{B}_q \subseteq \mathcal{C}_q$.

Again, we can now appeal to \cref{Prop:ChangeOaPrimes}, and add in the additive structure. This is a routine exercise to check which ideals remain closed under the transfers, and as such we leave it as an exercise for the reader. Of particular note is that is that these ideals are closed under the transfers in $\cO_3$ if and only if they are closed under the transfers in $\mathrm{Hull}(\cO_3) = \cO_\mathrm{comp}$. We expect that this is simply a consequence of working with potentially the most basic of Tambara functors, and we do not see this behavior in more complex Tambara functors (c.f., the computations in \cref{sec:exburn}).

Let us now compute the spectrum for the compatible pair $(\cO_1,\cO_1)$, which is self-compatible, and thus we can use \cref{thm:pi0}. We have that $\pi_0\cO_1$ consists of two elements, namely $e \to C_p$ and the singleton $C_{p^2}$. Restricting to the first component gives us a restricted (complete) $C_p$-Tambara functor, the prime ideals of which coincide with those for $\underline{\Z}$ for $C_p$ since the Weyl action is trivial. From the previous section we know that this have just one family of prime ideals given by $\begin{bmatrix}
    \langle q \rangle \\ \langle q \rangle
\end{bmatrix}$. Therefore our $(\cO_1,\cO_1)$-ideal will be
\[
\begin{bmatrix}
\res^{p^2}_1 \langle q \rangle \cap \res^{p^2}_p \langle q \rangle \\
 \langle q \rangle \\ \langle q \rangle
\end{bmatrix} = \begin{bmatrix}
\langle q \rangle  \\
 \langle q \rangle \\ \langle q \rangle
\end{bmatrix} = \mathcal{A}_q .\]
For the other connected component we need to take a prime ideal of $\Z$, that is, some $\langle q \rangle$, and then the entries below are formed of the empty intersection, that is, $\langle 1 \rangle$, and thus we get the ideal $\mathcal{C}_q$ yielding
\[
\Spec^{(\cO_1,\cO_1)}(\underline{\Z}) \cong \{\mathcal{A}_q, \mathcal{C}_q \}_{q \in \mathbb{P} \cup \{0\}}.
\]
We then proceed to use \cref{Prop:ChangeOaPrimes} to add in transfers to compute the prime spectra for $(\cO_1,\cO_3)$ and $(\cO_1,\cO_\mathrm{comp})$.

The only other computation here which warrants discussion is that of the spectrum for $(\cO_3,\cO_\mathrm{comp})$. As $\cO_3$ is not self-compatible we cannot use our usual trick of the path component decomposition. However, as $\underline{\Z}$ is multiplicatively cohomological we can appeal to \cref{thm:insensitive}, which tells us that there is a homeomorphism
\[
\Spec^{(\cO_3,\cO_{\mathrm{comp}})}(\underline{\Z}) \cong \Spec^{(\mathrm{Hull}(\cO_3),\cO_{\mathrm{comp}})} (\underline{\Z}) \cong \Spec^{(\cO_\mathrm{comp},\cO_{\mathrm{comp}})}(\underline{\Z}).
\]
In \cref{fig:zunderline} we explicitly display the primes of all spectra for the 12 compatible pairs for $\underline{\Z}$, mirroring the schematic presented in \cref{fig:strat} in the introduction.

\begin{remark}
    While \cref{fig:zunderline} displays equality of spaces, it does not highlight that there are homeomorphisms appearing which are not equalities on the primes. For example there is a homeomorphism 
    \[
\Spec^{(\cO_1,\cO_1)}(\underline{\Z})  \cong \Spec^{(\cO_2,\cO_2)}(\underline{\Z})
    \]
    even though the primes appearing differ. Among the 12 compatible pairs, we only see 7 homeomorphism types of space among the spectra. Due to the combinatorial nature of the calculations for $\underline{\Z}$, we expect that in general $C_{p^n}$ will yield $2^{n+1}-1$ homeomorphism type of spectra for this Tambara functor.
\end{remark}

\begin{landscape}
\begin{figure}
    \centering
\begin{tikzpicture}[yscale =1.2, xscale = 1.5]
    \draw[gray,ultra thick,->] (1,1) -- (11,1);
    \draw[gray,ultra thick,->] (1,1) -- (1,11);
    \draw[gray,thin,dashed] (1,1) -- (11,11);
    \node at (11.5,1) {$\mathcal{O}_{{a}}$};
    \node at (1,11.5) {$\mathcal{O}_{{m}}$};
    \draw[gray,ultra thick] (2,1.2) -- (2,0.8);
    \draw[gray,ultra thick] (4,1.2) -- (4,0.8);
    \draw[gray,ultra thick] (6,1.2) -- (6,0.8);
    \draw[gray,ultra thick] (8,1.2) -- (8,0.8);
    \draw[gray,ultra thick] (10,1.2) -- (10,0.8);
    \node at (2,0.5) {$\mathcal{O}_{\mathrm{triv}}$};
    \node at (4,0.5) {$\mathcal{O}_{\mathrm{1}}$};
    \node at (6,0.5) {$\mathcal{O}_{\mathrm{2}}$};
    \node at (8,0.5) {$\mathcal{O}_{\mathrm{3}}$};
    \node at (10,0.5) {$\mathcal{O}_{\mathrm{comp}}$};
    \begin{scope}[rotate around={90:(1,1)}]
    \draw[gray,ultra thick] (2,1.2) -- (2,0.8);
    \draw[gray,ultra thick] (4,1.2) -- (4,0.8);
    \draw[gray,ultra thick] (6,1.2) -- (6,0.8);
    \draw[gray,ultra thick] (8,1.2) -- (8,0.8);
    \draw[gray,ultra thick] (10,1.2) -- (10,0.8);
    \end{scope}
    \node at (0.25,2) {$\mathcal{O}_{\mathrm{triv}}$};
    \node at (0.25,4) {$\mathcal{O}_{\mathrm{1}}$};
    \node at (0.25,6) {$\mathcal{O}_{\mathrm{2}}$};
    \node at (0.25,8) {$\mathcal{O}_{\mathrm{3}}$};
    \node at (0.15,10) {$\mathcal{O}_{\mathrm{comp}}$};

    \draw[nice-blue,ultra thick] (8,2.1) -- (10,2.1);
    \draw[nice-blue, ultra thick, snake it] (8,2) -- (10,2);
    \draw[nice-blue, ultra thick, snake it] (8,4) -- (10,4);
    \draw[nice-blue,ultra thick] (8,4.1) -- (10,4.1);
    \draw[nice-blue,ultra thick] (8,1.9) -- (10,1.9);
    \draw[nice-blue,ultra thick] (8,3.9) -- (10,3.9);
    
    \draw[nice-red, ultra thick, snake it] (10,8.276) -- (10,9.8);
    \node[fill=white,draw=nice-green, ultra thick] at (2,2) {$\begin{bmatrix}q \\ 1 \\ 1 \end{bmatrix}$,$\begin{bmatrix}q \\ q \\ 1 \end{bmatrix}$,$\begin{bmatrix}q \\ q \\ q \end{bmatrix}$};
    \node[fill=white,draw=nice-green, ultra thick] at (4,4) {$\begin{bmatrix}q \\ 1 \\ 1 \end{bmatrix}$,$\begin{bmatrix}q \\ q \\ q \end{bmatrix}$};
    \node[fill=white,draw=nice-green, ultra thick] at (6,6) {$\begin{bmatrix}q \\ q \\ 1 \end{bmatrix}$,$\begin{bmatrix}q \\ q \\ q \end{bmatrix}$};
    \node[fill=white,draw=nice-green, ultra thick] at (10,10) {$\begin{bmatrix}q \\ q \\ q \end{bmatrix}$};
    \node[fill=white,draw=nice-blue, ultra thick,rounded corners=0.5cm] at (4,2) {$\begin{bmatrix}q \\ 1 \\ 1 \end{bmatrix}$,$\begin{bmatrix}p \\ p \\ 1 \end{bmatrix}$,$\begin{bmatrix}q \\ q \\ q \end{bmatrix}$};
    \node[fill=white,draw=nice-blue, ultra thick,rounded corners=0.5cm] at (6,2) {$\begin{bmatrix}p \\ 1 \\ 1 \end{bmatrix}$,$\begin{bmatrix}q \\ q \\ 1 \end{bmatrix}$,$\begin{bmatrix}q \\ q \\ q \end{bmatrix}$};
    \node[fill=white,draw=nice-blue, ultra thick,rounded corners=0.5cm] at (8,2) {$\begin{bmatrix}p \\ 1 \\ 1 \end{bmatrix}$,$\begin{bmatrix}p \\ p \\ 1 \end{bmatrix}$,$\begin{bmatrix}q \\ q \\ q \end{bmatrix}$};
    \node[fill=white,draw=nice-blue, ultra thick,rounded corners=0.5cm] at (10,2) {$\begin{bmatrix}p \\ 1 \\ 1 \end{bmatrix}$,$\begin{bmatrix}p \\ p \\ 1 \end{bmatrix}$,$\begin{bmatrix}q \\ q \\ q \end{bmatrix}$};
    \node[fill=white,draw=nice-blue, ultra thick,rounded corners=0.5cm] at (8,4) {$\begin{bmatrix}p \\ 1 \\ 1 \end{bmatrix}$,$\begin{bmatrix}q \\ q \\ q \end{bmatrix}$};
    \node[fill=white,draw=nice-blue, ultra thick,rounded corners=0.5cm] at (10,4) {$\begin{bmatrix}p \\ 1 \\ 1 \end{bmatrix}$,$\begin{bmatrix}q \\ q \\ q \end{bmatrix}$};
    \node[fill=white,draw=nice-blue, ultra thick,rounded corners=0.5cm] at (10,6) {$\begin{bmatrix}p \\ p \\ 1 \end{bmatrix}$,$\begin{bmatrix}q \\ q \\ q \end{bmatrix}$};
    \node[fill=white,fill=white,draw=nice-red, ultra thick] at (10,8) {$\begin{bmatrix}q \\ q \\ q \end{bmatrix}$};
\end{tikzpicture}
\caption{The prime ideals for $\underline{\Z}$ for all possible compatible pairs where $G = C_{p^2}$. The spectra on the diagonal are computed using the connected component decomposition of \cref{thm:pi0}. Moving to the right is adding in transfers and uses \cref{Prop:ChangeOaPrimes}. The red vertical equivalence comes from \cref{thm:insensitive}. Horizontal squiggly lines are representative of equality. }\label{fig:zunderline}
\end{figure}
\end{landscape}

\section{Burnside Tambara functors}\label{sec:exburn}

Now that we have seen the techniques in action for a very simple Tambara functor, we move our attention to perhaps the most important Tambara functor, that is, the Burnside Tambara functor $\underline{A}_G$. Unlike the case of $\underline{\Z}$, the Burnside Tambara functor is not multiplicatively cohomological and thus we cannot appeal to \cref{thm:insensitive}. Therefore we shall see in \cref{subsec:cp2burnside} how to compute these exceptional cases.

\subsection{$G = C_p$}\label{subsec:cpburnside}

The Lewis diagram for $\underline{A}_{C_p}$ is given as
\[\begin{tikzcd}[row sep=huge, column sep=large]
	{\Z[t]/\langle t^2-pt \rangle} \\
	\Z
	\arrow["{t \mapsto p}"{description}, from=1-1, to=2-1]
	\arrow["{k \mapsto tk}", shift left=4, curve={height=-6pt}, from=2-1, to=1-1]
	\arrow["{k \mapsto k + \frac{k^p-k}{p}t}"', shift right=4, curve={height=6pt}, from=2-1, to=1-1]
\end{tikzcd}\]
where the Weyl action at each level is trivial. 

To begin we record the levelwise prime ideals which can be obtained using Dress' construction \cite{dress_notes}.

\begin{lemma}
    Let $p$ be prime. Then as sets:
\begin{align*}
    \Spec(\Z) &= \left\{\langle q \rangle\right\}_{q \in \mathbb{P} \cup \{0\}} \\[10pt]
     \Spec(\Z[t]/\langle t^2-pt \rangle) &= \dfrac{\{\langle q,t \rangle, \langle q,t-p \rangle\}_{q \in \mathbb{P} \cup \{0\}}}{\langle p,t \rangle = \langle p,t-p \rangle}
\end{align*}
\end{lemma}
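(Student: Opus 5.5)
The plan is to compute each spectrum directly from the ring structure; nothing equivariant is needed here. For $\Spec(\Z)$ there is nothing to prove beyond the classical fact that the primes of $\Z$ are $\langle 0\rangle$ and $\langle q\rangle$ for $q$ a rational prime. For $R := \Z[t]/\langle t^2-pt\rangle$, I would use the two ring maps $\phi_0,\phi_p \colon R \to \Z$ given by $t \mapsto 0$ and $t \mapsto p$. These are the two marks (fixed point counts) of Dress' construction: $\phi_0$ counts $e$-fixed points of the free orbit minus the trivial part appropriately, and $\phi_p$ is restriction $\res^p_1$. Their product $\phi=(\phi_0,\phi_p)\colon R \to \Z\times\Z$ is injective, since $a+bt \mapsto (a, a+pb)$. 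Its cokernel is $\Z/p$, so $\phi$ is an integral extension: $\Z\times\Z$ is generated over $R$ by the idempotents, and $p\cdot(1,0) = p - t$ lies in the image.

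The key steps, in order, are as follows.

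First, by lying over and going up for the integral extension $R \hookrightarrow \Z\times\Z$, every prime of $R$ is a contraction of a prime of $\Z\times\Z$. The primes of $\Z \times \Z$ are $\langle q\rangle\times\Z$ and $\Z\times\langle q\rangle$. Their contractions are $\phi_0^{-1}\langle q\rangle = \langle q,t\rangle$ and $\phi_p^{-1}\langle q\rangle = \langle q, t-p\rangle$. The generators are justified as follows: $a+bt \in \ker(\phi_0 \bmod q)$ iff $q\mid a$, and similarly for $\phi_p$ after substituting $t = (t-p)+p$. This already gives the stated list as a surjection from two copies of $\Spec(\Z)$. Alternatively, one can avoid integrality: any prime $\mathfrak p$ contains $t(t-p)=0$, hence contains $t$ or $t-p$. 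Then $R/\langle t\rangle \cong \Z$ and $R/\langle t-p\rangle\cong\Z$, which identifies the primes containing each with $\Spec\Z$ directly. I would actually present this second argument, as it is shorter.

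Second, I determine the coincidences. Two such primes $\langle q,t\rangle$ and $\langle q', t-p\rangle$ coincide iff the ideal contains $t$ and $t-p$, hence contains $p$. This forces $q=q'=p$, and indeed $\langle p,t\rangle = \langle p,t-p\rangle$. For $q \ne p$ (including $q=0$) the two primes differ, since $p \notin \langle q, t\rangle$ via $\phi_0$. Within one family, different $q$ give different primes by applying $\phi_0$ or $\phi_p$. This yields exactly the quotient set stated.

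The only mildly delicate point is the identification of coincidences, specifically that $q=0$ yields two distinct minimal primes $\langle t\rangle \neq \langle t-p\rangle$ and that no gluing happens away from $p$. This follows immediately from applying $\phi_0$ to $t-p$, which gives $-p\neq 0 \bmod q$ for $q\neq p$.
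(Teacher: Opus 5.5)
Your proposal is correct and essentially follows the paper's route. The paper gives no argument beyond citing Dress' description of the primes of Burnside rings via mark homomorphisms, which is exactly your embedding $\phi=(\phi_0,\phi_p)\colon A(C_p)\hookrightarrow \Z\times\Z$ combined with lying over; your preferred zero-divisor argument ($t(t-p)=0$, so every prime contains $t$ or $t-p$) is a valid elementary shortcut for this particular ring, and your analysis of the coincidences at $q=p$ is right.

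One small correction to your gloss on the marks: $\phi_0$ is the $C_p$-fixed-point mark $X\mapsto |X^{C_p}|$, which kills $t=[C_p/e]$, and has nothing to do with $e$-fixed points. It is $\phi_p$ that is the cardinality mark, i.e.\ $\res^p_1$.
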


From \cref{ex:coefficientsystem} we compute the coefficient system primes. We once again give names to the families of primes for ease of notation:
\[
\mathcal{A}_q = \begin{bmatrix} \langle q ,t \rangle \\ \langle 1 \rangle \end{bmatrix} \qquad \text{ and } \qquad \mathcal{B}_q^1 = \begin{bmatrix} \langle q ,t-p \rangle \\ \langle 1 \rangle \end{bmatrix} \qquad \text{ and } \qquad \mathcal{B}_q^2 = \begin{bmatrix} \langle q ,t-p \rangle \\ \langle q \rangle \end{bmatrix}
\]
We clearly have inclusions $\mathcal{B}_q^2 \subseteq \mathcal{B}_q^1$, and an identification $\mathcal{A}_p = \mathcal{B}_p^1$.

\[
\Spec^{(\cO_\mathrm{triv},\cO_\mathrm{triv})}(\underline{A}_{C_p}) \cong \dfrac{\{\mathcal{A}_q, \mathcal{B}_q^1, \mathcal{B}_q^2 \}_{q \in \mathbb{P} \cup \{0\}}}{\mathcal{A}_p = \mathcal{B}_p^1}.
\]

We now move to the Green functor primes. Again, we need only check which primes remain ideals under the addition of the transfer $\tr_{1}^{p}(-)$. We see that $\tr_1^p(1) = t \not\in \langle t-p, q \rangle$, and as such $\mathcal{B}_q^1$ fails to be an ideal when $q \neq p$. In particular:
\[
\Spec^{(\cO_\mathrm{triv},\cO_\mathrm{comp})}(\underline{A}_{C_p}) \cong {\{\mathcal{A}_q, \mathcal{B}_q^2 \}_{q \in \mathbb{P} \cup \{0\}}}.
\]
One can check that this agrees with the computation that can be found in \cite{lewis:1980} (c.f., \cref{subsec:lewis}).

Finally, the spectrum for the complete Tambara Tambara functor has been computed by Nakaoka in \cite{nakaoka_spectrum}. For this we need to introduce one further family of primes:
\[
\mathcal{C}_q = \begin{bmatrix}
    \langle q \rangle \\ \langle q \rangle
\end{bmatrix}
\]
Note that $\langle q \rangle$ is not a prime ideal of $\Z[t]/(t^2-tp)$, but it is a radical ideal. We have inclusions $\mathcal{C}_q \subseteq \mathcal{B}_q^2$, and an identification $\mathcal{B}_p^2 = \mathcal{C}_p$. This leads to
\[
\Spec^{(\cO_\mathrm{comp},\cO_\mathrm{comp})}(\underline{A}_{C_p}) \cong \dfrac{\{\mathcal{B}_q^2, \mathcal{C}_q \}_{q \in \mathbb{P} \cup \{0\}}}{\mathcal{C}_p = \mathcal{B}_p^2}.
\]

We display the homeomorphism types of these spectra in \cref{tab:cpBurnside}, once again noting that a red arrow  indicates inclusion.

\begin{longtable}{cc|c}
$\mathcal{O}_m$ & $\mathcal{O}_a$ & $\Spec^{(\cO_m,\cO_a)}(\underline{A}_{C_p})$  \\ \hline
$\cO_\mathrm{triv}$ & $\cO_\mathrm{triv}$ & $\begin{gathered}
\begin{tikzpicture}[scale=0.45]
\draw[gray, thick] plot [smooth] coordinates {(0,-1) (8,-1)} node [black, right] {$\mathcal{B}_q^2$};
\draw[gray, thick] plot [smooth] coordinates {(0,1) (2.5,1) (4,0.5) (5.5,1) (8,1)} node [black, right] {$\mathcal{A}_q$};
\draw[gray, thick] plot [smooth] coordinates {(0,0) (2.5,0) (4,0.5) (5.5,0) (8,0)} node [black, right] {$\mathcal{B}_q^1$};
\draw[black, fill=gray] (0,0) circle (5pt) node{};
\draw[black, fill=gray] (0,1) circle (5pt) node{};
\draw[black, fill=gray] (0,-1) circle (5pt) node{};
\filldraw[black] (4,0.5) circle (3pt);
\filldraw[black] (4,-1) circle (3pt);
\draw[black!25!red, thick, ->] (4,-0.8) -- (4,0.3);
\begin{scope}[yshift=-30]
\draw[black!25!red, thick, <-] (2,0.8) -- (2,0.2);
\draw[black!25!red, thick, <-] (6,0.8) -- (6,0.2);
\draw[black!25!red, thick, <-] (7,0.8) -- (7,0.2);
\draw[black!25!red, thick, <-] (8,0.8) -- (8,0.2);
\draw[black!25!red, thick, <-] (1,0.8) -- (1,0.2);
\draw[black!25!red, thick, <-] (0,0.7) -- (0,0.3);
\draw[black!25!red, thick, <-] (5,1) -- (5,0.2);
\draw[black!25!red, thick, <-] (3,1) -- (3,0.2);
\end{scope}
\end{tikzpicture}
\end{gathered}$
\\ \hline
$\cO_\mathrm{triv}$ & $\cO_\mathrm{comp}$& $\begin{gathered}
\begin{tikzpicture}[scale=0.45]
\draw[gray, thick] plot [smooth] coordinates {(0,-1) (8,-1)} node [black, right] {$\mathcal{B}_q^2$};
\draw[gray, thick] plot [smooth] coordinates {(0,1) (2.5,1) (4,0.5) (5.5,1) (8,1)} node [black, right] {$\mathcal{A}_q$};
\draw[black, fill=gray] (0,1) circle (5pt) node{};
\draw[black, fill=gray] (0,-1) circle (5pt) node{};
\filldraw[black] (4,0.5) circle (3pt);
\filldraw[black] (4,-1) circle (3pt);
\draw[black!25!red, thick, ->] (4,-0.8) -- (4,0.3);
\end{tikzpicture}
\end{gathered}$
\\ \hline
$\cO_\mathrm{comp}$ & $\cO_\mathrm{comp}$& $\begin{gathered}
\begin{tikzpicture}[scale=0.45]
\draw[gray, thick] plot [smooth] coordinates {(0,0) (2.5,0) (4,0.5) (5.5,0) (8,0)} node [black, right] {$\mathcal{C}_q$};
\draw[gray, thick] plot [smooth] coordinates {(0,1) (2.5,1) (4,0.5) (5.5,1) (8,1)} node [black, right] {$\mathcal{B}_q^2$};
\draw[black, fill=gray] (0,0) circle (5pt) node{};
\draw[black, fill=gray] (0,1) circle (5pt) node{};
\filldraw[black] (4,0.5) circle (3pt);
\draw[black!25!red, thick, ->] (0,0.3) -- (0,0.7);
\draw[black!25!red, thick, ->] (1,0.1) -- (1,0.9);
\draw[black!25!red, thick, ->] (2,0.1) -- (2,0.9);
\draw[black!25!red, thick, ->] (6,0.1) -- (6,0.9);
\draw[black!25!red, thick, ->] (7,0.1) -- (7,0.9);
\draw[black!25!red, thick, ->] (8,0.1) -- (8,0.9);
\draw[black!25!red, thick, ->] (3,0.3) -- (3,0.7);
\draw[black!25!red, thick, ->] (5,0.3) -- (5,0.7);
\end{tikzpicture}
\end{gathered}$
\\
\caption{The three spectra arising from the bi-incomplete Tambara functors $i^\ast \underline{A}_{C_p}$.}\label{tab:cpBurnside}
\end{longtable}


\subsection{$G=C_{p^2}$}\label{subsec:cp2burnside}

We now move to our most complicated and comprehensive example, that of the Burnside Tambara functor for $C_{p^2}$:
\[\begin{tikzcd}[row sep=huge, column sep=large]
	{\Z[t,u]/\langle u^2-p^2u,t^2-pt,tu-pu \rangle} \\
	{\Z[t]/\langle t^2-pt \rangle} \\
	\Z
	\arrow["{\substack{u \mapsto pt \\ t \mapsto p}}"{description}, from=1-1, to=2-1]
	\arrow["{\substack{t \mapsto u \\ k \mapsto tk}}", shift left=4, curve={height=-6pt}, from=2-1, to=1-1]
	\arrow["{\substack{t \mapsto p^{p-2}u \\ k \mapsto k + \frac{k^p-k}{p}t}}"', shift right=4, curve={height=6pt}, from=2-1, to=1-1]
	\arrow["{t\mapsto p}"{description}, from=2-1, to=3-1]
	\arrow["{k \mapsto tk}", shift left=4, curve={height=-6pt}, from=3-1, to=2-1]
	\arrow["{k \mapsto k + \frac{k^p-k}{p}t}"', shift right=4, curve={height=6pt}, from=3-1, to=2-1]
\end{tikzcd}\]
The Weyl action at each level is trivial. Note that $\res_1^{p^2}(k)=\res_1^p(k)=k$ and $\nm_{p}^{p^2}(a+bt)$ can be computed using the Mazur sum formula:
\[
\nm_p^{p^2}(a+bt) = \nm_p^{p^2}(a) + \nm_p^{p^2}(b)p^{p-2}u + \sum_{\substack{i+j=p \\ i,j>0}} \tr_{p}^{p^2} (a^ib^jt^j).
\]
We once again record the levelwise prime ideals which can be obtained using Dress' construction \cite{dress_notes}.

\begin{lemma}\label{lem:zariskiprimes}
    Let $p$ be prime. Then as sets:
\begin{align*}
    \Spec(\Z) &= \left\{\langle q \rangle\right\}_{q \in \mathbb{P} \cup \{0\}} \\[10pt]
     \Spec(\Z[t]/\langle t^2-pt \rangle) &= \dfrac{\{\langle q,t \rangle, \langle q,t-p \rangle\}_{q \in \mathbb{P} \cup \{0\}}}{\langle p,t \rangle = \langle p,t-p \rangle} \\[10pt]
     \Spec(\Z[t,u]/ \langle t^2-pt,u^2-p^2u,tu-pu \rangle) &= \dfrac{\{\langle q,t,u\rangle,\langle q,t-p,u\rangle, \langle q,t-p,u-p^2 \rangle\}_{q \in \mathbb{P} \cup \{0\}}}{\langle p,t, u \rangle = \langle p,t-p \rangle = \langle p,t-p, u-p^2 \rangle}
\end{align*}
\end{lemma}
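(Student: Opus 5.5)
The plan is to compute each prime spectrum as a set of ring maps to fields, using the ghost (mark) homomorphism in the style of Dress. The first two rows need little work. $\Spec(\Z)$ is classical. For $R_1=\Z[t]/\langle t^2-pt\rangle$, note that $t(t-p)=0$. Hence every prime of $R_1$ contains $t$ or $t-p$. The quotients $R_1/\langle t\rangle\cong\Z$ and $R_1/\langle t-p\rangle\cong\Z$ then give the primes $\langle q,t\rangle$ and $\langle q,t-p\rangle$ for $q\in\mathbb{P}\cup\{0\}$. Since $\langle q,t\rangle=\langle q,t-p\rangle$ exactly when $p\in\langle q,t\rangle$, i.e.\ $q=p$, this yields the single stated identification.

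For the top ring $R_2=\Z[t,u]/\langle t^2-pt,u^2-p^2u,tu-pu\rangle$, I would first show it is free of rank $3$ over $\Z$ on $1,t,u$. The ghost map
\[
\phi\colon R_2\to\Z^3,\quad t\mapsto(0,p,p),\quad u\mapsto(0,0,p^2)
\]
records the marks at the subgroups $C_{p^2},C_p,e$. I would check that $\phi$ respects the three relations. Its image has finite index ($p^3$), so $\phi$ is an injective integral extension.

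With this in hand, lying-over applies. Every prime of $R_2$ is the preimage of a prime of $\Z^3$, which has the form $\langle q\rangle$ in one coordinate. Pulling back along the three coordinate projections gives the ideals $\langle q,t,u\rangle$, $\langle q,t-p,u\rangle$ and $\langle q,t-p,u-p^2\rangle$. As a direct check, the kernels of the three characters $(t,u)\mapsto(0,0),(p,0),(p,p^2)$ are $\langle t,u\rangle$, $\langle t-p,u\rangle$ and $\langle t-p,u-p^2\rangle$, each with quotient $\Z$, and adding $q$ gives the listed primes.

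It remains to identify the coincidences. Two characters agree mod $q$ iff their values on $t$ and $u$ agree mod $q$. This happens only when $q\mid p$, and then all three characters coincide mod $p$. So for $q=p$ the three families collapse, which gives $\langle p,t,u\rangle=\langle p,t-p,u\rangle=\langle p,t-p,u-p^2\rangle$ (the statement's middle term $\langle p,t-p\rangle$ should presumably read $\langle p,t-p,u\rangle$). For $q\neq p$, including $q=0$, the three primes stay distinct.

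The main obstacle is justifying that there are no further primes beyond these pullbacks, i.e.\ the integrality of $\phi$ together with the exact identification of the kernels. Once that is set up, the rest is bookkeeping.
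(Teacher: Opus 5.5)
Your proposal is correct and follows essentially the paper's route. The paper simply invokes Dress's description of $\Spec A(G)$ via the mark homomorphism, and your argument (ghost map $R_2\hookrightarrow\Z^3$, integrality and lying-over, then comparing the characters mod $q$) is the standard proof of that description; for $C_{p^2}$ the three mark primes merge exactly at $q=p$, since $O^p(H)=e$ for every $H$.

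You are also right that the middle term of the last identification is a typo. In $R_2$ one has $\langle p,t-p\rangle=\langle p,t\rangle=p\Z+\Z t+p\Z u$, which does not contain $u$, so it should read $\langle p,t-p,u\rangle$; this matches the entry for $\mathcal{B}^1_p$ in \cref{tab:1}.
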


In this section we will compute the bi-incomplete spectrum for all 12 compatible pairs for $C_{p^2}$, which are listed in \cref{subsec:cp2}, using the results of this paper, as well as leveraging results from \cite{4DS}. A summary of the results of this section are presented in \cref{tab:1} and \cref{tab:ofspectra}.

We will group these calculations via the choice of $\cO_m$ which makes sense bearing in mind \cref{Prop:ChangeOaPrimes}. 

\subsection*{$\cO_m = \cO_{\mathrm{triv}}$}

There are 5 compatible pairs with $\cO_m = \cO_{\mathrm{triv}}$. We begin with computing the coefficient system primes.

\begin{lemma}\label{lem:coefficientcp2}
The coefficient system primes of $\underline{A}_{C_{p^2}}$ are given by the following families as $q \in \mathbb{P} \cup \{0\}$:
\[
\mathcal{A}_q = \begin{bmatrix}\langle q,t,u \rangle \\ \langle 1 \rangle \\ \langle 1 \rangle\end{bmatrix}
\]
\vspace{2mm}
\[ 
\mathcal{B}^1_q = \begin{bmatrix}\langle q,t-p,u \rangle \\ \langle 1 \rangle \\ \langle 1 \rangle\end{bmatrix}, \, 
\mathcal{B}^2_q = \begin{bmatrix}\langle q,t-p,u \rangle \\ \langle q,t \rangle \\ \langle 1 \rangle\end{bmatrix}
\]
\vspace{2mm}
\[
\mathcal{C}_q^1 = \begin{bmatrix}\langle q,t-p,u-p^2 \rangle \\ \langle 1 \rangle \\ \langle 1 \rangle\end{bmatrix}, \, 
\mathcal{C}^2_q = \begin{bmatrix}\langle q,t-p,u-p^2 \rangle \\ \langle q,t-p \rangle \\ \langle 1 \rangle\end{bmatrix}, \, 
\mathcal{C}^3_q = \begin{bmatrix}\langle q,t-p,u-p^2 \rangle \\ \langle q,t-p \rangle \\ \langle q \rangle\end{bmatrix}
\]
\vspace{2mm}
with inclusions 
\[\mathcal{B}^2_q \subseteq \mathcal{B}^1_q \qquad
\mathcal{C}^3_q \subseteq \mathcal{C}^2_q \subseteq \mathcal{C}^1_q\]
and identifications
\[\mathcal{A}_p = \mathcal{B}_p^1=\mathcal{C}_p^1 \qquad
\mathcal{B}^2_p=\mathcal{C}^2_p.\]
As such 
\[
\Spec^{(\cO_\mathrm{triv} , \cO_\mathrm{triv})}(\underline{A}_{C_{p^2}}) \cong \dfrac{\{\mathcal{A}_q,\mathcal{B}_q^1,\mathcal{B}_q^2,\mathcal{C}_q^1,\mathcal{C}_q^2,\mathcal{C}_q^3\}_{q \in \mathbb{P} \cup \{0\}}}{\mathcal{A}_p = \mathcal{B}_p^1=\mathcal{C}_p^1,\,\mathcal{B}^2_p=\mathcal{C}^2_p}
\]
\end{lemma}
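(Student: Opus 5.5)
The plan is to apply \cref{ex:coefficientsystem}, the path component decomposition for $\cO_\mathrm{triv}$. Since $G=C_{p^2}$ is abelian and all Weyl actions on $\underline{A}_{C_{p^2}}$ are trivial, the $W_G(H)$-prime ideals of $T(G/H)$ are just the ordinary prime ideals. So the coefficient system primes are in bijection with
\[
\Spec(\Z)\amalg \Spec(\Z[t]/\langle t^2-pt\rangle)\amalg \Spec(\Z[t,u]/\langle u^2-p^2u,t^2-pt,tu-pu\rangle),
\]
and these three levelwise spectra are listed in \cref{lem:zariskiprimes}. For each component (a single subgroup $H$), I will write down the prime $I(J)$ of \cref{def:IJ}. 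It equals $J$ at level $H$. At each level $K\geq H$ it is $(\res^K_H)^{-1}(J)$, which simplifies to $(\res^K_H)^{-1}(J)$ for the chain $e\le C_p\le C_{p^2}$. At levels not containing $H$ it is $\langle 1\rangle$.

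The key computation is these preimages under the restriction maps $t\mapsto p$, $u\mapsto pt$ (from top to middle), and the composite $t\mapsto p$, $u\mapsto p^2$ (from top to bottom).
\begin{itemize}
\item For $H=e$ and $J=\langle q\rangle$, the preimage in the middle level is $\langle q,t-p\rangle$. The preimage in the top level is $\langle q,t-p,u-p^2\rangle$, since $u-p^2$ and $t-p$ lie in the kernel of the composite. This gives $\mathcal{C}^3_q$.
\item For $H=C_p$, the prime $\langle q,t\rangle$ pulls back to $\langle q,t-p,u\rangle$, because $u\mapsto pt\in\langle q,t\rangle$ and $t\mapsto p$, with the result maximal or prime by comparing quotients. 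The prime $\langle q,t-p\rangle$ pulls back to $\langle q,t-p,u-p^2\rangle$. These give $\mathcal{B}^2_q$ and $\mathcal{C}^2_q$.
\item For $H=C_{p^2}$, the three families of top-level primes give $\mathcal{A}_q$, $\mathcal{B}^1_q$ and $\mathcal{C}^1_q$.
\end{itemize}
To verify each preimage, I will check that the displayed ideal lies in the kernel of the induced map to $\Z/q$ (or to the relevant domain). I will then argue equality because the displayed ideal is already prime, being a preimage of a prime, and it is the full kernel of a surjection onto $\Z/q$ or $\Z$.

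Finally, the identifications come from the coincidences in \cref{lem:zariskiprimes} at $q=p$. The three top-level primes coincide, which gives $\mathcal{A}_p=\mathcal{B}^1_p=\mathcal{C}^1_p$. The middle coincidence $\langle p,t\rangle=\langle p,t-p\rangle$ gives $\mathcal{B}^2_p=\mathcal{C}^2_p$. Since the map of \cref{thm:pi0} is injective, there are no further identifications.

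For the inclusions, I will compare the ideals levelwise. The $\mathcal{B}$ and $\mathcal{C}$ chains are evident, since the same top-level ideal sits over progressively smaller ideals at lower levels. The main obstacle is keeping the bookkeeping of the degenerate identifications at $q=p$ and $q=0$ correct, and confirming that the listed inclusions are all the generating ones. By \cref{rem:closureofpoints}, this inclusion data determines the topology.
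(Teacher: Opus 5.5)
Your proposal is correct and follows the paper's route. The paper's proof is a one-line instruction: pick levelwise primes from \cref{lem:zariskiprimes} compatible with restriction, via \cref{ex:coefficientsystem}. You carry out the resulting preimage computations explicitly and correctly, e.g.\ $(\res^{p^2}_p)^{-1}\langle q,t\rangle=\langle q,t-p,u\rangle$.

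One small caveat: your closing remark that the inclusion data determines the topology is an overstatement. The specialization order does not determine a Zariski-type topology, already on $\Spec(\Z)$. However, the lemma only asserts the bijection, inclusions and identifications, so nothing in your argument depends on that remark.
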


\begin{proof}
This is a simple computation which requires picking levelwise prime ideals from \cref{lem:zariskiprimes} which are closed under restriction (\cref{ex:coefficientsystem}).
\end{proof}

Next we adjust the additive structure. In each case, \cref{Prop:ChangeOaPrimes} implies we need only check which of the ideals in \cref{lem:coefficientcp2} are closed under the included transfers. 

\begin{lemma}\label{lem:calc1}
    Let $\cO_a = \cO_1 = \{(e,C_p)\}$. Then
     \[
\Spec^{(\cO_\mathrm{triv} , \cO_1)}(\underline{A}_{C_{p^2}}) \cong \dfrac{\{\mathcal{A}_q,\mathcal{B}_q^1,\mathcal{B}_q^2,\mathcal{C}_q^1,\mathcal{C}_q^3\}_{q \in \mathbb{P} \cup \{0\}}}{\mathcal{A}_p = \mathcal{B}_p^1=\mathcal{C}_p^1}
\]
\end{lemma}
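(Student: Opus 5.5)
By \cref{Prop:ChangeOaPrimes}, applied to the sub-compatible pair $(\cO_\mathrm{triv},\cO_\mathrm{triv}) \subseteq (\cO_\mathrm{triv},\cO_1)$, the $(\cO_\mathrm{triv},\cO_1)$-primes are exactly the coefficient system primes of \cref{lem:coefficientcp2} that are closed under the one extra transfer $\tr_1^p$. So the plan is to run through the six families there and test this single closure condition. The topology needs no separate work: by \cref{rem:closureofpoints} it is determined by the inclusion relations among primes, and these are simply inherited from the list in \cref{lem:coefficientcp2}.

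The condition to check is $\tr_1^p(P(G/e)) \subseteq P(G/C_p)$, where $\tr_1^p(k) = tk$. Since $P(G/e)$ is an ideal of $\Z$, this reduces to checking that $t\cdot g \in P(G/C_p)$ for a generator $g$ of $P(G/e)$. I would split into three cases according to the middle level.

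\begin{enumerate}
\item The middle level is $\langle 1 \rangle$. This covers $\mathcal{A}_q$, $\mathcal{B}^1_q$ and $\mathcal{C}^1_q$, and the condition holds trivially.
\item The bottom level is $\langle 1\rangle$ and the middle level is proper. This covers $\mathcal{B}^2_q$ and $\mathcal{C}^2_q$, and we need $t$ to lie in the middle ideal.
\begin{itemize}
\item For $\mathcal{B}^2_q$ we have $t \in \langle q,t\rangle$, so it survives.
\item For $\mathcal{C}^2_q$ we need $t \in \langle q, t-p\rangle$, equivalently $p \in \langle q, t-p\rangle$, which forces $q=p$. So $\mathcal{C}^2_q$ fails for every $q \neq p$, including $q=0$. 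For $q=p$ it survives, but it coincides with $\mathcal{B}^2_p$ by the identification in \cref{lem:coefficientcp2}.
\end{itemize}
\item The bottom level is $\langle q\rangle$. This covers $\mathcal{C}^3_q$, and we need $tq \in \langle q,t-p\rangle$, which is clear.
\end{enumerate}

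Assembling the cases, the surviving primes are $\mathcal{A}_q, \mathcal{B}^1_q, \mathcal{B}^2_q, \mathcal{C}^1_q, \mathcal{C}^3_q$ for all $q \in \mathbb{P}\cup\{0\}$. The identification $\mathcal{A}_p=\mathcal{B}^1_p=\mathcal{C}^1_p$ persists from \cref{lem:coefficientcp2}. The identification $\mathcal{B}^2_p = \mathcal{C}^2_p$ is no longer needed, because $\mathcal{C}^2$ has been removed as a family and its single surviving member is already counted as $\mathcal{B}^2_p$.

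The only step that needs genuine care is the $\mathcal{C}^2_q$ case: one must verify that $\langle q, t-p\rangle$ contains $t$ exactly when $q = p$. Working in $\Z[t]/\langle t^2-pt\rangle$, the quotient by $\langle q,t-p\rangle$ is $\Z/q$ with $t \mapsto p$, so $t$ lies in the ideal iff $p \equiv 0 \pmod q$. For $q=0$ this fails since $p \neq 0$ in $\Z$. Everything else is routine bookkeeping.
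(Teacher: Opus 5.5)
Your proposal is correct and follows the paper's proof: you use \cref{Prop:ChangeOaPrimes} to reduce to checking which coefficient-system primes are closed under $\tr_1^p(k)=tk$, and you observe that only $\mathcal{C}^2_q$ fails, since $t\notin\langle q,t-p\rangle$ for $q\neq p$, while its $q=p$ member is absorbed by $\mathcal{B}^2_p=\mathcal{C}^2_p$. Your case analysis just spells out the routine checks for the remaining families, which the paper leaves implicit.
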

\begin{proof}
    All ideals aside from $\mathcal{C}_q^2$ remain ideals. Indeed, $\tr_1^p(1) = t \not\in \langle q,t-p \rangle$ when $q\neq p$, and the $q=p$ case is covered by $\mathcal{B}^2_p = \mathcal{C}^2_p$.
\end{proof}

\begin{lemma}\label{lem:calc2}
    Let $\cO_a = \cO_2 = \{(C_p,C_{p^2})\}$. Then
    \[
\Spec^{(\cO_\mathrm{triv} , \cO_2)}(\underline{A}_{C_{p^2}}) \cong \dfrac{\{\mathcal{A}_q,\mathcal{B}_q^2,\mathcal{C}_q^2,\mathcal{C}_q^3\}_{q \in \mathbb{P} \cup \{0\}}}{\mathcal{B}_p^2=\mathcal{C}_p^2}
\]
\end{lemma}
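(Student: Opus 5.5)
By \cref{Prop:ChangeOaPrimes}, the $(\cO_\mathrm{triv},\cO_2)$-primes are exactly those coefficient system primes of \cref{lem:coefficientcp2} that are closed under the single new transfer $\tr_p^{p^2}$. It is additive and satisfies Frobenius reciprocity (\cref{prop: Frobenius reciprocity}), and $\res^{p^2}_p$ is surjective here ($t\mapsto p$, $u\mapsto pt$, integers map to integers). So $\tr_p^{p^2}(x\cdot\res^{p^2}_p(y))=\tr_p^{p^2}(x)\,y$. Hence for a levelwise ideal $P$ it suffices to check that $\tr_p^{p^2}$ sends a set of ideal generators of $P(C_{p^2}/C_p)$ into $P(C_{p^2}/C_{p^2})$. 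The relevant values are $\tr_p^{p^2}(k)=kt$ and $\tr_p^{p^2}(t)=u$.

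I will then go through the six families in turn.
\begin{itemize}
\item $\mathcal{A}_q$: the middle level is $\langle 1\rangle$, so we need $\tr_p^{p^2}(1)=t$ in $\langle q,t,u\rangle$. It is, so $\mathcal{A}_q$ survives for every $q$.
\item $\mathcal{B}^1_q$ and $\mathcal{C}^1_q$: again we need $t$ in the top level. Modulo $t-p$ we have $t\equiv p$, so $t\in\langle q,t-p,u\rangle$ or $t\in\langle q,t-p,u-p^2\rangle$ forces $p\in\langle q\rangle$ in $\Z$, i.e.\ $q=p$. Both families are therefore removed, except at $q=p$, where $\mathcal{B}^1_p=\mathcal{C}^1_p=\mathcal{A}_p$ is already retained.
\item $\mathcal{B}^2_q$: the generators are $q$ and $t$, with images $qt$ and $u$. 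Both lie in $\langle q,t-p,u\rangle$, so it survives.
\item $\mathcal{C}^2_q$ and $\mathcal{C}^3_q$: the generators are $q$ and $t-p$, with images $qt$ and $u-pt$. The second is the one needing care. Modulo $\langle q,t-p,u-p^2\rangle$ it reduces to $p^2-p\cdot p=0$, so both families survive.
\end{itemize}

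The resulting set is $\{\mathcal{A}_q,\mathcal{B}^2_q,\mathcal{C}^2_q,\mathcal{C}^3_q\}$. The only identification among these inherited from \cref{lem:coefficientcp2} is $\mathcal{B}^2_p=\mathcal{C}^2_p$; the other identifications involved only discarded families. The topology is the subspace topology, determined by the inclusions via \cref{rem:closureofpoints}.

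The only step that is not immediate is the containment $u-pt\in\langle q,t-p,u-p^2\rangle$. That requires evaluating the transfer on a non-generator and using both relations $t\equiv p$ and $u\equiv p^2$ simultaneously. The rest is bookkeeping.
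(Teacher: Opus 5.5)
Your overall route is the paper's: use \cref{Prop:ChangeOaPrimes} to keep exactly those primes of \cref{lem:coefficientcp2} that are closed under $\tr_p^{p^2}$. You discard $\mathcal{B}^1_q,\mathcal{C}^1_q$ for $q\neq p$ because $\tr_p^{p^2}(1)=t$, and you handle $\mathcal{C}^2_q,\mathcal{C}^3_q$ via $u-pt=(u-p^2)-p(t-p)$. Your conclusion is correct.

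However, your reduction to ideal generators rests on a false claim: $\res^{p^2}_p$ is \emph{not} surjective. Its image is the $\Z$-span of $\res(1)=1$, $\res(t)=p$ and $\res(u)=pt$, which is $\Z\oplus p\Z t$. This does not contain $t\in A(C_p)$. So Frobenius reciprocity only moves multipliers from $\Z+p\Z t$ across the transfer. Checking $\tr_p^{p^2}$ on an ideal generator $g$ therefore says nothing directly about $\tr_p^{p^2}(tg)$. As written, this step of the argument fails.

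The repair is easy. Since $\tr_p^{p^2}$ is additive, check it on $\Z$-spanning sets of the middle-level ideals, or equivalently on both $g$ and $tg$ for each ideal generator $g$. The spanning sets are:
\begin{itemize}
\item $\langle 1\rangle=\Z+\Z t$;
\item $\langle q,t\rangle=q\Z+\Z t$;
\item $\langle q,t-p\rangle=\Z q+\Z qt+\Z(t-p)$, using $t(t-p)=0$.
\end{itemize}
For $\mathcal{B}^2_q$, your elements $q,t$ already happen to span $\langle q,t\rangle$ over $\Z$, so that case stands as written. The two values you did not check are:
\begin{itemize}
\item $\tr_p^{p^2}(t)=u$, needed for $\mathcal{A}_q$; it lies in $\langle q,t,u\rangle$.
\item $\tr_p^{p^2}(qt)=qu$, needed for $\mathcal{C}^2_q$ and $\mathcal{C}^3_q$; it lies in any ideal containing $q$.
\end{itemize}
The discarding steps need no reduction, since one failing element suffices. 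With this correction your argument is complete and coincides with the paper's proof.
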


\begin{proof}
     When $q \neq p$, $\mathcal{B}^1_q$ and $\mathcal{C}^1_q$ are no longer ideals as $\tr_p^{p^2}(1)=t \not\in \langle q,t-p,u \rangle$. The $q=p$ case is covered by $\mathcal{A}_p = \mathcal{B}^1_q = \mathcal{C}^1_q$. 
     
     It is not obvious from our choices of generators that $\mathcal{C}^2_q$ and $\mathcal{C}^3_q$ are ideals with respect to these transfers so we produce the argument here. It requires checking that $\tr_p^{p^2}(t-p) = u-pt \in \langle q,t-p,u-p^2 \rangle$. As ideals are absorbing under multiplication it follows that $p (t-p)$ is in the ideal, and then we can use the underlying abelian group structure to form $(u-p^2)-p(t-p) = u-pt$ as required. 
\end{proof}

\begin{lemma}\label{lem:calc3}
    Let $\cO_a = \cO_3 = \{(e,C_p),(e,C_{p^2})\}$. Then
    \[
\Spec^{(\cO_\mathrm{triv} , \cO_3)}(\underline{A}_{C_{p^2}}) \cong \dfrac{\{\mathcal{A}_q,\mathcal{B}_q^1,\mathcal{B}_q^2,\mathcal{C}_q^3\}_{q \in \mathbb{P} \cup \{0\}}}{\mathcal{A}_p = \mathcal{B}_p^1}
\]
\end{lemma}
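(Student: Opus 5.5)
By \cref{Prop:ChangeOaPrimes}, since $\cO_m = \cO_\mathrm{triv}$ is unchanged, the $(\cO_\mathrm{triv},\cO_3)$-primes are exactly the coefficient system primes of \cref{lem:coefficientcp2} that are closed under the transfers $\tr_1^p$ and $\tr_1^{p^2}$. So the plan is to run through the six families $\mathcal{A}_q,\mathcal{B}^1_q,\mathcal{B}^2_q,\mathcal{C}^1_q,\mathcal{C}^2_q,\mathcal{C}^3_q$ and test these two transfers on each. The first observation is that closure under $\tr_1^p$ has already been settled in \cref{lem:calc1}: it removes exactly $\mathcal{C}^2_q$ for $q\neq p$. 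For $q=p$ we have $\mathcal{C}^2_p=\mathcal{B}^2_p$, which survives. It therefore remains to test $\tr_1^{p^2}$ on the survivors $\mathcal{A}_q,\mathcal{B}^1_q,\mathcal{B}^2_q,\mathcal{C}^1_q,\mathcal{C}^3_q$.

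Next I would compute $\tr_1^{p^2}$ explicitly. It factors as $\tr_p^{p^2}\tr_1^p$, so $\tr_1^{p^2}(k)=\tr_p^{p^2}(tk)=ku$. An ideal is then closed under $\tr_1^{p^2}$ precisely when either its bottom level is $\langle 1\rangle$ and $u$ lies in its top level, or its bottom level is $\langle q\rangle$ and $qu$ lies in the top level.

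\begin{itemize}
\item For $\mathcal{A}_q$, $\mathcal{B}^1_q$ and $\mathcal{B}^2_q$, the top level contains $u$, so all three are closed.
\item For $\mathcal{C}^3_q$, the top level contains $q$ and hence $qu$, so it is closed.
\item For $\mathcal{C}^1_q$ with $q\neq p$, the bottom level is $\langle 1\rangle$, so we would need $u\in\langle q,t-p,u-p^2\rangle$. This would force $p^2$ into the ideal, which fails. Passing to the quotient map to $\mathbb{Z}/q$ (or $\mathbb{Z}$) sending $t\mapsto p$, $u\mapsto p^2$ makes this rigorous. So $\mathcal{C}^1_q$ is excluded.
\item For $q=p$, we have $\mathcal{C}^1_p=\mathcal{A}_p$, which survives.
\end{itemize}

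This leaves $\{\mathcal{A}_q,\mathcal{B}^1_q,\mathcal{B}^2_q,\mathcal{C}^3_q\}$ with the identification $\mathcal{A}_p=\mathcal{B}^1_p$ inherited from \cref{lem:coefficientcp2}. The topology is the subspace topology, determined by the inclusions, as in the earlier lemmas. Since $\mathcal{C}^2_p=\mathcal{B}^2_p$ and $\mathcal{C}^1_p=\mathcal{A}_p$ are already listed, no identification involving the removed families needs to be recorded.

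The only step with any content is the non-membership $u\notin\langle q,t-p,u-p^2\rangle$ for $q\neq p$, and a similar check that $\mathcal{C}^2_q$ really fails. I would handle both by evaluating along the ring map $t\mapsto p$, $u\mapsto p^2$ to $\mathbb{Z}/q$, which is well defined on the relations $t^2-pt$, $u^2-p^2u$, $tu-pu$.
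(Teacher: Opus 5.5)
Your proposal is correct and is essentially the paper's argument: start from the $(\cO_\mathrm{triv},\cO_1)$-primes, use $\tr_1^{p^2}(k)=\tr_p^{p^2}\tr_1^p(k)=ku$, and observe that exactly $\mathcal{C}^1_q$ for $q\neq p$ fails closure under the new transfer. Your evaluation along the ring map $t\mapsto p$, $u\mapsto p^2$ rigorously justifies the non-membership $u\notin\langle q,t-p,u-p^2\rangle$, which the paper only asserts.
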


\begin{proof}
    We begin with the primes from \cref{lem:calc1} and see which ones are moreover closed under the additional transfer. For this we observe that $\tr_1^{p^2}(k)=\tr_p^{p^2}\tr_1^p(k) = ku$. In particular we see that $\mathcal{C}^1_p$ is no longer an ideal as $\tr_1^{p^2}(1)=u \not\in \langle q,t-p,u-p^2\rangle$ whenever $q \neq p$.
\end{proof}

\begin{lemma}\label{lem:calc4}
    Let $\cO_a = \cO_\mathrm{comp}$. Then
    \[
\Spec^{(\cO_\mathrm{triv} , \cO_\mathrm{comp})}(\underline{A}_{C_{p^2}}) \cong \{\mathcal{A}_q,\mathcal{B}_q^2,\mathcal{C}_q^3\}_{q \in \mathbb{P} \cup \{0\}}
\]
\end{lemma}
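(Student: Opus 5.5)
By \cref{Prop:ChangeOaPrimes}, the $(\cO_\mathrm{triv},\cO_\mathrm{comp})$-primes are exactly the coefficient system primes of \cref{lem:coefficientcp2} that are closed under all transfers $\tr_1^p$, $\tr_p^{p^2}$ and $\tr_1^{p^2}$. Since $\cO_\mathrm{comp}$ contains both $\cO_2$ and $\cO_3$, every such prime is in particular an $(\cO_\mathrm{triv},\cO_2)$-prime and an $(\cO_\mathrm{triv},\cO_3)$-prime. Conversely, a prime in both of these lists is closed under every transfer of $\cO_\mathrm{comp} = \cO_2\cup\cO_3$, hence is an $(\cO_\mathrm{triv},\cO_\mathrm{comp})$-prime. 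So the plan is to intersect the lists of \cref{lem:calc2} and \cref{lem:calc3}, taking care with the identifications that occur at $q=p$.

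The list of \cref{lem:calc2} is $\{\mathcal{A}_q,\mathcal{B}_q^2,\mathcal{C}_q^2,\mathcal{C}_q^3\}$ with $\mathcal{B}_p^2=\mathcal{C}_p^2$. The list of \cref{lem:calc3} is $\{\mathcal{A}_q,\mathcal{B}_q^1,\mathcal{B}_q^2,\mathcal{C}_q^3\}$ with $\mathcal{A}_p=\mathcal{B}_p^1$. Their intersection is $\{\mathcal{A}_q,\mathcal{B}_q^2,\mathcal{C}_q^3\}$, which is the claimed set.

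The main obstacle is bookkeeping: we must make sure no coincidence at $q=p$ sneaks an extra ideal in or knocks one out. I would check this directly with the transfer values $\tr_1^p(1)=t$, $\tr_p^{p^2}(1)=t$ and $\tr_1^{p^2}(1)=u$.
\begin{itemize}
\item $\mathcal{C}_q^2$ for $q\neq p$ fails, because $\tr_1^p(1)=t\notin\langle q,t-p\rangle$.
\item $\mathcal{B}_q^1$ for $q\neq p$ fails, because $\tr_p^{p^2}(1)=t\notin\langle q,t-p,u\rangle$.
\item At $q=p$, both of these ideals coincide with survivors: $\mathcal{C}_p^2=\mathcal{B}_p^2$ and $\mathcal{B}_p^1=\mathcal{A}_p$. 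So no new identifications appear in the final answer.
\end{itemize}

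Finally, I would confirm that the three survivors really are closed under all transfers. This is automatic from membership in both lists. As a sanity check: $\mathcal{A}_q$ and $\mathcal{B}_q^2$ have $\langle 1\rangle$ in the relevant sources or targets, and $\mathcal{C}_q^3$ was handled in \cref{lem:calc2}, with $\tr_1^{p^2}$ sending $\langle q\rangle$ into $\langle q\rangle u$.
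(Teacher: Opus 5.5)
Your argument is correct and essentially matches the paper's, which also applies \cref{Prop:ChangeOaPrimes} and intersects previously computed lists; the only difference is that the paper intersects the $\cO_1$ and $\cO_2$ lists (\cref{lem:calc1} and \cref{lem:calc2}), implicitly using $\tr_1^{p^2}=\tr_p^{p^2}\circ\tr_1^p$, whereas your choice of $\cO_2$ and $\cO_3$ covers every nontrivial pair of $\cO_\mathrm{comp}$ directly. Both intersections yield $\{\mathcal{A}_q,\mathcal{B}_q^2,\mathcal{C}_q^3\}$, and your handling of the $q=p$ identifications is right.
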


\begin{proof}
    We intersect the primes from the $\cO_a = \cO_1$ case (\cref{lem:calc1}) and the $\cO_a = \cO_2$ case (\cref{lem:calc2}).
\end{proof}

\begin{remark}\leavevmode
\begin{itemize}
    \item The computation of $\Spec^{(\cO_\mathrm{triv} , \cO_\mathrm{comp})}(\underline{A}_{C_{p^2}})$ agrees with what can be found in \cite{lewis:1980}.
    \item \cref{lem:calc2} and \cref{lem:calc3} show that the computation of the spectrum is sensitive to the saturated hull of the additive part in general.
    \item While it may seem that the spaces $\Spec^{(\cO_\mathrm{triv} , \cO_2)}(\underline{A}_{C_{p^2}})$ and $\Spec^{(\cO_\mathrm{triv} , \cO_3)}(\underline{A}_{C_{p^2}})$ are homeomorphic, they are not. This can be seen by checking the inclusions between primes and we refer the reader to \cref{tab:ofspectra} for diagrams.

\end{itemize}    
\end{remark}

\subsection*{$\cO_m = \cO_1$}

Using \cref{thm:pi0}, we can compute the $(\cO_1,\cO_1)$ bi-incomplete primes. We remind ourselves from \cref{subsec:cpburnside} that the Tambara primes of $\underline{A}_{C_p}$ are of the form
\[
\begin{bmatrix}
    \langle q \rangle \\\langle q \rangle
\end{bmatrix} \quad \text{and} \quad \begin{bmatrix}
    \langle q , t-p \rangle \\ \langle q \rangle 
\end{bmatrix}
\]
with an identification when $p=q$.

We have two connected components of our self-compatible pair to deal with, the bottom and the top components. For $\mathfrak{p}$ a prime ideal in $A(C_{p^2})$ we obtain an $(\cO_1,\cO_1)$-prime ideal by adding $\langle 1 \rangle$'s below (given by considering the empty intersection). In the notation used previously this gives us the ideals $\mathcal{A}_q, \mathcal{B}_q^1$, and $\mathcal{C}_q^1$. For the other connected component we need to pick one of the above prime ideals and compute the preimage of restriction for the entry at the top level:
\begin{itemize}
    \item Let $\mathcal{J} = \begin{bmatrix}
    \langle q \rangle \\\langle q \rangle
\end{bmatrix}$. Then we need to compute $(\res_1^{p^2})^{-1}(\langle q \rangle) \cap (\res_p^{p^2})^{-1}(\langle q \rangle)$. The value of this intersection is given by $\langle q, t-p \rangle$ whenever $q \neq p$ and $\langle q,t \rangle$ if $q=p$. This provides an infinite family of primes:
\[
\mathcal{D}_q = \begin{bmatrix}
        \langle q,t-p\rangle \\ \langle q \rangle \\ \langle q \rangle
    \end{bmatrix}
\]

\begin{remark}\label{rem:nonnontriv}
We return to \cref{rem:onewonders} which discussed the levelwise Weyl-primality of prime ideas at levels where there are no nontrivial norms into that level. Consider $\mathcal{D}_q$ for $q=0$. Then we have $\mathcal{D}_0(C_{p^2}/e) = \langle t-p \rangle$ which is not Weyl-prime even though the top level has no non-trivial norms entering it. The issue is that there is a norm $e \to C_p$ and this adds to the potential generalized products that we see.
\end{remark}

\item Next we let $\mathcal{J} = \begin{bmatrix}
    \langle q, t-p \rangle \\\langle q \rangle
    \end{bmatrix}$ and we once again want to compute the inverse image of restriction. In this case, we obtain the infinite family $\mathcal{C}_q^3$ which we have already seen.
\end{itemize}
We have now computed all of the ideals, and it remains to discuss the topology which boils down to giving any identifications and inclusions. As before we have $\mathcal{A}_p = \mathcal{B}_p^1=\mathcal{C}_p^1$. There is an inclusion $\mathcal{D}_q \subseteq \mathcal{C}_q^3$. Finally there are inclusions $\mathcal{D}_p, \mathcal{C}_p^3 \subseteq \mathcal{A}_p$. All in all we have:

\begin{lemma}\label{lem:calc01}
 Let $\cO_a = \cO_1$. Then
\[
\Spec^{(\cO_1 , \cO_1)}(\underline{A}_{C_{p^2}}) \cong \dfrac{\{\mathcal{A}_q,\mathcal{B}_q^1,\mathcal{C}_q^1, \mathcal{C}_q^3, \mathcal{D}_q\}_{q \in \mathbb{P} \cup \{0\}}}{\mathcal{A}_p = \mathcal{B}_p^1= \mathcal{C}_p^1}
\]    
\end{lemma}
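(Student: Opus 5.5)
The plan is to apply \cref{thm:pi0} to the self-compatible pair $(\cO_1,\cO_1)$. Here $\pi_0\cO_1$ has two components: $\mathcal{O}'=\{e,C_p\}$ and the singleton $\{C_{p^2}\}$. By \cref{thm:pi0}, $\Spec^{(\cO_1,\cO_1)}(\underline{A}_{C_{p^2}})$ is in bijection with the disjoint union of the spectra of the two restricted Tambara functors, via $J\mapsto I(J)$ of \cref{def:IJ}. The argument then has two parts: identify the ideals $I(J)$ concretely, and determine the topology.

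\textbf{The singleton component.} The restricted Tambara functor at $\{C_{p^2}\}$ is just the ring $A(C_{p^2})$ with trivial Weyl action. Its primes are therefore the ordinary primes listed in \cref{lem:zariskiprimes}. Each one extends by $\langle 1\rangle$ at the lower levels, since there the intersection in \cref{def:IJ} is empty. This yields exactly $\mathcal{A}_q,\mathcal{B}^1_q,\mathcal{C}^1_q$, together with the identification $\mathcal{A}_p=\mathcal{B}^1_p=\mathcal{C}^1_p$ inherited from \cref{lem:zariskiprimes}.

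\textbf{The component $\{e,C_p\}$.} Since the Weyl actions are trivial, the restricted functor here is the complete $C_p$-Burnside Tambara functor. Its primes are the Nakaoka primes recalled from \cref{subsec:cpburnside}, namely $\mathcal{C}_q$ and $\mathcal{B}^2_q$, with $\mathcal{C}_p=\mathcal{B}^2_p$. For each such prime we compute the top level
\[
(\res^{p^2}_1)^{-1}(J(e))\cap(\res^{p^2}_p)^{-1}(J(C_p))
\]
by hand, writing elements as $a+bt+cu$ and using $\res^{p^2}_p(t)=p$, $\res^{p^2}_p(u)=pt$, $\res^{p^2}_1(t)=p$, $\res^{p^2}_1(u)=p^2$. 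For $J=\mathcal{C}_q$ the condition $a+bp+ct\in\langle q\rangle\subseteq \Z[t]/(t^2-pt)$ forces $q\mid c$ and $q\mid a+bp$, which gives $\langle q,t-p\rangle$, i.e.\ $\mathcal{D}_q$. For $J=\mathcal{B}^2_q$ we reduce modulo $\langle q,t-p\rangle$ and get $\langle q,t-p,u-p^2\rangle$, recovering $\mathcal{C}^3_q$.

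\textbf{Topology.} By \cref{rem:closureofpoints}, and since the spectrum is a quotient of these families, the topology is determined by the inclusions among the listed ideals, which we read off levelwise. We have $\mathcal{D}_q\subseteq\mathcal{C}^3_q$, because $\langle q,t-p\rangle\subseteq\langle q,t-p,u-p^2\rangle$ and the lower levels agree. At $q=p$, both $\mathcal{D}_p$ and $\mathcal{C}^3_p$ sit inside $\mathcal{A}_p$, since $\langle p,t,u\rangle$ is the common maximal ideal at the top and the lower levels of $\mathcal{A}_p$ are $\langle1\rangle$. We also check that no other inclusions or coincidences occur, for instance that $\mathcal{D}_p\neq\mathcal{C}^3_p$: the element $u-p^2$ does not lie in $\langle p,t-p\rangle$.

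The main obstacle I expect is the preimage computation at $q=p$ and $q=0$, where the Burnside relations make $\langle q\rangle$ non-prime. There one must be careful that the preimage is the stated ideal and not something larger such as $\langle p,t\rangle$. I would handle this by working in the explicit $\Z$-basis $\{1,t,u\}$ and doing the membership check coefficientwise.
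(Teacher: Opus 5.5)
Your route is the paper's: apply \cref{thm:pi0} with components $\{e,C_p\}$ and $\{C_{p^2}\}$, pad the primes of $A(C_{p^2})$ by $\langle 1\rangle$, and pull back the Nakaoka primes of $\underline{A}_{C_p}$ along restriction. The singleton component and the cases $q\neq p$ are fine.

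The gap is at $q=p$, exactly where you expected trouble. You wrote the middle-level condition as $a+bp+ct\in\langle q\rangle$. But $\res^{p^2}_p(u)=pt$, so the image of $a+bt+cu$ is $(a+bp)+cp\,t$, and the condition is $q\mid a+bp$ and $q\mid cp$. For $q\neq p$, including $q=0$, this agrees with what you wrote. For $q=p$ the second condition is vacuous. Moreover, at $q=p$ the relevant prime $J$ of the $C_p$-component is not $(\langle p\rangle;\langle p\rangle)$. That pair is not closed under $\nm_1^p$, since $\nm_1^p(p)=p+(p^{p-1}-1)t\notin\langle p\rangle$. The prime is instead $\mathcal{C}_p=\mathcal{B}^2_p=(\langle p,t\rangle;\langle p\rangle)$. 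The preimage of its levels is $\{a+bt+cu : p\mid a\}=\langle p,t,u\rangle$. Hence $\mathcal{D}_p=\mathcal{C}^3_p=\begin{bmatrix}\langle p,t,u\rangle\\ \langle p,t\rangle\\ \langle p\rangle\end{bmatrix}$, matching \cref{tab:1}.

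Your assertion that $\mathcal{D}_p\neq\mathcal{C}^3_p$ (via $u-p^2\notin\langle p,t-p\rangle$) is therefore false, and your spectrum would contain a spurious extra point over $p$. This is also internally inconsistent with your own setup. You used $\mathcal{C}_p=\mathcal{B}^2_p$, so both names are $I(J)$ for the same $J$, and \cref{thm:pi0} forces them to be the same point. The correct answer needs the identification $\mathcal{C}^3_p=\mathcal{D}_p$ in addition to $\mathcal{A}_p=\mathcal{B}^1_p=\mathcal{C}^1_p$. The displayed statement omits it, but the paper uses it in \cref{tab:1}, \cref{lem:cp2nonsat} and \cref{lem:completeCp2}.

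Separately, your claim that no further inclusions occur is also wrong. The singleton-component primes are $\langle 1\rangle$ below, so comparing top levels gives, for every $q$:
\begin{itemize}
\item $\mathcal{C}^3_q\subseteq\mathcal{C}^1_q$;
\item $\mathcal{D}_q\subseteq\mathcal{B}^1_q$, since $\{q\mid a+bp,\ q\mid c\}\subseteq\{q\mid a+bp\}$;
\item $\mathcal{D}_q\subseteq\mathcal{C}^1_q$, since $\{q\mid a+bp,\ q\mid c\}\subseteq\{q\mid a+bp+cp^2\}$.
\end{itemize}
Also, in $\mathcal{D}_q\subseteq\mathcal{C}^3_q$ the middle levels do not agree: they are $\langle q\rangle\subsetneq\langle q,t-p\rangle$. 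The inclusion still holds.
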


We now include more additive structure. In this case, we have options of $\cO_a = \cO_3$ and $\cO_a = \cO_\mathrm{comp}$, which we address separately. As in the previous section we need only see which primes remain ideals closed under the new additive transfers.

\begin{lemma}\label{lem:calc5}
    Let $\cO_a = \cO_3$. Then
\[
\Spec^{(\cO_1 , \cO_3)}(\underline{A}_{C_{p^2}}) \cong \dfrac{\{\mathcal{A}_q,\mathcal{B}_q^1, \mathcal{C}^3_q, \mathcal{D}_q\}_{q \in \mathbb{P} \cup \{0\}}}{\mathcal{A}_p = \mathcal{B}_p^1}
\]
\end{lemma}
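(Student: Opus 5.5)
By \cref{Prop:ChangeOaPrimes}, the $(\cO_1,\cO_3)$-primes are exactly the $(\cO_1,\cO_1)$-primes of \cref{lem:calc01} that are also closed under the one new transfer $\tr_1^{p^2}$. Every prime already has $\tr_1^p$-closure, and $\tr_1^{p^2}=\tr_p^{p^2}\tr_1^p$ as a composite map of $\underline{A}_{C_{p^2}}$. However, $\tr_p^{p^2}$ is not an allowed operation here, so we cannot reduce to it. We must instead test directly whether $\tr_1^{p^2}(P(C_{p^2}/e))\subseteq P(C_{p^2}/C_{p^2})$ for each family. As in the proof of \cref{lem:calc3}, $\tr_1^{p^2}(k)=ku$, so it suffices to check whether $qu$ lies in the top level when $P(C_{p^2}/e)=\langle q\rangle$, and whether $u$ lies there when $P(C_{p^2}/e)=\langle 1\rangle$.

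We go through the families in turn.
\begin{enumerate}
\item For $\mathcal{A}_q$ and $\mathcal{B}_q^1$ the top level contains $u$, so $ku$ always lies in it and both families survive.
\item For $\mathcal{C}_q^1$ the bottom level is $\langle 1\rangle$, so closure forces $u\in\langle q,t-p,u-p^2\rangle$, which would force $p^2\in\langle q\rangle$. This fails for $q\neq p$, so these primes drop out. The case $q=p$ is already accounted for by $\mathcal{C}_p^1=\mathcal{A}_p$.
\item For $\mathcal{C}_q^3$ and $\mathcal{D}_q$ the bottom level is $\langle q\rangle$, and $qu$ lies in the top level since $q$ does. Both families survive.
\end{enumerate}
This leaves $\{\mathcal{A}_q,\mathcal{B}_q^1,\mathcal{C}_q^3,\mathcal{D}_q\}$. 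The identification $\mathcal{A}_p=\mathcal{B}_p^1$ persists from \cref{lem:calc01}, while $\mathcal{C}_p^1$ disappears only as a separate label.

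The topology is determined by inclusions between points (\cref{rem:closureofpoints}), and passing to a subspace of $\Spec^{(\cO_1,\cO_1)}$ keeps exactly the inclusions among the surviving primes. In particular $\mathcal{D}_q\subseteq\mathcal{C}_q^3$ and $\mathcal{D}_p,\mathcal{C}_p^3\subseteq\mathcal{A}_p$ remain, which gives the stated description. The main point requiring care is the second item above: checking that $u\notin\langle q,t-p,u-p^2\rangle$ for $q\neq p$, using the explicit level ring from \cref{lem:zariskiprimes}. One must also be careful to test closure under $\tr_1^{p^2}$ itself and not wrongly demand closure under $\tr_p^{p^2}$, which would instead reproduce the $\cO_{\mathrm{comp}}$ answer.
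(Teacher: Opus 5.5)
Your proposal is correct and follows the same route as the paper: use \cref{Prop:ChangeOaPrimes} to reduce to checking which $(\cO_1,\cO_1)$-primes of \cref{lem:calc01} are closed under the single new transfer $\tr_1^{p^2}(k)=ku$. As in the paper, only $\mathcal{C}_q^1$ for $q\neq p$ fails, since $u\notin\langle q,t-p,u-p^2\rangle$, while $\mathcal{D}_q$ and $\mathcal{C}_q^3$ survive because their top levels contain $q$; your extra remarks on the subspace topology go slightly beyond the paper's proof but are consistent with it.
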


\begin{proof}
    As in \cref{lem:calc3} we have that $\mathcal{C}_q^1$ ceases to be an ideal as it is no longer closed under the transfer $(e,C_{p^2})$. The ideal $\mathcal{D}_q$ is still an ideal under this transfer as $\tr_1^{{p^2}}(q) = qu$ and ideals are absorbing under multiplication.
\end{proof}

\begin{lemma}
    Let $\cO_a = \cO_\mathrm{comp}$. Then
\[
\Spec^{(\cO_1 , \cO_\mathrm{comp})}(\underline{A}_{C_{p^2}}) \cong {\{\mathcal{A}_q, \mathcal{C}_q^3, \mathcal{D}_q\}_{q \in \mathbb{P} \cup \{0\}}}
\]
\end{lemma}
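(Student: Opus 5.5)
Following the pattern of \cref{lem:calc4}, I would obtain this spectrum from \cref{Prop:ChangeOaPrimes}. The pair $(\cO_1,\cO_1)$ is a sub-compatible pair of $(\cO_1,\cO_\mathrm{comp})$. So the $(\cO_1,\cO_\mathrm{comp})$-primes are exactly those primes in \cref{lem:calc01} that are closed under all transfers in $\cO_\mathrm{comp}$. Every transfer in $\cO_\mathrm{comp}$ is a composite of conjugations, $\tr_1^p$ and $\tr_p^{p^2}$. The transfer $\tr_1^p$ is already present in $\cO_1$. It therefore suffices to test each of the families $\mathcal{A}_q,\mathcal{B}_q^1,\mathcal{C}_q^1,\mathcal{C}_q^3,\mathcal{D}_q$ for closure under $\tr_p^{p^2}$.

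Equivalently, I could intersect the answer of \cref{lem:calc5}, which already handles $\tr_1^{p^2}$ and eliminates $\mathcal{C}_q^1$, with a check of $\tr_p^{p^2}$. The ideals $\mathcal{A}_q$ are trivially closed, since the middle level is $\langle 1\rangle$ but the transfer of $1$ is $t\in\langle q,t,u\rangle$. More precisely, the image of $\tr_p^{p^2}$ is spanned by $t$ and $u$, both of which lie in $\langle q,t,u\rangle$. For $\mathcal{B}_q^1$ with $q\neq p$, we have $\tr_p^{p^2}(1)=t\notin\langle q,t-p,u\rangle$, exactly as in \cref{lem:calc2}, so this family is removed. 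The case $q=p$ survives only through the identification $\mathcal{B}_p^1=\mathcal{A}_p$. For $\mathcal{C}_q^3$, closure follows from the computation in \cref{lem:calc2}: the only nontrivial check is $\tr_p^{p^2}(t-p)=u-pt\in\langle q,t-p,u-p^2\rangle$, together with $\tr_p^{p^2}(q)=qt$.

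The main step is $\mathcal{D}_q$. Its middle level is $\langle q\rangle$ and its top level is $\langle q,t-p\rangle$. By additivity, closure under $\tr_p^{p^2}$ requires checking the generators $q$ and $qt$ of the middle ideal as an abelian group. We have $\tr_p^{p^2}(q)=qt$, which lies in the ideal. Also $\tr_p^{p^2}(qt)=qu$, which lies in the ideal because ideals are absorbing. So $\mathcal{D}_q$ survives.

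Collecting the survivors $\mathcal{A}_q,\mathcal{C}_q^3,\mathcal{D}_q$ gives the stated set. The identification $\mathcal{A}_p=\mathcal{B}_p^1=\mathcal{C}_p^1$ no longer needs to be recorded, because $\mathcal{B}^1$ and $\mathcal{C}^1$ have been removed. The topology is then inherited from the inclusions already listed before \cref{lem:calc01}, namely $\mathcal{D}_q\subseteq\mathcal{C}_q^3$ and $\mathcal{D}_p,\mathcal{C}_p^3\subseteq\mathcal{A}_p$, using \cref{rem:closureofpoints}.
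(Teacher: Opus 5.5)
Your proposal is correct and follows the paper's argument: use \cref{Prop:ChangeOaPrimes} to reduce to checking which primes of \cref{lem:calc5} are closed under $\tr_p^{p^2}$. This discards $\mathcal{B}_q^1$ for $q\neq p$ and keeps $\mathcal{A}_q$, $\mathcal{C}_q^3$ and $\mathcal{D}_q$; you simply write out the transfer computations that the paper's proof only asserts.
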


\begin{proof}
    We need only check the closure of the primes from \cref{lem:calc5} under the additional transfer $(C_p,C_{p^2})$. We know from the Green functor case (\cref{lem:calc4}) that $\mathcal{B}_q^1$ is no longer an ideal. Again, $\mathcal{C}_q^3$ and $\mathcal{D}_q$ remain ideals.
\end{proof}

\subsection*{$\cO_m = \cO_2$}

For the compatible pairs with $\cO_m = \cO_2$, which is self compatible, we can use \cref{thm:pi0}, which tells us we need to consider the two connected components in turn.

Firstly, we can consider the connected component which is the singleton at the bottom. We pick a prime ideal of $\Z$, say $\langle q \rangle$ and the rest of the ideal is determined by preimages of restrictions. This is exactly the ideal $\mathcal{C}_q^3$.

Next, we must compute the Nakaoka spectrum of the (restricted) $C_p$ Tambara functor
\begin{equation}\label{eq:resBurnside}
i^\ast(\underline{A}_{C_{p^2}}) = \begin{tikzcd}[row sep=huge, column sep=large]
	{\Z[t,u]/\langle u^2-p^2u,t^2-pt,tu-pu \rangle} \\
	{\Z[t]/\langle t^2-pt \rangle}
	\arrow["{\substack{u \mapsto pt \\ t \mapsto p}}"{description}, from=1-1, to=2-1]
	\arrow["{\substack{t \mapsto u \\ k \mapsto tk}}", shift left=4, curve={height=-6pt}, from=2-1, to=1-1]
	\arrow["{\substack{t \mapsto p^{p-2}u \\ k \mapsto k + \frac{k^p-k}{p}t}}"', shift right=4, curve={height=6pt}, from=2-1, to=1-1]
\end{tikzcd}\end{equation}
where the group action is trivial at both levels. To compute this spectrum we will appeal to the $C_p$ ghost construction of \cite{4DS} which we will now recall.

\begin{definition}\label{def:ghost}
Let $T$ be a $C_p$ Tambara functor. Denote by $\tau$ the image of the transfer $\tr_1^{p}$ which is an ideal of $T(C_p/C_p)$. We then denote by $\Phi^{C_p}T \coloneq T(C_p/C_p)$ the quotient by this ideal, and refer to it as the \emph{geometric fixed points} of $T$.

The \emph{ghost} of $T$ is the Tambara functor
\vspace{-10mm}
\[
\mathghost(T) = \begin{tikzcd}[row sep=huge, column sep=large]
	{T(C_p/e)^{C_p} \times \Phi^{C_p}T}\\
	{T(C_p/e)}
	\arrow["{\res}"{description}, from=1-1, to=2-1]
	\arrow["{\tr}", shift left=4, curve={height=-6pt}, from=2-1, to=1-1]
	\arrow["{\nm}"', shift right=4, curve={height=6pt}, from=2-1, to=1-1]
    \arrow[from=2-1, to=2-1, loop, in=305, out=235, distance=10mm]
\end{tikzcd}
\qquad
\begin{aligned}
   \\ { } \\ \res(x,y) &= x \\ \tr(x) &= \left( \sum_{g \in G} g \cdot x, 0\right) \\ \nm(x) &= \left( \prod_{g \in G} g \cdot x, \nm(x) + \tau \right)
\end{aligned}
\]
The ghost of $T$ comes equipped with a map of Tambara functors, the \emph{ghost map}
\[
\chi_T \colon T \to \mathghost(T)
\]
which is the identity on level $C_p/e$ and given on the $C_p/C_p$ level by
\[
(\chi_T)_{C_p/C_p} = (\res,q) \colon T(C_p/C_p) \to T(C_p/e)^{C_p} \times \Phi^{C_p}T
\]
where $q \colon T(C_p/C_p) \to \Phi^{C_p}T$ is the quotient map.
\end{definition}

By \cite[Proposition 7.22]{4DS} we have a full understanding of the Nakaoka spectrum of $\mathghost(T)$. We will denote by $(\mathfrak{a};\mathfrak{b})$ the ideal of $\mathghost(T)$ defined as
\begin{align*}
    (\mathfrak{a};\mathfrak{b})(C_p/e) &= \mathfrak{a} \\
    (\mathfrak{a};\mathfrak{b})(C_p/C_p) &= \mathfrak{a}^{C_p} \times \mathfrak{b}
\end{align*}
where $\mathfrak{a} \subseteq T(C_p/e)$ is a $C_p$-prime ideal and $\mathfrak{b} \subseteq \Phi^{C_p}T$ is either a prime ideal or the entire ring.

\begin{proposition}[{\cite[Proposition 7.22]{4DS}}]\label{prop:ghostprimes}
    Every prime ideal of $\mathghost(T)$ is one of the following two forms:
    \begin{enumerate}
        \item $(\mathfrak{a};\Phi^{C_p}T)$ for some $C_p$-prime ideal $\mathfrak{a}$ of $T(C_p/e)$.
        \item $((\nm)^{-1}(\mathfrak{b}); \mathfrak{b})$ where $\mathfrak{b}$ is a prime ideal of $\Phi^{C_p}T$ and $\nm \colon T(C_p/e) \to \Phi^{C_p}T$ is the ring map induced by the norm.
    \end{enumerate}
\end{proposition}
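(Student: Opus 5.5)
The plan is to analyze a prime ideal $P$ of $\mathghost(T)$ through its two levels. Throughout, write $R = T(C_p/e)$ and $\Phi = \Phi^{C_p}T$, so the top level is $R^{C_p}\times\Phi$. I will rely on three structural facts. The transfer lands in $R^{C_p}\times 0$. Restriction kills $0\times\Phi$. And the norm composed with projection to $\Phi$ is the multiplicative map $\nm$ reduced mod $\tau$; this map is a ring map, because Tambara reciprocity (\cref{proposition: Tambara reciprocity}) says the error terms are transfers, and these die in $\Phi$.

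First, by \cref{lem:bottomlevel}, $\mathfrak{a}:=P(C_p/e)$ is either $C_p$-prime or all of $R$. If $\mathfrak{a}=R$, then $1=\res(1,1)$-closure forces nothing directly, so I argue as follows. Since $\nm(1)=(1,1)\in P(C_p/C_p)$, $P$ would not be proper; so $\mathfrak{a}$ must be $C_p$-prime.

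Next I decompose the top level. Using the idempotents $e_1=(1,0)$ and $e_2=(0,1)$, the ideal $P(C_p/C_p)$ splits as $I_1\times I_2$, with $I_1\subseteq R^{C_p}$ and $I_2\subseteq\Phi$. Closure under restriction gives $I_1\subseteq\mathfrak{a}^{C_p}$. For the reverse inclusion, take $x\in\mathfrak{a}^{C_p}$. I want $(x,0)\in P$, but the transfer only gives $(\sum_g gx,0)=(px,0)$, so this needs more care. Instead I use primality. Consider the elements $(x,0)\in T(C_p/C_p)$ and $(0,1)$. Every generalized product of these at the top level is either a product involving a norm or restriction of $(0,1)$, which vanishes after restriction, or it is $(x,0)(0,1)=0$. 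So $\mathtt{Q}(P,(x,0),(0,1))$ holds, provided the bottom level absorbs $\res(0,1)=0$. Hence $(x,0)\in P$ or $(0,1)\in P$. This splits into the two cases of the statement.

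\textbf{Case (1):} $(0,1)\in P$, so $I_2=\Phi$. Here I need $I_1=\mathfrak{a}^{C_p}$. I would show this by applying primality to $(x,0)$ and $(1,0)$ instead, or more simply by checking that $(\mathfrak{a};\Phi)$ is the largest ideal with bottom $\mathfrak{a}$ and that a prime must equal it. To prove the latter, apply $\mathtt{Q}$ to $(x,0)$ and $y\notin P(C_p/C_p)$. This is the step I expect to be fiddly, since one must enumerate the generalized products using the double coset formulas.

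\textbf{Case (2):} $(0,1)\notin P$, so every $(x,0)$ with $x\in\mathfrak{a}^{C_p}$ lies in $P$, giving $I_1=\mathfrak{a}^{C_p}$. Let $\mathfrak{b}=I_2$. Primality of $P$, applied to pairs $(0,b),(0,b')$, shows that $\mathfrak{b}$ is prime. The relevant generalized products are only the products $(0,bb')$, because restrictions vanish and there are no norms out of the top level. Closure under the norm gives $\nm(\mathfrak{a})\subseteq\mathfrak{b}$, that is, $\mathfrak{a}\subseteq\nm^{-1}(\mathfrak{b})$.

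The main obstacle is the reverse inclusion $\nm^{-1}(\mathfrak{b})\subseteq\mathfrak{a}$. For $r\in\nm^{-1}(\mathfrak{b})$, I would check $\mathtt{Q}(P,r,(0,1))$. The generalized products are of three kinds. At the bottom level they are $g r\cdot 0=0$. At the top level the products involving $\res(0,1)$ vanish, and the remaining ones are $\nm(r)\cdot(0,1)=(0,\overline{\nm r})\in P$. So $r\in\mathfrak{a}$, since $(0,1)\notin P$.
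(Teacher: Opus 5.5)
\textbf{Verdict: there is a genuine gap in Case (1)}, although it can be filled with a result the paper already states.

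\textbf{What is correct.} Your Step 1 is sound, and so is your Case (2). The primality of $\mathfrak b$ follows correctly from testing $(0,b),(0,b')$. The check of $\mathtt{Q}(P,r,(0,1))$ for $r\in\nm^{-1}(\mathfrak b)$ is exactly the right argument for $\nm^{-1}(\mathfrak b)\subseteq\mathfrak a$. There is one slip in Step 2. Among the generalized products of $(x,0)$ and $(0,1)$ you omit $\nm(\res(x,0))\cdot(0,1)=(0,\overline{\nm(x)})$, which is nonzero in general. It does lie in $P$, because $x\in\mathfrak a=P(C_p/e)$ and $P$ is closed under norms, so your conclusion survives.

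\textbf{The gap.} When $(0,1)\in P$ you never prove $I_1=\mathfrak a^{C_p}$, and neither route you suggest works.
\begin{itemize}
\item Testing $(x,0)$ against $(1,0)$ is circular, because $(x,0)\cdot(1,0)=(x,0)$ is itself one of the generalized products.
\item Testing against a bottom-level $y\notin\mathfrak a$ produces the generalized product $(x,0)\cdot\nm(y)=(x\prod_g gy,\,0)$. Its membership in $P$ is exactly the kind of statement you are trying to establish.
\item ``A prime must be the largest ideal with its bottom level'' is false in general: the type (2) primes are not. So that reduction needs an argument that genuinely uses $(0,1)\in P$, and you do not give one.
\end{itemize}

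\textbf{The missing idea} is to combine norms with levelwise radicality. For $x\in\mathfrak a^{C_p}$ we have $\nm(x)=(x^p,\overline{\nm(x)})\in P$, hence $(x^p,0)=(1,0)\cdot\nm(x)\in P(C_p/C_p)$. Then \cref{thm:levelwise-radical} gives $(x,0)\in P$.

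If you would rather not invoke radicality, you can check it directly. For $x\in\mathfrak a^{C_p}$ and $k\geq 1$, every generalized product of $(x,0)$ and $(x^k,0)$ is one of:
\begin{itemize}
\item a bottom-level element $x^{k+1}\in\mathfrak a$;
\item a multiple of $\nm(x)$ or of $\nm(x^k)$, both of which lie in $P$;
\item $(x^{k+1},0)$ itself.
\end{itemize}
Now descend from $k=p-1$.

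This gives $I_1=\mathfrak a^{C_p}$ for every prime, in both cases at once. The primality argument of your Step 2 then becomes unnecessary, and the case split reduces to whether $I_2$ is all of $\Phi^{C_p}T$.
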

In particular there is a bijection
\[
\Spec^{(\cO_{\mathrm{comp}},\cO_{\mathrm{comp}})}(\mathghost(T)) \cong \Spec(T(C_p/e)^{C_p}) \coprod \Spec(\Phi^{C_p}T)
\]

We now apply this machinery to the $C_p$ Tambara functor $i^\ast(\underline{A}_{C_{p^2}})$ from \eqref{eq:resBurnside}. The image of the transfer contains $u$ and $t$, and thus taking the quotient by this kills the polynomial generators. We conclude that
\begin{equation}\label{eq:resBurnside2}
\mathghost(i^\ast(\underline{A}_{C_{p^2}})) = \begin{tikzcd}[row sep=huge, column sep=large]
	{\Z[t]/\langle t^2-pt \rangle \times \Z} \\
	{\Z[t]/\langle t^2-pt \rangle}
	\arrow[from=1-1, to=2-1]
	\arrow[shift left=4, curve={height=-6pt}, from=2-1, to=1-1]
	\arrow[shift right=4, curve={height=6pt}, from=2-1, to=1-1]
\end{tikzcd}\end{equation}
where the description of the restriction, transfer, and norm follows from \cref{def:ghost}. 

Using \cref{prop:ghostprimes} we obtain a description of the Nakaoka spectrum of the ghost. We have an abstract bijection
\[
\Spec^{(\cO_{\mathrm{comp}},\cO_{\mathrm{comp}})}(\mathghost(i^\ast(\underline{A}_{C_{p^2}}))) \cong \Spec(\mathbb{Z}[t]/\langle t^2-pt\rangle) \coprod \Spec(\mathbb{Z})
\]
but it will be worthwhile having explicit descriptions of these prime ideals for our continued computations. Recall from \cref{lem:zariskiprimes} that the primes of $\mathbb{Z}[t]/(t^2-pt)$ come in two forms:
\begin{enumerate}
    \item $\langle q,t \rangle$
    \item $\langle q,t-p \rangle$
\end{enumerate}
with identification when $q=p$.

\begin{corollary}
    The prime ideals of $\mathghost(i^\ast(\underline{A}_{C_{p^2}}))$ are:
    \[
        \mathcal{J}_1 = \begin{bmatrix}
           \langle q,t \rangle \times \langle 1 \rangle  \\ \langle q,t \rangle
        \end{bmatrix}
        \qquad
        \mathcal{J}_2 = \begin{bmatrix}
           \langle q,t-p \rangle \times \langle 1 \rangle  \\ \langle q,t-p \rangle
        \end{bmatrix}
        \qquad
        \mathcal{K} = \begin{bmatrix}
           \langle q, t \rangle \times \langle q \rangle \\ \langle q, t \rangle
        \end{bmatrix}
        \]
    for $q \in \mathbb{P} \cup \{0\}$.
\end{corollary}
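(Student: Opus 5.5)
The plan is to specialize \cref{prop:ghostprimes} to $T = i^\ast(\underline{A}_{C_{p^2}})$ from \eqref{eq:resBurnside}, using the explicit form of its ghost in \eqref{eq:resBurnside2}. This requires two inputs: the $C_p$-primes of $T(C_p/e) = \Z[t]/\langle t^2-pt\rangle$, and the ring map $\nm \colon T(C_p/e) \to \Phi^{C_p}T \cong \Z$.

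\emph{The $C_p$-primes.} The Weyl action on $T(C_p/e)$ is trivial, so the $G$-prime condition ``$x(g\cdot y) \in \mathfrak{a}$ for all $g$'' reduces to $xy \in \mathfrak{a}$. Hence $C_p$-primes coincide with ordinary primes. By \cref{lem:zariskiprimes} these are $\langle q,t\rangle$ and $\langle q,t-p\rangle$ for $q \in \mathbb{P}\cup\{0\}$, with the identification at $q=p$. Also, triviality of the action gives $\mathfrak{a}^{C_p} = \mathfrak{a}$.

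\emph{Primes of the first form.} These are the primes $(\mathfrak{a};\Phi^{C_p}T)$. The top level is $\mathfrak{a}\times\langle 1\rangle$ and the bottom level is $\mathfrak{a}$. Taking $\mathfrak{a} = \langle q,t\rangle$ gives $\mathcal{J}_1$, and taking $\mathfrak{a} = \langle q,t-p\rangle$ gives $\mathcal{J}_2$.

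\emph{The induced norm map.} For primes of the second form, I would compute $\nm$ on generators.
\begin{itemize}
\item On $t$: we have $\nm(t) = p^{p-2}u$. Since $u = \tr(t)$ lies in $\tau$, the class of $\nm(t)$ is $0$ in $\Phi^{C_p}T$.
\item On integers: we have $\nm(k) = k + \frac{k^p-k}{p}t$. Since $t = \tr(1) \in \tau$, the class of $\nm(k)$ is $k$.
\end{itemize}
The map is a genuine ring homomorphism. By \cref{proposition: Tambara reciprocity}, $\nm(x+y) - \nm(x) - \nm(y)$ lies in the image of transfers, which is killed by the quotient. Multiplicativity is automatic. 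Therefore $\nm \colon \Z[t]/\langle t^2-pt\rangle \to \Z$ is the map sending $t \mapsto 0$.

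\emph{Primes of the second form.} These are the primes $(\nm^{-1}(\mathfrak{b});\mathfrak{b})$ with $\mathfrak{b} = \langle q\rangle \subseteq \Z$. By the previous step, $\nm^{-1}(\langle q\rangle) = \langle q,t\rangle$, so we obtain exactly $\mathcal{K}$. This includes the case $q=0$, where $\mathfrak{b} = \langle 0\rangle$ is prime because $\Z$ is a domain.

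Exhaustiveness follows directly from \cref{prop:ghostprimes}, which lists all primes of the ghost.

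The only step needing care is the identification $\Phi^{C_p}T \cong \Z$ together with the reduction of the norm modulo $\tau$. Concretely, one must confirm that $\tau$ is exactly the ideal $\langle t,u\rangle$. This holds because $\tr(k) = tk$ and $\tr(t) = u$, and $\langle t,u\rangle$ is already an ideal. Once this is settled, the remainder is bookkeeping.
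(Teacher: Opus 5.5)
Your proposal is correct and follows the paper's argument: specialize \cref{prop:ghostprimes} to the ghost in \eqref{eq:resBurnside2}, using that the trivial Weyl action makes the $C_p$-primes of $\Z[t]/\langle t^2-pt\rangle$ ordinary primes, and that $\tau=\langle t,u\rangle$ gives $\Phi^{C_p}T\cong\Z$. You also usefully spell out what the paper leaves implicit, namely that the induced norm $\Z[t]/\langle t^2-pt\rangle\to\Z$ is $a+bt\mapsto a$, so $\nm^{-1}(\langle q\rangle)=\langle q,t\rangle$; note that the fact that every listed ideal really is prime comes from the bijection stated after \cref{prop:ghostprimes}, not from that proposition's one-directional wording.
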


Now that we understand the prime ideals of the ghost we can use the fact that the ghost map $\chi_T$ satisfies going up and lying over \cite[Sec. 5]{4DS}, so applying $\chi^\ast_T$ yields the Nakaoka spectrum of $T$. This will allow us to compute the prime ideals of the Tambara functor that we actually care about.




\begin{corollary}
    The prime ideals of $i^\ast(\underline{A}_{C_{p^2}})$ are:
    \[
        \mathcal{J}_1 = \begin{bmatrix}
           (\res,q)^{-1}(\langle q,t \rangle \times \langle 1 \rangle)  \\ \langle q,t \rangle
        \end{bmatrix} = \begin{bmatrix}
           \langle q,t-p,u \rangle  \\ \langle q,t \rangle
        \end{bmatrix}
        \qquad
        \mathcal{J}_2 = \begin{bmatrix}
           (\res,q)^{-1}(\langle q,t-p \rangle \times \langle 1) \rangle ) \\ \langle q,t-p \rangle
        \end{bmatrix} = \begin{bmatrix}
           \langle q,t-p, u-p^2 \rangle  \\ \langle q,t-p \rangle
        \end{bmatrix}
    \]
    \vspace{2mm}
    \[
        \mathcal{K} = \begin{bmatrix}
           (\res,q)^{-1}(\langle q, t \rangle \times \langle q \rangle) \\ \langle q, t \rangle
        \end{bmatrix}  = \begin{bmatrix}
            \langle q, u \rangle \\ \langle q,t \rangle
        \end{bmatrix}
    \]
    for $q \in \mathbb{P} \cup \{0\}$.  We moreover have $(\mathcal{J}_1)_p = \mathcal{K}_p$
\end{corollary}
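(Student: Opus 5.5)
The plan is to obtain the primes of $i^\ast(\underline{A}_{C_{p^2}})$ by pulling back the primes of the ghost along $\chi_T$. Since $\chi_T$ satisfies lying over and going up \cite[Sec. 5]{4DS}, the map $\chi_T^\ast$ from the Nakaoka spectrum of $\mathghost(T)$ to that of $T$ is surjective. So every prime of $T$ is $\chi_T^{-1}$ of one of $\mathcal{J}_1,\mathcal{J}_2,\mathcal{K}$ from the previous corollary. What remains is to compute these preimages explicitly in the generators $t,u$ and to record which of them coincide.

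The bottom level is immediate, because $\chi_T$ is the identity on $C_p/e$. At the top level, the map $(\res,q)$ sends $a+bt+cu \mapsto (a+bp+c\,pt,\; a)$. Here $\res$ is given by $t\mapsto p$, $u\mapsto pt$, and the quotient $q$ kills $t$ and $u$, since both lie in the image of the transfer. I will compute the three preimages in turn.

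\begin{itemize}
\item For $\langle q,t\rangle\times\langle 1\rangle$, the only condition is $a+bp+cpt\in\langle q,t\rangle$, i.e.\ $q\mid a+bp$. I will check that $q$, $t-p$ and $u$ all satisfy this, and that modulo these generators every element reduces to an integer $a$, which lies in the preimage exactly when $q\mid a$. This gives $\langle q,t-p,u\rangle$.
\item For $\langle q,t-p\rangle\times\langle1\rangle$, the condition is $a+bp+cpt\in\langle q,t-p\rangle$, i.e.\ $q\mid a+bp+cp^2$. The elements $q$, $t-p$ and $u-p^2$ lie in the preimage, and the same reduction to integers shows they generate it, giving $\langle q,t-p,u-p^2\rangle$.
\item For $\langle q,t\rangle\times\langle q\rangle$, the conditions are $q\mid a+bp$ and $q\mid a$, hence $q\mid a$ and $q\mid bp$. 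When $q\neq p$ this gives $\langle q,u\rangle$, once I verify that $t$ itself is not in the preimage. When $q=p$, $t$ is in the preimage, and I expect the resulting ideal to coincide with $(\mathcal{J}_1)_p$.
\end{itemize}

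The identification $(\mathcal{J}_1)_p=\mathcal{K}_p$ then follows by comparing the two preimages at $q=p$. Both conditions reduce to $p\mid a$, so both give $\langle p,t,u\rangle=\langle p,t-p,u\rangle$.

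The main subtlety is the $q=0$ and $q=p$ bookkeeping in the case of $\mathcal{K}$. For $q=0$, the presentation $\langle 0,u\rangle$ should be read as $\langle u\rangle$. For $q=p$, the element $t$ lies in the preimage, so the displayed generators $\langle p,u\rangle$ need to be interpreted (or enlarged) accordingly. I will handle this by checking membership directly on the $\mathbb{Z}$-basis $\{1,t,u\}$ of the top-level ring, rather than trusting the generator lists.

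Finally, I will note that surjectivity of $\chi_T^\ast$ alone does not guarantee these pulled-back ideals are distinct as $q$ varies. Injectivity is not needed for the statement, since the corollary only lists the primes; distinctness can be read off from the computed forms.
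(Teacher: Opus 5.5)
Your proposal is correct and follows the paper's route exactly. You pull back the ghost primes $\mathcal{J}_1,\mathcal{J}_2,\mathcal{K}$ along $\chi_T=(\res,q)$, use lying over and going up for surjectivity, and your preimage computations on the $\mathbb{Z}$-basis $\{1,t,u\}$ all check out, including your caution at $q=p$: there the pullback is $\langle p,t,u\rangle$ rather than the literal $\langle p,u\rangle$ (which is not even norm-closed, since $\nm(p)=p+(p^{p-1}-1)t\notin\langle p,u\rangle$), and this is exactly the reading that makes $(\mathcal{J}_1)_p=\mathcal{K}_p$ hold, consistent with $\mathcal{E}_p$ in \cref{tab:1}.
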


We can now complete our computation of the ideals for this connected component by adding the entire ring at the bottom level (remembering that the empty intersection is the entire ring). Doing this for $\mathcal{J}_1$ produces $\mathcal{B}^2_q$ and for $\mathcal{J}_2$ produces $\mathcal{C}^2_q$; for $\mathcal{K}$, it gives rise to one ideal which we have yet to see:
\[
\mathcal{E}_q = \begin{bmatrix}
    \langle q,u \rangle \\ \langle q,t \rangle \\  \langle 1 \rangle
\end{bmatrix}
\]

All in all we have:

\begin{lemma}\label{lem:comp6}
    Let $\cO_a = \cO_2$. Then
\[
\Spec^{(\cO_2 , \cO_2)}(\underline{A}_{C_{p^2}}) \cong \dfrac{\{\mathcal{B}_q^2,\mathcal{C}_q^2,\mathcal{C}_q^3,\mathcal{E}_q\}_{q \in \mathbb{P} \cup \{0\}}}{\mathcal{B}_p^2 = \mathcal{C}_p^2 = \mathcal{E}_p}
\]    
\end{lemma}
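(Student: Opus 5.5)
The computation rests on \cref{thm:pi0}. Since $\cO_2=\{(C_p,C_{p^2})\}$ is saturated, $\pi_0\cO_2$ has exactly two components: the singleton $\{e\}$ and the edge $C_p\to C_{p^2}$. By \cref{thm:pi0}, $\Spec^{(\cO_2,\cO_2)}(\underline{A}_{C_{p^2}})$ is in bijection with the disjoint union of the spectra of the two restricted Tambara functors. We then transport each family of primes along the map $J\mapsto I(J)$ of \cref{def:IJ}.

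\textbf{The bottom component $\{e\}$.} Here the restricted object is just the ring $\Z$ with trivial Weyl action, so its primes are the $\langle q\rangle$. Applying \cref{def:IJ}, the higher levels become preimages of $\langle q\rangle$ under restriction. First, $(\res^p_1)^{-1}\langle q\rangle=\langle q,t-p\rangle$. Second, the top level is the intersection of the preimages under $\res^{p^2}_1$ and $\res^{p^2}_p$; since $\res^{p^2}_1(u)=p^2$, this gives $\langle q,t-p,u-p^2\rangle$. The resulting ideal is exactly $\mathcal{C}^3_q$.

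\textbf{The top component $C_p\to C_{p^2}$.} Here we use the $C_p$-Tambara functor \eqref{eq:resBurnside} together with the corollary computing its primes via the ghost: $\mathcal{J}_1$, $\mathcal{J}_2$, $\mathcal{K}$, with $(\mathcal{J}_1)_p=\mathcal{K}_p$. Since $e$ is not above any subgroup of this component, \cref{def:IJ} puts $\langle 1\rangle$ at the bottom level. The three families therefore extend to $\mathcal{B}^2_q$, $\mathcal{C}^2_q$ and $\mathcal{E}_q$ respectively.

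Because the map of \cref{thm:pi0} is a bijection, this list is complete. The only collisions come from coincidences within each component's spectrum. On the bottom component the $\langle q\rangle$ are distinct. On the top component there is $(\mathcal{J}_1)_p=\mathcal{K}_p$, and also $(\mathcal{J}_1)_p=(\mathcal{J}_2)_p$, because $\langle p,t\rangle=\langle p,t-p\rangle$ and the top levels agree by \cref{lem:zariskiprimes}. Together these give the identification $\mathcal{B}^2_p=\mathcal{C}^2_p=\mathcal{E}_p$.

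I expect the main obstacle to be verifying the ghost-derived description of the primes of \eqref{eq:resBurnside}. This has two parts.
\begin{itemize}
\item The explicit preimages under $(\res,q)$ must be correct. For instance, for $\mathcal{K}$ we need $(\res,q)^{-1}(\langle q,t\rangle\times\langle q\rangle)=\langle q,u\rangle$, which is checked by writing a general element $a+bt+cu$ and using $\res(u)=pt$, $\res(t)=p$, and $q(a+bt+cu)=a$.
\item Pulling back along $\chi_T$ must give a bijection on spectra, via the going-up and lying-over results of \cite[Sec.~5]{4DS}. The restricted functor has trivial Weyl action and is an honest $C_p$-Tambara functor, so these results apply.
\end{itemize}
Finally, I would double-check that no two of the listed families coincide for $q\neq p$, by comparing their levels as computed from \cref{lem:zariskiprimes}.
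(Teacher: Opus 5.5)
Your proposal follows the paper's proof essentially step for step. It applies \cref{thm:pi0} to the two components of $\cO_2$: the singleton $\{e\}$ gives $\mathcal{C}^3_q$ via preimages of restriction, and the component $C_p\to C_{p^2}$ is handled by pulling back the ghost primes $\mathcal{J}_1,\mathcal{J}_2,\mathcal{K}$ and padding with $\langle 1\rangle$ to obtain $\mathcal{B}^2_q,\mathcal{C}^2_q,\mathcal{E}_q$.

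Two small corrections to your verification plan are needed.

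\textbf{$\chi_T^{\ast}$ is not a bijection.} Going-up and lying-over only make $\chi_T^{\ast}$ surjective. It is genuinely not injective here: $(\langle p,t\rangle;\mathbb{Z})$ and $(\langle p,t\rangle;\langle p\rangle)$ are distinct ghost primes with the same pullback. That collision is exactly the identification $(\mathcal{J}_1)_p=\mathcal{K}_p$ you use. So the correct logic is surjectivity plus explicit computation of the pullbacks.

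\textbf{The formula $\langle q,u\rangle$ needs the case $q=p$ separately.} Write $x=a+bt+cu$. The preimage $(\res,q)^{-1}(\langle q,t\rangle\times\langle q\rangle)$ equals $\langle q,u\rangle$ only for $q\neq p$. For $q=p$ the conditions reduce to $p\mid a$, so the preimage is $\langle p,t,u\rangle$. The ideal $\langle p,u\rangle$ could not be right anyway, since it is not radical: $t^2=pt$ lies in it but $t$ does not. It is this $q=p$ computation that makes $\mathcal{E}_p=\mathcal{B}^2_p=\mathcal{C}^2_p$ hold.
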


Once again, it remains to add in the possible additive norms which in this case the only further choice is $\cO_a = \cO_\mathrm{comp}$. We have that $\mathcal{C}_q^2$ fails to be an ideal, and that $\mathcal{E}_q$ remains an ideal, giving the following.

\begin{lemma}\label{lem:comp7}
    Let $\cO_a = \cO_\mathrm{comp}$. Then
\[
\Spec^{(\cO_2 , \cO_\mathrm{comp})}(\underline{A}_{C_{p^2}}) \cong \frac{\{\mathcal{B}_q^2,\mathcal{C}_q^3,\mathcal{E}_q\}_{q \in \mathbb{P} \cup \{0\}}}{\mathcal{B}^2_p = \mathcal{E}_p}
\]    
\end{lemma}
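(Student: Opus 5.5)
By \cref{Prop:ChangeOaPrimes}, the $(\cO_2,\cO_\mathrm{comp})$-primes of $\underline{A}_{C_{p^2}}$ are exactly the $(\cO_2,\cO_2)$-primes of \cref{lem:comp6} that are also closed under the transfers in $\cO_\mathrm{comp}$ which are missing from $\cO_2$. Since $\cO_\mathrm{comp}$ is generated by $(e,C_p)$ and $(C_p,C_{p^2})$, and $\tr_1^{p^2} = \tr_p^{p^2}\circ\tr_1^p$, the only new transfer to test is $\tr_1^p\colon \Z \to \Z[t]/\langle t^2-pt\rangle$, $k \mapsto kt$. So the plan is to run through the four families $\mathcal{B}^2_q,\mathcal{C}^2_q,\mathcal{C}^3_q,\mathcal{E}_q$ and decide which satisfy $\tr_1^p(P(C_{p^2}/e)) \subseteq P(C_{p^2}/C_p)$. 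By additivity and $\Z$-linearity of $\tr_1^p$, it suffices to test this on a generator of the bottom level.

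The cases go as follows.

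For $\mathcal{B}^2_q$ and $\mathcal{E}_q$, the bottom level is $\langle 1\rangle$, so we need $\tr_1^p(1) = t$ to lie in the middle level $\langle q,t\rangle$. It does, so both families survive.

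For $\mathcal{C}^2_q$, we again need $t \in \langle q,t-p\rangle$. This would force $p \in \langle q, t-p\rangle$, which fails for $q \neq p$ (the argument is identical to that of \cref{lem:calc1}). So $\mathcal{C}^2_q$ is discarded for $q\neq p$. For $q=p$ it coincides with $\mathcal{B}^2_p$ and is therefore already counted.

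For $\mathcal{C}^3_q$, the bottom level is $\langle q\rangle$, so we need $\tr_1^p(q) = qt \in \langle q,t-p\rangle$, which holds trivially. Hence $\mathcal{C}^3_q$ survives. This agrees with the Green functor computation in \cref{lem:calc4}, where $\mathcal{C}^3_q$ was already shown to be closed under all transfers.

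It remains to track the identifications inherited from \cref{lem:comp6}. There we had $\mathcal{B}^2_p=\mathcal{C}^2_p=\mathcal{E}_p$; once $\mathcal{C}^2_q$ is removed as a family, this leaves only $\mathcal{B}^2_p = \mathcal{E}_p$. No new identifications can appear, since the surviving set of primes is a subset of the previous one. This gives the stated set-level description.

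The only point requiring care is the topology. The subspace topology on this subset of $\Spec^{(\cO_2,\cO_2)}$ agrees with the Zariski topology of $\Spec^{(\cO_2,\cO_\mathrm{comp})}$, because by \cref{rem:closureofpoints} both are governed by the inclusions among the surviving primes. I expect this, together with checking that nothing subtle happens in the $q=p$ identifications, to be the main (though mild) obstacle. Everything else is a direct membership check.
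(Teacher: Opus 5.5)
Your proposal is correct and is essentially the paper's argument: by \cref{Prop:ChangeOaPrimes} you keep exactly those $(\cO_2,\cO_2)$-primes from \cref{lem:comp6} that are closed under the missing transfer $\tr_1^p$. This discards $\mathcal{C}^2_q$ for $q\neq p$ and leaves $\mathcal{B}^2_q$, $\mathcal{C}^3_q$, $\mathcal{E}_q$, with $\mathcal{B}^2_p=\mathcal{E}_p$.

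One small correction on the topology: inclusions (specialization) alone do not determine a topology on an infinite space. The two topologies still agree, because for any $(\cO_2,\cO_2)$-ideal $J$, the set of surviving primes containing $J$ equals $V(J')$, where $J'$ is the $(\cO_2,\cO_\mathrm{comp})$-ideal generated by $J$.
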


\subsection*{$\cO_m = \cO_3$}

As we are in a case where we are not multiplicatively cohomological we cannot rely on the saturated hull of the multiplicative part to yield the result, and instead we must compute this explicitly.

We begin by observing that if $\mathcal{P}$ is an $(\mathcal{O}_3,\mathcal{O}_{\mathrm{comp}})$-prime ideal, then its restriction $i^*_{C_p}P$ to an ideal of $\underline{A}_{C_p}$ is a Tambara ideal. Indeed, if $\mathtt{Q}(i^*_{C_p}\mathcal{P},x,y)$ holds for some $x,y \in \underline{A}_{C_{p}}$, then $\mathtt{Q}(\mathcal{P},x,y)$ also holds since the only new generalized products are obtained by norming a generalized product from level $C_{p^2}/e$ to level $C_{p^2}/C_{p^2}$. Therefore $P(C_{p^2}/e) = \langle q \rangle$  and $P(C_{p^2}/C_p) = \langle q \rangle$ or $\langle q, t-p \rangle$, where $q$ is a prime or zero. 

Now, in the $(\mathcal{O}_1,\mathcal{O}_1)$ computation above, we computed the inverse image under restriction of these ideals. As any $(\mathcal{O}_3,\mathcal{O}_{\mathrm{comp}})$-ideal of $\underline{A}_{C_{p^2}}$ is necessarily an $(\mathcal{O}_1,\mathcal{O}_1)$-ideal, we see that any $(\mathcal{O}_3,\mathcal{O}_{\mathrm{comp}})$-prime ideal must be a sub-ideal of  $\mathcal{D}_q = \begin{bmatrix}
        \langle q,t-p\rangle \\ \langle q \rangle \\ \langle q \rangle
    \end{bmatrix}$ or of  $\mathcal{C}_q^3 = \begin{bmatrix}
        \langle q,t-p, u-p^2 \rangle \\ \langle q,t-p \rangle \\ \langle q \rangle
    \end{bmatrix}$. 
    Thus we are left with only determining possibilities for the top entries. We will tackle each of these in turn. 
    
    Let $q \neq p$. For $\mathcal{D}_q$ this gives us a minimal ideal of 
    \[
    \mathcal{D}_q' = \begin{bmatrix}
        \langle q \rangle \\ \langle q \rangle \\ \langle q \rangle 
    \end{bmatrix}
    \]
    while for $\mathcal{C}_q^3$ we get a minimal ideal of 
    \[
    \mathcal{F}_q= \begin{bmatrix}
        \langle q,u-pt \rangle \\ \langle q,t-p \rangle \\ \langle q \rangle
    \end{bmatrix}
    \]
    When $q=p$ then $\mathcal{D}_p' = \mathcal{F}_q = \mathcal{C}_p^3$.
    
    This warrants a little justification, and we will do so for the $\mathcal{D}_q'$ case. Note that we must contain
    \begin{align*}
        \tr_p^{p^2}(q) &= qt\\
        \tr_1^{p^2}(q) &= qu\\
        \nm_1^{p^2} (q) &= q + \dfrac{q^p-q}{p}t + \dfrac{q^{p^2}-q^p}{p^2}u
    \end{align*}
    We note that $q$ divides the second and third factors of $\nm_1^{p^2} (q)$. Therefore we see that as an ideal is necessarily a subgroup this ideal contains $q$, and as such we must contain at least $\langle q \rangle$.  
    
    At this stage we have no more option but to compute the $\mathtt{Q}$ condition for all possible options. 

    We have that 
    \[
        \mathcal{D}_p' = \begin{bmatrix}
            \langle q \rangle \\ \langle q \rangle \\ \langle q \rangle
        \end{bmatrix}
    \]
    is not a prime ideal. Consider the element $x = pt-u \in \underline{A}(C_{p^2}/C_{p^2})$ and $y = p-t \in \underline{A}(C_{p^2}/C_{p^2})$. Clearly neither of these are in the ideal $\langle q \rangle$, but we will show that all generalized products of $x$ and $y$ are zero, and thus in the ideal.

First, we observe that $(pt-u)(p-t) = p^2t-pt^2 - pu + tu = 0$. Next, $\res_1^{p^2}(pt-u)=0=\res_1^{p^2}(p-t)$ and we are done, as the only nontrivial norms we have to form generalized products with are at the bottom layer. Hence this is not a prime ideal. From this we moreover see that the top level must contain either $pt-u$ or $p-t$. In particular this is the insight needed to check that $\mathcal{F}_q$ is indeed an $(\cO_3,\cO_3)$-prime:

\begin{lemma}\label{lem:cp2nonsat}
    Let $\cO_a = \cO_\mathrm{comp}$. Then 
    \[
\Spec^{(\cO_3 , \cO_\mathrm{comp})}(\underline{A}_{C_{p^2}}) \cong  \dfrac{ {\{\mathcal{C}_q^3, \mathcal{D}_q, \mathcal{F}_q\}_{q \in \mathbb{P} \cup \{0\}}}}{\mathcal{C}_p^3 = \mathcal{D}_p = \mathcal{F}_p}
\]
\end{lemma}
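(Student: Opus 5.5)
The proof splits into two tasks: first bound which ideals can occur, then verify primality of each candidate. The reduction in the preceding paragraphs already does much of the first task. Any $(\cO_3,\cO_\mathrm{comp})$-prime $\mathcal{P}$ restricts to a Tambara prime of $\underline{A}_{C_p}$. So its bottom two levels are $\begin{bmatrix}\langle q\rangle\\\langle q\rangle\end{bmatrix}$ or $\begin{bmatrix}\langle q,t-p\rangle\\\langle q\rangle\end{bmatrix}$, and its top level sits between a forced minimal ideal and the preimage ideal. Concretely, $\mathcal{P}(C_{p^2}/C_{p^2})$ satisfies
\[\langle q\rangle \subseteq \mathcal{P}(C_{p^2}/C_{p^2}) \subseteq \langle q,t-p\rangle\]
in the first case, and
\[\langle q,u-pt\rangle\subseteq \mathcal{P}(C_{p^2}/C_{p^2})\subseteq\langle q,t-p,u-p^2\rangle\]
in the second. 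The lower bounds come from closure under $\tr$ and $\nm_1^{p^2}$, and the upper bounds from closure under restriction together with the $(\cO_1,\cO_1)$ computation.

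\textbf{Step 1: which element must lie in the top level.} I will use the zero-divisor pair $x=pt-u$, $y=p-t$. As shown above, all generalized products of $x$ and $y$ vanish, so any prime must contain $pt-u$ or $p-t$ at the top level.
\begin{itemize}
\item If the top level contains $p-t$, then in the first case it contains $\langle q,t-p\rangle$. The upper bound then forces equality, giving $\mathcal{D}_q$.
\item If the top level contains $pt-u$ but not $p-t$, the first case gives $\langle q,pt-u\rangle$. This must be $\subseteq\langle q,t-p\rangle$, but $pt-u$ restricts to $0$. So I will check whether $\langle q, pt-u\rangle$ is a valid prime, or else it coincides with something already listed.
\end{itemize}
Since the top ring $\Z[t,u]/\ldots$ is finite over $\Z$, the interval of ideals between the lower and upper bounds (after reducing mod $q$) is small. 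For $q\neq p$ the ring $\mathbb{F}_q[t,u]/\ldots\cong\mathbb{F}_q^3$, so the intermediate ideals correspond to subsets of three points. I will enumerate them, discard those failing Step 1 or ideal closure (transfers from $C_p$ send $t-p\mapsto u-pt$), and keep $\mathcal{D}_q$, $\mathcal{C}^3_q$ and $\mathcal{F}_q$. The case $q=0$ is analogous, working over $\mathbb{Q}$ after noting radicality via \cref{thm:levelwise-radical}.

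\textbf{Step 2: primality of the survivors.} For $\mathcal{D}_q$ and $\mathcal{C}^3_q$, I will compare with \cref{Prop:ChangeOaPrimes} and \cref{prop:add_mult}. Both are $(\cO_1,\cO_\mathrm{comp})$-primes by the earlier computation, and by \cref{prop:add_mult} it suffices to check closure under $\nm_1^{p^2}$. Since $\nm_1^{p^2}(q k)$ has every coefficient divisible by $q$ (the same computation as above), this closure holds.

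\textbf{Step 3: $\mathcal{F}_q$, the main obstacle.} This ideal is not closed under $\nm_p^{p^2}$, so the $\mathtt{Q}$ condition must be verified directly. I will split the check by the levels of $x$ and $y$.
\begin{itemize}
\item When both are at levels $\le C_p$, the generalized products are exactly those of $\underline{A}_{C_p}$ together with norms to the top. Primality then follows from the Tambara primality of the restriction.
\item When $x$ is at the top level and $x\notin\mathcal{F}_q$, I will decompose mod $q$ into the three idempotent components of $\mathbb{F}_q^3$ (mod $q$). The quotient $\mathbb{F}_q[t,u]/(q,u-pt)$ is $\mathbb{F}_q^2$, detected by the projections to $t=0$ and $t=p$. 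The generalized products available are the plain product $xy$ and $\nm_1^{p^2}(\res x)\cdot y$, and similar mixed products.
\item I will show that if $xy$ and these mixed terms lie in $\mathcal{F}_q$ while $x,y\notin\mathcal{F}_q$, then $x$ and $y$ are supported on complementary points. One of those points is the point $t=p,u=pt$, where the restriction is nonzero. Norming the restriction then gives a unit on that point, which is a contradiction.
\end{itemize}
This case analysis is what I expect to require the most care.

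Finally, the identifications at $q=p$ hold since all three ideals reduce to $\langle p,t,u\rangle$-type ideals, as noted above. The topology is then read off from the inclusions $\mathcal{F}_q\subseteq\mathcal{C}^3_q$ and $\mathcal{D}_q$.
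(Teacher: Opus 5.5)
Your route is the paper's: restrict to a Tambara prime of $\underline{A}_{C_p}$, trap the top level between the ideal forced by $\tr$ and $\nm_1^{p^2}$ and the $(\cO_1,\cO_1)$ preimage, and use the pair $(pt-u,\,p-t)$. Your Steps 2 and 3 (via \cref{prop:add_mult}, and a marks argument for $\mathcal{F}_q$) supply details the paper only gestures at.

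The genuine gap is in the classification when $q$ is not a prime different from $p$. The paper's sketch is equally silent here. For $q=0$, ``working over $\mathbb{Q}$ after noting radicality'' fails. Radicality does not make the top quotient torsion-free, and there are infinitely many radical ideals strictly between $\langle u-pt\rangle$ and $\langle t-p,u-p^2\rangle$, e.g.\ $\langle t-p,u-p^2\rangle\cap\langle r,t,u\rangle$ for primes $r\neq p$. With $\langle t-p\rangle$ and $0$ below, each is a levelwise radical $(\cO_3,\cO_\mathrm{comp})$-ideal with the same rationalization as $\mathcal{C}^3_0$, so they must be excluded by hand.

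Use the marks $(\phi_e,\phi_{C_p},\phi_{C_{p^2}})(a+bt+cu)=(a+bp+cp^2,\,a+bp,\,a)$. One sees that $\langle t-p,u-p^2\rangle=\Z(u-pt)+\Z(p-t)$ and that $\underline{A}_{C_{p^2}}(C_{p^2}/C_{p^2})\cdot(p-t)=\Z(p-t)$. Hence the top level is $\langle u-pt\rangle+\Z m(p-t)$ for some $m\geq 0$. For $m\geq 2$, take $x=p-t$ and $y=m$ at the top level. Every restriction of $x$, and hence every norm of one, is $0$. Meanwhile $xy$ and $x\cdot\nm_1^{p^2}(m)$ both equal $m(p-t)$, since all have marks $(0,0,pm)$. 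So $\mathtt{Q}(P,x,y)$ holds although $x,y\notin P$, and only $m=0,1$ survive, i.e.\ $\mathcal{F}_0$ and $\mathcal{C}^3_0$.

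Likewise, $q=p$ is not covered by your $\mathbb{F}_q^3$ enumeration. There the lower levels are $\langle p,t\rangle$ and $\langle p\rangle$. The top lies in the maximal ideal $(\res^{p^2}_p)^{-1}\langle p,t\rangle=\langle p,t,u\rangle$. It also contains $u=\tr_p^{p^2}(t)$, $pt=\tr_p^{p^2}(p)$, and $\nm_1^{p^2}(p)\equiv p-t$, hence $p^2=p(p-t)+pt$. So \cref{thm:levelwise-radical} forces the top to equal $\langle p,t,u\rangle$.

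There are three smaller errors. First, your second bullet in Step 1 is wrong. The element $pt-u$ dies under $\res^{p^2}_1$, but $\res^{p^2}_p(pt-u)=p(p-t)\notin\langle q\rangle$. So $pt-u\notin\langle q,t-p\rangle$, and that case is simply vacuous.

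Second, in Step 2 for $q=p$, the $t$-coefficient of $\nm_1^{p^2}(pk)$ is $p^{p-1}k^p-k$, which is not divisible by $p$. Closure still holds, because membership in $\langle p,t,u\rangle$ only depends on the constant term.

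Third, in Step 3 the contradiction lives at the $C_{p^2}$-point, not the $e$-point. Since $\phi_{C_{p^2}}(\nm_1^{p^2}(n))=n$, if $\phi_e(y)\not\equiv 0$ and $\phi_{C_{p^2}}(x)\not\equiv 0 \pmod q$, then $x\cdot\nm_1^{p^2}(\res_1 y)\notin\mathcal{F}_q$. The bottom-level product $\res_1 x\cdot\res_1 y$ is what forces one of $x,y$ to vanish at $e$ in the first place.
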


\begin{remark}
We used above that the restriction $i^*_{C_p}\mathfrak{p}$ of an $(\mathcal{O}_3,\mathcal{O}_{\mathrm{comp}})$-ideal to $\underline{A}_{C_p}$ is a (complete) Tambara ideal. In general, we do not know if the restriction of a prime ideal to a subgroup is always a (restricted) prime ideal; above, it was important that all generalized products outside of the restriction factored through a common subgroup (the trivial subgroup). 
\end{remark}


\subsection*{$\cO_m = \cO_\mathrm{comp}$}

This final case is simply a computation of the full Nakaoka spectrum of the Burnside Tambara functor for $C_{p^2}$, the result of which we reproduce here from \cite{nakaoka_spectrum}. We introduce one more family of prime ideals:
\[ \mathcal{G}_q  =
\begin{bmatrix}
   \langle q \rangle \\ \langle q \rangle \\ \langle q \rangle
\end{bmatrix}
\]

\begin{lemma}\label{lem:completeCp2}
    Let $\cO_a = \cO_\mathrm{comp}$. Then 
    \[
\Spec^{(\cO_\mathrm{comp} , \cO_\mathrm{comp})}(\underline{A}_{C_{p^2}}) \cong  \dfrac{ {\{\mathcal{C}_q^3, \mathcal{D}_q, \mathcal{G}_q\}_{q \in \mathbb{P} \cup \{0\}}}}{\mathcal{C}_p^3 = \mathcal{D}_p = \mathcal{G}_p}
\]
\end{lemma}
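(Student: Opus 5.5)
The plan is to run the same analysis as for $(\cO_3,\cO_\mathrm{comp})$ above, now adding closure under $\nm_p^{p^2}$ and the extra generalized products it creates. Alternatively, one could cite \cite{nakaoka_spectrum} directly, but the aim here is to derive the answer with the paper's tools.

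\textbf{Step 1: the bottom two levels.} Let $\mathcal{P}$ be a complete prime of $\underline{A}_{C_{p^2}}$. Restricting to the subgroup $C_p$ gives a complete $C_p$-Tambara ideal of $\underline{A}_{C_p}$, and I would check that it is prime. The argument is the one used before \cref{lem:cp2nonsat}: a generalized product not visible in $\underline{A}_{C_p}$ lands at level $C_{p^2}/C_{p^2}$, and the restriction of such a product to $C_{p^2}/C_p$ is, via the double coset formula (\cref{prop:dcoset}), a product of generalized products already formed in $\underline{A}_{C_p}$. By \cref{subsec:cpburnside}, the bottom two levels are therefore $\langle q\rangle,\langle q\rangle$ or $\langle q\rangle,\langle q,t-p\rangle$.

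\textbf{Step 2: bounding the top level.} A complete ideal is in particular an $(\cO_1,\cO_1)$-ideal. The preimage computations from the $(\cO_1,\cO_1)$ case then force the top level to lie inside the top level of $\mathcal{D}_q$ or of $\mathcal{C}^3_q$, respectively. The computation of $\tr$ and $\nm_1^{p^2}$ of $q$ done for $\mathcal{D}'_q$ shows the top level contains $q$. By \cref{thm:levelwise-radical} the top level is radical, so only finitely many candidates remain: radical ideals of $\Z[t,u]/\langle t^2-pt,u^2-p^2u,tu-pu\rangle$ lying between $\langle q\rangle$ and $\langle q,t-p\rangle$ (resp. $\langle q,t-p,u-p^2\rangle$). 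From \cref{lem:zariskiprimes} these are the intersections of the primes $\langle q,t,u\rangle$, $\langle q,t-p,u\rangle$, $\langle q,t-p,u-p^2\rangle$ in the appropriate range. Closure under $\nm_p^{p^2}$ applied to the middle level cuts the list further. Using the Mazur formula, I expect to show that $\nm_p^{p^2}(t-p)\notin\langle q,u-pt\rangle$ for $q\neq p$, which excludes $\mathcal{F}_q$ and its variants. This should leave exactly $\mathcal{C}^3_q$, $\mathcal{D}_q$ and $\mathcal{G}_q$.

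\textbf{Step 3: primality.} Since $\mathcal{C}^3_q$ and $\mathcal{D}_q$ are $(\cO_3,\cO_\mathrm{comp})$-primes by \cref{lem:cp2nonsat}, I would check by direct computation with the norm formula that they are closed under $\nm_p^{p^2}$. \cref{prop:add_mult} then makes them complete primes. For $\mathcal{G}_q$ no such shortcut exists, because it was not $\cO_3$-prime; the witnesses $x=pt-u$, $y=p-t$ no longer work. For instance, $\nm_p^{p^2}\res^{p^2}_p(x)\cdot y$ is a new generalized product, and I expect it to lie outside $\langle q\rangle$.

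\textbf{Main obstacle.} The hard step is verifying $\mathtt{Q}(\mathcal{G}_q,x,y)\Rightarrow x\in\mathcal{G}_q$ or $y\in\mathcal{G}_q$ in general. I would try the ghost method first: embed $\underline{A}_{C_{p^2}}/\mathcal{G}_q$ into a product of fixed-point-type Tambara functors via the marks homomorphism, where primality is transparent. Then I would pull back using lying over and going up, as in \cite[Sec. 5]{4DS}, or else quote \cite{nakaoka_spectrum}. The identifications at $q=p$, namely $\mathcal{C}^3_p=\mathcal{D}_p=\mathcal{G}_p$, follow because $p$, $t-p$ and $u-p^2$ coincide modulo $\langle p\rangle$ up to nilpotents, combined with radicality.
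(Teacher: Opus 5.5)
The paper gives no derivation of this lemma; it simply quotes Nakaoka's computation \cite{nakaoka_spectrum}. Your fallback citation is therefore exactly the paper's proof, and your self-contained derivation is a genuinely different route. As written, however, that derivation has two gaps.

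First, the finiteness claim in Step 2 is false at $q=0$, i.e.\ at the generic points. Since $A(C_{p^2})$ is one-dimensional, a radical ideal between $0$ and $\langle t-p\rangle$ need not be an intersection of the three primes with the same $q$. For a prime $q'\neq p$, the ideal $q'(t-p)A(C_{p^2})=\langle t-p\rangle\cap\langle q',t,u\rangle$ is radical. Because $\res^{p^2}_p(t-p)=0$, putting it on top of $0,0$ gives a levelwise radical Tambara ideal that your enumeration never rules out. It must be excluded by primality. In terms of the marks $\phi_K$ one has $z(t-p)=\phi_{C_{p^2}}(z)(t-p)$, so any top level inside $\langle t-p\rangle$ is $m(t-p)A(C_{p^2})$. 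For $|m|\geq 2$, the elements $x=t-p$ and $y=m$ satisfy $\mathtt{Q}$: every multiplicative translate of $x$ other than $x$ vanishes, and $x$ times any translate of $m$ has marks $(0,0,-pm)$. Yet neither element lies in the ideal. In the $\mathcal{C}^3_0$ branch the same issue arises, but there norm closure saves you: any radical ideal containing $\nm_{C_p}^{C_{p^2}}(t-p)$, which has marks $(0,(-p)^p,-p)$, contains $\ker\phi_e$. Relatedly, in Step 1, knowing that the \emph{restriction} of a top-level generalized product lies in $\mathcal{P}$ proves nothing. The correct reason is $\nm_{K_1}^{C_{p^2}}(a)\cdot\nm_{K_2}^{C_{p^2}}(b)=\nm_{C_p}^{C_{p^2}}\bigl(\nm_{K_1}^{C_p}(a)\cdot\nm_{K_2}^{C_p}(b)\bigr)$ for $K_1,K_2\leq C_p$.

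Second, and more seriously, the step carrying the real content of the lemma is only announced, with the citation as fallback. That step is primality of $\mathcal{G}_q$, in particular that $\underline{A}_{C_{p^2}}$ is a Tambara domain; this is precisely what separates the complete case from \cref{lem:cp2nonsat} (cf.\ \cref{rem:itissensitive}). It can be closed in a few lines, without lying over or going up (the results of \cite[Sec.~5]{4DS} concern the $C_p$ ghost map anyway). For $R=\Z$, or $R=\mathbb{F}_q$ with $q\neq p$, the marks embed $A(H)\otimes R$ into $\prod_{K\leq H}R$. Since $C_{p^2}$ is abelian, $\phi_M(\nm_K^L z)=\phi_{M\cap K}(z)^{[L:MK]}$. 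Take nonzero $x\in\underline{A}(C_{p^2}/H_1)\otimes R$ and $y\in\underline{A}(C_{p^2}/H_2)\otimes R$. Choose $M_x\leq H_1$ and $M_y\leq H_2$ with $\phi_{M_x}(x)\neq 0\neq\phi_{M_y}(y)$, and let $N=\max(M_x,M_y)$. Then the generalized product $\nm_{M_x}^{C_{p^2}}\res^{H_1}_{M_x}(x)\cdot\nm_{M_y}^{C_{p^2}}\res^{H_2}_{M_y}(y)$ has mark $\bigl(\phi_{M_x}(x)\phi_{M_y}(y)\bigr)^{[C_{p^2}:N]}\neq 0$ at $N$, so $\mathtt{Q}(0,x,y)$ fails. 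Applying this to $\underline{A}_{C_{p^2}}/\mathcal{G}_q$, on which the marks are injective, proves that $\mathcal{G}_q$ is prime. It also shows exactly why $\cO_3$ does not suffice: the argument uses $\nm_{C_p}^{C_{p^2}}$ when $M_x=C_p$.
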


\begin{remark}\label{rem:itissensitive}
    We can now observe through \cref{lem:cp2nonsat} and \cref{lem:completeCp2} that the computation is sensitive to the saturated hull of the multiplicative part. Explicitly, we see that the $(\cO_\mathrm{comp},\cO_\mathrm{comp})$-Tambara functor is a domain (i.e., the $0$ ideal is prime), while the $(\cO_3,\cO_\mathrm{comp})$-Tambara functor is not.
\end{remark}





\begin{table}[H]
\scalebox{0.75}{
\begin{tabular}{ccc}
\textbf{Name}   & $\mathbf{(q \neq p)}$                                                                             & $\mathbf{(q = p)}$ \\ \hline
\rule{0pt}{.4in}\vspace{.1in} $\mathcal{A}_q$   & $\begin{bmatrix}\langle q,t,u \rangle \\ \langle 1 \rangle \\ \langle 1 \rangle\end{bmatrix}$ & $\begin{bmatrix}\langle p,t,u \rangle \\ \langle 1 \rangle \\ \langle 1 \rangle\end{bmatrix}$                          \\ \hline
\rule{0pt}{.4in}\vspace{.1in}$\mathcal{B}^1_q$ & $\begin{bmatrix}\langle q,t-p,u \rangle \\ \langle 1 \rangle \\ \langle 1 \rangle\end{bmatrix}$  & $\begin{bmatrix}\langle p,t,u \rangle \\ \langle 1 \rangle \\ \langle 1 \rangle\end{bmatrix}$ \\ \hline
\rule{0pt}{.4in}\vspace{.1in}$\mathcal{B}^2_q$ & $\begin{bmatrix}\langle q,t-p,u \rangle \\ \langle q,t \rangle \\ \langle 1 \rangle\end{bmatrix}$       & $\begin{bmatrix}\langle p,t,u \rangle \\ \langle p,t \rangle \\ \langle 1 \rangle\end{bmatrix}$ \\ \hline
\rule{0pt}{.4in}\vspace{.1in}$\mathcal{C}^1_q$ & $\begin{bmatrix}\langle q,t-p,u-p^2 \rangle \\ \langle 1 \rangle \\ \langle 1 \rangle\end{bmatrix}$     &  $\begin{bmatrix}\langle p,t,u \rangle \\ \langle 1 \rangle \\ \langle 1 \rangle\end{bmatrix}$ \\ \hline
\rule{0pt}{.4in}\vspace{.1in}$\mathcal{C}^2_q$ & $\begin{bmatrix}\langle q,t-p,u-p^2 \rangle \\ \langle q,t-p \rangle \\ \langle 1 \rangle\end{bmatrix}$ & $\begin{bmatrix}\langle p,t,u \rangle \\ \langle p,t \rangle \\ \langle 1 \rangle\end{bmatrix}$  \\ \hline
\rule{0pt}{.4in}\vspace{.1in}$\mathcal{C}^3_q$ & $\begin{bmatrix}\langle q,t-p,u-p^2 \rangle \\ \langle q,t-p \rangle \\ \langle q \rangle\end{bmatrix}$ & $\begin{bmatrix}\langle p,t,u \rangle \\ \langle p,t \rangle \\ \langle p \rangle\end{bmatrix}$ \\ \hline
\rule{0pt}{.4in}\vspace{.1in}$\mathcal{D}_q$   & $\begin{bmatrix}\langle q,t-p\rangle \\ \langle q \rangle \\ \langle q \rangle \end{bmatrix}$           &     $\begin{bmatrix}\langle p,t,u \rangle \\ \langle p,t \rangle \\ \langle p \rangle\end{bmatrix}$                     \\ \hline
\rule{0pt}{.4in}\vspace{.1in}$\mathcal{E}_q$   & $\begin{bmatrix}    \langle q,u \rangle \\ \langle q,t \rangle \\  \langle 1 \rangle\end{bmatrix}$      &    $\begin{bmatrix} \langle p,t,u \rangle \\ \langle p,t \rangle \\ \langle 1 \rangle \end{bmatrix}$                      \\ \hline
\rule{0pt}{.4in}\vspace{.1in}$\mathcal{F}_q$   & $\begin{bmatrix} \langle q,u-pt \rangle \\ \langle q,t-p \rangle \\ \langle q \rangle \end{bmatrix}$    &        $\begin{bmatrix} \langle p,t,u \rangle \\ \langle p,t \rangle \\ \langle p \rangle \end{bmatrix}$                  \\ \hline
\rule{0pt}{.4in}\vspace{.1in}$\mathcal{G}_q$   & $\begin{bmatrix}   \langle q \rangle \\ \langle q \rangle \\ \langle q \rangle \end{bmatrix}$           &   $\begin{bmatrix}\langle p,t,u \rangle \\ \langle p,t \rangle \\ \langle p \rangle\end{bmatrix}$                      
\end{tabular}
}
\caption{A table listing the families of primes that appear in the computation of the bi-incomplete spectra for the Burnside Tambara functor $\underline{A}_{C_{p^2}}$.}\label{tab:1}
\end{table}

\newpage

\renewcommand{\arraystretch}{1.75}

\begin{longtable}{cc|c}
$\mathcal{O}_m$ & $\mathcal{O}_a$ & $\Spec^{(\cO_m,\cO_a)}(\underline{A}_{C_{p^2}})$  \\ \hline
$\cO_\mathrm{triv}$ & $\cO_\mathrm{triv}$ & $\mathcal{A}_q,\mathcal{B}_q^1,\mathcal{B}_q^2,\mathcal{C}_q^1,\mathcal{C}_q^2,\mathcal{C}_q^3$ \\ \hline
$\cO_\mathrm{triv}$ & $\cO_1$ & $\mathcal{A}_q,\mathcal{B}_q^1,\mathcal{B}_q^2,\mathcal{C}_q^1,\mathcal{C}_q^3$  \\ \hline
$\cO_1$ & $\cO_1$ &  $\mathcal{A}_q,\mathcal{B}_q^1,\mathcal{C}_q^1, \mathcal{C}_q^3, \mathcal{D}_q$ \\ \hline
$\cO_\mathrm{triv}$ & $\cO_2$ & $\begin{gathered}
\begin{tikzpicture}[scale=0.45]
\draw[gray, thick] plot [smooth] coordinates {(0,-1) (8,-1)} node [black, right] {$\mathcal{C}_q^3$};
\draw[gray, thick] plot [smooth] coordinates {(0,1) (2.5,1) (4,0.5) (5.5,1) (8,1)} node [black, right] {$\mathcal{B}_q^2$};
\draw[gray, thick] plot [smooth] coordinates {(0,0) (2.5,0) (4,0.5) (5.5,0) (8,0)} node [black, right] {$\mathcal{C}_q^2$};
\draw[gray, thick] plot [smooth] coordinates {(0,2) (2.5,2)  (5.5,2) (8,2)} node [black, right] {$\mathcal{A}_q$};
\draw[black, fill=gray] (0,0) circle (5pt) node{};
\draw[black, fill=gray] (0,1) circle (5pt) node{};
\draw[black, fill=gray] (0,-1) circle (5pt) node{};
\draw[black, fill=gray] (0,2) circle (5pt) node{};
\filldraw[black] (4,2) circle (3pt);
\filldraw[black] (4,0.5) circle (3pt);
\filldraw[black] (4,-1) circle (3pt);
\draw[black!25!red, thick, <-] (4,1.8) -- (4,0.7);
\draw[black!25!red, thick, ->] (4,-0.8) -- (4,0.3);
\begin{scope}[yshift=-30]
\draw[black!25!red, thick, <-] (2,0.8) -- (2,0.2);
\draw[black!25!red, thick, <-] (6,0.8) -- (6,0.2);
\draw[black!25!red, thick, <-] (7,0.8) -- (7,0.2);
\draw[black!25!red, thick, <-] (8,0.8) -- (8,0.2);
\draw[black!25!red, thick, <-] (1,0.8) -- (1,0.2);
\draw[black!25!red, thick, <-] (0,0.7) -- (0,0.3);
\draw[black!25!red, thick, <-] (5,1) -- (5,0.2);
\draw[black!25!red, thick, <-] (3,1) -- (3,0.2);
\end{scope}
\end{tikzpicture}
\end{gathered}$ \\ \hline
$\cO_2$ & $\cO_2$ &  $\begin{gathered}
\begin{tikzpicture}[scale=0.45]
\draw[gray, thick] plot [smooth] coordinates {(0,-1) (8,-1)} node [black, right] {$\mathcal{C}_q^3$};
\draw[gray, thick] plot [smooth] coordinates {(0,0) (2.5,0) (4,1) (5.5,0) (8,0)} node [black, right] {$\mathcal{C}_q^2$};
\draw[gray, thick] plot [smooth] coordinates {(0,1) (2.5,1) (4,1) (5.5,1) (8,1)} node [black, right] {$\mathcal{E}_q$};
\draw[gray, thick] plot [smooth] coordinates {(0,2) (2.5,2) (4,1) (5.5,2) (8,2)} node [black, right] {$\mathcal{B}^2_q$};
\draw[black, fill=gray] (0,0) circle (5pt) node{};
\draw[black, fill=gray] (0,1) circle (5pt) node{};
\draw[black, fill=gray] (0,-1) circle (5pt) node{};
\draw[black, fill=gray] (0,2) circle (5pt) node{};
\filldraw[black] (4,1) circle (3pt);
\filldraw[black] (4,-1) circle (3pt);
\draw[black!25!red, thick, ->] (0,1.3) -- (0,1.7);
\draw[black!25!red, thick, ->] (1,1.1) -- (1,1.9);
\draw[black!25!red, thick, ->] (2,1.1) -- (2,1.9);
\draw[black!25!red, thick, ->] (6,1.1) -- (6,1.9);
\draw[black!25!red, thick, ->] (7,1.1) -- (7,1.9);
\draw[black!25!red, thick, ->] (8,1.1) -- (8,1.9);
\draw[black!25!red, thick, ->] (3,1.1) -- (3,1.6);
\draw[black!25!red, thick, ->] (5,1.1) -- (5,1.6);
\draw[black!25!red, thick, ->] (4,-0.8) -- (4,0.7);

\draw[black!25!red, thick, <-] (0,-0.3) -- (0,-0.7);
\draw[black!25!red, thick, <-] (1,-0.1) -- (1,-0.9);
\draw[black!25!red, thick, <-] (2,-0.1) -- (2,-0.9);
\draw[black!25!red, thick, <-] (6,-0.1) -- (6,-0.9);
\draw[black!25!red, thick, <-] (7,-0.1) -- (7,-0.9);
\draw[black!25!red, thick, <-] (8,-0.1) -- (8,-0.9);
\draw[black!25!red, thick, <-] (3,0.1) -- (3,-0.9);
\draw[black!25!red, thick, <-] (5,0.1) -- (5,-0.9);
\end{tikzpicture}
\end{gathered}$ \\ \hline
$\cO_\mathrm{triv}$ & $\cO_3$ & $
\begin{gathered}
\begin{tikzpicture}[scale=0.45]
\draw[gray, thick] plot [smooth] coordinates {(0,-1) (8,-1)} node [black, right] {$\mathcal{C}_q^3$};
\draw[gray, thick] plot [smooth] coordinates {(0,0) (2.5,0)  (5.5,0) (8,0)} node [black, right] {$\mathcal{B}_q^2$};
\draw[gray, thick] plot [smooth] coordinates {(0,2) (2.5,2) (4,1.5) (5.5,2) (8,2)} node [black, right] {$\mathcal{A}_q$};
\draw[gray, thick] plot [smooth] coordinates {(0,1) (2.5,1) (4,1.5) (5.5,1) (8,1)} node [black, right] {$\mathcal{B}_q^1$};
\draw[black, fill=gray] (0,0) circle (5pt) node{};
\draw[black, fill=gray] (0,1) circle (5pt) node{};
\draw[black, fill=gray] (0,2) circle (5pt) node{};
\draw[black, fill=gray] (0,-1) circle (5pt) node{};
\filldraw[black] (4,1.5) circle (3pt);
\filldraw[black] (4,0) circle (3pt);
\filldraw[black] (4,-1) circle (3pt);
\draw[black!25!red, thick, <-] (4,1.3) -- (4,0.2);
\draw[black!25!red, thick, <-] (2,0.8) -- (2,0.2);
\draw[black!25!red, thick, <-] (6,0.8) -- (6,0.2);
\draw[black!25!red, thick, <-] (7,0.8) -- (7,0.2);
\draw[black!25!red, thick, <-] (8,0.8) -- (8,0.2);
\draw[black!25!red, thick, <-] (1,0.8) -- (1,0.2);
\draw[black!25!red, thick, <-] (0,0.7) -- (0,0.3);
\draw[black!25!red, thick, <-] (5,1) -- (5,0.2);
\draw[black!25!red, thick, <-] (3,1) -- (3,0.2);
\draw[black!25!red, thick, ->] (4,-0.8) -- (4,-0.2);
\end{tikzpicture}
\end{gathered}
$ \\ \hline
$\cO_1$ & $\cO_3$ &  $\begin{gathered}
\begin{tikzpicture}[scale=0.45]
\draw[gray, thick] plot [smooth] coordinates {(0,-2) (2.5,-2) (4,-1.5) (5.5,-2) (8,-2)} node [black, right] {$\mathcal{A}_q$};
\draw[gray, thick] plot [smooth] coordinates {(0,-1) (2.5,-1) (4,-1.5) (5.5,-1) (8,-1)} node [black, right] {$\mathcal{B}_q^1$};
\draw[gray, thick] plot [smooth] coordinates {(0,0) (2.5,0) (4,0.5) (5.5,0) (8,0)} node [black, right] {$\mathcal{D}_q$};
\draw[gray, thick] plot [smooth] coordinates {(0,1) (2.5,1) (4,0.5) (5.5,1) (8,1)} node [black, right] {$\mathcal{C}_q^3$};
\draw[black, fill=gray] (0,0) circle (5pt) node{};
\draw[black, fill=gray] (0,1) circle (5pt) node{};
\draw[black, fill=gray] (0,-1) circle (5pt) node{};
\draw[black, fill=gray] (0,-2) circle (5pt) node{};
\filldraw[black] (4,0.5) circle (3pt);
\filldraw[black] (4,-1.5) circle (3pt);
\draw[black!25!red, thick, ->] (0,0.3) -- (0,0.7);
\draw[black!25!red, thick, ->] (1,0.1) -- (1,0.9);
\draw[black!25!red, thick, ->] (2,0.1) -- (2,0.9);
\draw[black!25!red, thick, ->] (6,0.1) -- (6,0.9);
\draw[black!25!red, thick, ->] (7,0.1) -- (7,0.9);
\draw[black!25!red, thick, ->] (8,0.1) -- (8,0.9);
\draw[black!25!red, thick, ->] (3,0.3) -- (3,0.7);
\draw[black!25!red, thick, ->] (5,0.3) -- (5,0.7);
\draw[black!25!red, thick, <-] (4,-1.3) -- (4,0.3);
\draw[black!25!red, thick, ->] (0,-0.3) -- (0,-0.7);

\draw[black!25!red, thick, ->] (1,-0.1) -- (1,-0.9);
\draw[black!25!red, thick, ->] (2,-0.1) -- (2,-0.9);
\draw[black!25!red, thick, ->] (6,-0.1) -- (6,-0.9);
\draw[black!25!red, thick, ->] (7,-0.1) -- (7,-0.9);
\draw[black!25!red, thick, ->] (8,-0.1) -- (8,-0.9);
\draw[black!25!red, thick, ->] (3,0.0) -- (3,-1);
\draw[black!25!red, thick, ->] (5,0.0) -- (5,-1);

\end{tikzpicture}
\end{gathered}$ \\ \hline
$\cO_\mathrm{triv}$ & $\cO_\mathrm{comp}$ &  $\begin{gathered}
\begin{tikzpicture}[scale=0.45]
\draw[gray, thick] plot [smooth] coordinates {(0,-1) (8,-1)} node [black, right] {$\mathcal{C}_q^3$};
\draw[gray, thick] plot [smooth] coordinates {(0,0) (2.5,0)  (5.5,0) (8,0)} node [black, right] {$\mathcal{B}_q^2$};
\draw[gray, thick] plot [smooth] coordinates {(0,1) (2.5,1)  (5.5,1) (8,1)} node [black, right] {$\mathcal{A}_q$};
\draw[black, fill=gray] (0,0) circle (5pt) node{};
\draw[black, fill=gray] (0,1) circle (5pt) node{};
\draw[black, fill=gray] (0,-1) circle (5pt) node{};
\filldraw[black] (4,1) circle (3pt);
\filldraw[black] (4,0) circle (3pt);
\filldraw[black] (4,-1) circle (3pt);
\draw[black!25!red, thick, <-] (4,0.8) -- (4,0.2);
\draw[black!25!red, thick, ->] (4,-0.8) -- (4,-0.2);
\end{tikzpicture}
\end{gathered}$ \\ \hline
$\cO_1$ & $\cO_\mathrm{comp}$ &  $\begin{gathered}
\begin{tikzpicture}[scale=0.45]
\draw[gray, thick] plot [smooth] coordinates {(0,-1) (8,-1)} node [black, right] {$\mathcal{A}_q$};
\draw[gray, thick] plot [smooth] coordinates {(0,0) (2.5,0) (4,0.5) (5.5,0) (8,0)} node [black, right] {$\mathcal{D}_q$};
\draw[gray, thick] plot [smooth] coordinates {(0,1) (2.5,1) (4,0.5) (5.5,1) (8,1)} node [black, right] {$\mathcal{C}_q^3$};
\draw[black, fill=gray] (0,0) circle (5pt) node{};
\draw[black, fill=gray] (0,1) circle (5pt) node{};
\draw[black, fill=gray] (0,-1) circle (5pt) node{};
\filldraw[black] (4,0.5) circle (3pt);
\filldraw[black] (4,-1) circle (3pt);
\draw[black!25!red, thick, ->] (0,0.3) -- (0,0.7);
\draw[black!25!red, thick, ->] (1,0.1) -- (1,0.9);
\draw[black!25!red, thick, ->] (2,0.1) -- (2,0.9);
\draw[black!25!red, thick, ->] (6,0.1) -- (6,0.9);
\draw[black!25!red, thick, ->] (7,0.1) -- (7,0.9);
\draw[black!25!red, thick, ->] (8,0.1) -- (8,0.9);
\draw[black!25!red, thick, ->] (3,0.3) -- (3,0.7);
\draw[black!25!red, thick, ->] (5,0.3) -- (5,0.7);
\draw[black!25!red, thick, <-] (4,-0.8) -- (4,0.3);
\end{tikzpicture}
\end{gathered}$\\ \hline
$\cO_2$ & $\cO_\mathrm{comp}$ &  $\begin{gathered}
\begin{tikzpicture}[scale=0.45]
\draw[gray, thick] plot [smooth] coordinates {(0,-1) (8,-1)} node [black, right] {$\mathcal{C}_q^3$};
\draw[gray, thick] plot [smooth] coordinates {(0,0) (2.5,0) (4,0.5) (5.5,0) (8,0)} node [black, right] {$\mathcal{E}_q$};
\draw[gray, thick] plot [smooth] coordinates {(0,1) (2.5,1) (4,0.5) (5.5,1) (8,1)} node [black, right] {$\mathcal{B}^2_q$};
\draw[black, fill=gray] (0,0) circle (5pt) node{};
\draw[black, fill=gray] (0,1) circle (5pt) node{};
\draw[black, fill=gray] (0,-1) circle (5pt) node{};
\filldraw[black] (4,0.5) circle (3pt);
\filldraw[black] (4,-1) circle (3pt);
\draw[black!25!red, thick, ->] (0,0.3) -- (0,0.7);
\draw[black!25!red, thick, ->] (1,0.1) -- (1,0.9);
\draw[black!25!red, thick, ->] (2,0.1) -- (2,0.9);
\draw[black!25!red, thick, ->] (6,0.1) -- (6,0.9);
\draw[black!25!red, thick, ->] (7,0.1) -- (7,0.9);
\draw[black!25!red, thick, ->] (8,0.1) -- (8,0.9);
\draw[black!25!red, thick, ->] (3,0.3) -- (3,0.7);
\draw[black!25!red, thick, ->] (5,0.3) -- (5,0.7);
\draw[black!25!red, thick, ->] (4,-0.8) -- (4,0.3);
\end{tikzpicture}
\end{gathered}$\\ \hline
$\cO_3$ & $\cO_\mathrm{comp}$ &  $\begin{gathered}
\begin{tikzpicture}[scale=0.45]
\draw[gray, thick] plot [smooth] coordinates {(0,0) (8,0)} node [black, right] {$\mathcal{C}_q^3$};
\draw[gray, thick] plot [smooth] coordinates {(0,1) (2.5,1) (4,0) (5.5,1) (8,1)} node [black, right] {$\mathcal{D}_q$};
\draw[gray, thick] plot [smooth] coordinates {(0,-1) (2.5,-1) (4,0) (5.5,-1) (8,-1)} node [black, right] {$\mathcal{F}_q$};
\draw[black, fill=gray] (0,0) circle (5pt) node{};
\draw[black, fill=gray] (0,1) circle (5pt) node{};
\draw[black, fill=gray] (0,-1) circle (5pt) node{};
\filldraw[black] (4,0) circle (3pt);
\draw[black!25!red, thick, <-] (0,0.3) -- (0,0.7);
\draw[black!25!red, thick, <-] (1,0.1) -- (1,0.9);
\draw[black!25!red, thick, <-] (2,0.1) -- (2,0.9);
\draw[black!25!red, thick, <-] (6,0.1) -- (6,0.9);
\draw[black!25!red, thick, <-] (7,0.1) -- (7,0.9);
\draw[black!25!red, thick, <-] (8,0.1) -- (8,0.9);
\begin{scope}[yshift=-30]
    \draw[black!25!red, thick, ->] (0,0.3) -- (0,0.7);
\draw[black!25!red, thick, ->] (1,0.1) -- (1,0.9);
\draw[black!25!red, thick, ->] (2,0.1) -- (2,0.9);
\draw[black!25!red, thick, ->] (6,0.1) -- (6,0.9);
\draw[black!25!red, thick, ->] (7,0.1) -- (7,0.9);
\draw[black!25!red, thick, ->] (8,0.1) -- (8,0.9);
\end{scope}
\draw[black!25!red, thick, <-] (3,0.1) -- (3,0.6);
\draw[black!25!red, thick, <-] (5,0.1) -- (5,0.6);
\draw[black!25!red, thick, ->] (3,-0.6) -- (3,-0.1);
\draw[black!25!red, thick, ->] (5,-0.6) -- (5,-0.1);
\end{tikzpicture}
\end{gathered}$ \\ \hline
$\cO_\mathrm{comp}$ & $\cO_\mathrm{comp}$ &  $\begin{gathered}
\begin{tikzpicture}[scale=0.45]
\draw[gray, thick] plot [smooth] coordinates {(0,0) (8,0)} node [black, right] {$\mathcal{D}_q$};
\draw[gray, thick] plot [smooth] coordinates {(0,1) (2.5,1) (4,0) (5.5,1) (8,1)} node [black, right] {$\mathcal{C}_q^3$};
\draw[gray, thick] plot [smooth] coordinates {(0,-1) (2.5,-1) (4,0) (5.5,-1) (8,-1)} node [black, right] {$\mathcal{G}_q$};
\draw[black, fill=gray] (0,0) circle (5pt) node{};
\draw[black, fill=gray] (0,1) circle (5pt) node{};
\draw[black, fill=gray] (0,-1) circle (5pt) node{};
\filldraw[black] (4,0) circle (3pt);
\draw[black!25!red, thick, ->] (0,0.3) -- (0,0.7);
\draw[black!25!red, thick, ->] (1,0.1) -- (1,0.9);
\draw[black!25!red, thick, ->] (2,0.1) -- (2,0.9);
\draw[black!25!red, thick, ->] (6,0.1) -- (6,0.9);
\draw[black!25!red, thick, ->] (7,0.1) -- (7,0.9);
\draw[black!25!red, thick, ->] (8,0.1) -- (8,0.9);
\begin{scope}[yshift=-30]
    \draw[black!25!red, thick, ->] (0,0.3) -- (0,0.7);
\draw[black!25!red, thick, ->] (1,0.1) -- (1,0.9);
\draw[black!25!red, thick, ->] (2,0.1) -- (2,0.9);
\draw[black!25!red, thick, ->] (6,0.1) -- (6,0.9);
\draw[black!25!red, thick, ->] (7,0.1) -- (7,0.9);
\draw[black!25!red, thick, ->] (8,0.1) -- (8,0.9);
\end{scope}
\draw[black!25!red, thick, ->] (3,0.1) -- (3,0.6);
\draw[black!25!red, thick, ->] (5,0.1) -- (5,0.6);
\draw[black!25!red, thick, ->] (3,-0.6) -- (3,-0.1);
\draw[black!25!red, thick, ->] (5,-0.6) -- (5,-0.1);
\end{tikzpicture}
\end{gathered}$
\vspace{-5mm}
\\
\caption{A description of the bi-incomplete spectra for the Burnside Tambara functor $\underline{A}_{C_{p^2}}$ using the notation of \cref{tab:1}. For completeness we also describe the topology for small cases which is sufficient to show that we get 12 homeomorphism types. The solid black points correspond to $q=p$ and the large gray circles are generic points, with each line being a homeomorphic copy of $\Spec(\mathbb{Z})$. The calculations are grouped by the choice of $\cO_a$.}\label{tab:ofspectra}
\end{longtable}

\subsection{$G = C_{pq}$}

Let $p$ and $q$ be distinct primes and let $G=C_{pq}$. In this final example we will compute prime ideals for a specific compatible pair which will highlight phenomena that we have not yet seen. In particular, every time that we have considered a restricted Tambara functor it has simply been a Tambara functor for a subgroup. This need not be the case.

There is a self-compatible transfer system for $C_{pq}$ defined as
\[
\cO = \{ (e,C_p),(e,C_q) \}.
\]
Then $\pi_0 \mathcal{O}$ contains two elements:
\begin{itemize}
\item the path component $\mathcal{O}'$ containing only $C_{pq}$, and
\item the path component $\mathcal{O}''$ containing $e$, $C_p$, and $C_q$. 
\end{itemize}
We will describe $\Spec(\underline{A}_{C_{pq}})$, where $\underline{A}_{C_{pq}}$ is the restriction of the Burnside Tambara functor to $(\mathcal{O},\mathcal{O})$-Tambara functors via \cref{thm:pi0}.


Since $i^*_{\mathcal{O}'}\underline{A}_{C_{pq}} = A(C_{pq})$ with trivial Weyl action, we have $\Spec(i^*_{\mathcal{O}'}\underline{A}_{C_{pq}}) \cong \Spec(A(C_{pq}))$. This is classical; see \cite[Sec. 5]{dress_notes}. 

The computation of $\Spec(i^*_{\mathcal{O}''}\underline{A}_{C_{pq}})$ is more interesting; in particular, $\mathcal{O}''$ is not the complete transfer system for any group (a fortiori any subgroup of $C_{pq}$) as it does not have a a maximal element. However, its further restriction to $e$ and $C_p$ (or to $e$ and $C_q$) is the complete transfer system for a cyclic group of prime order which we have computed in \cref{subsec:cpburnside}.

\begin{lemma}
Let $\mathcal{P}$ be a prime ideal of $i^*_{\mathcal{O}''}\underline{A}_{C_{pq}}$. Then the restrictions of $\mathcal{P}$ to ideals in $\underline{A}_{C_p}$ and $\underline{A}_{C_q}$ are prime $(\cO_\mathrm{comp},\cO_\mathrm{comp})$-Tambara ideals. 
\end{lemma}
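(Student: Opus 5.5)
The plan mirrors the argument used for the $(\cO_3,\cO_{\mathrm{comp}})$ computation above. Write $\mathcal{P}_p$ for the restriction of $\mathcal{P}$ to the levels $C_{pq}/e$ and $C_{pq}/C_p$; this is a collection of ideals in $\underline{A}_{C_p}$, using that the Weyl actions are trivial on the Burnside functor. It is closed under restriction, the transfer $\tr_1^p$, the norm $\nm_1^p$, and conjugation, since these are among the operations of $i^*_{\mathcal{O}''}\underline{A}_{C_{pq}}$. Hence $\mathcal{P}_p$ is a Tambara ideal.

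\textbf{Properness.} If $\mathcal{P}_p$ were all of $\underline{A}_{C_p}$, then $1\in\mathcal{P}(C_{pq}/e)$. Closure under $\nm_1^q$ (since $\nm_1^q(1)=1$) would then put $1\in\mathcal{P}(C_{pq}/C_q)$. So $\mathcal{P}$ would be the whole restricted Tambara functor, contradicting primality. The same argument works with $p$ and $q$ swapped.

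\textbf{Primality.} Take $x\in\underline{A}_{C_p}(C_p/H_1)$ and $y\in\underline{A}_{C_p}(C_p/H_2)$ with $H_i\in\{e,C_p\}$, and suppose $\mathtt{Q}(\mathcal{P}_p,x,y)$ holds. I would show that $\mathtt{Q}(\mathcal{P},x,y)$ holds in $i^*_{\mathcal{O}''}\underline{A}_{C_{pq}}$; primality of $\mathcal{P}$ then gives $x$ or $y$ in $\mathcal{P}$, hence in $\mathcal{P}_p$.

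So I enumerate the generalized products of $x$ and $y$ in $i^*_{\mathcal{O}''}\underline{A}_{C_{pq}}$. Each multiplicative translate has the form $\nm^L_{K}\conj_g\res^{H_i}_{K}$ with $K\le H_i\le C_p$ and $(K,L)\in\cO$. There are two cases.

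\begin{enumerate}
\item If $L\in\{e,C_p\}$, the generalized product is literally one of the generalized products for $\underline{A}_{C_p}$. Conjugations by elements of $C_{pq}$ act trivially, so they add nothing new. Such a product lies in $\mathcal{P}$ by hypothesis.
\item If $L=C_q$, then $K=e$, because $(C_p,C_q)\notin\cO$. The product is therefore $\nm_1^q(a)\cdot\nm_1^q(b)=\nm_1^q(ab)$, where $a=\res^{H_1}_e x$ and $b=\res^{H_2}_e y$ (again conjugations are trivial). The product $ab$ is itself a generalized product at level $e$ in $\underline{A}_{C_p}$, so $ab\in\mathcal{P}(C_{pq}/e)$. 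Since $\mathcal{P}$ is closed under $\nm_1^q$, we get $\nm_1^q(ab)\in\mathcal{P}(C_{pq}/C_q)$.
\end{enumerate}

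Hence every generalized product lies in $\mathcal{P}$. The $q$ case is symmetric.

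\textbf{Main obstacle.} The key step is the factorization in the second case: every generalized product landing outside $\{e,C_p\}$ is a norm of a generalized product at the common subgroup $e$. This is exactly the feature flagged in the preceding remark, and it uses the multiplicativity of $\nm_1^q$ together with triviality of the Weyl actions. Once that is checked carefully against the definition of generalized products for restricted Tambara functors, the rest is bookkeeping.
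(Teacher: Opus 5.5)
Your proof is correct and takes the same route as the paper: you show that $\mathtt{Q}(\mathcal{P}_p,x,y)$ implies $\mathtt{Q}(\mathcal{P},x,y)$ using closure of $\mathcal{P}$ under norms, and then invoke primality of $\mathcal{P}$. Your explicit handling of the $L=C_q$ case via $\nm_1^q(a)\,\nm_1^q(b)=\nm_1^q(ab)$, and your properness check, fill in details that the paper's proof leaves implicit.
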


\begin{proof}
Let $T = i^*_{\mathcal{O}''}\underline{A}_{C_{pq}}$, let $\mathcal{P}$ be a prime ideal of $T$, and let $\mathcal{P}'$ be the restriction of $\mathcal{P}$ to an ideal of $\underline{A}_{C_p}$ (the argument for $C_q$ is identical). Suppose that $x \in T(C_{pq}/H)$ and $y \in T(C_{pq}/K)$ for some $H,K \in \{e,C_p\}$ and that $\mathtt{Q}(\mathcal{P}',x,y)$ holds. Since $\mathcal{P}$ is closed under norms, it follows immediately that $\mathtt{Q}(\mathcal{P},x,y)$ also holds, and thus by primality of $\mathcal{P}$, $x \in \mathcal{P}(C_{pq}/H) = \mathcal{P}'(C_p/H)$ or $y \in \mathcal{P}(C_{pq}/K) = \mathcal{P}'(C_{p}/K)$. Thus $\mathcal{P}'$ is prime. 
\end{proof}

On the other hand, not all combinations of prime ideals in $\underline{A}_{C_p}$ and $\underline{A}_{C_q}$ which coincide at the underlying level give rise to a prime ideal in $i^*_{\mathcal{O}''}\underline{A}_{C_{pq}}$:

\begin{proposition}\label{prop:cpqbottom}
There is a bijection
$$\Spec(i^*_{\mathcal{O}''}\underline{A}_{{C_{pq}}}) \cong \Spec(\underline{A}_{C_p}) \times_{\Spec(\mathbb{Z})} \Spec(\underline{A}_{C_q}) \setminus \{(\langle r \rangle, \langle r \rangle): r \neq p,q \text{ and $r$ prime or zero}\}.$$
\end{proposition}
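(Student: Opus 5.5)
The bijection will be the map sending a prime $\mathcal{P}$ of $T := i^*_{\mathcal{O}''}\underline{A}_{C_{pq}}$ to the pair of its restrictions $(\mathcal{P}_p, \mathcal{P}_q)$ to $\underline{A}_{C_p}$ and $\underline{A}_{C_q}$. These restrictions are prime by the preceding lemma. They agree at level $C_{pq}/e$, which is the common bottom level $\Z$. By \cref{lem:bottomlevel}, that common value is a prime $\langle r\rangle$ of $\Z$, so the pair defines a point of $\Spec(\underline{A}_{C_p}) \times_{\Spec(\Z)} \Spec(\underline{A}_{C_q})$. The map is injective, since $\mathcal{P}$ is determined by its three levels $e, C_p, C_q$.

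For the image, recall from \cref{subsec:cpburnside} that the Tambara primes of $\underline{A}_{C_p}$ lying over $\langle r\rangle$ are $\mathcal{C}_r = [\langle r\rangle; \langle r\rangle]$ and $\mathcal{B}^2_r = [\langle r, t-p\rangle; \langle r\rangle]$, with $\mathcal{C}_p = \mathcal{B}^2_p$. The same holds for $C_q$, with $t_q$ and $q$ in place of $t$ and $p$. The key feature is that $\langle r, t-p\rangle = (\res_1^p)^{-1}(\langle r\rangle)$. Hence an element at level $C_p$ lies in $\mathcal{B}^2_r$ if and only if its restriction lies in $\langle r\rangle$. I will call such a top level \emph{restriction-detected}. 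I will also use $\mathcal{C}_p = \mathcal{B}^2_p$ (and $\mathcal{C}_q = \mathcal{B}^2_q$) to note that every top level is restriction-detected when $r \in \{p,q\}$. Given any pair in the fibre product, the levelwise union is clearly an ideal of $T$: the only operations are between $e$ and $C_p$ or between $e$ and $C_q$, and each half is already a Tambara ideal.

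\emph{Excluded pairs are not prime.} Let $r \neq p,q$ and take the pair $(\mathcal{C}_r, \mathcal{C}_r)$, so both top levels are $\langle r\rangle$. Set $x = t_p - p \in T(C_{pq}/C_p)$ and $y = t_q - q \in T(C_{pq}/C_q)$. Neither lies in $\langle r\rangle$, but $\res x = 0 = \res y$. Every multiplicative translate of $x$ at level $e$ or $C_q$ factors through $\res x = 0$, as does every multiplicative translate of $y$ at level $e$ or $C_p$. There is no level at which a nonzero translate of $x$ meets a nonzero translate of $y$, so every generalized product is $0$. Thus $\mathtt{Q}(\mathcal{P}, x, y)$ holds and $\mathcal{P}$ is not prime.

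\emph{All remaining pairs are prime.} Now at least one top level, say the one at $C_p$, is restriction-detected. Suppose $\mathtt{Q}(\mathcal{P}, x, y)$ holds. If $x$ and $y$ both lie in the $C_p$-half, or both in the $C_q$-half, then their generalized products in that half are among those in $T$, so primality of $\mathcal{P}_p$ or $\mathcal{P}_q$ settles it. The mixed cases, and those involving level $e$, are handled by one observation. Every multiplicative translate of $\res x$ is a multiplicative translate of $x$. Hence $\mathtt{Q}(\mathcal{P}, x, y)$ implies $\mathtt{Q}(\mathcal{P}_q, \res x, y)$ whenever $y$ lies in the $C_q$-half. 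Take $x$ at $C_p$ and $y$ at $C_q$. Primality of $\mathcal{P}_q$ then gives either $y \in \mathcal{P}$, or $\res x \in \langle r\rangle$. In the latter case $x \in \mathcal{P}$ because the $C_p$ level is restriction-detected. If instead $x$ lies at $e$ or at $C_q$, the pair is already in the $C_q$-half. The step I expect to need the most care is this bookkeeping: checking that every generalized product in $T$ really is a generalized product in one of the two halves applied to $x$ or $\res x$. That rests on there being no operation directly between $C_p$ and $C_q$ except through $e$. Finally I will check that the identifications $\mathcal{C}_p = \mathcal{B}^2_p$ and $\mathcal{C}_q = \mathcal{B}^2_q$ account exactly for why the cases $r = p$ and $r = q$ are not excluded.
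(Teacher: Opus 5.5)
Your proof is correct. Its skeleton is the paper's: send a prime to its two restrictions, rule out $(\langle r\rangle,\langle r\rangle)$ for $r\neq p,q$ with the same witnesses $t_p-p$ and $t_q-q$, and use that one top level equals $(\res_1^p)^{-1}\langle r\rangle$.

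The genuine difference is the mixed case $x\in T(C_{pq}/C_p)$, $y\in T(C_{pq}/C_q)$.
\begin{itemize}
\item The paper argues elementwise. From $\res x\cdot\res y\in\langle r\rangle$ it gets $\res y\in\langle r\rangle$, and stops if the $C_q$-top is restriction-detected. Otherwise it uses $\nm_1^q(z)=z+\frac{z^q-z}{q}t_q$ and divisibility by $r$ to force $r\mid\res x$.
\item You instead note that, because $(e,C_q)\in\mathcal{O}''$, every multiplicative translate of $\res x$ is one of $x$. So $\mathtt{Q}(\mathcal{P},x,y)$ implies $\mathtt{Q}(\mathcal{P}_q,\res x,y)$, and Tambara primality of the $C_q$-half then does all the work.
\end{itemize}
Your route needs no case split on the shape of the $C_q$-top and no Burnside-specific norm formula, so it applies verbatim to any such gluing with one restriction-detected top. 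It also avoids the paper's cancellation of $y$ modulo $\langle r\rangle\subseteq A(C_q)$. That ideal is not prime, since $t_q(t_q-q)=0$, so the paper's step tacitly needs that $\nm_1^q(z)$ is a non-zero-divisor in $A(C_q)/\langle r\rangle$ when $r\nmid z$. You also make explicit what the paper leaves implicit: injectivity, that every pair in the fibre product glues to an ideal of $T$, and the same-half cases. In exchange, the paper's version is a direct check against the explicit ideals.

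There are two small slips, neither affecting the argument.
\begin{itemize}
\item When $r=p$, only the $C_p$-top is forced to be restriction-detected. Over $\langle p\rangle$ the $C_q$-side may be $[\langle p\rangle;\langle p\rangle]$, which omits $t_q-q$ even though $\res(t_q-q)=0$. So ``every top level'' should read ``the top level at $C_r$''.
\item The bookkeeping you flag runs the other way. What is needed is that generalized products of $(\res x,y)$ in the $C_q$-half are generalized products of $(x,y)$ in $T$, which is exactly your translate observation. You do not need that every product in $T$ comes from a half.
\end{itemize}
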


\begin{proof}
The previous lemma implies that the prime ideals of $i^*_{\mathcal{O}''}\underline{A}_{C_{pq}}$ restrict to pairs of primes in $\underline{A}_{C_p}$ and $\underline{A}_{C_q}$, and these obviously must coincide at the underlying level. We must determine which pairs comprise prime $i^*_{\mathcal{O}''}\underline{A}_{C_{pq}}$-ideals. Let us write $A(C_p) = \mathbb{Z}[x_p]/\langle x_p^2-px_p \rangle$ and $A(C_q) = \mathbb{Z}[x_q]/\langle x_q^2-qx_q \rangle$. 

First, consider the ideal $\mathcal{P} = (\langle r \rangle, \langle r \rangle)$, where $r \neq p,q$. Then $\mathtt{Q}(\mathcal{P},x_p-p,x_q-q)$ holds, but $x_p-p$ and $x_q-q$ are not in $\mathcal{P}$. 

On the other hand, consider the case $\mathcal{P} = (\mathcal{P}_1,\mathcal{P}_2)$ with at least one $\mathcal{P}_i$ maximal, say $\mathcal{P}_1(C_p/C_p) = \langle r,x_p-p \rangle$ with $r$ a prime or zero. Note that in this case $\mathcal{P}_1(C_p/C_p) = (\res^{p}_1)^{-1}(q)$ is the inverse image of the underlying level. Let $x \in \underline{A}(C_{pq}/C_p)$ and let $y \in \underline{A}(C_{pq}/C_q)$. Suppose that $\mathtt{Q}(x,y,\mathcal{P})$ holds and that $x \notin \mathcal{P}$. We must show that $y \in \mathcal{P}$. The $\mathtt{Q}$-condition implies that $\res^{p}_1(x) \res^{q}_1(y) \in (r)$, so either $\res^{p}_1(x) \in (r)$ or $\res^{q}_1(y) \in (r)$. Since we assumed $x \notin \mathcal{P}$, we have $\res^{q}_1(y) \in (r)$, i.e., $y \in (\res^{q}_1)^{-1}(r)$. 

If $\mathcal{P}_2$ is maximal, then we are done, so suppose otherwise, i.e., suppose that $\mathcal{P}_2(C_q/C_q) = (r)$. Suppose $y \notin (r)$. The $\mathtt{Q}$-condition implies that $\nm_1^{q}(\res_1^{p}(x)) \cdot y \in (r)$, so $\nm_1^{q}(\res_1^{p}(x)) \in (r)$. Let $z = \res^{p}_1(x)$. Then $r$ divides $\nm_1^{q}(z) = z + \frac{z^q-z}{q}x_q$, and consequently, $r$ divides $\res_1^{p}(x)$. But then $\res_1^{p}(x) \in (r)$, contradicting our hypothesis that $x \notin \mathcal{P}$. 
\end{proof}

Between our calculation of the prime ideals of $i^*_{\mathcal{O}'}\underline{A}_{C_{pq}}$ and the prime ideals of $i^*_{\mathcal{O}''}\underline{A}_{C_{pq}}$, it is straightforward to compute $\Spec(\underline{A}_{C_{pq}})$ using \cref{thm:pi0}. Since our main motivation for this section was really illustrating the computation of the spectrum of a restricted Tambara functor which is not the restriction to a subgroup, we leave the final details of the computation (computing the inverse images of restrictions and determining inclusions) to the interested reader.

\section{Vistas: $C_2$-equivariant tensor-triangular computations}\label{sec:vista}

Let $(\sfT, \otimes, \mathbbm{1}_{\sfT})$ be an essentially small tensor-triangulated (tt-)category. Then we have the associated Balmer spectrum $\Spc(\sfT)$ of the prime thick-tensor ideals equipped with the topology determined by a universal support theory \cite{balmer_spectrum}. 

\begin{remark}\label{rem:balmertop}
Unwrapping the definition of the topology we have for $\mathsf{P} \in \Spc(\sfT)$ that
\[
\overline{\{\sfP\}} = \{\mathsf{Q} \in \Spc(\sfT) \mid \mathsf{Q} \subseteq \sfP\}.
\]
One should compare this to the Zariski case, or the Nakaoka topology (c.f., \cref{rem:closureofpoints}) and observe that there there is a reversal in the ordering.
\end{remark}

In \cite{balmer_3spectra} Balmer provides a continuous comparison map 
\[\Spc(\sfT) \to \Spec(\mathrm{End}(\mathbbm{1}_{\sfT}))\]
from the Balmer spectrum of $\sfT$ to the Zariski spectrum of the endomorphism ring of the unit.

When $\sfT = \mathsf{Sp}_G^\omega$ for a finite group $G$, this provides a comparison map to the Zariski spectrum of the Burnside ring of $G$ \cite{balmersanders}. There has been a conjectural idea of enhancing tt-categories with equivariant structure so that instead of taking endomorphism rings, we obtain endomorphism $(\cO_m,\cO_a)$-Tambara functors. One should then also obtain a comparison map between some sort of ``Balmer--Nakaoka'' spectrum and the Nakaoka spectrum of the endomorphism bi-incomplete Tambara functor. 

One obstacle to this is that when we look at these ``Tambara tt-categories'', we can no longer expect the prime ideals to be levelwise prime, just as in the case of the Nakaoka spectrum of the Burnside Tambara functor. This provides an obstruction to repurposing Balmer's proof of the comparison map in this more structured setting, as it critically hinges on using local rings arising from localizing away from prime ideals.

Indeed, take some $\sfP \in \Spc(\sfT)$ and consider the localization $\varphi \colon \sfT \to \sfT_\p$ which induces a ring homomorphism $f \colon \mathrm{End}(\mathbbm{1}_\sfT) \to \mathrm{End}(\mathbbm{1}_{\sfT_\sfP})$. Applying $\Spec$ we obtain a map $f \colon \Spec(\mathrm{End}(\mathbbm{1}_{\sfT_\sfP})) \to \Spec(\mathrm{End}(\mathbbm{1}_{\sfT}))$. Now, the former space is local (as we have the Zariski spectrum of a local ring). As such it has a distinguished $\mathfrak{m}$ (i.e., the unique maximal ideal). We then declare the image of $\p$ under the desired comparison map to be $f^\ast (\m)$. 

In this final section we will compute this conjectural structured spectrum for $\sfT = \mathsf{Sp}_{C_2}^\omega$, which should be thought of as the categorical lift of $\underline{A}_{C_2}$, and provide evidence that the existence of the expected comparison maps is more nuanced than expected.

Foregoing any formal construction, we simply describe the ``Tambara tt-category'' that we wish to study to be the  object
\[\begin{tikzcd}[row sep=large]
	{\mathsf{Sp}_{C_2}^\omega} \\
	{\mathsf{Sp}^\omega}
	\arrow["{\Phi^e}"{description}, from=1-1, to=2-1]
	\arrow["{\mathrm{coind}_1^{2}}", shift left=4, curve={height=-6pt}, from=2-1, to=1-1]
	\arrow["{N_1^2}"', shift right=4, curve={height=6pt}, from=2-1, to=1-1]
\end{tikzcd}\]
where
\begin{itemize}
    \item $\Phi^e$ is the trivial geometric fixed points of a $C_2$-spectrum (i.e., the restriction functor), which is a strong monoidal triangulated functor, 
    \item $\mathrm{coind}_1^2$ is the coinduction functor $F((C_2)_+,-)$, which is a triangulated functor, and
    \item $N_1^2$ is the Hill--Hopkins--Ravenel norm functor \cite{hhr}, which is a strong monoidal functor.
\end{itemize}
We will denote this object by $\underline{\mathsf{Sp}}_{C_2}$.

These functors play the role of the restriction, transfer, and norm, respectively. We obtain similar relations as in the Tambara functor case:
\begin{itemize}
    \item Coinduction is the right adjoint to the geometric fixed points $\Phi^e$. Therefore by \cite{bds_wirth}, the projection formula yields an isomorphism
    \[
        \mathrm{coind}_1^2(X) \otimes Y \cong \mathrm{coind}_1^2(X \otimes \Phi^e(Y))
    \]
    which is simply Frobenius reciprocity. We also highlight that as $C_2$ is finite, the Wirthm\"{u}ller isomorphism tells us that the coinduction coincides with the left adjont of $\Phi^e$, induction, which is $(C_2)_+ \otimes -$, as the dualizing object is simply the sphere spectrum.
    \item One can extract from the definition of the norm that
    \[
    N_1^2(X \coprod Y) \cong N_1^2(X) \coprod N_1^2(Y) \coprod \mathrm{coind}_1^2(X \otimes Y)
    \]
    which provides Tambara reciprocity.
    \item We also have the double coset formulae:
    \[
    \Phi^e \mathrm{coind}_1^2 (X) \cong X \coprod X \qquad \text{and} \qquad \Phi^e N_1^2(X) \cong X \otimes X.
    \]
\end{itemize}
 We can replicate the definitions of thick tensor (tt)-ideals in this structured setting, defining them to be levelwise collections of tt-ideals closed under the structure functors. We may similarly extend the definition of prime ideals and the $\mathtt{Q}$-condition. Finally, we will equip these spectra with the topology arising from the Balmer spectrum so that closures of a prime is constructed by taking those primes contained in it.
 
 We aim to construct the coefficient system, Green, and Tambara primes in this setting, and as such, we sensibly first recall the construction of the prime ideals in both levels (see, for instance, \cite{balmersanders}).  We have chosen this simple example as we have a full description of the topology of the Balmer spectra in both cases, which by Balmer's classification result (along with the fact that both categories are rigid) provides an explicit description of all ideals.

Denote by $\mathsf{K}_{p,n}$ the kernel in $\mathsf{Sp}^\omega$ of the $p$-local $(n-1)$-st Morava $K$-theory (composed with localization $\mathsf{Sp}^\omega \to \mathsf{Sp}^\omega_{(p)}$ at $p$). In particular $\mathsf{K}_{p,1} := \mathsf{Sp}^{\omega,\mathrm{tor}} =: \mathsf{K}_{0,1}$ is the subcategory of torsion finite spectra, independently of $p$, while $\mathsf{K}_{p,\infty} = \cap_{n \geq 1} \mathsf{K}_{p,n} = \ker(\mathsf{Sp}^\omega \to \mathsf{Sp}^\omega_{(p)})$ is the subcategory of $p$-acyclic finite spectra. For the topology, the closed points are the $\mathsf{K}_{p,\infty}$ while $\mathsf{K}_{0,1}$ is the generic point. The inclusions $\mathsf{K}_{p,n} \subsetneq \mathsf{K}_{p,n-1}$  provide a description of the closure
\[
\overline{\{\mathsf{K}_{p,n}\}} = \{\mathsf{K}_{p,k} \mid n \leq k \leq \infty\}.
\]
The Thomason subsets (which are in bijection with the tt-ideals under Balmer's classification) are either empty, the entire space, or an arbitrary union of columns $\overline{\{\mathsf{K}_{p,n}\}}$ with $n \geq 2$.

Equipped with our non-equivariant primes we can consider the topology of $C_2$-equivariant spectra (we could work more generally for an abelian group $G$ but this is not relevant to our story).

For a subgroup $H$ of $C_2$ we denote by $\mathsf{P}_{H,p,n} := (\Phi^H)^{-1}(\mathsf{K}_{p,n})$. That is,
\[
    \mathsf{P}_{H,p,n} = \{X \in \mathsf{Sp}^\omega_{C_2} \mid \Phi^H(X) \in \mathsf{K}_{p,n}\}.
\]
There are inclusion $\mathsf{P}_{H,p,n} \subsetneq \mathsf{P}_{H,p,n-1}$. There are also inclusions which arise due to blueshift, namely $\mathsf{P}_{e,2,n} \subseteq \mathsf{P}_{C_2,2,n-1}$ for all $n \geq 2$.

\subsection{Coefficient system primes}

We will begin by computing the coefficient system primes. By using \cref{ex:coefficientsystem} we obtain some candidate primes, but we will need to check that there are no other options as one should not naively assume that commutative algebra results always carry over to the tt-setting. The candidate primes are:
\[
\mathsf{A}_{p,n} = \begin{bmatrix}
    \mathsf{P}_{{C_2},p,n} \\ \langle \mathbb{S} \rangle
\end{bmatrix}
\qquad
\mathsf{B}_{p,n}^1 = \begin{bmatrix}
    \mathsf{P}_{e,p,n} \\ \langle \mathbb{S} \rangle
\end{bmatrix}
\qquad
\mathsf{B}_{p,n}^2 = \begin{bmatrix}
    \mathsf{P}_{e,p,n} \\ \mathsf{K}_{p,n}
\end{bmatrix}
\]
Along with the usual chromatic inclusions, there are inclusions $\mathsf{B}_{p,n}^2 \subseteq \mathsf{B}_{p,n}^1$ and $\mathsf{B}_{2,n}^1 \subseteq \mathsf{A}_{2,n-1}$.


We need to check that there are no other prime ideals. As we must be levelwise prime and closed under restriction, we are reduced to checking if 
\[
\begin{bmatrix}
    \mathsf{P}_{e,p,m} \\ \mathsf{K}_{p,n}
\end{bmatrix}
\]
is a coefficient system prime for $m > n$. Take $X \in \mathsf{P}_{e,p,n} \setminus \mathsf{P}_{e,p,m}$ at the top level and $\mathbb{S}$ at the bottom level. Then the only generalized product we can form is $\Phi^e(X) \otimes \mathbb{S} \cong \Phi^e(X) \in \mathsf{K}_{p,n}$ even though neither of the elements are in the ideal. Thus, this is not a prime ideal.

\begin{lemma}
    \[
\Spc^{(\cO_\mathrm{triv} , \cO_\mathrm{triv})}(\underline{\mathsf{Sp}}_{C_{2}}) \cong  \{ \mathsf{A}_{p,n}, \mathsf{B}_{p,n}^1, \mathsf{B}_{p,n}^2 \}_{p \in \mathbb{P}, n \in \mathbb{N} \cup \infty}.
\]    
\end{lemma}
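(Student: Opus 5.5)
The plan is to split according to which level is proper, mirroring \cref{ex:coefficientsystem}. Let $\mathsf{Q} = (\mathsf{Q}_{top}, \mathsf{Q}_{bot})$ be a coefficient system prime of $\underline{\mathsf{Sp}}_{C_2}$. Here $\mathsf{Q}_{top}$ is a tt-ideal of $\mathsf{Sp}^\omega_{C_2}$ and $\mathsf{Q}_{bot}$ is a tt-ideal of $\mathsf{Sp}^\omega$ with $\Phi^e(\mathsf{Q}_{top}) \subseteq \mathsf{Q}_{bot}$. There is no nontrivial norm, so a generalized product is just a tensor product of restrictions. The Weyl action of $C_2$ on $\mathsf{Sp}^\omega$ is trivial up to isomorphism. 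Hence the $\mathtt{Q}$-condition for $X, Y$ at the same level reduces to $X \otimes Y \in \mathsf{Q}$. For $X$ at the top level and $Y$ at the bottom, it reduces to $\Phi^e(X) \otimes Y \in \mathsf{Q}_{bot}$.

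The first step is to show that each level is either the whole category or a Balmer prime. If $\mathsf{Q}_{bot} = \langle \mathbb{S}\rangle$, then testing the $\mathtt{Q}$-condition on two top-level objects shows $\mathsf{Q}_{top}$ is prime, since it is proper. So $\mathsf{Q}_{top} = \mathsf{P}_{H,p,n}$ by the Balmer--Sanders classification, which gives $\mathsf{A}_{p,n}$ or $\mathsf{B}^1_{p,n}$. If $\mathsf{Q}_{bot}$ is proper, then taking $X = \mathbb{S}_{C_2}$ against bottom objects shows $\mathsf{Q}_{bot} = \mathsf{K}_{p,n}$ is prime.

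In the second case, $\mathsf{Q}_{top}$ cannot be everything, since $\Phi^e(\mathbb{S}_{C_2}) = \mathbb{S} \notin \mathsf{Q}_{bot}$. So $\mathsf{Q}_{top}$ is a proper tt-ideal contained in $(\Phi^e)^{-1}(\mathsf{K}_{p,n}) = \mathsf{P}_{e,p,n}$. The two-top-elements test makes it prime, so $\mathsf{Q}_{top} = \mathsf{P}_{H,p',m}$. The containment $\mathsf{P}_{H,p',m} \subseteq \mathsf{P}_{e,p,n}$ forces either $H = e$, $p' = p$, $m \geq n$, or, via blueshift, the case $H = C_2$ with $p = 2$.

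The key step, and the main obstacle, is excluding everything except $\mathsf{P}_{e,p,n}$ itself. The mixed-level $\mathtt{Q}$-condition does this. Suppose $\mathsf{Q}_{top} \subsetneq \mathsf{P}_{e,p,n}$, and pick $X \in \mathsf{P}_{e,p,n} \setminus \mathsf{Q}_{top}$ together with $Y = \mathbb{S}$. The only generalized products are $\Phi^e(X) \otimes \mathbb{S} \in \mathsf{K}_{p,n}$, yet neither $X$ nor $\mathbb{S}$ lies in the ideal. This contradicts primality, so the inclusion must be an equality, and the result is $\mathsf{B}^2_{p,n}$.

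The delicate points are two. First, the Balmer--Sanders inclusions, including the case $p = 2$, must be properly subsumed. Second, in each of the three candidates one has to verify that the mixed $\mathtt{Q}$-condition really does imply primality. For $\mathsf{B}^2_{p,n}$, this uses that $\Phi^e(X) \otimes Y \in \mathsf{K}_{p,n}$ implies $\Phi^e(X) \in \mathsf{K}_{p,n}$ or $Y \in \mathsf{K}_{p,n}$, and hence $X \in \mathsf{P}_{e,p,n}$ or $Y \in \mathsf{K}_{p,n}$. For $\mathsf{A}$ and $\mathsf{B}^1$, the bottom level is everything, so the condition holds trivially.
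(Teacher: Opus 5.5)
Your proof is correct and is essentially the paper's argument. The paper likewise uses levelwise primality and closure under $\Phi^e$ to reduce to a top level strictly inside $\mathsf{P}_{e,p,n}$ sitting over $\mathsf{K}_{p,n}$, and rules this out with exactly your test: $X \in \mathsf{P}_{e,p,n}\setminus\mathsf{Q}_{\mathrm{top}}$ at the top paired with $\mathbb{S}$ at the bottom.

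There are two harmless slips. First, the blueshift inclusion runs $\mathsf{P}_{e,2,n} \subseteq \mathsf{P}_{C_2,2,n-1}$, so no $\mathsf{P}_{C_2,p',m}$ is ever contained in $\mathsf{P}_{e,p,n}$ (the object $C_{2+}$ witnesses this). Second, primality of $\mathsf{Q}_{\mathrm{bot}}$ comes from the bottom--bottom test, not from pairing with $\mathbb{S}_{C_2}$. Neither affects the result, because your mixed test excludes every strict subideal of $\mathsf{P}_{e,p,n}$ without any classification of $\mathsf{Q}_{\mathrm{top}}$.
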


\subsection{Green functor primes}

We follow \cref{Prop:ChangeOaPrimes} and see which of our coefficient system Balmer primes are closed under coinduction, which we have discussed is equivalent to being closed under induction. We make use of the following result:

\begin{lemma}[{\cite[Lemma 3.9]{MR4036448}}]\label{lem:coinductthing}
    Let $H$ be a subgroup of a finite group $G$ and let $X$ be a finite $H$-spectrum. Then
    \begin{enumerate}
        \item The type of $\Phi^H(\mathrm{ind}_H^G(X))$ is equal to the type of $\Phi^H(X)$.
        \item If $K$ is not subconjugate to $H$, then the geometric fixed points $\Phi^K(\mathrm{ind}_H^G(X))$ are trivial.
    \end{enumerate}
\end{lemma}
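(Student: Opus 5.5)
Both claims will follow from the double coset formula for $\Phi^K$ of an induced spectrum, combined with the tom Dieck-type splitting of geometric fixed points of orbits. Write $\mathrm{ind}_H^G(X) = G_+ \wedge_H X$. For a subgroup $K \leq G$ we compute $\Phi^K(\mathrm{ind}_H^G X)$ by first restricting to $K$ and applying the Mackey double coset decomposition
\[
\res^G_K \mathrm{ind}_H^G X \simeq \bigvee_{KgH \in K\backslash G/H} \mathrm{ind}^K_{K \cap gHg^{-1}} \res^{gHg^{-1}}_{K\cap gHg^{-1}} c_g X .
\]
Since $\Phi^K$ is a strong symmetric monoidal exact functor commuting with finite wedges, it suffices to understand $\Phi^K(\mathrm{ind}^K_L Y)$ for $L \leq K$. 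Here $\mathrm{ind}^K_L Y \simeq K/L_+ \wedge \widetilde{Y}$ only when $Y$ is pulled back, so instead I would use the general fact $\Phi^K(K_+ \wedge_L Y) \simeq *$ whenever $L < K$ is proper. This follows from $\Phi^K(Z) = (\widetilde{E\mathcal{P}} \wedge Z)^K$ together with the observation that $K_+\wedge_L Y$ is built from cells of orbit type $K/L'$ with $L' \leq L$ proper, and $\widetilde{E\mathcal{P}}\wedge K/L'_+ \simeq *$.

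For part (2): if $K$ is not subconjugate to $H$, then for every $g$ the subgroup $K \cap gHg^{-1}$ is proper in $K$. Every wedge summand above is therefore induced from a proper subgroup, hence has contractible $\Phi^K$, and so $\Phi^K(\mathrm{ind}_H^G X) \simeq *$.

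For part (1), take $K = H$. A summand survives $\Phi^H$ only when $H \cap gHg^{-1} = H$, that is, when $g \in N_G(H)$. This gives
\[
\Phi^H(\mathrm{ind}_H^G X) \simeq \bigvee_{gH \in W_G H} \Phi^H(c_g X),
\]
where each $\Phi^H(c_gX)$ is the underlying-spectrum conjugate of $\Phi^H(X)$ by the Weyl element and is thus equivalent to $\Phi^H(X)$ as a nonequivariant spectrum. The result is a finite wedge of $|W_GH|$ copies of $\Phi^H X$. Since the type of a finite wedge of copies of a finite spectrum equals the type of the spectrum (the $K(n)$-homology is a direct sum of copies), the types agree, at each prime $p$ separately in the $p$-local sense used to define $\mathsf{K}_{p,n}$.

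I expect the main obstacle to be the vanishing $\Phi^K(\mathrm{ind}^K_L Y)\simeq *$ for proper $L<K$. I would prove it by induction on cells of $Y$, reducing to $Y = L/L'_+\wedge S^n$, where it follows directly from the defining property that $\widetilde{E\mathcal{P}}$ kills all orbits $K/L''$ with $L''$ proper. The identification of the conjugated summands as having equal type is routine by comparison.
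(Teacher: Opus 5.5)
Your argument is correct. The paper gives no proof of this lemma; it is imported from \cite{MR4036448}, so there is no internal argument to match, and what you have written is a self-contained proof for finite $G$.

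Your route is the standard one: restrict to $K$, apply the Mackey double coset decomposition, and discard every summand induced from a proper subgroup of $K$. In effect this proves
\[
\Phi^K(G_+\wedge_H X)\simeq\bigvee_{gH\in (G/H)^K}\Phi^K\bigl(\res^{gHg^{-1}}_K c_gX\bigr),
\]
since the surviving double cosets $KgH$ (those with $K\leq gHg^{-1}$) are exactly the cosets $gH$ fixed by $K$.
\begin{itemize}
\item Part (2) is then the observation $(G/H)^K=\emptyset$.
\item Part (1) is $(G/H)^H=W_GH$. This gives a nonempty wedge of $|W_GH|$ copies of $\Phi^H(X)$, which has the same type at every prime because $K(n)_*$ is additive.
\end{itemize}

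One simplification. The step you flag as the main obstacle, $\Phi^K(K_+\wedge_L Y)\simeq *$ for proper $L<K$, needs no induction over cells. By the projection formula,
\[
\widetilde{E\mathcal{P}}\wedge (K_+\wedge_L Y)\simeq K_+\wedge_L\bigl(\res^K_L\widetilde{E\mathcal{P}}\wedge Y\bigr).
\]
Here $\res^K_L\widetilde{E\mathcal{P}}$ is $L$-equivariantly contractible, because every subgroup of $L$ lies in the family $\mathcal{P}$ of proper subgroups of $K$. So the vanishing holds for arbitrary $Y$ in one line.
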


From this we observe that $\Phi^e (\mathrm{ind}_1^2(\mathbb{S}))$ has the same type as $\mathbb{S}$, while $\Phi^{C_2} (\mathrm{ind}_1^2(\mathbb{S}))$ is trivial. It follows that $\mathsf{A}_{p,n}$ remains an ideal, while $\mathsf{B}_{p,n}^1$ does not, exactly as in the Burnside case.

\begin{lemma}
    \[
\Spc^{(\cO_\mathrm{triv} , \cO_\mathrm{comp})}(\underline{\mathsf{Sp}}_{C_{2}}) \cong  \{ \mathsf{A}_{p,n}, \mathsf{B}_{p,n}^2 \}_{p \in \mathbb{P}, n \in \mathbb{N} \cup \infty}.
\]    
\end{lemma}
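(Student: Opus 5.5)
The plan follows the Burnside computation of \cref{lem:calc4} via \cref{Prop:ChangeOaPrimes}. Since $(\cO_\mathrm{triv},\cO_\mathrm{triv})$ is a sub-compatible pair of $(\cO_\mathrm{triv},\cO_\mathrm{comp})$, and the $\mathtt{Q}$-condition involves no transfers, the Green primes should be exactly those coefficient system primes $\mathsf{A}_{p,n}$, $\mathsf{B}^1_{p,n}$, $\mathsf{B}^2_{p,n}$ from the previous lemma that are additionally closed under $\mathrm{coind}_1^2$. We first confirm that the argument of \cref{Prop:ChangeOaPrimes} transfers verbatim to the tt-setting; the point is only that generalized products are built from restrictions, conjugations and norms. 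By the Wirthm\"uller isomorphism we may test closure under $\mathrm{ind}_1^2=(C_2)_+\otimes-$ instead of coinduction.

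Next we check each family by examining the bottom-level ideal and its image under $\mathrm{ind}_1^2$.
\begin{enumerate}
\item For $\mathsf{A}_{p,n}$, the bottom level is $\langle\mathbb{S}\rangle=\mathsf{Sp}^\omega$. We need $\mathrm{ind}_1^2(Y)\in\mathsf{P}_{C_2,p,n}$ for every finite spectrum $Y$. By part (2) of \cref{lem:coinductthing}, $\Phi^{C_2}\mathrm{ind}_1^2(Y)\simeq 0$, and $0$ lies in every $\mathsf{K}_{p,n}$. So $\mathsf{A}_{p,n}$ remains an ideal.
\item For $\mathsf{B}^1_{p,n}$, take $Y=\mathbb{S}$. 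Then $\Phi^e\mathrm{ind}_1^2(\mathbb{S})\simeq\mathbb{S}\vee\mathbb{S}$ has type $0$ and so is not in $\mathsf{K}_{p,n}$ for $n\geq 1$. Hence $\mathrm{ind}_1^2(\mathbb{S})\notin\mathsf{P}_{e,p,n}$ and $\mathsf{B}^1_{p,n}$ is not an ideal.
\item For $\mathsf{B}^2_{p,n}$, take $Y\in\mathsf{K}_{p,n}$. Then $\Phi^e\mathrm{ind}_1^2(Y)\simeq Y\vee Y\in\mathsf{K}_{p,n}$, since the double coset formula for $\Phi^e\mathrm{coind}$ holds and thick subcategories are closed under sums. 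So $\mathrm{ind}_1^2(Y)\in\mathsf{P}_{e,p,n}$ and $\mathsf{B}^2_{p,n}$ survives.
\end{enumerate}

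This yields exactly $\{\mathsf{A}_{p,n},\mathsf{B}^2_{p,n}\}$. The main obstacle is the first step, namely justifying \cref{Prop:ChangeOaPrimes} and the completeness of the coefficient-system list in the tt-setting. Here we rely on the previous lemma, which already ruled out other combinations. The remaining point is the degenerate indexing $\mathsf{K}_{p,1}=\mathsf{K}_{0,1}$ in the $\mathsf{B}^1$ case, which is handled by the type-$0$ argument.
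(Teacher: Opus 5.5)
Your proposal is correct and follows the paper's route. You reduce via \cref{Prop:ChangeOaPrimes} to checking which coefficient-system primes are closed under $\mathrm{coind}_1^2\simeq\mathrm{ind}_1^2$, and then use \cref{lem:coinductthing} to see that $\mathsf{A}_{p,n}$ survives (the $C_2$-geometric fixed points of an induced spectrum vanish) while $\mathsf{B}^1_{p,n}$ fails (since $\Phi^e\mathrm{ind}_1^2(\mathbb{S})$ has type $0$). Your explicit check that $\mathsf{B}^2_{p,n}$ survives, via $\Phi^e\mathrm{ind}_1^2(Y)\simeq Y\vee Y$, fills in a step the paper leaves implicit.
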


\subsection{Tambara functor primes}

We now compute the Tambara Balmer primes. This is the situation where we foresee potential issues as we will no longer be levelwise prime. We will require the following result:
\begin{lemma}\label{lem:normandgeom}
    Let $X$ be a finite type $n$ spectrum and let $H$ be a subgroup of a finite group $G$. Then
    \begin{enumerate}
        \item  There is a natural isomorphism $\Phi^G(N_H^G(X)) \cong \Phi^H(X)$.
        \item $\Phi^{e}(N_e^G(X))$ is also of type $n$.
    \end{enumerate}
\end{lemma}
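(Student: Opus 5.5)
Part (1) is the standard Hill--Hopkins--Ravenel diagonal property of the norm, and I would quote it rather than reprove it. The HHR norm is built so that geometric fixed points turn norms into restrictions. Concretely, $\Phi^G N_H^G$ is naturally equivalent to $\Phi^H$ as symmetric monoidal functors on $H$-spectra; see \cite{hhr}, where this is the ``diagonal'' or ``geometric fixed points of the norm'' proposition.

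My plan for (1) is as follows. First, check the equivalence on suspension spectra of finite $H$-sets (or $H$-spaces) $\Sigma^\infty_H A_+$. Here $N_H^G \Sigma^\infty_H A_+ \simeq \Sigma^\infty_G \mathrm{Map}_H(G,A)_+$. Its $G$-geometric fixed points are $\Sigma^\infty(\mathrm{Map}_H(G,A)^G)_+ \cong \Sigma^\infty (A^H)_+$, which is $\Phi^H$ of the input. Second, both functors are symmetric monoidal and commute with sifted colimits. The norm does not preserve cofiber sequences, but it does preserve sifted colimits, and geometric fixed points preserve all colimits. A finite $H$-spectrum is built from desuspensions of such suspension spectra. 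So I would extend the identification by writing $X$ as a colimit of representation-sphere desuspensions $S^{-V}\wedge \Sigma^\infty A_+$. For these I use $N_H^G S^{-V} \simeq S^{-\mathrm{Ind}_H^G V}$ and $\Phi^G S^{-\mathrm{Ind} V} = S^{-(\mathrm{Ind} V)^G} = S^{-V^H}$. For the lemma we only need the isomorphism on objects, so naturality can be taken from \cite{hhr}.

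For (2) I would use the double coset formula for the norm. For $H=e$, the underlying spectrum of $N_e^G X$ is $\Phi^e N_e^G X \simeq X^{\otimes |G|}$, the smash power with its permutation action forgotten. This matches the relation $\Phi^e N_1^2(X)\cong X\otimes X$ stated above. The claim then reduces to a non-equivariant fact: a smash power of a type $n$ finite spectrum has type $n$. For each prime $p$ and each $k$, the Künneth isomorphism gives $K(k)_*(X^{\otimes m}) \cong K(k)_*(X)^{\otimes_{K(k)_*} m}$, since $K(k)_*$ is a graded field. A tensor power of vector spaces over a graded field vanishes exactly when the vector space does. Hence $K(k)_*(X^{\otimes m})=0$ if and only if $K(k)_*(X)=0$, and the type, defined as the least $k$ with nonvanishing Morava $K$-theory, is unchanged. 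The same Künneth argument covers the rational case $k=0$ and the $p$-localization implicit in $\mathsf{K}_{p,n}$.

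I expect the main obstacle to be (1), specifically the extension from suspension spectra to all finite $H$-spectra, because the norm is not exact. I would not attempt it from scratch. I would cite the HHR result directly and only verify on the generators that the formula is compatible with the conventions used here for $\Phi^H$ and $N_H^G$. Part (2) is routine once the underlying spectrum of the norm is identified as a smash power.
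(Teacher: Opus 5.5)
Your proposal is correct. The paper gives no proof of this lemma. It states it as a standard input: (1) is the Hill--Hopkins--Ravenel equivalence $\Phi^G N_H^G\simeq\Phi^H$, and (2) rests on identifying the underlying spectrum of $N_e^G X$ with $X^{\wedge|G|}$. Your citation for (1) and your Morava $K$-theory K\"unneth argument for (2) therefore supply what the paper leaves implicit, and (2) is complete as written.

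The one soft spot is your sketch of (1). An identification checked objectwise on generators does not propagate through sifted colimits unless you already have a natural comparison map. The diagrams presenting a finite spectrum as a sifted colimit involve wedges and arbitrary stable maps between the generators, and $N_H^G$ does not preserve wedges. A natural comparison map is exactly what the HHR diagonal provides, so deferring to the citation is fine.

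For the objectwise statement the lemma actually needs, you can avoid colimits altogether. Write the finite $H$-spectrum as $S^{-V}\wedge\Sigma^\infty_H B$, with $B$ a finite pointed $H$-CW complex. Since $N_H^G$ and $\Phi^G$ are strong symmetric monoidal, $N_H^G\Sigma^\infty_H B\simeq\Sigma^\infty_G\bigwedge_{G/H}B$, and the $G$-fixed points of $\bigwedge_{G/H}B$ are the diagonal copy of $B^H$, you get
\[
\Phi^G N_H^G\bigl(S^{-V}\wedge\Sigma^\infty_H B\bigr)\simeq S^{-(\mathrm{Ind}_H^G V)^G}\wedge\Sigma^\infty\Bigl(\bigwedge_{G/H}B\Bigr)^G\simeq S^{-V^H}\wedge\Sigma^\infty B^H\simeq\Phi^H\bigl(S^{-V}\wedge\Sigma^\infty_H B\bigr).
\]
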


We approach this in an \emph{ad hoc} manner, rather than attempting to compute using a ghost construction as in \cite{4DS} which we will postpone to future work. We use the fact that the bottom level must be a Balmer prime of $\mathsf{Sp}^\omega$, and that we must be closed under $\Phi^e$, coinduction, and norm. As such, we take $\mathsf{B}_{p,n}^2$ are our starting point.

\begin{lemma}
    $\mathsf{B}_{p,n}^2$ is a Tambara prime for all $p$ and for all $n$.
\end{lemma}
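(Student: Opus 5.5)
Write $\mathsf{B}_{p,n}^2$ as $\mathsf{P}_{e,p,n}$ at the top level and $\mathsf{K}_{p,n}$ at the bottom. Two things must be verified: that $\mathsf{B}_{p,n}^2$ is a Tambara ideal (closed under $\Phi^e$, $\mathrm{coind}_1^2$ and $N_1^2$), and that it satisfies the $\mathtt{Q}$-primality condition.

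\emph{Closure.} Since $\mathsf{P}_{e,p,n}=(\Phi^e)^{-1}(\mathsf{K}_{p,n})$ by definition, closure under restriction is immediate. For coinduction, which coincides with induction, take $X\in\mathsf{K}_{p,n}$. The double coset formula gives $\Phi^e\mathrm{coind}_1^2(X)\cong X\coprod X$, and this lies in $\mathsf{K}_{p,n}$ because $\mathsf{K}_{p,n}$ is thick. For the norm, $\Phi^e N_1^2(X)\cong X\otimes X$, which lies in $\mathsf{K}_{p,n}$ because $\mathsf{K}_{p,n}$ is a tensor ideal. Hence $N_1^2(X)\in\mathsf{P}_{e,p,n}$. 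Properness is clear, since $\mathbb{S}\notin\mathsf{K}_{p,n}$.

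\emph{Primality.} Let $a,b$ be objects at levels $H,H'$ with $\mathtt{Q}(\mathsf{B}_{p,n}^2,a,b)$, and write $\bar a,\bar b$ for their underlying spectra ($\Phi^e$ of the object if it sits at the top level, the object itself otherwise). The key observation is that every generalized product maps into the bottom level after applying $\Phi^e$, so everything reduces to primality of $\mathsf{K}_{p,n}$ in $\mathsf{Sp}^\omega$. Among the generalized products is the bottom-level product $\bar a\otimes\bar b$, obtained by restricting both objects to $e$ with no norms. Hence $\bar a\otimes\bar b\in\mathsf{K}_{p,n}$. Since $\mathsf{K}_{p,n}$ is a Balmer prime, either $\bar a\in\mathsf{K}_{p,n}$ or $\bar b\in\mathsf{K}_{p,n}$. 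In either case the corresponding original object lies in $\mathsf{B}_{p,n}^2$: at the bottom level this is tautological, and at the top level it is the definition of $\mathsf{P}_{e,p,n}$.

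The same argument works case by case whether $a$ and $b$ live at the top or bottom, since the bottom-level product is always available as a generalized product. The edge cases $n=1$ and $n=\infty$ need no special treatment, because $\mathsf{K}_{p,1}$ and $\mathsf{K}_{p,\infty}$ are still primes.

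The step I expect to be the main subtlety is not the argument itself but making sure it is legitimate in the tt-setting. In particular, the $\mathtt{Q}$-condition must be interpreted with tensor products of objects (thick tensor ideals) rather than ring elements, and one must confirm that the restriction-only generalized product is indeed among those quantified over. With the structured definitions stated earlier, this holds, and the remaining generalized products involving $N_1^2$ need not be examined at all, because the implication only uses one of them.
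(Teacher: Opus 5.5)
Your proof is correct. It differs from the paper's only in how primality is established.

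The paper's proof is two lines. Ideal closure is said to follow from \cref{lem:normandgeom}(2), with closure under coinduction implicitly inherited from $\mathsf{B}^2_{p,n}$ already being a Green prime. Primality is then deduced from \cref{prop:add_mult}: a $(\mathcal{O}_{\mathrm{triv}},\mathcal{O}_{\mathrm{comp}})$-prime that is closed under the norm is $(\mathcal{O}_{\mathrm{comp}},\mathcal{O}_{\mathrm{comp}})$-prime.

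Your closure check is essentially the same. Using $\Phi^e N_1^2 X\cong X\otimes X$ together with the tensor-ideal property is slightly cleaner than the paper's ``type $n$'' formulation, since $\mathsf{K}_{p,n}$ consists of spectra of type $\geq n$.

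For primality, you do not reduce to the Green computation. You verify the $\mathtt{Q}$-condition directly using only the identity-coset generalized product $\Phi^e(a)\otimes\Phi^e(b)$ at level $e$. This is the choice $L=K_1=K_2=e$, $g_1=g_2=1$, which is always permitted because $(e,e)\in\mathcal{O}_m$. This is the same mechanism as the last case in the proof of \cref{prop:lefttoright}.

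What your route buys: it does not depend on the earlier coefficient-system and Green-functor lemmas, whose primality in the tt-setting was asserted by analogy with \cref{ex:coefficientsystem} rather than checked. It also does not require transporting \cref{prop:add_mult} to thick tensor ideals. As a bonus, it shows that $\mathsf{B}^2_{p,n}$ is prime for every compatible pair under which it is an ideal.

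What the paper's route buys: it is shorter given its prior results, and it illustrates the paper's intended strategy of starting from a known prime and adding norms.
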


\begin{proof}
    We first check that it is a Tambara ideal. This follows immediately from \cref{lem:normandgeom}(2). The fact that it is prime then follows from \cref{prop:add_mult}.
\end{proof}

We introduce notation for a new ideal of $\mathsf{Sp}_{C_2}^\omega$:
\[
\mathsf{I}_{p,n} = \mathsf{P}_{e,p,n} \cap \mathsf{P}_{C_2,p,n} =   \{ X \in \mathsf{Sp}_{C_2}^\omega \mid \Phi^e(X) \in \mathsf{K}_{p,n} \text{ and } \Phi^{C_2}(X) \in \mathsf{K}_{p,n} \}
\]

\begin{lemma}
    The collection
    \[
\mathsf{C}_{p,n} = \begin{bmatrix}\mathsf{I}_{p,n} \\ \mathsf{K}_{p,n} \end{bmatrix}
    \]
    is a prime Tambara ideal.
\end{lemma}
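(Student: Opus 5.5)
We first check that $\mathsf{C}_{p,n}$ is a Tambara ideal, and then verify primality by a direct $\mathtt{Q}$-condition argument, splitting by the levels of the two elements.

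\textbf{Ideal.} Each level is a thick tensor ideal. $\mathsf{I}_{p,n}$ is the intersection of the two Balmer primes $\mathsf{P}_{e,p,n}$ and $\mathsf{P}_{C_2,p,n}$, and $\mathsf{K}_{p,n}$ is a prime. Closure under restriction holds because $X\in \mathsf{I}_{p,n}$ forces $\Phi^e X\in\mathsf{K}_{p,n}$. For coinduction (equivalently induction) we use \cref{lem:coinductthing}: if $Y\in\mathsf{K}_{p,n}$, then $\Phi^e\mathrm{ind}_1^2 Y\cong Y\coprod Y\in \mathsf{K}_{p,n}$ and $\Phi^{C_2}\mathrm{ind}_1^2Y\simeq 0$. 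For the norm we use \cref{lem:normandgeom}. Part (1) gives $\Phi^{C_2}N_1^2Y\cong Y\in\mathsf{K}_{p,n}$, and the double coset formula gives $\Phi^eN_1^2Y\cong Y\otimes Y\in\mathsf{K}_{p,n}$. Hence $N_1^2$ maps $\mathsf{K}_{p,n}$ into $\mathsf{I}_{p,n}$.

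\textbf{Primality.} Suppose $\mathtt{Q}(\mathsf{C}_{p,n},X,Y)$ holds.

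\emph{Both at the bottom level.} The product $X\otimes Y$ lies in $\mathsf{K}_{p,n}$, so primality of $\mathsf{K}_{p,n}$ gives the conclusion.

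\emph{One at the top and one at the bottom.} Take $X$ at the top and $Y$ at the bottom. The generalized products are $\Phi^eX\otimes Y\in\mathsf{K}_{p,n}$ and $X\otimes N_1^2Y\in\mathsf{I}_{p,n}$. Suppose $Y\notin\mathsf{K}_{p,n}$. From the first product we get $\Phi^eX\in\mathsf{K}_{p,n}$. Applying $\Phi^{C_2}$ to the second product gives $\Phi^{C_2}X\otimes Y\in\mathsf{K}_{p,n}$, so $\Phi^{C_2}X\in\mathsf{K}_{p,n}$. Thus $X\in\mathsf{I}_{p,n}$.

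\emph{Both at the top.} The generalized products include $X\otimes Y\in\mathsf{I}_{p,n}$ and $\Phi^eX\otimes\Phi^eY\in\mathsf{K}_{p,n}$. From these, $\Phi^{C_2}X\otimes\Phi^{C_2}Y\in\mathsf{K}_{p,n}$, so one of $\Phi^{C_2}X,\Phi^{C_2}Y$ lies in $\mathsf{K}_{p,n}$. Likewise one of $\Phi^eX,\Phi^eY$ does. The bad case is the mixed one, say $\Phi^eX\in\mathsf{K}_{p,n}$ and $\Phi^{C_2}Y\in\mathsf{K}_{p,n}$ while $\Phi^{C_2}X,\Phi^eY\notin\mathsf{K}_{p,n}$. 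This is the main obstacle. Here norms of restrictions cannot help, because $\Phi^{C_2}N_1^2\Phi^eX\cong\Phi^eX$ is already in $\mathsf{K}_{p,n}$.

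To attack the bad case we would first try to use blueshift and the Tate-type constraints relating $\Phi^e$ and $\Phi^{C_2}$ of a finite $C_2$-spectrum. For odd $p$ there is the equality $\mathsf{P}_{e,p,n}=\mathsf{P}_{C_2,p,n}$-type localization behaviour. For $p=2$ there is the inclusion $\mathsf{P}_{e,2,n}\subseteq\mathsf{P}_{C_2,2,n-1}$. The aim is to show that the mixed configuration cannot occur for finite $X,Y$, and if it can, to find an element of $\mathsf{C}_{p,n}$ witnessing the required contradiction.

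If this fails, a genuine counterexample would arise: for instance, $X$ with $\Phi^eX$ of high type but $\Phi^{C_2}X$ of low type, paired with a $Y$ realizing the opposite. We expect this case to be exactly where the ``gremlins'' of the topology enter. The remaining verification would then reduce to checking the realizability constraints of \cite{balmersanders} on pairs of types of $(\Phi^eX,\Phi^{C_2}X)$.
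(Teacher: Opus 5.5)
\textbf{There is a genuine gap in the case where both elements are at the top level.} Your check that $\mathsf{C}_{p,n}$ is a Tambara ideal is correct and is the same verification the paper does. Your primality cases with both elements at the bottom, or one at each level, are also correct. The problem is the case with $X$ and $Y$ both at the top. There you leave the mixed configuration open:
\[
\Phi^eX,\ \Phi^{C_2}Y\in\mathsf{K}_{p,n}, \qquad \Phi^{C_2}X,\ \Phi^eY\notin\mathsf{K}_{p,n}.
\]
You even allow that a counterexample might exist, so as written the proposal does not prove primality. The blueshift and Balmer--Sanders realizability route you sketch is not needed. Your aside about odd $p$ is also false: $\mathsf{P}_{e,p,n}\neq\mathsf{P}_{C_2,p,n}$, since $C_{2+}$ has $\Phi^{C_2}C_{2+}\simeq 0$ but $\Phi^eC_{2+}\simeq\mathbb{S}\vee\mathbb{S}$.

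\textbf{The missing generalized product.} You dismissed norms of restrictions by looking only at $N_1^2\Phi^eX\otimes Y$. But $\mathtt{Q}(\mathsf{C}_{p,n},X,Y)$ also contains $X\otimes N_1^2\Phi^eY$: take $L=K_1=C_2$ and $K_2=e$ in the definition. By \cref{lem:normandgeom}(1) and monoidality of $\Phi^{C_2}$,
\[
\Phi^{C_2}(X\otimes N_1^2\Phi^eY)\cong\Phi^{C_2}X\otimes\Phi^eY .
\]
This must lie in $\mathsf{K}_{p,n}$, and that fails in your bad case because $\mathsf{K}_{p,n}$ is prime. So the bad case never satisfies $\mathtt{Q}$.

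\textbf{How the case closes.} Write $a,b,c,d$ for the statements that $\Phi^eX$, $\Phi^{C_2}X$, $\Phi^eY$, $\Phi^{C_2}Y$ respectively lie in $\mathsf{K}_{p,n}$. Apply primality of $\mathsf{K}_{p,n}$ to four objects:
\begin{itemize}
\item $\Phi^eX\otimes\Phi^eY$ gives $a\lor c$;
\item $\Phi^{C_2}(X\otimes Y)$ gives $b\lor d$;
\item $\Phi^{C_2}(X\otimes N_1^2\Phi^eY)$ gives $b\lor c$;
\item $\Phi^{C_2}(N_1^2\Phi^eX\otimes Y)$ gives $a\lor d$.
\end{itemize}
Now $(a\lor c)\land(a\lor d)=a\lor(c\land d)$ and $(b\lor c)\land(b\lor d)=b\lor(c\land d)$. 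Hence the four conditions together are equivalent to $(a\land b)\lor(c\land d)$. That says exactly $X\in\mathsf{I}_{p,n}$ or $Y\in\mathsf{I}_{p,n}$.

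So the top--top case follows by the same elementary reasoning you already used in the mixed-level case, with no chromatic input. The paper itself only says primality ``follows similar to the Burnside Tambara case''; the argument above is the concrete content of that remark.
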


\begin{proof}
    As $\mathsf{C}_{p,n} \subseteq \mathsf{B}^2_{p,n}$ we know that it is closed under restriction. We now check that it is closed under the coinduction and the norm. This reduces to checking that:
    \begin{enumerate}
        \item If $X \in \mathsf{K}_{p,n}$ then $\mathrm{coind}_1^2(X) \in \mathsf{P}_{C_2,p,n}$;
        \item If $X \in \mathsf{K}_{p,n}$ then $N_1^2(X) \in \mathsf{P}_{C_2,p,n}$;
    \end{enumerate}
    The first of these claims follows from \cref{lem:coinductthing}(2), while the second claim follows from \cref{lem:normandgeom}(1). As such, $\mathsf{C}_{p,n}$ is a Tambara  ideal.

    We are now left with checking that $\mathsf{C}_{p,n}$ is prime. As it is a subideal of $\mathsf{A}_{p,n}$ we need only the $\mathtt{Q}$-condition for things that have been left out. The result follows similar to the Burnside Tambara case.
\end{proof}

\begin{lemma}
    \[
\Spc^{(\cO_\mathrm{comp} , \cO_\mathrm{comp})}(\underline{\mathsf{Sp}}_{C_{2}}) \cong  \{ \mathsf{B}_{p,n}^2, \mathsf{C}_{p,n} \}_{p \in \mathbb{P}, n \in \mathbb{N} \cup \infty}.
\]    
\end{lemma}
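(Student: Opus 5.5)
The claim has two parts: every Tambara prime is of the form $\mathsf{B}^2_{p,n}$ or $\mathsf{C}_{p,n}$, and the converse, which the two preceding lemmas already give. So the work is exhaustiveness. I would follow the strategy used for the Burnside functor in \cref{subsec:cpburnside}, arguing level by level.

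\textbf{Bottom level.} Let $\mathsf{Q}$ be a Tambara prime. Then $\mathsf{Q}(C_2/e)$ is a Balmer prime of $\mathsf{Sp}^\omega$, the analogue of \cref{lem:bottomlevel}; the conjugation action here is trivial up to isomorphism. The point is that a generalized product of two objects of the bottom level is a tensor product of $X$, or its conjugate, with $Y$, so the usual argument goes through. It could also be the whole category $\langle\mathbb{S}\rangle$. That case is excluded because then $N_1^2(\mathbb{S})=\mathbb{S}$ lies in the top level, making $\mathsf{Q}$ improper, exactly as in the $\underline{\Z}$ computation. Hence $\mathsf{Q}(C_2/e)=\mathsf{K}_{p,n}$ for some $p,n$, including the generic point.

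\textbf{Top level.} Closure under restriction gives $\mathsf{Q}(C_2/C_2)\subseteq(\Phi^e)^{-1}\mathsf{K}_{p,n}=\mathsf{P}_{e,p,n}$. Closure under coinduction and norm of $\mathsf{K}_{p,n}$, together with thick-tensor-ideal closure, then gives $\mathsf{Q}(C_2/C_2)\supseteq \mathsf{I}_{p,n}$. For this I would use the rigidity of $\mathsf{Sp}^\omega_{C_2}$ and Balmer's classification: a tt-ideal is determined by its support, a Thomason subset. The ideal generated by $\mathrm{coind}_1^2\mathsf{K}_{p,n}$, $N_1^2\mathsf{K}_{p,n}$ and the ideal they generate should have support exactly the complement of the points excluded by $\mathsf{I}_{p,n}$. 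Its $\Phi^e$-part is governed by \cref{lem:normandgeom}(2) and its $\Phi^{C_2}$-part by \cref{lem:normandgeom}(1). Concretely, $\mathsf{I}_{p,n}$ should be the smallest tt-ideal of $\mathsf{Sp}^\omega_{C_2}$ containing $\mathrm{ind}_1^2(\mathsf{K}_{p,n})$ and $N_1^2(\mathsf{K}_{p,n})$. Taking a type-$n$ complex $X$, $\Phi^{C_2}N_1^2X\simeq X$ and $\Phi^e N_1^2 X\simeq X\otimes X$ both have type $n$, which should force the support of the generated ideal to be exactly $\mathrm{supp}(\mathsf{I}_{p,n})$.

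\textbf{Pinning down the top level.} Now $\mathsf{I}_{p,n}\subseteq\mathsf{Q}(C_2/C_2)\subseteq\mathsf{P}_{e,p,n}$. The tt-ideals in between correspond to Thomason subsets between the two supports, and these differ only in the $C_2$-fixed column $\{\mathsf{P}_{C_2,p,k}\}$. The possible intermediate ideals are $\mathsf{P}_{e,p,n}\cap\mathsf{P}_{C_2,p,m}$ for $m\le n$. Blueshift at $p=2$ gives a little extra room, which I would track by hand.

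I then rule out $n<m<\ldots$ by a $\mathtt{Q}$-argument as in the coefficient-system case. Suppose the top level is strictly between $\mathsf{I}_{p,n}$ and $\mathsf{P}_{e,p,n}$. Pick $X$ in $\mathsf{P}_{e,p,n}\setminus\mathsf{Q}(C_2/C_2)$ and $Y$ at the top level with $\Phi^{C_2}Y$ of type $\ge m$ but $Y\notin\mathsf{Q}$. One could for instance take $Y$ with $\Phi^eY$ of type $<n$ but $\Phi^{C_2}Y$ of high type, or a suitable complementary choice. Then check that every generalized product lands in $\mathsf{Q}$. These products are $X\otimes Y$ at the top, and $\Phi^eX\otimes\Phi^eY$ and norm-twisted terms at the bottom. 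This contradicts primality, leaving only the endpoints $\mathsf{I}_{p,n}$ and $\mathsf{P}_{e,p,n}$, that is, $\mathsf{C}_{p,n}$ and $\mathsf{B}^2_{p,n}$.

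The main obstacle is this last step, together with the support computation for the generated ideal. Its hardest part is the prime $2$, where blueshift ($\mathsf{P}_{e,2,n}\subseteq\mathsf{P}_{C_2,2,n-1}$) changes which intermediate Thomason subsets exist. I would handle it by treating $p=2$ and odd $p$ separately, checking the finitely many relevant columns explicitly.
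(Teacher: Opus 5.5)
Your first two steps are correct and follow the reasoning the paper sketches: the bottom level is a Balmer prime, and the top level is constrained by closure under $\Phi^e$, coinduction and norm. Your squeeze $\mathsf{I}_{p,n}\subseteq\mathsf{Q}(C_2/C_2)\subseteq\mathsf{P}_{e,p,n}$ via supports is also correct, as is your list of intermediate tt-ideals $\mathsf{P}_{e,p,n}\cap\mathsf{P}_{C_2,p,m}$. This is more explicit than the paper, which states the lemma without an exhaustiveness argument.

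The gap is the step you flag yourself. You never actually exclude $\mathsf{M}_m=[\,\mathsf{P}_{e,p,n}\cap\mathsf{P}_{C_2,p,m};\ \mathsf{K}_{p,n}\,]$ for $1\le m<n$, and your sketch would not do it.

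For $X,Y$ both at level $C_2/C_2$, the only generalized product at the bottom is $\Phi^eX\otimes\Phi^eY$. The norm-twisted products $X\otimes N_1^2\Phi^eY$, $N_1^2\Phi^eX\otimes Y$ and $N_1^2\Phi^eX\otimes N_1^2\Phi^eY$ all live at the \emph{top}, and they are what constrains the witnesses. Take $X\in\mathsf{P}_{e,p,n}\setminus\mathsf{M}_m(C_2/C_2)$, so $\Phi^{C_2}X$ has $p$-local type $<m$. Then $\Phi^{C_2}(X\otimes N_1^2\Phi^eY)\simeq\Phi^{C_2}X\otimes\Phi^eY$ has type $\max(\operatorname{type}\Phi^{C_2}X,\operatorname{type}\Phi^eY)$. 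Hence $\Phi^eY$ must have type in $[m,n)$; ``$\Phi^eY$ of type $<n$ with $\Phi^{C_2}Y$ of high type'' is not enough.

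A pair that works for odd $p$ is as follows.
\begin{itemize}
\item Let $X=\mathrm{cofib}(t\colon S^0\to S^0)$ with $t=[C_2/e]\in A(C_2)$. Then $\Phi^eX\simeq S^0/2$ is $p$-locally trivial and $\Phi^{C_2}X\simeq S^0\vee S^1$ has type $0$.
\item Let $Y=N_1^2F$ with $F$ of type $m$. Then $\Phi^eY\simeq F\otimes F$ and $\Phi^{C_2}Y\simeq F$ both have type $m$.
\end{itemize}
Every generalized product then has $p$-locally trivial underlying spectrum and $\Phi^{C_2}$ of type $\ge m$, so $\mathtt{Q}(\mathsf{M}_m,X,Y)$ holds. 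Yet $X\notin\mathsf{M}_m$, since $\Phi^{C_2}X\notin\mathsf{K}_{p,m}$, and $Y\notin\mathsf{M}_m$, since $\Phi^eY\notin\mathsf{K}_{p,n}$.

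You also have the role of $p=2$ backwards: blueshift removes room rather than adding it. The inclusion $\mathsf{P}_{e,2,n}\subseteq\mathsf{P}_{C_2,2,n-1}$ gives $\mathsf{P}_{e,2,n}\cap\mathsf{P}_{C_2,2,m}=\mathsf{P}_{e,2,n}$ for every $m\le n-1$. So at $p=2$ the squeeze contains no strictly intermediate tt-ideals, and the top level is forced to be $\mathsf{I}_{2,n}$ or $\mathsf{P}_{e,2,n}$ with no $\mathtt{Q}$-argument needed. The genuine work is at odd $p$, where the whole column $\{\mathsf{P}_{C_2,p,k}\}_{1\le k\le n}$ is available. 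The witness above relies on $S^0/2$ being $p$-locally trivial, so it is needed exactly there.

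With this witness for each odd $p$ and each $1\le m<n$ (including $n=\infty$), plus the two preceding lemmas for the endpoints $\mathsf{C}_{p,n}$ and $\mathsf{B}^2_{p,n}$, your outline becomes a complete proof.
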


\subsection{Comparison maps}\label{subsec:comparisonmaps}

In the previous sections we have computed the three spectra for our structured tt-category, however we have not yet made any claims regarding the comparison maps that we seek. In \cref{rem:closureofpoints} and \cref{rem:balmertop} we have highlighted how inclusions of ideals lead to opposite effects in the resulting topology. Our naming conventions have been kept consistent throughout our examples, for example looking at the rational level we have
\[
\mathcal{B}_{0}^2 \subset \mathcal{B}_0^1 \qquad \text{ and } \qquad \sfB_{0,1}^2 \subset \sfB_{0,1}^1.
\]
As such, while $\mathcal{B}_0^1 \in \overline{\{\mathcal{B}_0^2\}}$, $\mathsf{B}_{0,1}^1 \not\in \overline{\{\mathcal{B}_{0,1}^2\}}$. As such, the obvious choice of a comparison map 
 \[
\Spc^{(\cO_\mathrm{triv} , \cO_\mathrm{triv})}(\underline{\mathsf{Sp}}_{C_{2}}) \to \Spec^{(\cO_\mathrm{triv} , \cO_\mathrm{triv})}(\underline{A}_{C_{2}})
\]
\[
\mathsf{A}_{p,n} \mapsto \mathcal{A}_p \qquad \mathsf{B}_{p,n}^1 \mapsto \mathcal{B}_p^1 \qquad \mathsf{B}_{p,n}^2 \mapsto \mathcal{B}_p^2
\]
fails to be continuous.

In \cref{fig:comparison_coefficient,,fig:comparison_green,,fig:comparison_tambara}, we display the Balmer Nakaoka spectra on top of the corresponding Nakaoka spectrum in the coefficient, Green, and Tambara situations. In these figures a green squiggly arrow denotes topological specialization, as such closure goes up. We see that in all cases the obvious comparison map is not continuous, and the obstruction arises from the type of inclusions mentioned above.

We can form continuous comparison maps, but even in this simple example, they feel very \emph{ad hoc}.  For example there is a continuous comparison map
\[
\Spc^{(\cO_\mathrm{triv} , \cO_\mathrm{comp})}(\underline{\mathsf{Sp}}_{C_{2}}) \to \Spec^{(\cO_\mathrm{triv} , \cO_\mathrm{comp})}(\underline{A}_{C_{2}})
\]
\[
\mathsf{A}_{p,n} \mapsto \mathcal{B}_p \qquad \mathsf{B}_{p,n}^2 \mapsto \mathcal{A}_p^2
\]
In some sense we see that we need to consider the ``Balmer direction'' and ``Nakaoka direction'' separately.

\newpage

\begin{figure}[htp]
    \centering
    \scalebox{0.65}{
\begin{tikzpicture}[yscale=1.5, xscale=1.25]
    \tikzset{
  rightsquigarrow/.style={
    decorate,
    decoration={
    zigzag,
    segment length=4,
    amplitude=.4,post=lineto,
    post length=2pt
}
  }
    }
    \tikzset{>=latex}

    \node[circle,fill=gray,inner sep=2.5pt,minimum size=2pt,draw=black,label=below:$\mathsf{B}^2_{0,1}$] (a1) at (0,0) {};
    \node[circle,fill=gray,inner sep=2.5pt,minimum size=2pt,draw=black,label=below:$\mathsf{B}^1_{0,1}$] (b1) at (1,0) {};
    \node[circle,fill=gray,inner sep=2.5pt,minimum size=2pt,draw=black,label=below:$\mathsf{A}_{0,1}$] (c1) at (2,0) {};

    \node[circle,fill=black,inner sep=2pt,minimum size=2pt] (c22) at (-1,1) {};
    \node[circle,fill=black,inner sep=2pt,minimum size=2pt] (b22) at (-2,1) {};
    \node[circle,fill=black,inner sep=2pt,minimum size=2pt] (a22) at (-3,1) {};

    \node[circle,fill=black,inner sep=2pt,minimum size=2pt] (c23) at (-1,2) {};
    \node[circle,fill=black,inner sep=2pt,minimum size=2pt] (b23) at (-2,2) {};
    \node[circle,fill=black,inner sep=2pt,minimum size=2pt] (a23) at (-3,2) {};

    \node[circle,fill=black,inner sep=2pt,minimum size=2pt] (c24) at (-1,3) {};
    \node[circle,fill=black,inner sep=2pt,minimum size=2pt] (b24) at (-2,3) {};
    \node[circle,fill=black,inner sep=2pt,minimum size=2pt] (a24) at (-3,3) {};

    \node[circle,fill=black,inner sep=2pt,minimum size=2pt] (c25) at (-1,4) {};
    \node[circle,fill=black,inner sep=2pt,minimum size=2pt] (b25) at (-2,4) {};
    \node[circle,fill=black,inner sep=2pt,minimum size=2pt] (a25) at (-3,4) {};

    \node[circle,fill=none,inner sep=2pt,minimum size=2pt] (c26) at (-1,5) {$\vdots$};
    \node[circle,fill=none,inner sep=2pt,minimum size=2pt] (b26) at (-2,5) {$\vdots$};
    \node[circle,fill=none,inner sep=2pt,minimum size=2pt] (a26) at (-3,5) {$\vdots$};

    \node[circle,fill=black,inner sep=2pt,minimum size=2pt, label=above:$\mathsf{A}_{2,n}$] (c2i) at (-1,5.25) {};
    \node[circle,fill=black,inner sep=2pt,minimum size=2pt,label=above:$\mathsf{B}^1_{2,n}$] (b2i) at (-2,5.25) {};
    \node[circle,fill=black,inner sep=2pt,minimum size=2pt,label=above:$\mathsf{B}^2_{2,n}$] (a2i) at (-3,5.25) {};

    \node[circle,fill=black,inner sep=2pt,minimum size=2pt] (ap2) at (3,1) {};
    \node[circle,fill=black,inner sep=2pt,minimum size=2pt] (bp2) at (4,1) {};
    \node[circle,fill=black,inner sep=2pt,minimum size=2pt] (cp2) at (5,1) {};

    \node[circle,fill=black,inner sep=2pt,minimum size=2pt] (ap3) at (3,2) {};
    \node[circle,fill=black,inner sep=2pt,minimum size=2pt] (bp3) at (4,2) {};
    \node[circle,fill=black,inner sep=2pt,minimum size=2pt] (cp3) at (5,2) {};

    \node[circle,fill=black,inner sep=2pt,minimum size=2pt] (ap4) at (3,3) {};
    \node[circle,fill=black,inner sep=2pt,minimum size=2pt] (bp4) at (4,3) {};
    \node[circle,fill=black,inner sep=2pt,minimum size=2pt] (cp4) at (5,3) {};

    \node[circle,fill=black,inner sep=2pt,minimum size=2pt] (ap5) at (3,4) {};
    \node[circle,fill=black,inner sep=2pt,minimum size=2pt] (bp5) at (4,4) {};
    \node[circle,fill=black,inner sep=2pt,minimum size=2pt] (cp5) at (5,4) {};

    \node[circle,fill=none,inner sep=2pt,minimum size=2pt] (cp6) at (5,5) {$\vdots$};
    \node[circle,fill=none,inner sep=2pt,minimum size=2pt] (bp6) at (4,5) {$\vdots$};
    \node[circle,fill=none,inner sep=2pt,minimum size=2pt] (ap6) at (3,5) {$\vdots$};
    
    \node[circle,fill=black,inner sep=2pt,minimum size=2pt,label=above:$\mathsf{B}^2_{p,n}$] (api) at (3,5.25) {};
    \node[circle,fill=black,inner sep=2pt,minimum size=2pt,label=above:$\mathsf{B}^1_{p,n}$] (bpi) at (4,5.25) {};
    \node[circle,fill=black,inner sep=2pt,minimum size=2pt,label=above:$\mathsf{A}_{p,n}$] (cpi) at (5,5.25) {};

    \draw[black!25!green, thick,->,rightsquigarrow] (c1) -- (c22);
    \draw[black!25!green, thick,->,rightsquigarrow] (b1) -- (b22);
    \draw[black!25!green, thick,->,rightsquigarrow] (a1) -- (a22);
    
    \draw[black!25!green, thick,->,rightsquigarrow] (c1) -- (cp2);
    \draw[black!25!green, thick,->,rightsquigarrow] (b1) -- (bp2);
    \draw[black!25!green, thick,->,rightsquigarrow] (a1) -- (ap2);

    \foreach \a/\b in {p2/p3, p3/p4, p4/p5, p5/p6} {
        \draw[black!25!green, thick,->,rightsquigarrow] (a\a) -- (a\b);
        \draw[black!25!green, thick,->,rightsquigarrow] (b\a) -- (b\b);
        \draw[black!25!green, thick,->,rightsquigarrow] (c\a) -- (c\b);
    }

    \foreach \a/\b in {22/23, 23/24, 24/25, 25/26} {
        \draw[black!25!green, thick,->,rightsquigarrow] (a\a) -- (a\b);
        \draw[black!25!green, thick,->,rightsquigarrow] (b\a) -- (b\b);
        \draw[black!25!green, thick,->,rightsquigarrow] (c\a) -- (c\b);
    }

    \draw[black!25!green, thick,->,rightsquigarrow] (b1) -- (a1);
    \draw[black!25!green, thick,->,rightsquigarrow] (b22) -- (a22);
    \draw[black!25!green, thick,->,rightsquigarrow] (b23) -- (a23);
    \draw[black!25!green, thick,->,rightsquigarrow] (b24) -- (a24);
    \draw[black!25!green, thick,->,rightsquigarrow] (b25) -- (a25);
    \draw[black!25!green, thick,->,rightsquigarrow] (b2i) -- (a2i);

    \draw[black!25!green, thick,->,rightsquigarrow] (bp2) -- (ap2);
    \draw[black!25!green, thick,->,rightsquigarrow] (bp3) -- (ap3);
    \draw[black!25!green, thick,->,rightsquigarrow] (bp4) -- (ap4);
    \draw[black!25!green, thick,->,rightsquigarrow] (bp5) -- (ap5);
    \draw[black!25!green, thick,->,rightsquigarrow] (bpi) -- (api);

    \draw[black!25!green, thick,->,rightsquigarrow] (c22) -- (b23);
    \draw[black!25!green, thick,->,rightsquigarrow] (c23) -- (b24);
    \draw[black!25!green, thick,->,rightsquigarrow] (c24) -- (b25);
    \draw[black!25!green, thick,->,rightsquigarrow] (c25) -- (b26);
    \draw[black!25!green, thick,->,rightsquigarrow] (c2i) -- (b2i);

    \draw[dotted,thick] (2.7,0.75) rectangle (5.3,5.75);
    \draw[dotted,thick] (2.55,0.65) rectangle (5.45,5.85);

    \begin{scope}[yshift = -70]
    \node[circle,fill=gray,inner sep=2.5pt,minimum size=2pt,draw=black,label=below:$\mathcal{B}^2_{0}$] (a1) at (0,0) {};
    \node[circle,fill=gray,inner sep=2.5pt,minimum size=2pt,draw=black,label=below:$\mathcal{B}^1_{0}$] (b1) at (1,0) {};
    \node[circle,fill=gray,inner sep=2.5pt,minimum size=2pt,draw=black,label=below:$\mathcal{A}_{0}$] (c1) at (2,0) {};

    \node[circle,fill=black,inner sep=2pt,minimum size=2pt] (c22) at (-1.5,1) {};
    \node[circle,fill=black,inner sep=2pt,minimum size=2pt,label=above:{$\mathcal{B}^1_{2}=\mathcal{A}_2$}] (b22) at (-1.5,1) {};
    \node[circle,fill=black,inner sep=2pt,minimum size=2pt,label=above:$\mathcal{B}^2_{2}$] (a22) at (-3,1) {};

    \node[circle,fill=black,inner sep=2pt,minimum size=2pt,label=above:$\mathcal{B}^2_{p}$] (ap2) at (3,1) {};
    \node[circle,fill=black,inner sep=2pt,minimum size=2pt,label=above:$\mathcal{B}^1_{p}$] (bp2) at (4,1) {};
    \node[circle,fill=black,inner sep=2pt,minimum size=2pt,label=above:$\mathcal{A}_{p}$] (cp2) at (5,1) {};

    \draw[black!25!green, thick,->,rightsquigarrow] (c1) -- (c22);
    \draw[black!25!green, thick,->,rightsquigarrow] (b1) -- (b22);
    \draw[black!25!green, thick,->,rightsquigarrow] (a1) -- (a22);
    
    \draw[black!25!green, thick,->,rightsquigarrow] (c1) -- (cp2);
    \draw[black!25!green, thick,->,rightsquigarrow] (b1) -- (bp2);
    \draw[black!25!green, thick,->,rightsquigarrow] (a1) -- (ap2);

    \draw[black!25!green, thick,->,rightsquigarrow] (a1) -- (b1);
    \draw[black!25!green, thick,->,rightsquigarrow] (a22) -- (b22);
    \draw[black!25!green, thick,->,rightsquigarrow] (ap2) -- (bp2);

    \draw[dotted,thick] (2.7,0.75) rectangle (5.3,1.75);
    \draw[dotted,thick] (2.55,0.65) rectangle (5.45,1.85);
    \end{scope}
\end{tikzpicture}
}
    \caption{The coefficient system prime spectra for $\underline{\mathsf{Sp}}_{C_2}$ (top) and $\underline{{A}}_{C_2}$ (bottom).}
    \label{fig:comparison_coefficient}
\end{figure}

\begin{figure}[htp]
    \centering
    \begin{minipage}{.5\textwidth}
     \centering
    \scalebox{0.65}{
\begin{tikzpicture}[yscale=1.5, xscale=0.65]

    \tikzset{
  rightsquigarrow/.style={
    decorate,
    decoration={
    zigzag,
    segment length=4,
    amplitude=.4,post=lineto,
    post length=2pt
}
  }
    }
    \tikzset{>=latex}

    \node[circle,fill=gray,inner sep=2.5pt,minimum size=2pt,draw=black,label=below:$\mathsf{B}^2_{0,1}$] (a1) at (0,0) {};
    \node[circle,fill=gray,inner sep=2.5pt,minimum size=2pt,draw=black,label=below:$\mathsf{A}_{0,1}$] (c1) at (2,0) {};

    \node[circle,fill=black,inner sep=2pt,minimum size=2pt] (c22) at (-1,1) {};
    \node[circle,fill=black,inner sep=2pt,minimum size=2pt] (a22) at (-3,1) {};

    \node[circle,fill=black,inner sep=2pt,minimum size=2pt] (c23) at (-1,2) {};
    \node[circle,fill=black,inner sep=2pt,minimum size=2pt] (a23) at (-3,2) {};

    \node[circle,fill=black,inner sep=2pt,minimum size=2pt] (c24) at (-1,3) {};
    \node[circle,fill=black,inner sep=2pt,minimum size=2pt] (a24) at (-3,3) {};

    \node[circle,fill=black,inner sep=2pt,minimum size=2pt] (c25) at (-1,4) {};
    \node[circle,fill=black,inner sep=2pt,minimum size=2pt] (a25) at (-3,4) {};

    \node[circle,fill=none,inner sep=2pt,minimum size=2pt] (c26) at (-1,5) {$\vdots$};
    \node[circle,fill=none,inner sep=2pt,minimum size=2pt] (a26) at (-3,5) {$\vdots$};

    \node[circle,fill=black,inner sep=2pt,minimum size=2pt, label=above:$\mathsf{A}_{2,n}$] (c2i) at (-1,5.25) {};
    \node[circle,fill=black,inner sep=2pt,minimum size=2pt,label=above:$\mathsf{B}^2_{2,n}$] (a2i) at (-3,5.25) {};

    \node[circle,fill=black,inner sep=2pt,minimum size=2pt] (ap2) at (3,1) {};
    \node[circle,fill=black,inner sep=2pt,minimum size=2pt] (cp2) at (5,1) {};

    \node[circle,fill=black,inner sep=2pt,minimum size=2pt] (ap3) at (3,2) {};
    \node[circle,fill=black,inner sep=2pt,minimum size=2pt] (cp3) at (5,2) {};

    \node[circle,fill=black,inner sep=2pt,minimum size=2pt] (ap4) at (3,3) {};
    \node[circle,fill=black,inner sep=2pt,minimum size=2pt] (cp4) at (5,3) {};

    \node[circle,fill=black,inner sep=2pt,minimum size=2pt] (ap5) at (3,4) {};
    \node[circle,fill=black,inner sep=2pt,minimum size=2pt] (cp5) at (5,4) {};

    \node[circle,fill=none,inner sep=2pt,minimum size=2pt] (cp6) at (5,5) {$\vdots$};
    \node[circle,fill=none,inner sep=2pt,minimum size=2pt] (ap6) at (3,5) {$\vdots$};
    
    \node[circle,fill=black,inner sep=2pt,minimum size=2pt,label=above:$\mathsf{B}^2_{p,n}$] (api) at (3,5.25) {};
    \node[circle,fill=black,inner sep=2pt,minimum size=2pt,label=above:$\mathsf{A}_{p,n}$] (cpi) at (5,5.25) {};

    \draw[black!25!green, thick,->,rightsquigarrow] (c1) -- (c22);
    \draw[black!25!green, thick,->,rightsquigarrow] (a1) -- (a22);
    
    \draw[black!25!green, thick,->,rightsquigarrow] (c1) -- (cp2);
    \draw[black!25!green, thick,->,rightsquigarrow] (a1) -- (ap2);

    \foreach \a/\b in {p2/p3, p3/p4, p4/p5, p5/p6} {
        \draw[black!25!green, thick,->,rightsquigarrow] (a\a) -- (a\b);
        \draw[black!25!green, thick,->,rightsquigarrow] (c\a) -- (c\b);
    }

    \foreach \a/\b in {22/23, 23/24, 24/25, 25/26} {
        \draw[black!25!green, thick,->,rightsquigarrow] (a\a) -- (a\b);
        \draw[black!25!green, thick,->,rightsquigarrow] (c\a) -- (c\b);
    }

    \draw[black!25!green, thick,->,rightsquigarrow] (c22) -- (a23);
    \draw[black!25!green, thick,->,rightsquigarrow] (c23) -- (a24);
    \draw[black!25!green, thick,->,rightsquigarrow] (c24) -- (a25);
    \draw[black!25!green, thick,->,rightsquigarrow] (c25) -- (a26);
    \draw[black!25!green, thick,->,rightsquigarrow] (c2i) -- (a2i);

    \draw[dotted,thick] (2.5,0.75) rectangle (5.5,5.75);
    \draw[dotted,thick] (2.35,0.65) rectangle (5.65,5.85);

    \begin{scope}[yshift = -70]
    \node[circle,fill=gray,inner sep=2.5pt,minimum size=2pt,draw=black,label=below:$\mathcal{B}^2_{0}$] (a1w) at (0,0) {};
    \node[circle,fill=gray,inner sep=2.5pt,minimum size=2pt,draw=black,label=below:$\mathcal{A}_{0}$] (c1w) at (2,0) {};

    \node[circle,fill=black,inner sep=2pt,minimum size=2pt] (c22w) at (-1,1) {};
    \node[circle,fill=black,inner sep=2pt,minimum size=2pt,label=above:{$\mathcal{A}_2$}] (b22w) at (-1,1) {};
    \node[circle,fill=black,inner sep=2pt,minimum size=2pt,label=above:$\mathcal{B}^2_{2}$] (a22w) at (-3,1) {};

    \node[circle,fill=black,inner sep=2pt,minimum size=2pt,label=above:$\mathcal{B}^2_{p}$] (ap2w) at (3,1) {};
    \node[circle,fill=black,inner sep=2pt,minimum size=2pt,label=above:$\mathcal{A}_{p}$] (cp2w) at (5,1) {};

    \draw[black!25!green, thick,->,rightsquigarrow] (c1w) -- (c22w);
    \draw[black!25!green, thick,->,rightsquigarrow] (a1w) -- (a22w);
    
    \draw[black!25!green, thick,->,rightsquigarrow] (c1w) -- (cp2w);
    \draw[black!25!green, thick,->,rightsquigarrow] (a1w) -- (ap2w);

    \draw[black!25!green, thick,->,rightsquigarrow] (a22w) -- (b22w);

    \draw[dotted,thick] (2.5,0.75) rectangle (5.5,1.75);
    \draw[dotted,thick] (2.35,0.65) rectangle (5.65,1.85);
    \end{scope}
\end{tikzpicture}
}
    \captionof{figure}{The Green prime spectra for $\underline{\mathsf{Sp}}_{C_2}$ (top) and $\underline{{A}}_{C_2}$ (bottom).}
    \label{fig:comparison_green}
    \end{minipage}%
\begin{minipage}{.5\textwidth}
    \centering
    \scalebox{0.65}{
\begin{tikzpicture}[yscale=1.5, xscale=0.65]
    \tikzset{
  rightsquigarrow/.style={
    decorate,
    decoration={
    zigzag,
    segment length=4,
    amplitude=.4,post=lineto,
    post length=2pt
}
  }
    }
    \tikzset{>=latex}

    \node[circle,fill=gray,inner sep=2.5pt,minimum size=2pt,draw=black,label=below:$\mathsf{C}_{0,1}$] (a1) at (0,0) {};
    \node[circle,fill=gray,inner sep=2.5pt,minimum size=2pt,draw=black,label=below:$\mathsf{B}_{0,1}^2$] (c1) at (2,0) {};

    \node[circle,fill=black,inner sep=2pt,minimum size=2pt] (c22) at (-1,1) {};
    \node[circle,fill=black,inner sep=2pt,minimum size=2pt] (a22) at (-3,1) {};

    \node[circle,fill=black,inner sep=2pt,minimum size=2pt] (c23) at (-1,2) {};
    \node[circle,fill=black,inner sep=2pt,minimum size=2pt] (a23) at (-3,2) {};

    \node[circle,fill=black,inner sep=2pt,minimum size=2pt] (c24) at (-1,3) {};
    \node[circle,fill=black,inner sep=2pt,minimum size=2pt] (a24) at (-3,3) {};

    \node[circle,fill=black,inner sep=2pt,minimum size=2pt] (c25) at (-1,4) {};
    \node[circle,fill=black,inner sep=2pt,minimum size=2pt] (a25) at (-3,4) {};

    \node[circle,fill=none,inner sep=2pt,minimum size=2pt] (c26) at (-1,5) {$\vdots$};
    \node[circle,fill=none,inner sep=2pt,minimum size=2pt] (a26) at (-3,5) {$\vdots$};

    \node[circle,fill=black,inner sep=2pt,minimum size=2pt, label=above:$\mathsf{B}^2_{2,n}$] (c2i) at (-1,5.25) {};
    \node[circle,fill=black,inner sep=2pt,minimum size=2pt,label=above:$\mathsf{C}_{2,n}$] (a2i) at (-3,5.25) {};

    \node[circle,fill=black,inner sep=2pt,minimum size=2pt] (ap2) at (3,1) {};
    \node[circle,fill=black,inner sep=2pt,minimum size=2pt] (cp2) at (5,1) {};

    \node[circle,fill=black,inner sep=2pt,minimum size=2pt] (ap3) at (3,2) {};
    \node[circle,fill=black,inner sep=2pt,minimum size=2pt] (cp3) at (5,2) {};

    \node[circle,fill=black,inner sep=2pt,minimum size=2pt] (ap4) at (3,3) {};
    \node[circle,fill=black,inner sep=2pt,minimum size=2pt] (cp4) at (5,3) {};

    \node[circle,fill=black,inner sep=2pt,minimum size=2pt] (ap5) at (3,4) {};
    \node[circle,fill=black,inner sep=2pt,minimum size=2pt] (cp5) at (5,4) {};

    \node[circle,fill=none,inner sep=2pt,minimum size=2pt] (cp6) at (5,5) {$\vdots$};
    \node[circle,fill=none,inner sep=2pt,minimum size=2pt] (ap6) at (3,5) {$\vdots$};
    
    \node[circle,fill=black,inner sep=2pt,minimum size=2pt,label=above:$\mathsf{C}_{p,n}$] (api) at (3,5.25) {};
    \node[circle,fill=black,inner sep=2pt,minimum size=2pt,label=above:$\mathsf{B}^2_{p,n}$] (cpi) at (5,5.25) {};

    \draw[black!25!green, thick,->,rightsquigarrow] (c1) -- (c22);
    \draw[black!25!green, thick,->,rightsquigarrow] (a1) -- (a22);
    
    \draw[black!25!green, thick,->,rightsquigarrow] (c1) -- (cp2);
    \draw[black!25!green, thick,->,rightsquigarrow] (a1) -- (ap2);

    \foreach \a/\b in {p2/p3, p3/p4, p4/p5, p5/p6} {
        \draw[black!25!green, thick,->,rightsquigarrow] (a\a) -- (a\b);
        \draw[black!25!green, thick,->,rightsquigarrow] (c\a) -- (c\b);
    }

    \foreach \a/\b in {22/23, 23/24, 24/25, 25/26} {
        \draw[black!25!green, thick,->,rightsquigarrow] (a\a) -- (a\b);
        \draw[black!25!green, thick,->,rightsquigarrow] (c\a) -- (c\b);
    }

    \draw[black!25!green, thick,->,rightsquigarrow] (c22) -- (a22);
    \draw[black!25!green, thick,->,rightsquigarrow] (c23) -- (a23);
    \draw[black!25!green, thick,->,rightsquigarrow] (c24) -- (a24);
    \draw[black!25!green, thick,->,rightsquigarrow] (c25) -- (a25);
    \draw[black!25!green, thick,->,rightsquigarrow] (c2i) -- (a2i);

    \draw[black!25!green, thick,->,rightsquigarrow] (cp2) -- (ap2);
    \draw[black!25!green, thick,->,rightsquigarrow] (cp3) -- (ap3);
    \draw[black!25!green, thick,->,rightsquigarrow] (cp4) -- (ap4);
    \draw[black!25!green, thick,->,rightsquigarrow] (cp5) -- (ap5);
    \draw[black!25!green, thick,->,rightsquigarrow] (cpi) -- (api);

    \draw[black!25!green, thick,->,rightsquigarrow] (c1) -- (a1);

    \draw[dotted,thick] (2.5,0.75) rectangle (5.5,5.75);
    \draw[dotted,thick] (2.35,0.65) rectangle (5.65,5.85);

    \begin{scope}[yshift = -70]
    \node[circle,fill=gray,inner sep=2.5pt,minimum size=2pt,draw=black,label=below:$\mathcal{C}_{0}$] (a1w) at (0,0) {};
    \node[circle,fill=gray,inner sep=2.5pt,minimum size=2pt,draw=black,label=below:$\mathcal{B}^2_{0}$] (c1w) at (2,0) {};

    \node[circle,fill=black,inner sep=2pt,minimum size=2pt] (c22w) at (-2,1) {};
    \node[circle,fill=black,inner sep=2pt,minimum size=2pt,label=above:{$\mathcal{B}^2_2 = \mathcal{C}_2$}] (b22w) at (-2,1) {};
    \node[circle,fill=black,inner sep=2pt,minimum size=2pt] (a22w) at (-2,1) {};

    \node[circle,fill=black,inner sep=2pt,minimum size=2pt,label=above:$\mathcal{C}_{p}$] (ap2w) at (3,1) {};
    \node[circle,fill=black,inner sep=2pt,minimum size=2pt,label=above:$\mathcal{B}^2_{p}$] (cp2w) at (5,1) {};

    \draw[black!25!green, thick,->,rightsquigarrow] (c1w) -- (c22w);
    \draw[black!25!green, thick,->,rightsquigarrow] (a1w) -- (a22w);
    
    \draw[black!25!green, thick,->,rightsquigarrow] (c1w) -- (cp2w);
    \draw[black!25!green, thick,->,rightsquigarrow] (a1w) -- (ap2w);

    \draw[black!25!green, thick,->,rightsquigarrow] (a1w) -- (c1w);
    \draw[black!25!green, thick,->,rightsquigarrow] (ap2w) -- (cp2w);
     
    \draw[dotted,thick] (2.5,0.75) rectangle (5.5,1.75);
    \draw[dotted,thick] (2.35,0.65) rectangle (5.65,1.85);
    \end{scope}
\end{tikzpicture}
}
\captionof{figure}{The Tambara prime spectra for $\underline{\mathsf{Sp}}_{C_2}$ (top) and $\underline{{A}}_{C_2}$ (bottom).}
    \label{fig:comparison_tambara}
    \end{minipage}
\end{figure}

\newpage

\addcontentsline{toc}{part}{References}

\let\oldaddcontentsline\addcontentsline
\renewcommand{\addcontentsline}[3]{}
\printbibliography
\let\addcontentsline\oldaddcontentsline

\end{document}